\documentclass[a4paper,12pt]{amsart}

\usepackage{geometry}
\geometry{
  includeheadfoot,
  margin=2.54cm
}

\usepackage[comma, numbers]{natbib}
\usepackage[inline]{enumitem}
\usepackage{graphicx}
\usepackage[textfont=it,tableposition=above]{caption}		
\usepackage{subfig}					

\usepackage{framed}					
\usepackage[foot]{amsaddr}	
\usepackage{url}						

\usepackage{amssymb}				
\usepackage{mathtools}			

\usepackage[icon=Note, draft]{pdfcomment}
\hypersetup{hidelinks,breaklinks}

\usepackage[norefs,nocites]{refcheck}				




\newtheorem{theorem}{Theorem}

\newtheorem{proposition}[theorem]{Proposition}

\theoremstyle{definition}
\newtheorem*{definition}{Definition}
\newtheorem{example}{Example}
\newtheorem{problem}{Problem}


\newcommand{\R}{\mathbb R}																			
\newcommand{\N}{\mathbb N}																			
\newcommand{\M}{\mathcal M}																			
\newcommand{\Prob}[1][\R^d]{\mathcal P\left({#1}\right)}				
\newcommand{\Fsets}{\mathcal F}																	
\newcommand{\PP}{\mathsf P}																			
\newcommand{\as}{\mathrm {a.s.}}																
\newcommand{\HD}{hD}																						
\newcommand{\SVD}{svD}																					
\newcommand{\MahD}{MD}																					
\newcommand{\dd}{\,\mathrm{d}\,}																
\newcommand{\E}{\operatorname{E}}																
\newcommand{\Var}{\operatorname{Var}}														
\newcommand{\CR}[1][\delta]{P_{#1}}															
\newcommand{\CRFB}[1][\delta]{P_{#1}^{FB}}											
\newcommand{\vol}[2][d]{\operatorname{vol}_{#1}\left(#2\right)}	
\newcommand{\asa}[2][]{\operatorname{as}_{#1}\left(#2\right)}		
\newcommand{\Asa}[2][]{\operatorname{As}_{#1}\left(#2\right)}		
\newcommand{\Sph}[1][d-1]{\mathbb S^{#1}}												
\newcommand{\B}[1][d]{B^{#1}}																		
\newcommand{\dH}{\operatorname{d}_H}														
\newcommand{\dS}{\operatorname{d}_S}														
\newcommand{\tr}{^\mathsf{T}}																		
\newcommand{\MS}{\mathfrak S}																		
\newcommand{\CB}[1][d]{\mathcal K^{#1}}													
\newcommand{\MD}{\Pi}																						
\newcommand{\AHD}{AhD}																					
\DeclareMathOperator{\rad}{rad}																	
\DeclareMathOperator{\Supp}{Supp}																
\newcommand{\dist}[1][\Sigma]{\operatorname{d}_{#1}}						
\newcommand{\iu}{\mathrm{i}\mkern1mu}														
\newcommand{\eqd}{\stackrel{\mathclap{\mbox{\tiny{\emph{d}}}}}{=}}	
\newcommand{\Euler}{e}																					
\newcommand{\AIP}{\mathcal P^{d}}																
\newcommand{\FD}{\mathcal F^d}																	

\author{Stanislav Nagy$^{1}$}
\author{Carsten Sch\"utt$^2$}
\author{Elisabeth Werner$^3$}

\email{nagy@karlin.mff.cuni.cz}
\email{schuett@math.uni-kiel.de}
\email{elisabeth.werner@case.edu}

\address{$^1$
	Charles University, Prague,
	Department of Probability and Math. Statistics,
	Czech Republic
}
\address{$^2$
	Universit\"at Kiel,
	Mathematisches Seminar,
	Germany
}
\address{$^3$
	Case Western Reserve University, Cleveland,
	Department of Mathematics, Applied Mathematics and Statistics,
	United States
}

\title{Data depth and floating body}
\date{\today}

\keywords{floating body, halfspace depth, measures of symmetry, statistical depth}
\subjclass[2010]{62H05; 52A20; 62G35}

\begin{document}

\begin{abstract}
Little known relations of the renown concept of the halfspace depth for multivariate data with notions from convex and affine geometry are discussed. Halfspace depth may be regarded as a measure of symmetry for random vectors. As such, the depth stands as a generalization of a measure of symmetry for convex sets, well studied in geometry. Under a mild assumption, the upper level sets of the halfspace depth coincide with the convex floating bodies used in the definition of the affine surface area for convex bodies in Euclidean spaces. These connections enable us to partially resolve some persistent open problems regarding theoretical properties of the depth. 
\end{abstract}

\maketitle

\section{Introduction} 

Halfspace depth and floating body are the same concept. The first is extensively studied in nonparametric statistics, the second is of great importance in convex geometry. Until recently, work on data depth has not been recognized by the convex geometry community, and that in convex geometry not by researchers in statistics. Of course, the motivation and the goals in both fields are different, and even their philosophies are not the same. Nonetheless, there is an abundance of results common to both fields. We want to explore and summarize here what is common to both fields, what is known and what is not known.

In nonparametric statistics, data depth is a generalization of order statistics and ranks to multivariate random variables. Its aim is, for a multivariate probability distribution, to devise a distribution-specific ranking of points in the sample space. In other words, depth is a function intended to distinguish points that fit the overall pattern of the distribution, from measurement errors and other outliers.

In convex geometry, the concept of floating body was used, among other things, to introduce the affine surface area to all convex bodies. The associated affine isoperimetric inequality is much stronger than the classical isoperimetric inequality. It provides solutions to many problems where ellipsoids are extrema.

\section{Motivation and background}	

\begin{figure}[htpb]
\includegraphics[width=.75\textwidth]{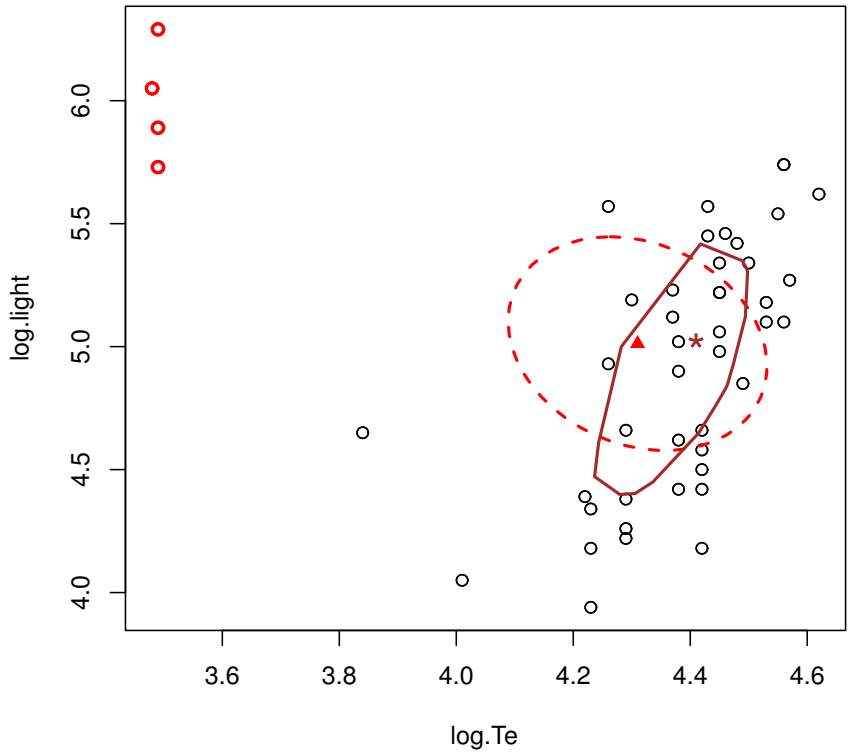} 
\caption{The Hertzsprung-Russell diagram of the Star Cluster CYG~OB1. The four giant stars (red points) attract both the sample mean (red triangle), and the sample covariance (represented by the red ellipse). The halfspace depth-based median (brown star), and the depth-based central region containing $25~\%$ of the observations, provide a more appropriate representation of the location and the variability of the main data cloud.}
\label{figure:outliers}
\end{figure}

In classical statistics of univariate data it is well known that the ordering of data points and the corresponding rank statistics constitute powerful statistical tools, valid under very broad sets of assumptions. The median, for instance, is a rather efficient, robust, affine equivariant location estimator. Quantiles are invaluable in both visualization and inference. Rank tests provide versatile analogues to the traditional testing procedures, and unlike many standard parametric statistical tests, work under minimal assumptions imposed on the data.

In multivariate spaces, however, no natural ordering exists. For $d>1$, we are not able to rank points $x$ and $y$ in $\R^d$ according to their magnitude, or tell whether $x$ lies ``to the left" of $y$. Though, for a given dataset, one can still ask how well a point fits into the overall pattern of the observations. If the data concentrate around a focal point, and follow a simple scatter structure, we can say that a point $x$ inside the data cloud is ``deeper'' inside the mass of the data, than a point $y$ that lies on the outskirts, or outside the data cloud. This notion of multivariate center-outwards ranking is formalized by the idea of data depth. In general, a depth $D(x;P)$ is a function that, given a probability distribution $P$ on $\R^d$ (or a random sample from this distribution), quantifies the centrality (the depth) of a point $x$ with respect to (w.r.t.) the geometry of $P$. The more $x$ is inside the main bulk of the mass of $P$, the higher the depth $D(x;P)$. As such, the depth enables us to rank the points of $\R^d$ according to their centrality w.r.t. $P$, and devise the corresponding depth-rank statistics, or depth-based quantile regions.

Let us illustrate our point by giving two simple examples where the depth plays an instrumental role. In the first one, our search for sensible data ordering is motivated by the problem to define a multivariate analogue of the median. In the classification task presented afterwards, we stress the importance of general global ranking procedures in data science.

Consider the dataset of $47$ bivariate observations taken from \citep{Rousseeuw_Leroy1987}, displayed in Figure~\ref{figure:outliers}. The data correspond to the Hertzsprung-Russell diagram of the stars in the Star Cluster CYG~OB1 in the Cygnus constellation. In the scatterplot, the logarithm of the effective temperature at the surface of the star (\texttt{log.Te}) is plotted, against the logarithm of its light intensity (\texttt{log.light}). The majority of the observations follows a common pattern --- their data points concentrate in the south-east part of the plot, and appear to be scattered rather regularly. Four stars clearly do not follow that pattern, and could be considered as outliers (the red points in Figure~\ref{figure:outliers}). Those are known to be stars of different characteristics (so-called giant stars). Let us determine the location of the random sample. The sample mean (red triangle) is attracted towards the outlying observations, and does not represent the location of the majority of the data appropriately. That is, of course, caused by the fact that the expectation is known to be affected severely by erroneous data, and outliers, i.e., it is not robust. For univariate data, one can opt for the median in such situations. But, what is a median of a multivariate dataset? Intuitively, the median should capture the location of the majority of observations, and should be little affected by errors, or other anomalies in the data. The median should be a point ``deep" inside the data cloud. With the notion of the halfspace depth (the precise definition is given below), we consider the depth median being the point whose depth w.r.t. the random sample is the highest (brown star in Figure~\ref{figure:outliers}). The depth median is robust, i.e. it is much less affected by the four giant stars than the mean. It captures the center of the main bulk of data much better then the sample mean. Additionally, let us consider the Mahalanobis ellipse (for precise definition see \eqref{Mahalanobis level set} below) that corresponds to the sample mean, and the sample covariance of our dataset, and contains $25~\%$ of the data points (red ellipse in Figure~\ref{figure:outliers}). This ellipse is intended to represent the scatter pattern of the data. As seen in Figure~\ref{figure:outliers}, it is also heavily biased towards the anomalous observations. On the other hand, the halfspace depth region that contains (roughly) $25~\%$ of the deepest points (brown polygon), still represents the main modes of variation of the data quite reliably.

\begin{figure}[htpb]
\includegraphics[width=.45\textwidth]{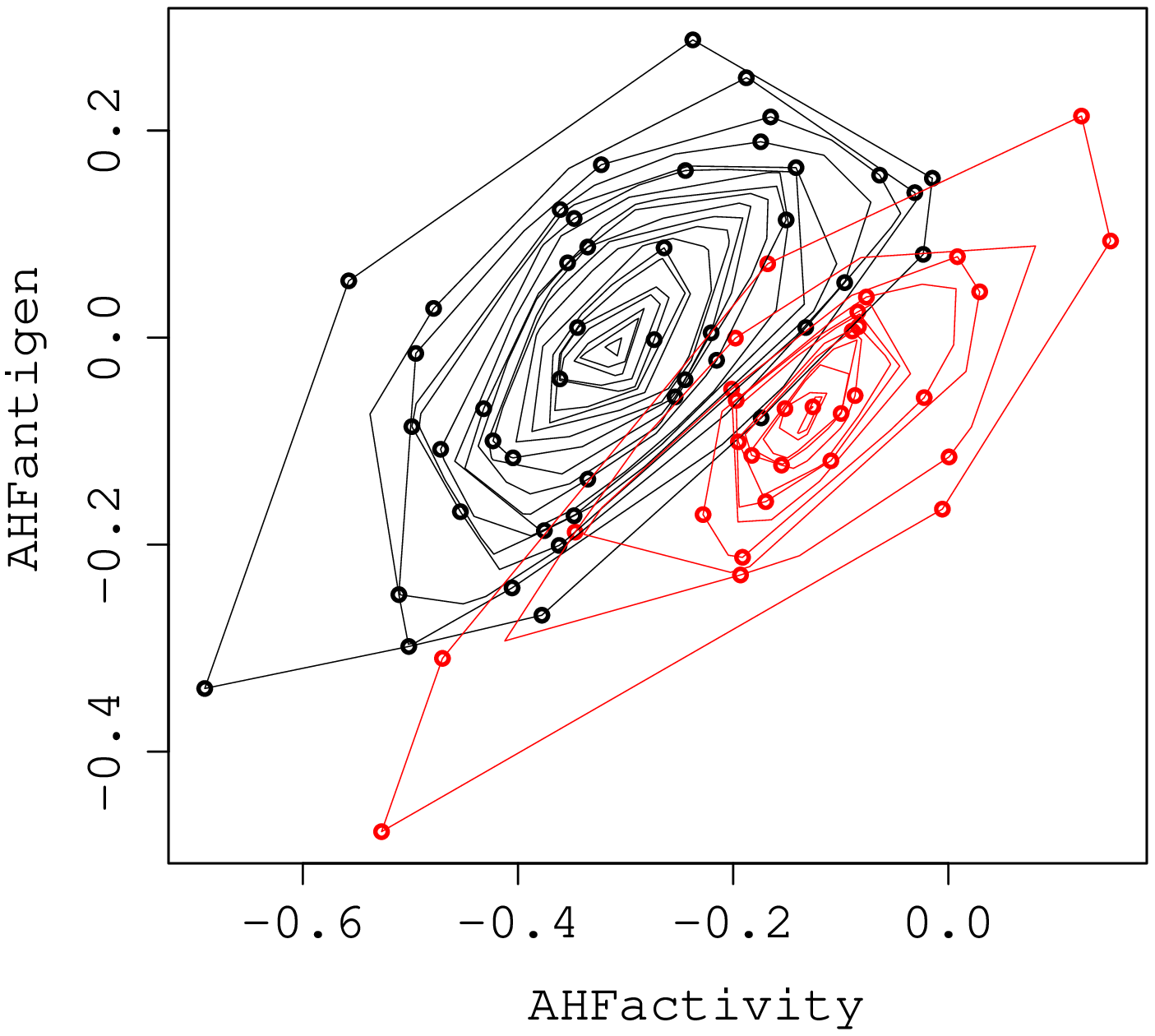} \quad \includegraphics[width=.45\textwidth]{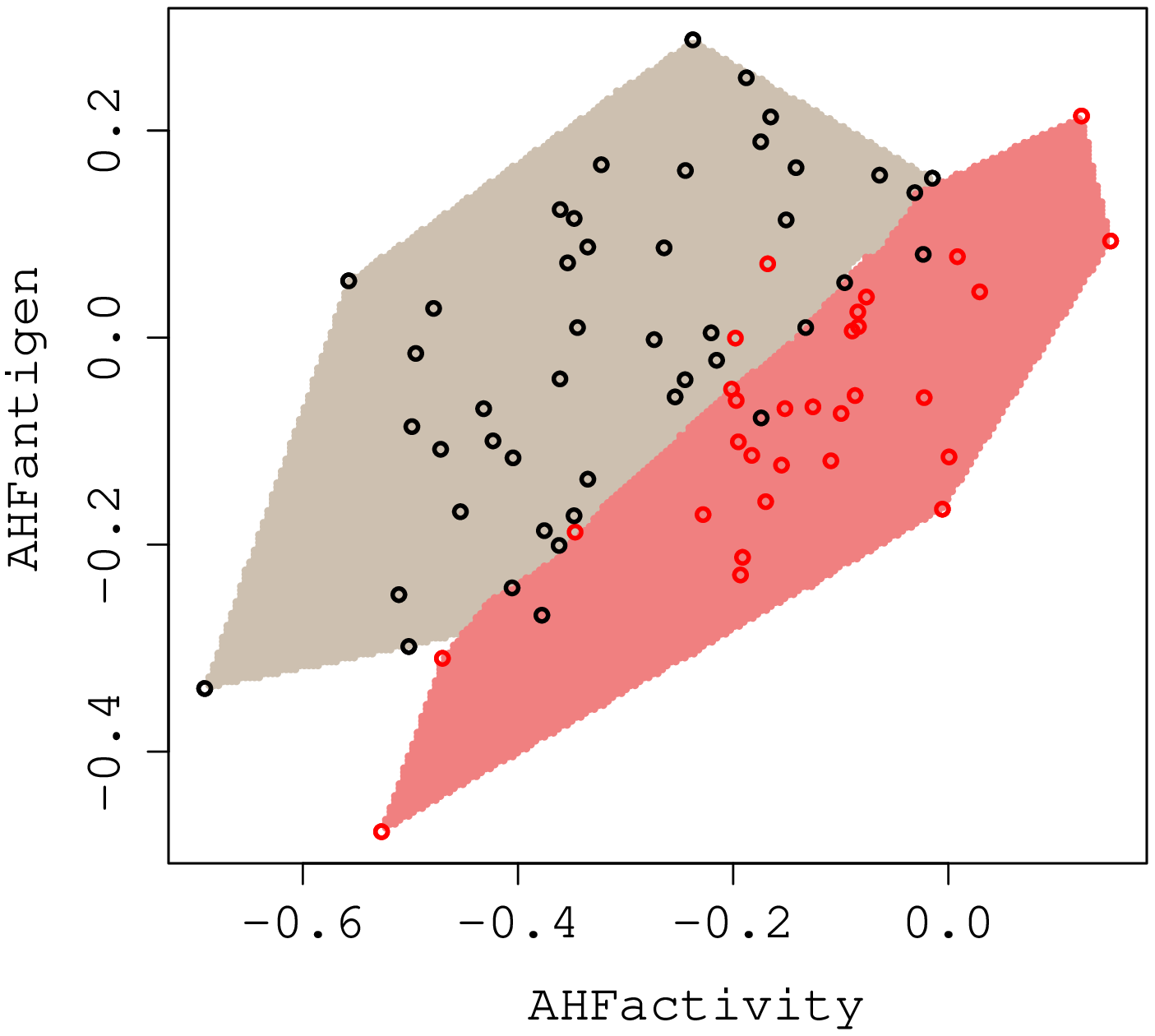}
\caption{Hemophilia data. The group of carriers of Hemophilia A (black points) is to be separated from the group of non-carriers (red points). Left panel: halfspace depth contours of the two groups; right panel: regions where one of the depths dominates the other --- a new observation inside the light-gray polygon is classified into the black group, an observation inside the light-red polygon into the red group. Observations outside the polygons remain unclassified.}
\label{figure:classification}
\end{figure}

For our second motivating example, the \texttt{hemophilia} data (available in \citep{ddalpha}) are visualized in Figure~\ref{figure:classification}. The dataset consists of bivariate measurements (AHF activity and AHF antigen) taken from blood samples of $75$ women, out of whom $45$ are known to be hemophilia A carriers (black dots on Figure~\ref{figure:classification}). Our task here is classification --- given a new datum with the two measured characteristics, decide whether the new patient is a potential hemophilia A carrier. The literature where problems of this type are studied in statistics is immense. One approach to this problem is to make use of the depth, and the ranking of the observations. Firstly, compute the depth of the point w.r.t. both random samples, i.e. rank the new data point inside the group of carriers, and the group of non-carriers, respectively. Then, assign the new datum to that group for which it is more typical, reflected by its higher depth-based rank. Figure~\ref{figure:classification} shows the contours of the halfspace depth functions for both random samples (left panel), and the regions where one of the depths is larger than the other (the polygons on the right hand side). A new observation within the light-gray polygon would be assigned into the black cloud of points (carriers); a point inside the light-red polygon is assessed to come from a non-carrier patient\footnote{Note that there are points that remain unclassified. For instance, points outside both convex hulls of the sample points are not assigned to any of the clusters using our simple classification rule. More advanced depth-based techniques dealing with these problems are available in the literature.}. This approach, sometimes called the maximum depth classification and its other variants based on the depth, turned out be particularly appealing in the past years. Mainly due to their conceptual simplicity, versatility, and good robustness properties, depth-based classification rules have gained great importance over the past decade.

As we saw, data depth introduces ranking and ordering also for multivariate datasets. Other applications of the depth include multivariate extensions of the rank tests, L-statistics (linear combinations of order statistics), and many other nonparametric and robust procedures.

Let us now provide a rigorous definition of the halfspace depth. For $d\geq 1$, a point $x$ in $d$-dimensional Euclidean space $\R^d$, and a probability distribution $P$ on $\R^d$, the halfspace depth $\HD$ of $x$ with respect to $P$ is given by
	\begin{equation*}	
	\HD(x;P) = \inf\left\{ P(H^{-}) \colon \mbox{ $H$ is a hyperplane with $x\in H$} \right\},	
	\end{equation*}
where $H^{-}$ denotes one of the closed halfspaces associated with its boundary hyperplane $H$ in $\R^d$. In other words, the depth $\HD$ is given as the smallest probability of a closed halfspace that contains $x$. Points outside the convex hull of the support of $P$ have zero depth. More generally, any point $x$ with $\HD(x;P) < \delta$ can be separated from the main mass of $P$ by a hyperplane cutting away both the point $x$, and a mass of probability at most $\delta$. Points with rather high depth values can be seen as those lying at the center of the distribution, as no halfspace of small probability can separate them from the rest of $P$. This way, the depth $\HD$ acts as a mapping that orders the sample points in a center-outwards direction, with the ordering given subject to the distribution $P$. 

One plausible statistical application of the depth is the possibility to introduce quantiles to multivariate data. Consider, for observations on the real line $\R$, the central quantile regions given as the intervals bounded by the $\alpha$-, and $(1-\alpha)$-quantiles of $P$, for $\alpha \in (0,1/2]$. A natural multivariate analogue of these sets are then the loci of points whose depth exceeds given thresholds. Such sets of points are called the \emph{central regions} of $P$ in $\R^d$ (given by the depth $\HD$). As discussed in Section~\ref{section:depth} below, the collection of all central regions of $P$ consists of affine equivariant nested closed convex sets, monotone in the sense of set inclusion, see also the left panel of Figure~\ref{figure:classification}, or Figure~\ref{figure:floating bodies} below. For sufficiently regular distributions, the smallest non-empty set in this collection is a singleton --- the most central point of $\R^d$ for $P$. This point is frequently recognized as a generalization of the median to $\R^d$-valued data.

The earliest contribution in statistics that deals with some form of the halfspace depth is believed to be \citep{Hodges1955} from 1955. There, a sign test for bivariate data is proposed and examined. Its test statistic takes the form of the depth $\HD$ at a single, given point $x$. 

The seminal paper that introduced the depth in the sample case (that is, for datasets) is \citet{Tukey1975} from 1975. In that paper, the depth is proposed as a tool that enables efficient visualization of random samples in $\R^2$. It is in \citep{Tukey1975}, where the word \emph{depth} is used for the first time. The original formal definition of the halfspace depth for multivariate data can be found in \citet{Donoho1982} and \citet{Donoho_Gasko1992} (see also \citep{Small1987}). 

Starting with the study of Donoho, much research has focused on data depth and related concepts. The prominent, loosely related simplicial depth for multivariate data was defined by \citet{Liu1988, Liu1990}, building upon the ideas presented in \citet{Oja1983}. Soon, the idea of depth was extended to data in non-linear spaces \citep{Small1987,Liu_Singh1992}, general metric spaces \citep{Carrizosa1996}, observations on graphs \citep{Small1997}, regression \citep{Rousseeuw_Hubert1999}, or data taking values in functional spaces \citep{Fraiman_Muniz2001}, and Banach spaces \citep{Cuevas_Fraiman2009}.

The general concept of data depth in $\R^d$ was formalized by \citet{Zuo_Serfling2000}, \citet{Dyckerhoff2004}, and \citet{Serfling2006}, see also \citet{Mosler2013}. Nowadays, dozens of depth functions and related methods for all types of data can be found in the literature. It is, however, the halfspace depth $\HD$, that is the single most important depth that continues to reappear as the prime representative of this idea.  

Apart from statistics, halfspace depth gained considerable attention also in discrete and computational geometry (see \citep{Liu_etal2006}). There, the combinatorial nature of the sample version of $\HD$ provides a rich source of interesting problems, especially in connection with its computational aspects. For instance, the halfspace medians, i.e. points at which $\HD$ is maximized over $\R^d$, are closely related to the notion of \emph{centerpoints} studied in discrete geometry (see \citep[Section~1.4]{Matousek2002}). For a recent overview of data depth and its links to computational geometry see \citet{Rousseeuw_Hubert2018}.

A notable article on the properties of the halfspace depth for general (probability) measures is \citet{Rousseeuw_Ruts1999}. There, the population version of the depth $\HD$ (i.e. the depth w.r.t. the true sampling distribution $P$) is investigated. Several interesting links between the concept of the halfspace depth, and some sources outside mathematical statistics are outlined. In \citet[Section~8]{Rousseeuw_Ruts1999} it is noted that the halfspace depth relates with the voting problem studied in \citet{Caplin_Nalebuff1988, Caplin_Nalebuff1991} in the theory of social choice. Further, it is also observed that some results concerning the maximal depth of a point in $\R^d$ can be found already in \citet{Neumann1945, Rado1946}, and \citet{Grunbaum1960}, in the literature concerned with the geometric properties of functions and sets. 

In the present paper, we pursue this line of research, and point to the remarkable similarity of the notion of halfspace depth, and some concepts used in other fields of mathematics, especially in convex and affine geometry. 

The paper is organized as follows. In Section~\ref{section:depth} we introduce the notation, and give a brief overview of some of the most important properties of the halfspace depth. In Section~\ref{section:Winternitz} we follow the lead provided by \citet{Rousseeuw_Ruts1999}, and trace a little known early precursor of the halfspace depth to be the so-called Winternitz measure of symmetry of convex bodies, a functional that dates back at least to \citet{Blaschke1923}. In Sections~\ref{section:convex floating bodies} and~\ref{section:floating bodies for measures} we examine relations of the halfspace depth with the (convex) floating bodies, an important tool used in the study of convex sets in $\R^d$. As demonstrated, the history of the halfspace depth is much longer than assumed: the earliest predecessors of the depth $\HD$ appear to be the floating bodies in $\R^d$, studied already by \citet{Dupin1822} in 1822. Later, floating bodies reappear in mathematics in 1923 in \citet{Blaschke1923}, in connection with an affine invariant, the affine surface area of convex bodies, and other problems. As discussed in Section~\ref{section:convex floating bodies}, the modern notion of the floating body, the convex floating body, defined independently by \citet{Barany_Larman1988} and \citet{Schutt_Werner1990}, plays a major role for the concept of affine surface area studied in geometry. We present extensions of this notion to log-concave measures and show its importance in questions of approximation of convex bodies by polytopes. It is also discussed in Section~\ref{section:floating bodies for measures} that the convex floating body corresponds to the upper level sets of the halfspace depth. Using this identity, we provide in Section~\ref{section:ellipsoids} a surprising bound of the halfspace depth in terms of the Mahalanobis depth. Section~\ref{section:characterization} is devoted to the distribution-by-depth characterization problem, concerned with finding conditions under which no two probability measures can have the same depth over $\R^d$. It is shown that two important partial positive results to this problem \citep{Hassairi_Regaieg2008, Kong_Zuo2010} are both special cases of a more general theorem, conveniently stated in terms of floating bodies of measures. As a corollary, we obtain some new classes of distributions characterized by their depth. Finally, in Section~\ref{section:extensions} we discuss some extensions of the depth to more exotic data. The survey is completed with a series of open problems relevant to the topics of halfspace depth and floating bodies.

\section{Data depth: Notation and essential properties}	\label{section:depth}

Let $\left( \Omega, \Fsets, \PP \right)$ be the probability space on which all random variables are defined. For a measurable space $\M$, denote by $\Prob[\M]$ the space of all probability distributions on $\M$, and write $X\sim P$ for a random variable $X$ with distribution $P\in\Prob[\M]$. The support of $P$ is denoted by $\Supp(P) \subseteq \M$. For $n \in \N = \{1,2,\dots\}$ and a random sample $X_1, X_2, \dots, X_n$ from $P$, let $P_n \in \Prob[\M]$ be the associated empirical measure, i.e. the uniform measure supported in the sample points. For $X \sim P$ and $\phi \colon \M \to \M$ measurable, write $P_{\phi(X)} \in \Prob[\M]$ for the probability distribution of the transformed random variable $\phi(X)$. This way, $P \equiv P_X$. 

The space $\R^d$ is equipped with the Euclidean norm $\left\Vert \cdot \right\Vert$ and the inner product $\left\langle \cdot, \cdot \right\rangle$.  For $x \in \R^d$ and $r>0$, 
	\[	\B(x,r) = \left\{ y \in \R^d \colon \left\Vert y - x \right\Vert \leq r \right\}	\]
is the closed Euclidean ball centered at $x$ with radius $r$. $\B$ stands for the unit ball $\B(0,1)$ and  $\Sph = \partial \B$  denotes the unit sphere. $\partial K$ stands for the topological boundary of $K \subset \R^d$. The Lebesgue measure of a measurable set $K$ will be denoted also by $\vol{K}$. 

A convex body is a convex, compact subset of $\R^d$ with non-empty interior. For $k \in\N$ it is said to have a $C^k$ boundary if its boundary, locally parametrized as a function from $\R^{d-1}$, is $k$-times continuously differentiable. We denote the collection of all convex bodies in $\R^d$ by $\CB$. For an interior point $x_{0}$ of a convex body $K$ define the polar body $K^{x_{0}}$ of $K$ w.r.t. $x_{0}$ by
	\begin{equation}\label{polar}	
		K^{x_{0}}=\left\{y\in\R^d \colon \left\langle y,x-x_{0}\right\rangle\leq 1 \mbox{ for all } x\in K\right\}.	
	\end{equation}		
If  $0 \in  \operatorname{Int} (K)$, the interior of $K$, we write $K^\circ$ for the polar body \eqref{polar} of $K$ w.r.t. $0$. A star body $K$ is a compact subset of $\R^d$ with the property that there exists $x \in K$ such that the open line segment from $x$ to any point $y \in K$ is contained in the interior of $K$. The Minkowski addition of $K, L \in \CB$ is $K + L = \left\{ x + y \colon x \in K, y \in L \right\}$. Likewise, for $\lambda \in \R$,  $\lambda K = \left\{ \lambda x \colon x \in K \right\}$.

We write 
	\[	H_{u,\alpha} = \left\{ z \in \R^d \colon \langle z, u \rangle = \alpha \right\}	\quad \mbox{for }u \in \R^d \setminus \{0\}, \alpha \in \R,	\]
for a hyperplane in $\R^d$, and denote for $H = H_{u,\alpha}$ the two halfspaces bounded by this hyperplane by $H^- = H_{u,\alpha}^-$ and $H_+ = H_{u,\alpha}^+$, respectively.
By $\mathcal H$ we denote the set of all hyperplanes, and by $\mathcal H^-$ the set of all closed halfspaces in $\R^d$. 

We say that the halfspace $H^+$ supports the set $K \subset \R^d$ if $K \cap H^- \ne \emptyset$ and $K \subseteq H^+$. A hyperplane $H\in\mathcal H$ is said to support $K$ if either $H^-$ or $H^+$ supports $K$. The collection of supporting halfspaces of an empty set is empty. Recall that the boundary of a convex body $K \in \CB$ is $C^1$ if and only if for any $x \in \partial K$ there exists a unique hyperplane $H \in \mathcal H$ that supports $K$ with $x \in H$ \citep[Theorem~2.2.4]{Schneider2014}. For a convex set $K \subset \R^d$, the support function of $K$ is defined as
	\begin{equation}	\label{support function}
	h_K \colon \Sph \to \R \colon u \mapsto \sup\left\{ \langle x, u \rangle \colon x \in K \right\}.
	\end{equation}
The centroid (or the barycenter) of a compact set $K \subset \R^d$ is the expectation of a random variable distributed uniformly on $K$.

\subsection{Statistical depth for multivariate data}

The first formal definition of the halfspace depth in $\R^d$ for general probability distributions can be found in \citet{Donoho1982}.

\begin{definition}
Let $P\in\Prob$ and $x\in\R^d$. The \emph{halfspace depth} (or \emph{Tukey depth}) of $x$ w.r.t. $P$ is defined as
	\begin{equation}	\label{halfspace depth}
	\HD\left( x; P \right) = \inf \left\{ P(H^-) \colon H \in \mathcal H, x \in H^- \right\}.	
	\end{equation}
\end{definition}

In \eqref{halfspace depth}, the infimum can be equally well taken only over those $H\in\mathcal H$ with $x \in H$ \citep[Proposition~3]{Rousseeuw_Ruts1999}. 

In the study of the theoretical properties of $\HD$, two regularity conditions imposed on $P\in\Prob$ frequently play an important role. The first, a smoothness condition, appears in \citet{Dumbgen1992} and \citet{Mizera_Volauf2002}, and reads 
	\begin{equation}	\label{Delta}	
	P(H) = 0 \mbox{ for all }H \in \mathcal H. 
	\end{equation} 
It is trivially satisfied if, for instance, $P$ has a density in $\R^d$. 

The second requirement concerns the support of $P$. We say that $P\in\Prob$ has \emph{contiguous support} \citep{Mizera_Volauf2002, Kong_Zuo2010} if there are no two disjoint halfspaces $H_1^-, H_2^- \in \mathcal H^-$ such that $P(H_1^-)>0$ and $P(H_2^-)>0$, but $P(H_1^-) + P(H_2^-) = 1$. In other words, the support of $P$ cannot be separated by a slab between two closed parallel hyperplanes. 

In Section~\ref{section:ellipsoids} we demonstrate a surprising relation between the halfspace depth, and another renown depth function that can be found in the literature: the Mahalanobis depth. To this end, let us briefly recall its definition, and some elementary properties. 

For any symmetric positive definite matrix $\Sigma\in\R^{d\times d}$, the Mahalanobis distance \citep{Mahalanobis1936} of two points $x, y \in \R^d$ is defined as 
	\begin{equation}	\label{Mahalanobis distance}
	\dist(x,y) = \sqrt{\left(x - y\right)\tr \Sigma^{-1} \left(x - y\right)}.	
	\end{equation}
It is a metric on $\R^d$. Based on this distance, \citet{Liu1992} proposed the following depth function.

\begin{definition}
Let $X \sim P \in \Prob$ be such that $\E X = \mu$ and $\Var X = \Sigma$ is positive definite. The \emph{Mahalanobis depth} of $x$ w.r.t. $P$ is defined as
	\[	\MahD(x;P) = \left( 1 + \dist(x,\mu) \right)^{-1}.	\]
\end{definition}

The Mahalanobis depth w.r.t. $P$ takes the maximal value $1$ at the expectation of $P$. Its upper level sets are concentric ellipsoids given, for $\delta\in(0,1]$, by
	\begin{equation}	\label{Mahalanobis level set}
	\left\{ x \in \R^d \colon \MahD(x;P) \geq \delta \right\}	= \left\{ x \in \R^d \colon \sqrt{\left(x - \mu\right)\tr \Sigma^{-1} \left(x - \mu\right)} \leq \frac{1-\delta}{\delta} \right\}.	
	\end{equation}
These ellipsoids are also called the Mahalanobis ellipsoids of the distribution $P$. Note that unlike the halfspace depth $\HD$, the Mahalanobis depth $\MahD$ is not defined for all $P\in\Prob$, but rather it is restricted to distributions with finite second moments, and positive definite variance matrices.

\subsection{Properties of the halfspace depth}

In this section we collect some basic properties of the halfspace depth \eqref{halfspace depth}, and of its upper level sets that will prove to be useful in the sequel. 

For any $P\in\Prob$, consider the upper level sets of $\HD$
	\begin{equation}	\label{central region}
	\CR = \left\{ y \in \R^d \colon \HD(y;P) \geq \delta	\right\} \quad\mbox{for }\delta\in[0,1]. 	
	\end{equation}
Immediately from the definition we see that the collection of sets $\CR$, $\delta\in[0,1]$ is nested, decreasing in the sense of set inclusion, and $\CR[0]=\R^d$. The set $\CR$ is also called the central region of $P$ corresponding to $\delta\in[0,1]$.

\begin{example}	\label{example:depth level sets of convex bodies}
Let $X \sim P\in\Prob$ be the uniform distribution on the unit ball $\B$. The marginal distribution function of the first coordinate of $X$ is given by
	\[	F_1(s) = 	\begin{cases}
								\frac{\Gamma\left((d+2)/2\right)}{\Gamma\left((d+1)/2\right)\sqrt{\pi}} \int_{-1}^s \left( 1 - t^2 \right)^{(d-1)/2} \dd t & \mbox{for }s \in (-1,1), \\
								0 & \mbox{for }s\leq -1, \\
								1 & \mbox{for }s\geq 1.
								\end{cases}	\]
It is not difficult to see that 
	\[	\HD(x;P) = F_1\left( - \left\Vert x \right\Vert \right) \quad \mbox{for all }x \in \R^d,	\]
i.e. the central region $\CR$ of $P$ is a ball with radius $-F_1^{-1}\left( \delta \right)$ for $\delta \in [0,1/2]$ and $F_1^{-1}$ the quantile function corresponding to $F_1$. For $d=2$ we obtain
		\[	\HD(x;P) = 	\begin{cases}
										\frac{1}{2}-\frac{1}{\pi}\left(\arcsin(\left\Vert x \right\Vert) + \left\Vert x \right\Vert \sqrt{1-\left\Vert x \right\Vert^2} \right)	& \mbox{if }\left\Vert x \right\Vert \leq 1, \\
										0 & \mbox{otherwise},	
										\end{cases}	\]
which agrees with \citet[Section~5.6]{Rousseeuw_Ruts1999}. Uniform distributions on balls are a special case of spherically (and elliptically) symmetric distributions. Such distributions will be treated in Example~\ref{example:alpha symmetric distributions} below.
	
For the uniform distribution $P\in\Prob[\R^2]$ on the unit square $[0,1]^2$, the halfspace depth and its central regions were computed by \citet[Section~5.4]{Rousseeuw_Ruts1999}
	\[	\HD(x;P) = 	\begin{cases}
									2 \min\left\{x, 1-x \right\} \min\left\{ y, 1-y \right\} & \mbox{for }x,y \in[0,1],	\\
									0 & \mbox{otherwise}.
									\end{cases}	\]
The expression for the halfspace depth of $P\in\Prob[\R^2]$ distributed uniformly on the equilateral triangle can be found in \citet[Section~5.3]{Rousseeuw_Ruts1999}. Several central regions \eqref{central region} of the halfspace depth for the latter two distributions centered at their halfspace medians are displayed in Figure~\ref{figure:floating bodies}. Exact expressions for the halfspace depth for the uniform distribution on a simplex, and a (hyper)-cube in $\R^d$ are much more involved for $d>2$ than for $d=2$. They can be obtained from \citep[Lemma~1.3 and its proof]{Schutt1991}.
\end{example}

\begin{figure}[htpb]
\includegraphics[width=.45\textwidth]{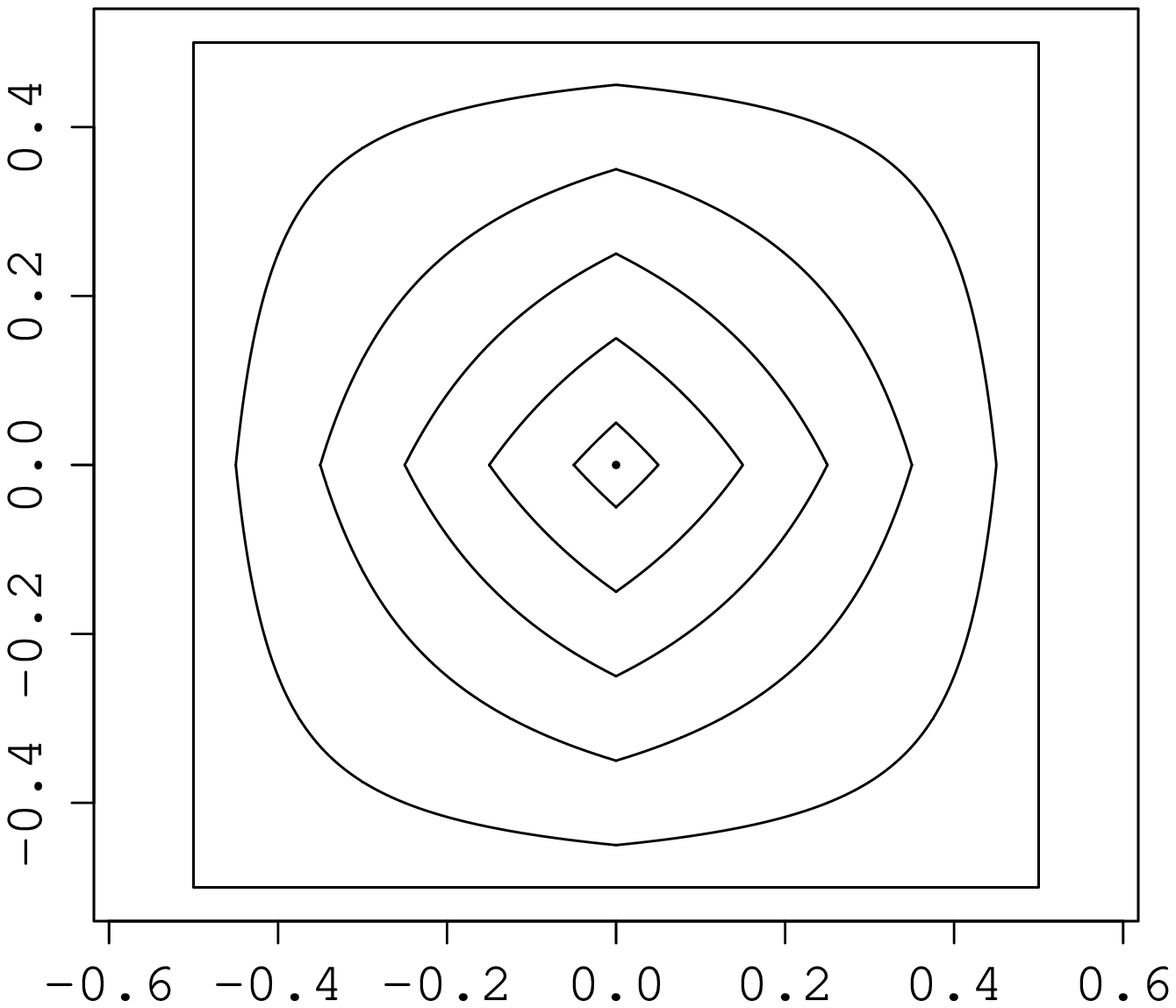} \quad \includegraphics[width=.45\textwidth]{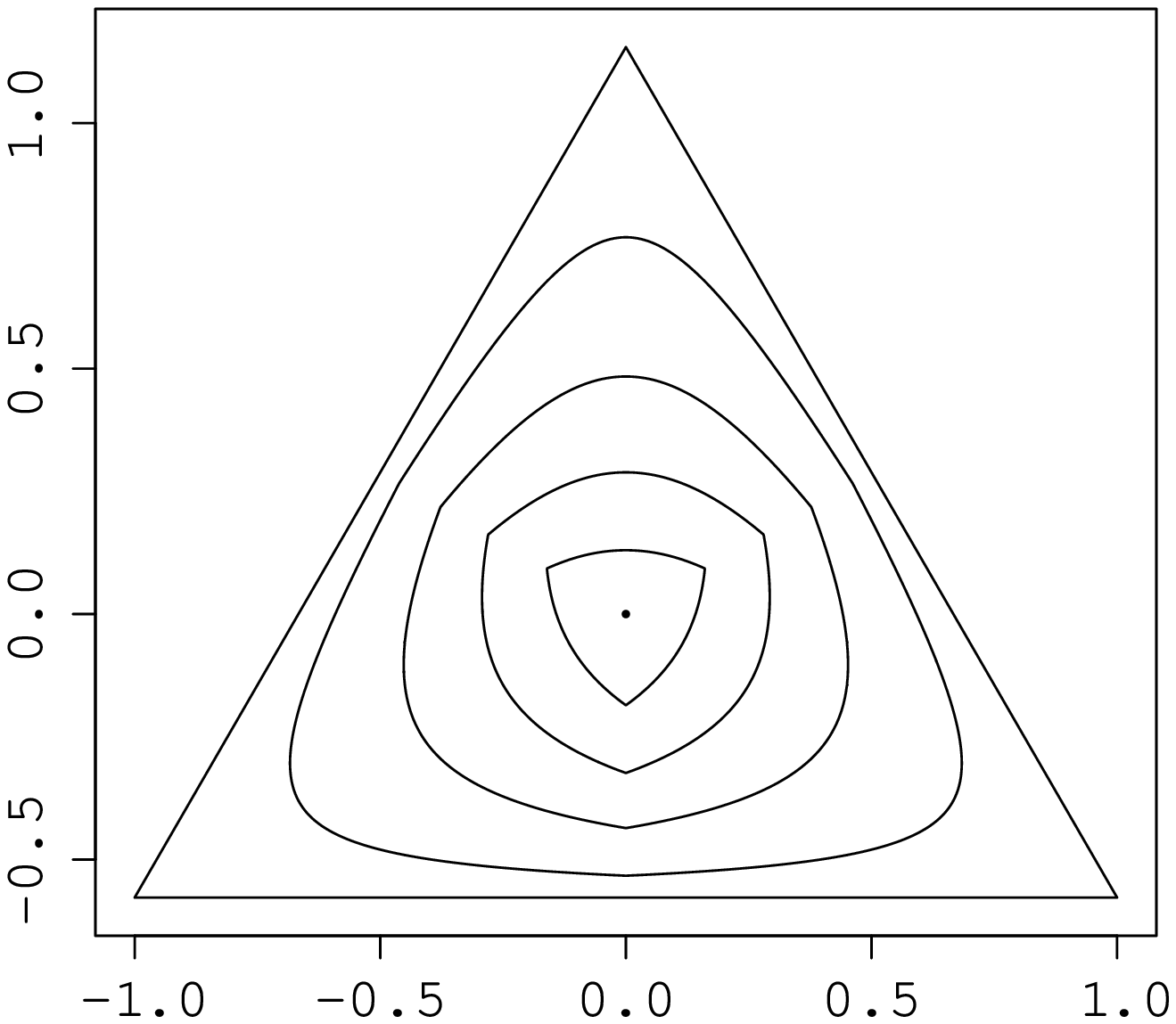}
\caption{Halfspace depth central regions \eqref{central region} for $\delta \in \{ 0.05, 0.15, 0.25, 0.35, 0.45 \}$ for the uniform distribution on the unit square (left panel), and an equilateral triangle (right panel). The black dots stand for the halfspace medians of these two distributions. Note that the central region for $\delta = 0.45$ is empty for $P$ uniform on a triangle. In that case $\MD(P) = 4/9 < 0.45$ for $\MD$ from \eqref{maximal depth}.}
\label{figure:floating bodies}
\end{figure}

\begin{example}	\label{example:alpha symmetric distributions}
In accordance with \citet{Fang_etal1990} we say that the distribution of $X \sim P\in\Prob$ is $\alpha$-symmetric, $0<\alpha\leq 2$, if for some continuous function $\phi \colon \R \to \R$ the characteristic function of the random vector $X$ takes the form 
	\[	\psi_X(t) = \E \Euler^{\iu \left\langle t, X \right\rangle} = \phi\left( \left\Vert t \right\Vert_\alpha \right) \quad \mbox{for all }t \in \R^d,	\]
where $\iu$ is the imaginary unit, and for $t = \left(t_1, \dots, t_d\right) \in \R^d$ we set $\left\Vert t \right\Vert_\alpha = \left(\sum_{i=1}^d \left\vert t_i \right\vert^\alpha\right)^{1/\alpha}$ for $0<\alpha<\infty$, and $\left\Vert t \right\Vert_\infty = \max_{i=1,\dots,d} \left\vert t_i \right\vert$. For $\alpha = 2$ we obtain the collection of all spherically symmetric distributions, i.e. distributions invariant under all orthonormal rotations of the sample space \citep[Chapter~2]{Fang_etal1990}. For instance, the uniform distribution on the unit ball $\B$, the uniform distribution on the unit sphere $\Sph$, or the standard multivariate Gau{\ss}ian distribution are all spherically symmetric. The multivariate probability distribution with independent Cauchy marginals is $1$-symmetric. For $\phi(s) = \Euler^{-s^\alpha}$ with $\alpha \in (0,2]$ we obtain the multivariate symmetric stable laws.

$\alpha$-symmetric distributions have been studied by many authors \citep{Cambanis_etal1983, Fang_etal1990, Koldobsky2005}. They are distinguished by the special property that all univariate projections of an $\alpha$-symmetric measure $X = \left(X_1, \dots, X_d\right) \sim P$ are multiples of the same univariate distribution
	\[	\left\langle u, X \right\rangle \eqd \left\Vert u \right\Vert_\alpha X_1 \quad \mbox{for all }u \in \R^d, 	\]
where $\eqd$ stands for ``is equal in distribution" \citep[Theorem~7.1]{Fang_etal1990}. This makes it possible to compute the depth $\HD\left(\cdot;P\right)$ exactly. For an $\alpha$-symmetric $P\in\Prob$ we have for all $x\in\R^d$
	\begin{equation}	\label{depth for symmetric distributions}
	\begin{aligned}
	\HD\left(x;P\right) & = \inf_{u \in \R^d \setminus \{0\}} \PP\left( \left\langle u, X \right\rangle \leq \left\langle u, x \right\rangle \right) = \inf_{u \in \R^d \setminus \{0\}} \PP\left( X_1 \leq \left\langle u, x \right\rangle/\left\Vert u \right\Vert_\alpha \right) \\
	& = F_1\left( - \left\Vert x \right\Vert_ {\alpha^*} \right),
	\end{aligned}
	\end{equation}
where $F_1$ is the distribution function of $X_1$, and 
	\[	\alpha^* = 	\begin{cases}
									\alpha/(\alpha-1) & \mbox{if }\alpha>1, \\
									\infty & \mbox{if }\alpha \in (0,1], 
									\end{cases}	\]
is the conjugate exponent to $\alpha$. The last equality in \eqref{depth for symmetric distributions} is due to the (generalized) H\"older inequality (see, e.g., \citep[Lemma~A.1]{Chen_Tyler2004}). All central regions $\CR$ of an $\alpha$-symmetric distribution are therefore the lower level sets of the norm $\left\Vert \cdot \right\Vert_{\alpha^*}$. In particular, for all spherically symmetric distributions the central regions are centered balls, and for all $\alpha\leq 1$ the central regions are centered (hyper)-cubes in $\R^d$. Apart from simple uniform distributions on convex bodies such as those in Example~\ref{example:depth level sets of convex bodies} and atomic distributions (see the left panel of Figure~\ref{figure:classification}), $\alpha$-symmetric distributions (and their affine images) are the only class of probability distributions whose depth $\HD$ are we able to evaluate exactly. This was noticed by \citet[Example~(C)]{Masse_Theodorescu1994} and \citet{Chen_Tyler2004}. See also Figure~\ref{figure:depth for distributions with density}.
\end{example}

\subsubsection{Affine invariance}	

For a non-singular matrix $A \in \R^{d\times d}$ and $b \in \R^d$, consider the affine transformation $T \colon \R^d \to \R^d \colon x \mapsto A x + b$. The depth $\HD$ is invariant with respect to $T$
	\begin{equation*}	
	\HD\left( x ; P_X \right) = \HD\left( T(x) ; P_{T(X)} \right) \quad\mbox{for all }x\in\R^d, P_X\in\Prob.	
	\end{equation*}
This implies that central regions $\CR\equiv (P_X)_\delta$ are affine equivariant under affine transformations $T$ of full rank, i.e. $T((P_X)_\delta) = (P_{T(X)})_\delta$ for any $\delta\in[0,1]$. Due to the affine invariance of $\HD$ and Example~\ref{example:alpha symmetric distributions}, the central regions $\CR$ of elliptically symmetric distributions (i.e. invertible affine images of spherically symmetric distributions, see \citep[Chapter~2]{Fang_etal1990}) are concentric ellipsoids with the same center and orientation as the density level sets of $P$ (if the density exists). In particular, this holds true for the central regions of any full-dimensional multivariate Gau{\ss}ian distribution (see Figure~\ref{figure:depth for distributions with density}).

\begin{figure}[htpb]
\includegraphics[width=.45\textwidth]{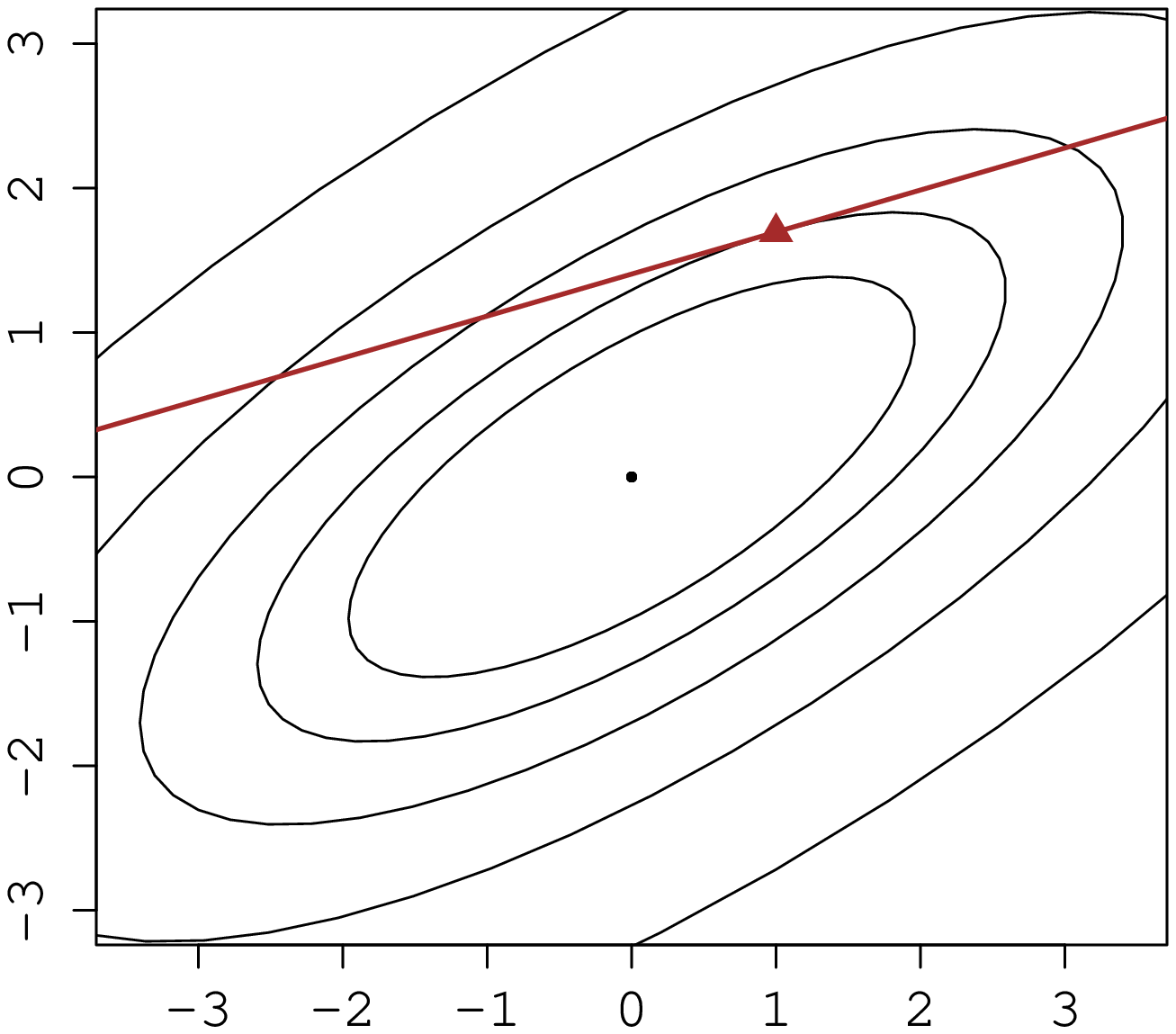} \quad \includegraphics[width=.45\textwidth]{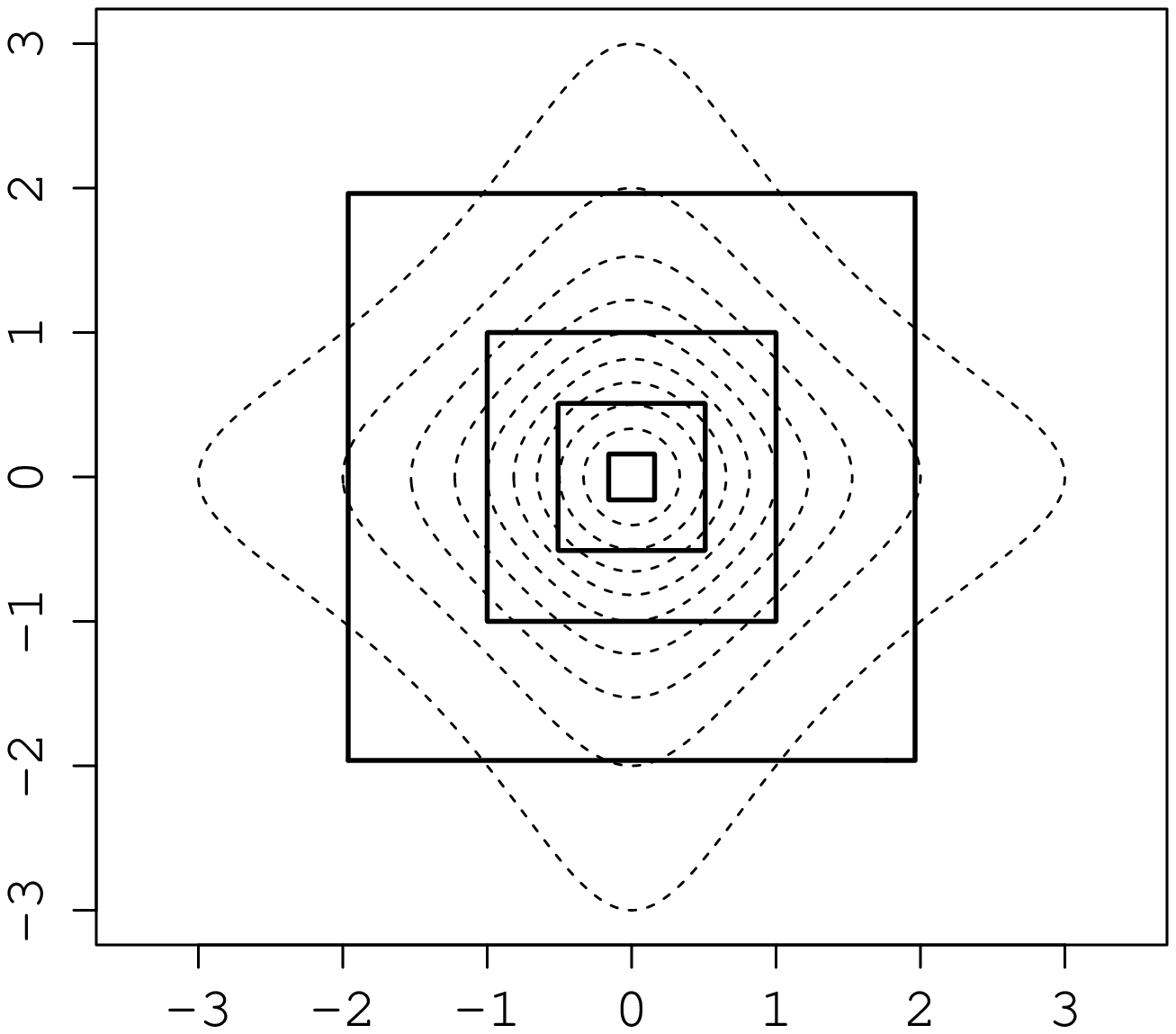}
\caption{Left panel: several contours of the density of a bivariate centered Gau{\ss}ian measure $P$ (thin black lines). For any $x \in \R^d\setminus\{0\}$ (brown triangle) the unique hyperplane $H \ni x$ such that $P(H^-) = \HD(x;P)$ is the hyperplane that supports the contour ellipsoid of the density of $P$ passing through $x$ (solid brown line). Thus, the depth central regions $\CR$ with $\delta \in (0,1/2)$ are all concentric ellipsoids of the same shapes as the density contours. Right panel: several density contours (dashed lines) and the corresponding halfspace depth contours (solid lines) of the bivariate distribution with independent Cauchy marginals $P$. Since $P$ is $1$-symmetric, the central regions $\CR$ are all concentric squares.}
\label{figure:depth for distributions with density}
\end{figure}

\subsubsection{Quasi-concavity} \label{section:quasi-concavity}

The sets $\CR$ are all convex, which means that the mapping $\HD$ is quasi-concave in its first argument. Quasi-concavity of $\HD$ is essential for the construction of estimators based on the depth, such as the depth-trimmed means.

\subsubsection{Maximality at the center}	\label{section:symmetry}

Denote the maximal depth value of a distribution by
	\begin{equation}	\label{maximal depth}
	\MD(P) = \sup_{x\in\R^d} \HD(x;P) \quad\mbox{for }P\in\Prob.
	\end{equation}
By \citet[Lemma~1]{Rousseeuw_Struyf2004}, 
	\[	\MD(P) \leq \left(1+\sup_{x \in \R^d} P\left(\{x\}\right)\right)/2,	\]
and $\MD(P) \leq 1/2$ for $P$ that satisfies \eqref{Delta}. As shown by \citet[Proposition~7]{Rousseeuw_Ruts1999}, for any $P\in\Prob$ the maximal depth is attained in $\R^d$. Therefore, it makes sense to define the halfspace median (or depth median) of $P$ as any point $x_P\in\R^d$ such that
	\[	\HD(x_P;P) = \MD(P).	\]
The halfspace median is not necessarily unique --- consider, for instance, the uniform distribution $P$ on the vertices of a simplex in $\R^d$, where any point in that simplex is a halfspace median of $P$. If the set $M(P) = \left\{ x \in \R^d \colon \HD\left(x;P\right) = \MD(P) \right\}$ is not a singleton, some authors prefer to define the halfspace median as the barycenter of the region $M(P)$. In this paper, we do not follow that convention, and unless stated otherwise, we call all elements of the set $M(P)$ halfspace medians of $P$. 

In general, the set of all halfspace medians of $P$ can be shown to be non-empty, compact and convex. If \eqref{Delta} is true for $P$  with contiguous support, then by \citet[Proposition~7]{Mizera_Volauf2002} the halfspace median of $P$ is unique. In any case, the central regions \eqref{central region} are non-empty if and only if $\delta\in[0,\MD(P)]$.

If the distribution $P$ is (in some sense) symmetric around a point $x_P\in\R^d$, it is natural to require that the center of symmetry $x_P$ is the unique halfspace median of $P$, i.e. the only point such that $\MD(P) = \HD(x_P;P)$.

\begin{definition}
The distribution $P \equiv P_X\in\Prob$ is said to be \emph{centrally symmetric} around $x_P \in \R^d$, if
	\[	P_{X-{x_P}} = P_{{x_P} - X}.	\]
$P$ is centrally symmetric, if it is centrally symmetric around some $x_P \in \R^d$.
\end{definition}
	
If $P$ is centrally symmetric, the maximal depth value $\MD(P)$ must be at least $1/2$, and this depth is attained only at the center of symmetry $x_P$. But centrally symmetric distributions are not the only ones for which the maximal depth is at least $1/2$. This leads to the following definition, due to \citet{Zuo_Serfling2000B}.

\begin{definition}
$P\in\Prob$ is \emph{halfspace symmetric} around $x_P \in \R^d$, if 
	\[	\MD(P) = \HD(x_P;P) \geq 1/2.	\]
$P$ is said to be halfspace symmetric, if it is halfspace symmetric around some $x_P\in\R^d$.
\end{definition}
	
As discussed in \citet{Zuo_Serfling2000B}, the halfspace symmetry of measures in $\R^d$ is more general than the rather restrictive central symmetry, in the sense that any centrally symmetric distribution is also halfspace symmetric. To see that the converse does not hold true, consider the following example.

\begin{example}	\label{example:symmetry}
Let $P\in\Prob[\R^2]$ be the uniform distribution concentrated in the vertices $(\pm 1, \pm 1)$ of a centered square in $\R^2$. $P$ is halfspace symmetric, and centrally symmetric around $x_P = 0 \in \R^2$. For any $\lambda>0$, translate the point mass from $(1,1)$ to $(\lambda, \lambda)$. The resulting distribution $P^{\prime}$ is then still halfspace symmetric around the origin. Yet, for $\lambda \ne 1$, $P^\prime$ is not centrally symmetric. 
\end{example}

Any univariate distribution is halfspace symmetric around its (univariate) median. For a comprehensive discussion on the subject of symmetry of multivariate probability distributions see \citet{Serfling_symmetry}.

\subsubsection{Vanishing at infinity}

Any random vector $X \sim P \in \Prob$ lives with large probability inside a closed ball of finite diameter. Thus, it is reasonable to ask that also the depth associated to $P$ assigns high values of $\HD$ only to points inside (big) closed balls. This property, often called the vanishing at infinity property of $\HD$, can be expressed as
	\[	\lim_{M \to \infty} \sup \left\{ \HD(x;P) \colon \left\Vert x \right\Vert > M \right\} = 0 \quad\mbox{for all }P\in\Prob.	\]
For the halfspace depth this condition is satisfied (see, for instance, \citep[Theorem~2.1]{Zuo_Serfling2000}). The central regions \eqref{central region} are therefore bounded for all $\delta>0$.

\subsubsection{Continuity of the depth}	\label{section:continuity}

As observed by \citet[Lemma~6.1]{Donoho_Gasko1992}, the halfspace depth is upper semi-continuous in its first argument
	\begin{equation}	\label{semicontinuity}
	{\lim\sup}_{x \to x_0} \HD(x;P) \leq \HD(x_0;P)	\quad\mbox{for all }P\in\Prob, x_0\in\R^d.	
	\end{equation}
By \citet[Proposition~1]{Mizera_Volauf2002} if \eqref{Delta} holds true for $P$, then $\HD$ is also continuous in $x$. For the central regions \eqref{central region} condition \eqref{semicontinuity} means that each $\CR$ is a (convex) closed set for $\delta \in [0,\MD(P)]$, and compact for $\delta\in(0,\MD(P)]$ for any $P\in\Prob$.

\subsubsection{Continuity of the central regions}

Consider now the set-valued mapping that for $P \in \Prob$ given, to $\delta \in [0,\MD(P)]$ assigns its central region \eqref{central region}. This mapping is essential for understanding the properties of the depth, as the level sets of $\HD$ are usually of greater interest than individual depth values at fixed points in $\R^d$. The mapping $\delta \mapsto \CR$ takes values in the space $\CB$ of convex subsets of $\R^d$. That space can be equipped with the Hausdorff distance (see, e.g., \citep[Section~1.8]{Schneider2014})
	\begin{equation}	\label{Hausdorff distance}
	\dH(K, C) = \inf\left\{ \varepsilon > 0 \colon K \subseteq C_\varepsilon \mbox{ and }C \subseteq K_\varepsilon \right\} \quad\mbox{for }K, C \in \CB,		
	\end{equation}
where $K_\varepsilon$ is the $\varepsilon$-neighborhood of $K$, 
	\[	K_\varepsilon = K + \B(x,\varepsilon) = \bigcup_{x \in K} \B(x,\varepsilon) \quad\mbox{for }K \in \CB\mbox{ and }\varepsilon >0.	\]
Continuity properties of the map $\delta \mapsto \CR$ were investigated by several authors. The following result was first stated by \citet[Remark~3.6]{Masse_Theodorescu1994}, and later refined by \citet[Theorem~6 and Proposition~7]{Mizera_Volauf2002}, and \citet[Theorem~3.2 and Example~4.2]{Dyckerhoff2018}. In a slightly different context, it was also considered by \citet{Kong_Mizera2012}.

\begin{theorem}
Let \eqref{Delta} be true for $P\in\Prob$ with contiguous support. Then the map $\delta \mapsto \CR$ is continuous in the Hausdorff distance for $\delta \in (0,\MD(P))$. 
\end{theorem}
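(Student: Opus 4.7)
The plan is to split the Hausdorff continuity at each $\delta_0 \in (0, \MD(P))$ into the two one-sided limits, exploiting that $\delta \mapsto \CR$ is decreasing in set inclusion and that the regions $\CR$ are uniformly bounded for $\delta$ bounded away from $0$ (vanishing at infinity). Both sides then reduce to a compactness argument, but only the right-hand side really uses the hypotheses of the theorem.

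Left continuity ($\delta_n \uparrow \delta_0$) is routine: since $\CR[\delta_n] \supseteq \CR[\delta_0]$, one only has to show that $\sup\{d(x, \CR[\delta_0]) : x \in \CR[\delta_n]\} \to 0$. Otherwise, passing to a subsequence produces $x_n \in \CR[\delta_n]$ with $x_n \to x$ and $d(x, \CR[\delta_0]) \geq \varepsilon > 0$, whereas the upper semi-continuity \eqref{semicontinuity} yields $\HD(x;P) \ge \limsup_n \HD(x_n;P) \ge \delta_0$, so $x \in \CR[\delta_0]$, a contradiction. Neither \eqref{Delta} nor the contiguous support hypothesis enters here.

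Right continuity ($\delta_n \downarrow \delta_0$), where $\CR[\delta_n] \subseteq \CR[\delta_0]$, requires the symmetric bound $\sup\{d(y, \CR[\delta_n]) : y \in \CR[\delta_0]\} \to 0$, and the whole argument hinges on the following density lemma: \emph{the set $\{y \in \R^d : \HD(y;P) > \delta_0\}$ is dense in $\CR[\delta_0]$.} Granting the lemma, if the bound failed one could extract $y_n \in \CR[\delta_0]$ with $y_n \to y \in \CR[\delta_0]$ and $d(y_n, \CR[\delta_n]) \geq \varepsilon$; choosing $y'$ with $\|y-y'\| < \varepsilon/3$ and $\HD(y';P) > \delta_0$, the point $y'$ lies in $\CR[\delta_n]$ for every sufficiently large $n$, forcing $d(y_n, \CR[\delta_n]) \to 0$, a contradiction.

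To prove the density lemma, take $y \in \CR[\delta_0]$ with $\HD(y;P) = \delta_0$ (strict inequality is trivial) and any halfspace median $x_P$ (so $\HD(x_P;P) = \MD(P) > \delta_0$), and examine the segment $y_\lambda = (1-\lambda)y + \lambda x_P$. Quasi-concavity gives $\HD(y_\lambda;P) \ge \delta_0$; the crux is to exclude equality for $\lambda \in (0, 1]$. Under \eqref{Delta}, the map $(u, \alpha) \mapsto F_u(\alpha) := P(\langle X, u\rangle \le \alpha)$ is jointly continuous on $\Sph \times \R$ by dominated convergence (using that $P(\langle X, u\rangle = \alpha) = 0$), so by compactness of $\Sph$ the infimum in $\HD(y_\lambda;P) = \inf_{u \in \Sph} F_u(\langle y_\lambda, u\rangle)$ is attained at some $u_0$. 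If this infimum equalled $\delta_0$, the three inequalities $F_{u_0}(\langle y, u_0\rangle) \ge \delta_0$, $F_{u_0}(\langle x_P, u_0\rangle) \ge \MD(P) > \delta_0$, and $F_{u_0}(\langle y_\lambda, u_0\rangle) = \delta_0$, combined with the monotonicity of $F_{u_0}$, force $F_{u_0}$ to equal $\delta_0 \in (0, 1)$ on a non-degenerate interval with endpoints $\langle y, u_0\rangle$ and $\langle y_\lambda, u_0\rangle$. The two parallel closed halfspaces bounded by the endpoints of this interval then carry $P$-mass $\delta_0$ and $1-\delta_0$ respectively, summing to $1$, contradicting contiguous support. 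Hence $\HD(y_\lambda;P) > \delta_0$ for every $\lambda \in (0,1]$, and letting $\lambda \to 0^+$ proves the density claim. The main obstacle is exactly this strict-gain step: dropping \eqref{Delta} admits atoms in some $F_u$ that spoil continuity and the attainment of the infimum, while dropping contiguous support admits a flat piece of $F_{u_0}$ in its interior range, so that $\HD$ stays equal to $\delta_0$ along a whole segment in $\CR[\delta_0]$ and the map $\delta \mapsto \CR$ jumps at $\delta_0$.
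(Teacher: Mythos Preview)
The paper does not supply a proof of this theorem; it is quoted with references to \citet{Masse_Theodorescu1994}, \citet{Mizera_Volauf2002}, and \citet{Dyckerhoff2018}. So there is no in-paper argument to compare against, and your proposal stands on its own.

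Your argument is correct and self-contained. The split into the two one-sided limits, with left continuity coming from compactness and upper semi-continuity of $\HD(\cdot;P)$ alone, and right continuity reduced to the density of the strict super-level set $\{\HD(\cdot;P)>\delta_0\}$ in $\CR[\delta_0]$, is exactly the natural decomposition, and your use of the two hypotheses is in the right place. The density lemma via the segment to the halfspace median is the key step, and your use of \eqref{Delta} (to get joint continuity of $(u,\alpha)\mapsto F_u(\alpha)$ and hence attainment of the infimum) and of contiguous support (to rule out a flat stretch of $F_{u_0}$ strictly inside $(0,1)$) is sound.

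One small point you might make explicit: your sentence ``force $F_{u_0}$ to equal $\delta_0$ on a non-degenerate interval with endpoints $\langle y,u_0\rangle$ and $\langle y_\lambda,u_0\rangle$'' tacitly assumes $\langle y,u_0\rangle<\langle x_P,u_0\rangle$. The other two configurations ($\langle y,u_0\rangle\ge\langle x_P,u_0\rangle$) are disposed of directly by monotonicity of $F_{u_0}$ together with $F_{u_0}(\langle x_P,u_0\rangle)>\delta_0$, without invoking contiguous support; it is worth saying so in one line, since otherwise the ``non-degenerate interval'' claim looks unjustified when $\langle y,u_0\rangle=\langle x_P,u_0\rangle$. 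With that clarification, the proof is complete.
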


\subsubsection{Consistency, robustness and other statistical properties}

In statistics, the true distribution $P\in\Prob$ is seldom known. Instead, one usually observes for $n\in\N$ only a random sample $X_1, \dots, X_n$ of independent random variables with distribution $P$, and infers the properties of $P$ from the empirical distribution $P_n$ of that sample. As $n \to \infty$, the halfspace depth is universally consistent, which means that for any $P\in\Prob$ the depth $\HD$ based on the empirical distribution $P_n$ (the sample depth) approaches the true depth evaluated w.r.t. $P$ uniformly over the whole space $\R^d$
	\[	\sup_{x\in\R^d} \left\vert \HD(x;P_n) - \HD(x;P) \right\vert \xrightarrow[n\to\infty]{\as} 0 \quad\mbox{for any }P\in\Prob.	\]
This result was first established in \citet[p.~1817]{Donoho_Gasko1992}. Interestingly, it does not require any properties of the distribution $P$. For $P$ satisfying \eqref{Delta}, it can be strengthened to the form that for any sequence of measures $\left\{ P_\nu \right\}_{\nu=1}^\infty \subset \Prob$ weakly convergent to $P$,
	\[	\sup_{x\in\R^d} \left\vert \HD(x;P_\nu) - \HD(x;P) \right\vert \xrightarrow[\nu\to\infty]{} 0.	\]
This property follows by an argument of \citet[Corollary~2]{Dumbgen1992} applied to $\HD$, see also \citep[Theorem~A.3]{Nagy_etal2016}, and is frequently called the uniform qualitative robustness property of $\HD$. Further robustness properties of $\HD$ were studied by \citet{Romanazzi2001, Romanazzi2002}, and \citet{Chen_Tyler2002, Chen_Tyler2004}, among others.

Uniform consistency results hold true also for the depth level sets \eqref{central region}. In its full generality, the following result, recently established in \citet[Theorem~4.5 and Example~4.2]{Dyckerhoff2018}, unifies and completes the partial results from \citep{Masse_Theodorescu1994, He_Wang1997, Zuo_Serfling2000C}.

\begin{theorem}	\label{theorem:consistency sets}
Let \eqref{Delta} be true for $P\in\Prob$ with contiguous support. Then for every compact interval $A \subset (0,\MD(P)]$
	\[	\sup_{\delta \in A} \dH\left( P_\delta,(P_n)_\delta \right) \xrightarrow[n\to\infty]{\as} 0.	\]
\end{theorem}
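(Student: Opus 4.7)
The plan is to reduce the set-valued uniform Hausdorff convergence to the uniform pointwise convergence of $\HD$ itself, via a level-set sandwich combined with continuity of $\delta \mapsto \CR$. Two ingredients are available: (i) the almost-sure uniform consistency
$\epsilon_n := \sup_{x \in \R^d} \left\vert \HD(x;P_n) - \HD(x;P) \right\vert \xrightarrow[n\to\infty]{\as} 0$,
recalled in Section~\ref{section:continuity} and valid for every $P\in\Prob$; and (ii) the Hausdorff continuity of $\delta \mapsto \CR$ on $(0,\MD(P))$ provided by the preceding theorem under the standing hypotheses on $P$.

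First I would establish a pointwise sandwich: the uniform bound $\left\vert \HD(x;P_n)-\HD(x;P)\right\vert \leq \epsilon_n$ implies, for every $\delta>0$,
$\CR[\delta+\epsilon_n] \subseteq (P_n)_\delta \subseteq \CR[\delta-\epsilon_n]$,
and by monotonicity of the central regions also $\CR[\delta+\epsilon_n] \subseteq \CR \subseteq \CR[\delta-\epsilon_n]$. A one-line Hausdorff estimate---if two non-empty sets $B, C$ both lie in the same nested pair $A_1 \subseteq A_2$, then $\dH(B,C)\leq\dH(A_1,A_2)$---yields
$\dH\left((P_n)_\delta,\CR\right) \leq \dH\left(\CR[\delta-\epsilon_n], \CR[\delta+\epsilon_n]\right)$.

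On any compact subinterval $A' \subset (0,\MD(P))$ bounded away from the maximum, the Hausdorff continuity of $\delta \mapsto \CR$ upgrades, by compactness, to uniform continuity on a neighborhood of $A'$; since $\epsilon_n \to 0$ almost surely, the right-hand side above converges to zero uniformly in $\delta \in A'$, establishing the claim for such $A'$.

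The main obstacle---and the most delicate step---is the case $\MD(P)\in A$, where the preceding theorem does not give continuity. Here I would use uniqueness of the halfspace median (guaranteed by \eqref{Delta} together with contiguous support, cited in Section~\ref{section:symmetry}), so that $\CR[\MD(P)]=\{x_P\}$ is a singleton. Upper semi-continuity of $\delta\mapsto\CR$ then forces $\dH\left(\CR[\MD(P)-\eta],\{x_P\}\right)\to 0$ as $\eta\downarrow 0$. Combined with the sandwich and with $\MD(P_n)\to\MD(P)$ almost surely (a direct corollary of $\epsilon_n\to 0$, which also ensures that $(P_n)_\delta$ is eventually non-empty for every $\delta$ strictly below $\MD(P)$), both $(P_n)_\delta$ and $\CR$ are squeezed into arbitrarily small neighborhoods of $x_P$ for $\delta$ in a narrow band near $\MD(P)$. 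Splitting $A$ into a part bounded away from $\MD(P)$ (handled by the uniform continuity argument) and a narrow band at $\MD(P)$ (handled by the squeezing argument) then combines the two regimes into the required uniform convergence.
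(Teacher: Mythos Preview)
The paper does not supply its own proof of this theorem: it is quoted verbatim as a result ``recently established in \citet[Theorem~4.5 and Example~4.2]{Dyckerhoff2018}'', with earlier partial versions attributed to \citep{Masse_Theodorescu1994, He_Wang1997, Zuo_Serfling2000C}. There is therefore no argument in the paper to compare your proposal against.

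That said, your scheme is the standard one and is essentially how such results are proved: the sandwich $\CR[\delta+\epsilon_n]\subseteq(P_n)_\delta\subseteq\CR[\delta-\epsilon_n]$ together with the elementary fact that $A_1\subseteq B,C\subseteq A_2$ with $A_1\neq\emptyset$ implies $\dH(B,C)\leq\dH(A_1,A_2)$, followed by uniform continuity of $\delta\mapsto\CR$ on compacta of $(0,\MD(P))$, is exactly the right reduction. Your treatment of the endpoint via uniqueness of the median and the shrinking $\CR[\MD(P)-\eta]\to\{x_P\}$ is also the correct idea.

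One point deserves more care. At $\delta=\MD(P)$ itself, your sandwich loses its lower layer ($\CR[\delta+\epsilon_n]$ is empty), and you only assert non-emptiness of $(P_n)_\delta$ for $\delta$ \emph{strictly} below $\MD(P)$. Nothing in your argument rules out $\MD(P_n)<\MD(P)$ for infinitely many $n$, in which case $(P_n)_{\MD(P)}=\emptyset$ and $\dH\big((P_n)_{\MD(P)},\{x_P\}\big)$ is not even defined under the usual convention. Your ``squeezing'' paragraph covers the narrow band just below $\MD(P)$, but does not close this gap at the endpoint. To make the argument complete up to $\MD(P)$ you would need either an explicit convention for the Hausdorff distance to the empty set (as some authors adopt), or a separate argument that $\MD(P_n)\geq\MD(P)$ eventually almost surely---neither of which is supplied by the material cited in the paper.
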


In Theorem~\ref{theorem:consistency sets}, $(P_n)_\delta$ stands for the $\delta$-central region \eqref{central region} of the empirical measure $P_n$. Further valuable improvements of the statistical theory of the halfspace depth include the derivation of the rates of convergence of the depth and its central regions \citep{Kim2000, Burr_Fabrizio2017, Brunel2018}, and distributional asymptotics of these and related quantities \citep{Bai_He1999, Zuo_etal2004, Zuo_He2006, Masse2002, Masse2004, Masse2009}.

\section{Description at the center: Winternitz measure of symmetry}	\label{section:Winternitz}

\subsection{Maximal depth of a point}

Several results on the maximal depth mapping $\MD$ from \eqref{maximal depth} can be found in literature much earlier than the definition of the halfspace depth (see \citep[Sections~3 and~4]{Rousseeuw_Ruts1999}). From these references, it appears that the behavior of the maximal depth relates to the degree of concavity of the measure $P$. Following \citet{Borell1974}, see also \citet{Bobkov2010}, let us first provide a rigorous definition of concave probability measures.

\begin{definition}
We say that $P\in\Prob$ is an $s$-concave measure for $-\infty\leq s<\infty$, if 
	\begin{equation*}	
	P\left(\lambda A + (1-\lambda)B\right) \geq \begin{cases}
																							\min\left\{ P(A), P(B) \right\} & \mbox{for }s = -\infty, \\
																							P(A)^\lambda P(B)^{1-\lambda}	& \mbox{for }s = 0, \\
																							\left( \lambda P(A)^{s} + (1-\lambda) P(B)^{s} \right)^{1/s}	& \mbox{otherwise},
																							\end{cases}
	\end{equation*}
for all non-empty Borel sets $A, B \subseteq \R^d$ and $\lambda \in [0,1]$. 
\end{definition}

As noted by \citet{Bobkov2010}, if $P$ is not a Dirac measure, then $s\leq 1$. Further, a measure $P\in\Prob$ is $s$-concave with $s \leq 1/d$ if and only if $P$ has a density $f$ that is supported on an open convex subset $U$ of $\R^d$ and that is $s_d=s/(1-ds)$-concave, i.e., for all $x,y \in U$, for all $\lambda \in [0,1]$, 
	\[	f\left(\lambda x +(1-\lambda) y\right) \geq \begin{cases}
																									\min\left\{ f(x), f(y) \right\} & \mbox{for }s = -\infty, \\
																									f(x)^\lambda f(y)^{1-\lambda}	& \mbox{for }s = 0, \\
																									\left(\lambda f(x)^{s_d} + (1-\lambda) f(y)^{s_d} \right)^\frac{1}{s_d}	& \mbox{otherwise}.
																									\end{cases}	\]

For $s = 0$, $s$-concave measures are also called log-concave measures, and represent a natural generalization of uniform measures on convex bodies. Indeed, any uniform measure on a convex body is log-concave.

We are ready to state a result that summarizes what is known about the maximal depth functional $\MD(P)$ defined in \eqref{maximal depth}.

\begin{theorem}	\label{theorem:maximum}
The following inequalities hold true:
\begin{enumerate}[label=(\roman*)]
\item \label{maximum:1} For any $P\in\Prob$
	\[	\left(1+\sup_{x \in \R^d} P\left(\{x\}\right)\right)/2 \geq \MD(P) \geq \frac{1}{d+1}.	\]
\item \label{maximum:2} For $P\in\Prob$ uniformly distributed on a convex body, 
	\[	1/2 \geq \MD(P) \geq \left(\frac{d}{d+1}\right)^d > \Euler^{-1} > 0.367.	\]
\item \label{maximum:3} For an $s$-concave measure $P\in\Prob$ with $-1<s\leq 1$,
	\begin{equation}	\label{Bobkov bound}
			1/2 \geq \MD(P) \geq 	\begin{cases}
														\Euler^{-1} & \mbox{for }s=0, \\
														\left(\frac{1}{s+1}\right)^{1/s} & \mbox{otherwise.}
														\end{cases}
	\end{equation}
\end{enumerate}
\end{theorem}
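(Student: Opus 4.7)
The plan is to address the three parts in turn, since they involve results of increasing specificity about $P$.

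For the upper bound in \ref{maximum:1}, I would start from the elementary observation that for any $x \in \R^d$ and any hyperplane $H \ni x$, both closed halfspaces $H^-, H^+$ contain $x$, so
\[
\HD(x;P) \;\leq\; \min\{P(H^-),P(H^+)\} \;\leq\; \tfrac{1}{2}(P(H^-)+P(H^+)) \;=\; \tfrac{1}{2}(1+P(H)).
\]
This already yields $\MD(P) \leq 1/2$ for every non-atomic $P$. To obtain the sharp form $(1+\sup_x P(\{x\}))/2$ of Rousseeuw--Struyf, I would argue that at any depth-maximizing point $x_P$ the infimum of $P(H)$ over $H \ni x_P$ is bounded by the largest atom of $P$: given any $H$, one can rotate $H$ about $x_P$ inside the pencil of hyperplanes containing it, and by a countable-additivity argument only finitely many positions can carry mass exceeding any prescribed threshold, so the relevant infimum equals the atomic mass located at $x_P$ itself.

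For the lower bound in \ref{maximum:1}, this is Rado's centerpoint theorem. I would rewrite
\[
\CR \;=\; \bigcap\bigl\{H^+ \in \mathcal H^- \colon P\bigl((H^+)\compl\bigr) < \delta\bigr\}.
\]
Any $d+1$ such halfspaces $H_1^+,\dots,H_{d+1}^+$ satisfy $P(\bigcup_i (H_i^+)\compl) \leq (d+1)\delta$, so whenever $\delta < 1/(d+1)$ the set $\bigcap_i H_i^+$ has positive $P$-mass, hence is non-empty. Helly's theorem applied to the family of closed convex sets (combined with the vanishing-at-infinity property from Section~\ref{section:depth}, which guarantees boundedness and hence compactness up to truncation by a large ball) yields $\CR \neq \emptyset$ for all $\delta < 1/(d+1)$. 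Upper semi-continuity of $\HD$ and the nested compactness of the central regions then give $\CR[1/(d+1)] \neq \emptyset$, so $\MD(P) \geq 1/(d+1)$.

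For \ref{maximum:2}, I would invoke the classical inequality of Gr\"unbaum: if $K \in \CB$ has centroid $c$, then for every hyperplane $H \ni c$,
\[
\vol{K \cap H^-} \;\geq\; \left(\tfrac{d}{d+1}\right)^{\!d}\vol{K}.
\]
Applied to the uniform measure $P$ on $K$, this gives $\HD(c;P) \geq (d/(d+1))^d$, and the crude estimate $(d/(d+1))^d = (1 - 1/(d+1))^d > \Euler^{-1}$ follows from the monotonicity $(1-1/n)^n \uparrow \Euler^{-1}$. The upper bound $1/2$ is inherited from \ref{maximum:1} because uniform measures on convex bodies have no atoms. The proof of Gr\"unbaum's inequality itself uses Brunn--Minkowski: symmetrization along the normal direction of $H$ reduces to a one-dimensional problem whose extremizer is the one-sided cone with apex at $c$, on which the constant $(d/(d+1))^d$ is attained.

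For \ref{maximum:3}, the argument is Bobkov's generalization of Gr\"unbaum. I would replace Brunn--Minkowski by the Borell--Brascamp--Lieb inequality, which is the natural $s$-concave substitute: integrating an $s_d$-concave density along the normal direction of $H$ produces a one-dimensional $s_d$-concave marginal, and the extremal case among such marginals (subject to the centroid constraint and a fixed halfspace mass) is an explicit affine-power density supported on an interval. Direct one-dimensional optimization over this family produces the constant $(1/(s+1))^{1/s}$, which tends to $\Euler^{-1}$ as $s \to 0$, matching the log-concave value. The upper bound $1/2$ again comes from \ref{maximum:1}, since $s$-concave measures with $s > -\infty$ assign no mass to hyperplanes (they are absolutely continuous with respect to Lebesgue measure on their affine hull).

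The main technical obstacle is the Gr\"unbaum/Bobkov lower bound: producing the sharp constant requires identifying the explicit extremizer via a symmetrization argument and then a careful one-dimensional calculation. The upper bounds and the centerpoint theorem in \ref{maximum:1} are comparatively routine applications of Helly and the elementary halfspace identity.
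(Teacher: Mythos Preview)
The paper does not give a proof of this theorem: it is a survey result, with the upper bound in \ref{maximum:1} credited to \citet[Lemma~1]{Rousseeuw_Struyf2004}, the lower bounds in \ref{maximum:1} and \ref{maximum:2} to Neumann, Rado, Gr\"unbaum and Hammer, and \ref{maximum:3} to Caplin--Nalebuff and Bobkov. The paper adds only the remark that in each case the lower bound is witnessed at the expectation, i.e.\ one actually shows $\HD(\E X;P)$ exceeds the stated constant. Your proposal reconstructs precisely the standard arguments behind those citations --- Helly for the centerpoint theorem, Schwarz symmetrization plus Brunn--Minkowski for Gr\"unbaum's inequality, and the Borell--Brascamp--Lieb extension for the $s$-concave case --- so substantively you are doing what the cited papers do.

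Two small imprecisions are worth tightening. First, your rotation argument for the upper bound in \ref{maximum:1} (``only finitely many positions can carry mass exceeding any threshold'') does not work as stated for $d\geq 3$, since distinct hyperplanes through $x$ are not disjoint away from $x$; the clean way is to integrate $P(H_u\setminus\{x\})$ over $u\in\Sph$ and observe via Fubini that the integral vanishes (each $y\neq x$ lies in a null set of hyperplanes), forcing $\inf_u P(H_u)=P(\{x\})$. Second, $(d/(d+1))^d>\Euler^{-1}$ does not follow from the monotonicity $(1-1/n)^n\uparrow\Euler^{-1}$ you cite; rather, with $n=d+1$ one has $(d/(d+1))^d=(1-1/n)^{n-1}$, and it is $(1-1/n)^{n-1}\downarrow\Euler^{-1}$ that gives the strict inequality. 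Neither point affects the overall structure, which is correct.
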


As noted by \citet[Section~4]{Grunbaum1960}, the lower bounds in parts \ref{maximum:1} and \ref{maximum:2} are sharp. In part~\ref{maximum:1} it is enough to take the uniform distribution in the vertices of a simplex in $\R^d$. For part~\ref{maximum:2} one takes the uniform distribution on the simplex in $\R^d$. 

\begin{problem}
Are the lower bounds in part~\ref{maximum:3} of Theorem~\ref{theorem:maximum} sharp? That is, does there exist an $s$-concave probability measure $P$ with equality on the right hand side of \eqref{Bobkov bound}?
\end{problem}

The lower bounds in parts \ref{maximum:1} and \ref{maximum:2} of Theorem \ref{theorem:maximum} were proved by \citet{Neumann1945} for $d=2$. In full generality, part \ref{maximum:2} was proved independently by \citet{Grunbaum1960}, and \citet{Hammer1960}. Part \ref{maximum:3} can be found in \citet[Proposition~3]{Caplin_Nalebuff1991}, see also \citet[Theorem~5.2]{Bobkov2010}. As discussed by \citet{Bobkov2010}, the condition $s>-1$ implies the existence of the expectation $\E X$ of $X \sim P$. Actually, in all three parts of Theorem~\ref{theorem:maximum} in the proofs it is shown that $\HD(\E X; P)$ is never smaller than the given lower bounds.

\begin{problem}
Is there a non-trivial lower bound for $\MD(P)$ for all $s$-concave measures with $s\leq -1$?
\end{problem}

\subsection{Central and halfspace symmetry: Funk's theorem}

For part~\ref{maximum:2} of Theorem~\ref{theorem:maximum}, there exists a remarkable converse.

\begin{theorem}	\label{theorem:Funk}
Let $P\in\Prob$ be uniformly distributed on a convex body $K\in\CB$. Then $P$ is halfspace symmetric around $x_P\in\R^d$ if and only if it is centrally symmetric around $x_P$.
\end{theorem}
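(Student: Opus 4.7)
The plan is to reduce the theorem to a statement about the radial function of $K$: if every hyperplane through an interior point bisects the volume of $K$, then $K$ is centrally symmetric about that point. The crucial tool is the classical injectivity of the hemispherical (Funk--Minkowski) transform on odd functions on the sphere. One direction is immediate (and already observed in the text): central symmetry implies halfspace symmetry.

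For the nontrivial direction, translate so that $x_P = 0$. Since $P$ has a density, the smoothness condition \eqref{Delta} is satisfied, so $\MD(P)\leq 1/2$; together with halfspace symmetry this forces $\HD(0;P)=1/2$. I would first check that $0\in\operatorname{Int}(K)$: otherwise a hyperplane supporting $K$ at $0$ would produce a closed halfspace containing $0$ of $P$-mass $0$, contradicting $\HD(0;P)=1/2$. Both facts together say that for every hyperplane $H$ through $0$,
\[
P(H^-) = P(H^+) = \tfrac{1}{2},
\]
i.e., every hyperplane through $0$ cuts $K$ into two pieces of equal $d$-dimensional Lebesgue measure.

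Writing the volumes in polar coordinates via the radial function $\rho(u)=\max\{t\geq 0:tu\in K\}$, the equal-bisection property reads, for every $u\in\Sph$,
\[
\int_{\{v\in\Sph:\,\langle v,u\rangle\geq 0\}} \rho(v)^d \,\dd\sigma(v) \;=\; \int_{\{v\in\Sph:\,\langle v,u\rangle\leq 0\}} \rho(v)^d \,\dd\sigma(v),
\]
where $\sigma$ is the surface measure on $\Sph$. Decomposing $f(v)=\rho(v)^d$ into its even and odd parts $f_e(v)=\tfrac{1}{2}(f(v)+f(-v))$ and $f_o(v)=\tfrac{1}{2}(f(v)-f(-v))$, the integral of $f_e$ over each closed hemisphere already equals half of its total integral, so the displayed identity reduces to
\[
\int_{\{v\in\Sph:\,\langle v,u\rangle\geq 0\}} f_o(v)\,\dd\sigma(v) = 0 \quad\text{for every } u\in\Sph.
\]

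The concluding step is to invoke Funk's theorem: an odd continuous function on $\Sph$ whose integral vanishes over every closed hemisphere must be identically zero. Thus $f_o\equiv 0$, that is $\rho(v)=\rho(-v)$ for all $v\in\Sph$, which is exactly $K=-K$; hence $P$ is centrally symmetric about $0$. The main obstacle I anticipate is the clean invocation of the Funk uniqueness theorem, which requires $f_o$ to be continuous on the entire sphere; this is precisely why the preliminary reduction $0\in\operatorname{Int}(K)$ (so that $\rho$ is continuous and bounded away from $0$) is necessary. Once that is in place, the rest is bookkeeping.
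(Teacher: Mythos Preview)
Your argument is correct and follows precisely the classical route that the paper \emph{cites} (Funk in dimensions $2,3$; Schneider in general $d$) but does not reproduce: halfspace symmetry forces every hyperplane through $0$ to bisect the volume of $K$, whence the odd part of $\rho^d$ has vanishing hemispherical integrals, and the injectivity of the hemispherical transform on odd continuous functions gives $\rho(-v)=\rho(v)$. Your care about $0\in\operatorname{Int}(K)$ to secure continuity of $\rho$ is exactly the right technical point. One minor remark: the injectivity statement you invoke is due in full generality to Schneider (1970), not Funk, so calling it ``Funk's theorem'' is slightly anachronistic for $d\geq 4$.

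The paper's own deduction takes a different path. Rather than working with the radial function, it applies Theorem~\ref{theorem:Funk for measures} (Rousseeuw--Struyf, proved via the Cram\'er--Wold device) to conclude that halfspace symmetry of an absolutely continuous $P$ implies \emph{angular} symmetry around $x_P$; for $P$ uniform on $K$ this forces the support function $h_K$ to be even, hence $K=-K$. Both routes ultimately rest on harmonic analysis (spherical harmonics for the hemispherical transform, Fourier transforms for Cram\'er--Wold), as the paper itself remarks. Your approach is more geometrically transparent for convex bodies and isolates the key analytic ingredient explicitly; the paper's route has the advantage of passing through a statement (angular symmetry $\Leftrightarrow$ halfspace symmetry) valid for all absolutely continuous measures, not just uniform ones.
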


The proof of Theorem~\ref{theorem:Funk} was first obtained in 1915 for $d=2$ and $d=3$ by \citet{Funk1915}. In its full generality the result was conjectured, among others, by \citet[p.~251]{Grunbaum1963}, but completely solved only in 1970 in \citet[Satz~4.2]{Schneider1970B} and \citet[Theorem~1.5]{Schneider1970}, see also \citet{Falconer1983}. For its modern version, including an extension to star convex bodies $K \subset \R^d$ see \citet[Section~5.6]{Groemer1996}.

By Theorem~\ref{theorem:Funk}, the two notions of central and halfspace symmetry from Section~\ref{section:symmetry} coincide for uniform distributions on (star) convex bodies in $\R^d$, see also Example~\ref{example:symmetry}. This suggests the following problem.

\begin{problem}	\label{problem:symmetry}
Under which conditions can Theorem~\ref{theorem:Funk} be generalized to probability measures?
\end{problem}

A partial answer to Problem~\ref{problem:symmetry} can be found if one considers the notion of angular symmetry for random vectors, proposed by \citet[Section~2]{Liu1988}.

\begin{definition}
The distribution of a random vector $X \sim P \in\Prob$ is said to be \emph{angularly symmetric} around $x_P \in \R^d$, if the random variables $(X - x_P)/\left\Vert X - x_P \right\Vert$ and $-(X - x_P)/\left\Vert X - x_P \right\Vert$ are identically distributed. $P$ is angularly symmetric, if it is angularly symmetric around some $x_P \in \R^d$.
\end{definition}

Angular symmetry can be shown to be an intermediate between the rather strong concept of central symmetry, and the halfspace symmetry, considered in Section~\ref{section:symmetry}. Any $P$ that is centrally symmetric around $x_P$ is angularly symmetric around $x_P$ \citep[Lemma~2.2]{Zuo_Serfling2000B}, and any $P$ angularly symmetric around $x_P$ is also halfspace symmetric around $x_P$ \citep[Lemma~2.4]{Zuo_Serfling2000B}. None of these implications can be reversed. Though, a partial reverse to the second one was asserted in the statistical literature. For $d=2$, \citet[Theorem~2.6]{Zuo_Serfling2000B} in 2000 and \citet[Theorem~2]{Dutta_etal2011} in 2011 independently proved that if $P$ is absolutely continuous and halfspace symmetric around $x_P \in \R^d$, then $P$ must be also angularly symmetric around $x_P$. \citet[Theorems~1 and~2]{Rousseeuw_Struyf2004} in 2004 gave a complete proof for general $d\in\N$ in the following form.

\begin{theorem} \label{theorem:Funk for measures}
The distribution $P\in\Prob$ is angularly symmetric around $x_P \in \R^d$ if and only if
	\[	\HD(x_P;P) = \left( 1 + P(\{x_P\}) \right)/2.	\]
In particular, 
	\begin{enumerate}[label=(\roman*)]
	\item any $P$ halfspace symmetric around $x_P$ with $P(\{x_P\}) = 0$ is angularly symmetric around $x_P$, and
	\item for any $P$ such that $\sup_{x\in\R^d} P(\{x\}) = 0$, halfspace symmetry and angular symmetry are equivalent notions.
	\end{enumerate}
\end{theorem}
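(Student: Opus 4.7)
The plan is to translate so that $x_P = 0$ by affine invariance, introduce the angular projection $Y := X/\|X\|$ defined on $\{X\ne 0\}$, and let $\tilde\mu$ be the resulting sub-probability on $\Sph$ of total mass $2m := 1-P(\{0\})$, so that angular symmetry around $0$ reads $\tilde\mu(-A) = \tilde\mu(A)$ for every Borel $A\subseteq\Sph$. For $u\in\Sph$ and the closed hemisphere $C_u^- := \{y\in\Sph:\langle y,u\rangle\le 0\}$ one has $P(H_{u,0}^-) = P(\{0\}) + \tilde\mu(C_u^-)$ and, summing with the opposite halfspace, $\tilde\mu(C_u^-) + \tilde\mu(C_u^+) = 2m + \tilde\mu(H_{u,0}\cap\Sph)$. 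A Fubini calculation shows $\tilde\mu(H_{u,0}\cap\Sph) = 0$ for a.e.\ $u$, and this yields at once the universal upper bound $\HD(0;P)\le (1+P(\{0\}))/2$ together with the forward implication: under $\tilde\mu = \tilde\mu^-$ we have $\tilde\mu(C_u^-) = \tilde\mu(C_u^+)\ge m$ for every $u$, so $\HD(0;P)\ge(1+P(\{0\}))/2$ also holds.

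The substantive step is the converse. Equality in the theorem forces $P(H_{u,0}^-),\,P(H_{u,0}^+)\ge(1+P(\{0\}))/2$ for \emph{every} $u\in\Sph$, so the open halfspace masses $a_u := \PP(\langle X,u\rangle<0)$ and $b_u := \PP(\langle X,u\rangle>0)$ satisfy $a_u,b_u\le m$; whenever $\tilde\mu(H_{u,0}\cap\Sph) = 0$ the identity $a_u + b_u = 2m$ immediately gives $a_u = b_u = m$. The main task is to upgrade this from ``a.e.\ $u$'' to ``every $u$''; the plan is a perturbation argument. Fix an exceptional $u_0$ (where the equator carries $\tilde\mu$-mass), pick $w\in u_0^\perp\cap\Sph$, and set $u_\varepsilon := u_0 + \varepsilon w$. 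Pointwise on $\Sph$,
\[
\I{\langle y,u_\varepsilon\rangle<0}\ \xrightarrow[\varepsilon\downarrow 0]{}\ \I{\langle y,u_0\rangle<0} + \I{\langle y,u_0\rangle=0,\,\langle y,w\rangle<0},
\]
so dominated convergence gives $a_{u_\varepsilon}\to a_{u_0} + \alpha(w)$ with $\alpha(w) := \tilde\mu(\{y:\langle y,u_0\rangle=0,\,\langle y,w\rangle<0\})$; from $a_{u_\varepsilon}\le m$ one reads off $\alpha(w)\le m-a_{u_0}$, and the analogous computation for the open $+$ halfspace yields $\alpha(-w)\le m-b_{u_0}$. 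Integrating these two inequalities over $w\in u_0^\perp\cap\Sph$ and discarding by a second Fubini the codimension-two slice $\{y:\langle y,u_0\rangle=\langle y,w\rangle=0\}$ (of vanishing $\tilde\mu$-mass for a.e.\ $w$), the sum of the resulting integrals equals $\tilde\mu(H_{u_0,0}\cap\Sph)\,|S^{d-2}|$, which matches the sum of the upper bounds only if both inequalities saturate on average. One extracts $m-a_{u_0} = m-b_{u_0} = \tilde\mu(H_{u_0,0}\cap\Sph)/2$, whence $a_{u_0} = b_{u_0}$ and angular symmetry holds at $u_0$.

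The principal obstacle lies exactly in this upgrade: the naive route through upper semi-continuity of $u\mapsto\tilde\mu(C_u^-)$ fails, since a non-negative upper semi-continuous function on $\Sph$ vanishing almost everywhere need not vanish pointwise (a single atom on the equator of $u_0$ is compatible with upper semi-continuity), so the a.e.\ identity $a_u = b_u = m$ does not propagate automatically. The perturbation into the orthogonal direction is what recovers the missing pointwise control by pinning down how the equator's mass splits across $u_0$. The two corollaries then follow at once: for (i), halfspace symmetry together with $P(\{x_P\}) = 0$ traps $\HD(x_P;P)$ between $1/2$ and $(1+P(\{x_P\}))/2=1/2$, so the main equivalence applies; for (ii), the hypothesis $\sup_x P(\{x\}) = 0$ makes both halfspace and angular symmetry at $x_P$ equivalent to $\HD(x_P;P) = 1/2$.
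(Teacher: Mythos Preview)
Your argument has a genuine gap at the very last step. You correctly establish that $a_u=b_u$ for every $u\in\Sph$ (the perturbation into $u_0^\perp$ is a nice way to handle the exceptional directions), but you then write ``whence $a_{u_0}=b_{u_0}$ and angular symmetry holds at $u_0$'' as if the two were the same thing. They are not. Angular symmetry means that $Y=(X-x_P)/\|X-x_P\|$ and $-Y$ have the same law, i.e.\ $\tilde\mu(-A)=\tilde\mu(A)$ for \emph{every} Borel $A\subseteq\Sph$; your condition $a_u=b_u$ only says that $\tilde\mu$ and $\tilde\mu^{-}$ agree on open hemispheres. Agreement on hemispheres does not force two measures on $\Sph$ to coincide (for $d=2$, add any multiple of $\cos 2\theta$ to the uniform density). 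What saves the situation is that $\nu=\tilde\mu-\tilde\mu^{-}$ is \emph{odd}, and an odd finite signed measure on $\Sph$ with vanishing hemispherical transform must be zero. That injectivity statement, however, is exactly the harmonic-analytic input (spherical harmonics, or equivalently the Cram\'er--Wold / Fourier route) that the paper flags as the non-trivial core of the proof: it cites \citet{Rousseeuw_Struyf2004}, whose argument goes through Cram\'er--Wold, and remarks that ``all known proofs \dots\ have in common the use of harmonic analysis.'' Your write-up does not supply or cite this step.

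There is a second, smaller point. Once one is willing to invoke the hemispherical-transform injectivity on odd measures, the almost-everywhere identity $a_u=b_u$ that you obtain immediately from Fubini already gives $\nu=0$ (the transform of a finite signed measure, viewed in $L^2(\Sph)$, has all odd-degree harmonic coefficients killed only if $\nu$ itself vanishes). So the perturbation upgrade from ``a.e.\ $u$'' to ``every $u$'' is not the crux; it is an attractive detour that still lands you at the same unproved lemma. If you want a genuinely self-contained argument avoiding spherical harmonics, you would need to replace that lemma by something elementary, and I do not see one in your outline.
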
 


When $P$ is the uniform distribution on a (centered) convex body $K \in \CB$, Theorem~\ref{theorem:Funk for measures} stands as a generalization of Funk's theorem to probability measures. Indeed, assume that $P$ is halfspace symmetric around the origin $x_P = 0 \in \R^d$. Since $P$ is absolutely continuous, by Theorem~\ref{theorem:Funk for measures} it is also angularly symmetric around $x_P$. Because $P$ is uniform, angular symmetry of $P$ implies that the support function $h_K$ from \eqref{support function} must be an even function on $\Sph$, which in turn gives that $K$ must be centrally symmetric around $x_P$. 


Remarkably, \citet[Theorems~1 and~2]{Rousseeuw_Struyf2004} were discovered independently of the results in geometry. The proof of \citet{Rousseeuw_Struyf2004} makes use of the classical theorem of \citet{Cramer_Wold1936} from 1936, closely related to the Fourier transforms of measures. The known proofs of Theorem~\ref{theorem:Funk} employ techniques from spherical harmonics, or integral equations. Thus, all known proofs of Theorems~\ref{theorem:Funk} and~\ref{theorem:Funk for measures} are non-trivial, but have in common the use of harmonic analysis.

\subsection{Measures of symmetry}

Characterization results like Theorem~\ref{theorem:Funk} for convex bodies stimulated much research in convex geometry. Eventually, these efforts led to \emph{measures of symmetry for convex sets}, comprehensively covered by \citet{Grunbaum1963}. A measure of symmetry is a mapping $\MS \colon \CB \to [0,1]$ such that 
	\begin{enumerate}[label=(\roman*)]
	\item $\MS(K) = 1$ if and only if $K$ is (centrally) symmetric, 
	\item $\MS(K) = \MS(T(K))$ for any non-singular affine transformation $T \colon \R^d \to \R^d$, and
	\item $\MS$ is continuous on $\CB$ (equipped with a suitable topology\footnote{For details on possible choices of topology see \citet{Grunbaum1963}.}).
	\end{enumerate}

A variant of part~\ref{maximum:2} in Theorem~\ref{theorem:maximum}, that states that for any $X \sim P\in\Prob$ uniformly distributed on a convex body 
	\[	\HD\left(\E X; P \right) \geq \left(\frac{d}{d+1}\right)^d,	\]
is known since the 1910s as the Winternitz theorem (due to Artur Winternitz, according to \citep{Blaschke1923}). This result gave rise to the following measure of symmetry, which is remarkably close to the halfspace depth.

\begin{definition}
Let $P\in\Prob$ be the uniform distribution on $K\in\CB$. For $x\in K$ and $H \in \mathcal H$ with $x\in H$, let
	\[	w_K(x;H^-) = \frac{P(H^- \cap K)}{1-P(H^- \cap K)},	\]
and consider $w_K(x) = \min\left\{ w_K(x;H^-) \colon H \in \mathcal H, x \in H \right\}$. The \emph{Winternitz measure of symmetry} of $K$ is then defined as
	\[	W(K) = \max\left\{ w_K(x) \colon x \in K \right\}.	\]
\end{definition}

The measure of symmetry $W(K)$ was considered by many authors. For a historical account and the theoretical background on measures of symmetry see the seminal paper of \citet[Section~6.2]{Grunbaum1963}. For a modern treatment of the topic see \citet{Toth2015}. 

Obviously, for $K \in \CB$, the Winternitz measure of symmetry is equivalent with the maximal depth \eqref{maximal depth} attained w.r.t. the uniform measure $P$ on $K$
	\[	\MD(P) = \frac{W(K)}{1+W(K)}.	\]
The function $w_K \colon K \to [0,\infty]$ used in the definition of $W(K)$ links directly to $\HD$ via
	\[	\HD(x;P) = \frac{w_K(x)}{1+w_K(x)}.	\]
For $w_K$, it was noted already by \citet{Grunbaum1963} in 1963 that its upper level sets are convex, and that its maximal value is always attained in $K$ (cf. Sections~\ref{section:quasi-concavity} and \ref{section:symmetry} above).


Connections of the depth $\HD$ with results on partitions of convex bodies (Theorem~\ref{theorem:maximum} above) have already been noted by \citet{Rousseeuw_Ruts1999}. Though, as far as we know, no links between the measures of symmetry for convex bodies and the halfspace depth have yet been established in the statistical literature. 

In the other direction, some notions of depth can be found in the literature on the geometry of convex bodies. For instance, in \citet{Bose_etal2011} the ``depth'' for a convex body $K$ is defined as the halfspace depth \eqref{halfspace depth} of the associated uniform distribution, in connection with a generalized version of the Winternitz theorem. Nonetheless, precise links between the respective fields of mathematics appear to be still lacking.

\subsubsection{The ray basis theorem} 

For $P\in\Prob$ and $x\in\R^d$ we say that a halfspace $H^- \in \mathcal H^-$ is minimal at $x$ if $x \in H$ and $P(H^-) = \HD(x;P)$. $H$ is then called a minimal hyperplane of $x$. From the definition of the minimal halfspace it is easy to see that the following holds. 

\begin{proposition}	\label{minimal halfspace supports}
Let $P\in\Prob$ have contiguous support and let $H^- \in \mathcal H^-$ be minimal at $x \in \R^d$ with $\HD(x;P) = \delta$. Then the halfspace $H^+$ supports $\CR$.
\end{proposition}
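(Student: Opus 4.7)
\smallskip
\noindent\textbf{Proof plan.} The non-emptiness of $\CR \cap H^-$ is immediate: since $\HD(x;P)=\delta$ we have $x\in\CR$, and $x\in H$ by the definition of $H$ being a minimal hyperplane at $x$, so $x \in \CR\cap H^-$. The substance of the proposition lies in showing $\CR\subseteq H^+$, and my plan is to argue by contradiction using the contiguous support hypothesis.

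Writing $H=H_{u,\alpha}$ with $H^+=\{z:\langle z,u\rangle\geq\alpha\}$ and $H^-=\{z:\langle z,u\rangle\leq\alpha\}$, suppose there were some $y\in\CR$ with $y\notin H^+$, i.e.\ $\langle y,u\rangle<\alpha$. The key device is to slide the hyperplane $H$ parallel to itself until it passes through $y$: set $H'=H_{u,\langle y,u\rangle}$ and $H'^-=\{z:\langle z,u\rangle\leq\langle y,u\rangle\}$. Since $H'^-\subseteq H^-$, one direction gives $P(H'^-)\leq P(H^-)=\delta$; since $y\in H'$ and $\HD(y;P)\geq\delta$, the definition of the depth gives $P(H'^-)\geq\delta$. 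Hence $P(H'^-)=\delta$.

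Next I would bookkeep the mass of the opposite halfspace via $P(H^+)=1-P(H^-\setminus H)=1-\delta+P(H)\geq 1-\delta$. Because $\langle y,u\rangle<\alpha$, the halfspaces $H'^-$ and $H^+$ are disjoint, so $P(H'^-)+P(H^+)\leq 1$, which forces $P(H)=0$ and consequently $P(H^+)=1-\delta$. Provided $\delta\in(0,1)$, both $P(H'^-)=\delta$ and $P(H^+)=1-\delta$ are strictly positive and sum to one, with $H'^-$ and $H^+$ two disjoint closed halfspaces — precisely the situation excluded by the contiguity of the support of $P$. Contradiction.

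The degenerate cases $\delta=0$ and $\delta=1$ I would dispatch separately: for $\delta=0$ the statement is either vacuous or trivial depending on convention, and for $\delta=1$ contiguous support forces $P$ to be a Dirac at $x$, whence $\CR[1]=\{x\}\subseteq H^+$ automatically. I expect the only delicate step to be the careful handling of mass on the boundary hyperplanes $H$ and $H'$, which is why I would carry the $P(H)$ term through the computation rather than assuming \eqref{Delta}; the contiguity hypothesis alone is what eliminates the pathological configurations once the two parallel halfspaces are put in place.
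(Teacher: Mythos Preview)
Your proof is correct. The paper itself does not give a proof of this proposition; it merely prefaces the statement with ``From the definition of the minimal halfspace it is easy to see that the following holds.'' Your argument supplies precisely the details one would expect behind that remark: the point $x$ witnesses $\CR\cap H^-\neq\emptyset$, and the parallel-slide construction together with contiguity rules out any $y\in\CR\setminus H^+$.

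Two small comments. First, your bookkeeping with $P(H)$ is the right way to avoid assuming \eqref{Delta}; the computation $P(H'^-)+P(H^+)\leq 1$ indeed forces $P(H)=0$ a posteriori, so no smoothness hypothesis is needed. Second, in the boundary case $\delta=1$ your conclusion is right but the reasoning is slightly off: it is the condition $\HD(x;P)=1$ alone (not contiguity) that forces $P=\delta_x$, via the bound $\MD(P)\leq\bigl(1+\sup_{z}P(\{z\})\bigr)/2$ stated in Section~\ref{section:symmetry}. In any case the proposition is only invoked later in the paper for $\delta\in(0,1/2)$, so the endpoints are not at issue.
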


An interesting characterization of the halfspace median of a measure $P\in\Prob$ in terms of minimal halfspaces was observed by \citet[pp.~1818--1819]{Donoho_Gasko1992} in 1992 and \citet[Propositions~8 and~12]{Rousseeuw_Ruts1999} in 1999. For $P$ absolutely continuous, $x$ is a halfspace median of $P$ if and only if the union of the collection of minimal halfspaces at $x$ is $\R^d$. In \citet{Rousseeuw_Ruts1999}, this result is dubbed the ray basis theorem.

\begin{theorem} \label{theorem:ray basis}
Let $P\in\Prob$, and $x \in \R^d$ be such that the union of the collection of minimal halfspaces at $x$ is $\R^d$. Then $x$ is a halfspace median of $P$.

Assume that $P$ satisfies \eqref{Delta}, and let $x\in\R^d$ be a halfspace median of $P$. Then there exists a collection of minimal halfspaces at $x$ of cardinality at most $d+1$ whose union is $\R^d$.
\end{theorem}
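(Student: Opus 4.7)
The plan is to handle the two implications separately. For the first: given any $y \in \R^d$, the hypothesis yields a minimal halfspace $H^-$ at $x$ with $y \in H^-$. Since $P(H^-) = \HD(x;P)$, the infimum in \eqref{halfspace depth} gives $\HD(y;P) \leq P(H^-) = \HD(x;P)$, so $x$ maximizes $\HD(\cdot;P)$. No regularity on $P$ is required here.

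For the second implication, I parametrize closed halfspaces with $x$ on the boundary by their outward unit normals: write $H_u^- = \{z \in \R^d : \langle z - x, u\rangle \leq 0\}$ and put
\[
M_x = \{u \in \Sph : P(H_u^-) = \HD(x;P)\}.
\]
Under \eqref{Delta} the map $u \mapsto P(H_u^-)$ is continuous on the compact sphere, so $M_x$ is non-empty and compact. The key step is to show $0 \in \operatorname{conv}(M_x)$. Argue by contradiction: if $0 \notin \operatorname{conv}(M_x)$, the separation theorem supplies $v \in \R^d \setminus \{0\}$ and $\alpha > 0$ with $\langle u, v\rangle \geq \alpha$ on $M_x$. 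Pick an open neighbourhood $U \supset M_x$ in $\Sph$ on which $\langle \cdot, v\rangle > \alpha/2$; continuity and compactness of $\Sph \setminus U$ yield $\eta > 0$ with $P(H_u^-) \geq \HD(x;P) + \eta$ there. For $x_\epsilon = x + \epsilon v$ with $\epsilon > 0$ small, the shifted halfspace $\{z : \langle z - x_\epsilon, u\rangle \leq 0\}$ contains $H_u^-$ whenever $u \in U$, so its $P$-mass is at least $\HD(x;P)$; joint continuity of $(y,u) \mapsto P(\{z : \langle z - y, u\rangle \leq 0\})$ under \eqref{Delta} preserves the $\eta/2$-gap on $\Sph \setminus U$ for small $\epsilon$. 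This yields $\HD(x_\epsilon;P) > \HD(x;P)$, contradicting the maximality of $\HD$ at $x$.

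With $0 \in \operatorname{conv}(M_x) \subset \R^d$ in hand, Carath\'eodory's theorem produces $k \leq d+1$ normals $u_1, \dots, u_k \in M_x$ and weights $\lambda_i > 0$, $\sum_i \lambda_i = 1$, with $\sum_i \lambda_i u_i = 0$. If $\bigcup_i H_{u_i}^-$ failed to cover $\R^d$, some $y$ would satisfy $\langle y - x, u_i\rangle > 0$ for every $i$, and the $\lambda_i$-weighted sum would give
\[
0 = \left\langle y - x, \sum_i \lambda_i u_i \right\rangle = \sum_i \lambda_i \langle y - x, u_i\rangle > 0,
\]
a contradiction; hence the $k \leq d+1$ minimal halfspaces $H_{u_i}^-$ cover $\R^d$.

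The main obstacle is securing the \emph{strict} inequality $\HD(x_\epsilon;P) > \HD(x;P)$ in the perturbation step: under only \eqref{Delta} (without absolute continuity or contiguous support), the thin slab gained at a minimal $u \in M_x$ may carry zero $P$-mass, so the strict gain must come entirely from the $\eta$-slack on $\Sph \setminus U$. Ruling out a new minimizer of $u \mapsto P(\{z : \langle z - x_\epsilon, u\rangle \leq 0\})$ escaping to the boundary of $U$ -- where the monotone shift gives only a non-strict inequality -- requires a uniform continuity estimate in both arguments, and is the technical heart of the proof.
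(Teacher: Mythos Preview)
The paper does not prove this theorem; it is stated with attribution to \citet[pp.~1818--1819]{Donoho_Gasko1992} and \citet[Propositions~8 and~12]{Rousseeuw_Ruts1999}. So there is no in-paper proof to compare against, and your attempt must be judged on its own merits.

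Your argument for the first implication is correct and essentially the standard one.

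For the second implication, the strategy --- show that $0\in\operatorname{conv}(M_x)$ and then invoke Carath\'eodory --- is the right one, and your compactness/continuity setup as well as the final covering step from $\sum_i\lambda_i u_i=0$ are fine. The gap is exactly where you flag it, but it is more serious than your final paragraph suggests, and your localisation of it is off. For $u\in U$ you only obtain $\phi_\epsilon(u)\geq\phi_0(u)\geq\delta$, and for $u\in M_x\subset U$ this inequality can be an equality for \emph{every} $\epsilon>0$ whenever the slab $\{z:\langle x,u\rangle<\langle z,u\rangle\leq\langle x,u\rangle+\epsilon\langle v,u\rangle\}$ carries no $P$-mass (which \eqref{Delta} alone does not preclude). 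The potential new minimiser is therefore not ``escaping to the boundary of $U$'' --- on $U\setminus M_x$ you have $\phi_\epsilon(u)\geq\phi_0(u)>\delta$ --- it is sitting inside $M_x$ itself. Consequently you only get $\HD(x_\epsilon;P)\geq\delta$, hence $x_\epsilon$ is \emph{also} a median; under \eqref{Delta} without contiguous support the median set need not be a singleton, so this is no contradiction, and no ``uniform continuity estimate'' will manufacture the strict inequality you need.

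A repair is available but requires an additional idea. One route: observe that your argument actually gives $\phi_t(u)\geq\delta$ for all $t\geq0$ and all $u$ with $\langle u,v\rangle>0$, while for $u$ with $\langle u,v\rangle\leq 0$ you have $\phi_0(u)\geq\delta+\eta$. Hence $\{t\geq0:x+tv\in M(P)\}$ contains a nontrivial interval $[0,T]$; compactness of $M(P)$ forces $T<\infty$. At $x^*=x+Tv$, taking $s\downarrow0$ and directions $u_s$ witnessing $\HD(x^*+sv;P)<\delta$, one checks $\langle u_s,v\rangle<0$ and extracts a limit $u^*\in M_{x^*}$ with $\langle u^*,v\rangle\leq0$. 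If $\langle u^*,v\rangle=0$, then $\langle x,u^*\rangle=\langle x^*,u^*\rangle$ and $u^*\in M_x$ with $\langle u^*,v\rangle=0<\alpha$, contradicting the separation. The case $\langle u^*,v\rangle<0$ needs a further (short) argument; this is where the actual work beyond your write-up lies.
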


The smoothness condition \eqref{Delta} is important in Theorem~\ref{theorem:ray basis}. As noted by \citet[Example~4.3]{Masse2004} it is possible to construct distributions $P\in\Prob$, that violate \eqref{Delta}, with a unique minimal halfspace at their halfspace median.

For $P\in\Prob$ uniformly distributed on a convex body $K \in \CB$, a result similar to Theorem~\ref{theorem:ray basis} was stated in \citet[p.~251]{Grunbaum1963} in 1963 for the Winternitz measure of symmetry. There, it was asserted that it follows from a version of Helly's theorem that there must exist at least $d+1$ different minimal halfspaces at the halfspace median $x_P$ of $P$. The assumptions of that result appear, however, to be incomplete, as pointed out to us by M.~Tancer~\citep{Tancer2}.

Another interesting problem closely connected with the halfspace median and Theorem~\ref{theorem:ray basis}, is a conjecture of \citet[p.~41]{Grunbaum1961} from 1961 that asks if for any convex body $K \in \CB$ with $d \geq 2$ there exists a point $x \in K$ that is a centroid of at least $d+1$ sections of $K$ by different hyperplanes passing through $x$. For $d = 2$, the solution to this problem is straightforward, as noted already in \citep{Grunbaum1961}. For $d>2$, this problem appears to be still open (see \citep{Steinhaus1955}, \citep[p.~251]{Grunbaum1963}, and \citep[Problem~A8]{Croft_etal1994}). It is natural to conjecture that the halfspace median is such a point. Indeed, combine Theorem~\ref{theorem:ray basis} with a theorem of \citet{Dupin1822} (stated in part~\ref{Dupin} of Proposition~\ref{PropFloatBod1} below) that says that for any $K\in\CB$ the point $x \in K$ is the centroid of all minimal hyperplanes at $x$ (w.r.t. the uniform distribution $P$ on $K$) to obtain that if the minimal hyperplanes at $x$ are in general position, then the halfspace median is a point as postulated in the conjecture. Here, a set of hyperplanes is said to be in general position if for all choices of at most $d$ such distinct hyperplanes their normals are linearly independent. A further open question is if the conjecture holds true with $x$ being the centroid of $K$.

Theorem~\ref{theorem:ray basis} provides a useful characterization criterion for the depth-based extension of the median. Apart from its theoretical appeal, it promises applications in the computation of the depth, and the depth median.

\subsubsection{Minimality and stability}

An important question regarding the measures of symmetry concerns their minimality, i.e. characterization of sets $K \in \CB$ such that $\MS(K) = \inf\left\{ \MS(K^\prime) \colon K^\prime \in \CB \right\}$. As remarked by \citet[Section~4]{Grunbaum1960} in 1960, for the Winternitz measure of symmetry
	\[	\inf \left\{ W(K^\prime) \colon K^\prime \in \CB \right\} = \frac{d^d}{(d+1)^d - d^d},	\]
and this value is attained if and only if $K$ is a bounded cone in $\R^d$. This value corresponds to
	\[	\inf_{K^\prime\in\CB} \left\{ \MD(P) \colon P \mbox{ is distributed uniformly on }K^\prime \right\} = \left(d/(d+1)\right)^d,	\]
see also Theorem~\ref{theorem:maximum}. In a related question, \citet{Grunbaum1960} also determined the collection of measures $P\in\Prob$ such that 
	\[	\MD(P) = 1/(d+1) = \inf\left\{ \MD(P^\prime) \colon P^\prime \in \Prob \right\},	\]
by showing that this can happen if and only if $P$ is a uniform distribution on the vertices of a non-degenerate simplex in $\R^d$. In statistics, this result was observed independently by \citet[Lemma~6.3]{Donoho_Gasko1992} in 1992 for $\HD$.

In convex analysis, another desirable property of measures of symmetry is their stability. A measure of symmetry $\MS$ is said to have the stability property if for any $\varepsilon > 0$ and $K \in \CB$ with $\MS(K) < \inf\left\{ \MS(K^\prime) \colon K^\prime \in \CB \right\} + \varepsilon$ there exists a constant $c>0$ and $L \in \CB$ such that $\MS(L) = \inf\left\{ \MS(K^\prime) \colon K^\prime \in \CB \right\}$, and $\delta(K,L) \leq c \, \varepsilon$. Here, $\delta$ stands for some metric on $\CB$, and $c$ may depend on $d$, as well as on some characteristic of $K$ such as its volume, or diameter. An important stability theorem for the Winternitz measure of symmetry was derived by \citet[Theorem~2]{Groemer2000}. 

\begin{theorem}	\label{theorem:stability}
Let $K \in \CB$ and let $P\in\Prob$ be uniformly distributed on $K$. Let $\varepsilon \geq 0$. There exists a constant $\lambda > 0$ depending only on $d$ such that $\MD(P) \leq \left(d/(d+1)\right)^d + \varepsilon$ implies that $K$ contains a bounded cone $C\in\CB$ with
$$	
\dS(K,C) \leq \lambda \vol{K} \varepsilon^{1/2d^2},
$$
where, \begin{equation}\label{SymDiff}\dS(K,C) = \vol{K \cup C} - \vol{K \cap C}
\end{equation} 
is the symmetric difference metric on $\CB$.
\end{theorem}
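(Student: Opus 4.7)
The idea is to use rigidity in Grünbaum's proof of Theorem~\ref{theorem:maximum}\ref{maximum:2}, whose equality case is exactly the bounded cone. A near-extremal value of $\MD(P)$ should force the existence of a cone $C\subseteq K$ with small volume deficit, and the symmetric difference $\dS(K,C)$ will then reduce to $\vol{K}-\vol{C}$.

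First, I would transfer near-extremality from the halfspace median to the centroid $x_0=\E X$. Grünbaum's own argument already produces the bound $\HD(x_0;P)\geq(d/(d+1))^d$, so the hypothesis $\MD(P)\leq(d/(d+1))^d+\varepsilon$ forces
\[
0\leq \HD(x_0;P)-\left(\tfrac{d}{d+1}\right)^d\leq \varepsilon.
\]
By continuity of $P(H_{u,\langle u,x_0\rangle}^-)$ in $u\in\Sph$, the infimum defining $\HD(x_0;P)$ is attained in some direction $u^*$, and the corresponding halfspace $H^-=H_{u^*,\langle u^*,x_0\rangle}^-$ satisfies $\bigl(\tfrac{d}{d+1}\bigr)^d\leq \vol{K\cap H^-}/\vol{K}\leq \bigl(\tfrac{d}{d+1}\bigr)^d+\varepsilon$.

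Second, I would reduce to the one-dimensional chord function in direction $u^*$. Let $f(t)=\vol[d-1]{K\cap H_{u^*,t}}$ with support $[a,b]$. Brunn's theorem gives that $f^{1/(d-1)}$ is concave on $[a,b]$; combined with the boundary values $f^{1/(d-1)}(a)=0$ and $f^{1/(d-1)}(b)=f(b)^{1/(d-1)}$ this yields
\[
f(t)\geq f(b)\left(\tfrac{t-a}{b-a}\right)^{d-1}\quad\text{for }t\in[a,b],
\]
with equality iff $f^{1/(d-1)}$ is affine. Let $v_0$ be any point of $K\cap\{\langle u^*,\cdot\rangle=a\}$ and $B=K\cap\{\langle u^*,\cdot\rangle=b\}$; the bounded cone $C$ with apex $v_0$ and base $B$ is inscribed in $K$ by convexity, its chord function in direction $u^*$ is exactly the lower envelope $f(b)((t-a)/(b-a))^{d-1}$, and
\[
\dS(K,C)=\vol{K}-\vol{C}=\int_a^b\Bigl(f(t)-f(b)\bigl(\tfrac{t-a}{b-a}\bigr)^{d-1}\Bigr)\dd t.
\]

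Third, I would run Grünbaum's argument in quantitative form. The ratio $\int_0^b f(t)\dd t/\int_a^b f(t)\dd t$ is (up to the placement of the centroid) precisely the depth $\vol{K\cap H^-}/\vol{K}$, and the sharp Grünbaum bound on this ratio becomes an equality exactly when $f^{1/(d-1)}$ is affine. A stability version of this one-dimensional extremal problem, obtained by tracking the slack in the Brunn concavity/centroid comparison between $f^{1/(d-1)}$ and its affine envelope, converts the deficit $\varepsilon$ into an $L^1$-deviation bound
\[
\int_a^b\bigl|f(t)^{1/(d-1)}-\tfrac{t-a}{b-a}f(b)^{1/(d-1)}\bigr|\dd t\leq c_d\,(b-a)\,f(b)^{1/(d-1)}\,\varepsilon^{\beta_d}.
\]
Raising to the $(d-1)$-st power, using monotonicity of $s\mapsto s^{d-1}$ on the lower envelope and the H\"older inequality in $t$, and then applying $\dS(K,C)=\vol{K}-\vol{C}$ produces the bound $\dS(K,C)\leq \lambda \vol{K}\,\varepsilon^{1/(2d^2)}$ for a dimension-dependent $\lambda>0$.

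The main obstacle is calibrating the stability of the one-dimensional Grünbaum inequality with the right dimensional exponent. Off-the-shelf stability forms of Brunn--Minkowski yield bounds of the wrong homogeneity in $\vol{K}$ or incur worse dimensional losses; the specific exponent $1/(2d^2)$ in Groemer's theorem comes from tracking how deviation of $f^{1/(d-1)}$ from affinity propagates first through the $(d-1)$-st power (one loss of $1/(d-1)$) and then through a further square-root-type averaging in $t$, while the placement of the true centroid of $f$ (generally not coinciding with that of the inscribed cone's profile) absorbs a second order-$\varepsilon$ correction that must be handled delicately to avoid weakening the exponent further.
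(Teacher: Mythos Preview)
The paper does not give its own proof of Theorem~\ref{theorem:stability}; it is quoted from \citet[Theorem~2]{Groemer2000} without argument, so there is nothing in the paper to compare your proposal against directly.

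As a standalone plan, your architecture is the natural one and is essentially Groemer's: pass to the centroid, fix a minimal direction, reduce to the one-dimensional section function $f$, and run a stability version of the Winternitz/Gr\"unbaum inequality for $(1/(d-1))$-concave densities. Two issues, though. First, a technical slip in Step~2: you place the base of $C$ at level $b=\max_{x\in K}\langle u^*,x\rangle$, but for a generic (e.g.\ strictly convex) body $f(b)=0$, so your inscribed cone degenerates to a segment and the bound $\dS(K,C)=\vol{K}-\vol{C}=\vol{K}$ is vacuous. One must take the base at, or near, the level where $f$ is maximal and then separately control the part of $K$ beyond that level; this already costs part of the $\varepsilon$-budget and contributes to the dimensional loss in the final exponent. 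Second, and more importantly, Step~3 is where the theorem actually lives, and your proposal does not carry it out: the displayed $L^1$ stability bound for $f^{1/(d-1)}$ with an unspecified exponent $\beta_d$ \emph{is} the content of Groemer's argument, not an input to it, and the subsequent passage to $\int(f-g)$ via ``raise to the $(d-1)$-st power and use H\"older'' hides further losses (the centroid of $f$ does not coincide with that of the cone profile; the maximum of $f$ need not sit at an endpoint). Your closing paragraph correctly flags this as the obstacle, but as written the plan stops exactly where the proof begins.
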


As far as we are aware, no results corresponding to stability theorems can be found for probability measures and the halfspace depth. 

\begin{problem}
Does a variant of a stability result such as Theorem~\ref{theorem:stability} hold for probability measures and depth medians?
\end{problem}

\subsection{Affine invariant points}

Symmetry is a key structural property of convex bodies relevant in many problems. A systematic study of symmetry was initiated by Gr\"unbaum in his seminal paper \citep{Grunbaum1963} from 1963. A crucial notion in his work is that of affine invariant point. It allows to analyze the symmetry situation. In a nutshell: the more affine invariant points, the fewer symmetries.

Recall that the set $\CB$ is equipped with the Hausdorff distance \eqref{Hausdorff distance}.
 
\begin{definition}
A map $p \colon \CB \to \R^{d}$ is called an affine invariant point, if $p$ is continuous and if for every non-singular affine map $T:\mathbb R^{d}\rightarrow \mathbb R^{d}$ one has, 
	\begin{equation*}
	p(T(K))=T(p(K)).
	\end{equation*}		
	We denote by $\AIP$ the set of all affine invariant points on $\R^{d}$.
\end{definition}
$\AIP$ is an affine subspace of $C(\CB,\R^{d})$, the space of continuous mappings from $\CB$ to $\R^d$.

Examples of affine invariant points, already known to \citet{Grunbaum1963} are, e.g., the centroid of a convex body $K$ (i.e. the expectation of the uniform distribution on $K$), the Santal\'o point (the unique point $s(K)$ in the interior of $K\in\CB$ for which the minimum of the functional $\vol{K^{s(K)}}$ is attained, see also the important Blaschke-Santal\'o inequality in \eqref{Blaschke-Santalo} below), and the center of the ellipsoid of maximal volume inside a convex body. 
\citet{Grunbaum1963} asked a number of questions about affine invariant points: 
	\begin{enumerate}[label=(\roman*)]
	\item \label{Grunbaum1} Is there a convex body $K$ such that $\AIP(K)=\{p(K) \colon p\in\AIP\} = \R^{d}$?
	\item \label{Grunbaum2} Is the space $\AIP$ infinite-dimensional?
	\item \label{Grunbaum3} Let $K$ be a convex body and let $T \colon \R^{d}\to\R^{d}$ be an affine map with $T(K)=K$. We denote
		\[	\FD(K) = \left\{ x\in \R^{d} \colon \mbox{ for all $T$ such that $T(K)=K$ we have $Tx=x$} \right\}.	\]
Do we have $\FD(K)=\AIP(K)$?
	\end{enumerate}
One can argue that those convex bodies that have only one affine invariant point are the most symmetric convex bodies. This would include the simplex in $\R^d$ which is from another point of view the most non-symmetric convex body (see Theorem~\ref{theorem:maximum}). 

A convex body has only one affine invariant point, if it has {\em enough symmetries}.
We say that an affine map $T\colon \R^{d}\to\R^{d}$ is a symmetry of a convex body $K$ if $T(K)=K$. We say that a convex body has enough symmetries if the only affine maps commuting with all symmetries of $K$ are multiples of the identity. 

For a convex body $K$ with enough symmetries the halfspace median coincides with the centroid of $K$.

The following theorems answer Gr\"unbaum's questions \ref{Grunbaum1} and \ref{Grunbaum2}. They can be found in \citet{Meyer_etal2015}.

\begin{theorem}
For every $d \geq 2$ there is a body $K \in \CB$ with $\AIP(K) = \left\{p(K)\colon p\in\AIP\right\} = \R^d$. Such convex bodies are actually dense in $\CB$ with respect to the Hausdorff metric.
\end{theorem}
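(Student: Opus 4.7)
\emph{Plan.} The backbone of the argument is the structural observation that $\AIP$ is an affine subspace of $C(\CB,\R^d)$, so evaluation at a fixed $K$ makes $\AIP(K)$ an affine subspace of $\R^d$. Moreover, if $T$ is any affine symmetry of $K$ (i.e.\ $T(K)=K$) and $p \in \AIP$, then $T(p(K)) = p(T(K)) = p(K)$, whence $\AIP(K) \subseteq \FD(K)$. So the natural target is convex bodies whose only affine symmetry is the identity: for such $K$ one has $\FD(K) = \R^d$, so the equality $\AIP(K) = \R^d$ is at least not ruled out.

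\emph{Density reduction.} I would first show that bodies with trivial affine symmetry group are already dense in $(\CB,\dH)$. This is a generic-perturbation argument: starting from any $K_0 \in \CB$, superimpose several small outward bumps at generic boundary points in a way that any non-identity affine automorphism would have to permute them while preserving their shapes and positions, which a dimension count precludes. Consequently, it is enough to prove $\AIP(K) = \R^d$ for every $K$ with trivial affine symmetry group.

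\emph{Main construction.} Fix such a rigid $K_0$ and an arbitrary $v \in \R^d$. On the affine orbit $\mathcal O(K_0) = \{AK_0 + b : A \in GL(d), b\in\R^d\}$, the identifying affine map is unique by rigidity, so the formula $AK_0 + b \mapsto Av + b$ defines an unambiguous continuous affine-equivariant map $\mathcal O(K_0) \to \R^d$. The plan is to extend this to a genuine affine invariant point $p_v \in \AIP$; then $p_v(K_0)=v$, and letting $v$ range over $\R^d$ forces $\AIP(K_0) = \R^d$. Fix some reference $q \in \AIP$, e.g.\ the centroid, and on an affine-invariant neighborhood of $\mathcal O(K_0)$ construct an affine-equivariant local section $K \mapsto (\widetilde A_K, \widetilde b_K)$ with $(\widetilde A_{K_0},\widetilde b_{K_0}) = (\mathrm{Id},0)$ by normalizing both $K$ and $K_0$ via their John ellipsoids and reading off the residual affine map. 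Combine it with an affine-invariant cutoff $\varphi\colon \CB \to [0,1]$ supported in that neighborhood with $\varphi(K_0) = 1$, and set
\[
	p_v(K) \;=\; q(K) \,+\, \varphi(K)\,\bigl(\widetilde A_K v + \widetilde b_K - q(K)\bigr).
\]
Then $p_v(K_0) = v$, while equivariance of $(\widetilde A_K,\widetilde b_K)$, $q$ and invariance of $\varphi$ give equivariance of $p_v$.

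\emph{Main obstacle.} The hard part is the construction of two objects with the right equivariance properties: (i) a continuous affine-invariant "distance to the orbit" used to build $\varphi$, and (ii) the continuous affine-equivariant local section $K \mapsto (\widetilde A_K, \widetilde b_K)$ on a neighborhood of $\mathcal O(K_0)$. Both rest on the affine covariance of the John ellipsoid: after normalization, every $K$ in the neighborhood lies close to a fixed body in the unit ball, and the residual "best-fitting" map $K_0 \mapsto K$ can be chosen in an $SO(d)$-equivariant way. Verifying continuity and affine equivariance of the resulting $p_v$ globally on $\CB$ is the technical crux. Granted this, we conclude $\AIP(K_0) = \R^d$ for every rigid $K_0$, and the density assertion of the theorem then follows from the density of rigid bodies established in the reduction step.
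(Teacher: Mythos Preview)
The paper does not actually prove this theorem; it quotes it from \citet{Meyer_etal2015} and only records the hint that ``in the proofs of these theorems, new classes of affine invariant points were introduced using convex floating bodies'', namely the maps $p_\delta$ sending $K$ to the centroid of its convex floating body $K_\delta$. So the comparison is between your abstract extension scheme and that concrete floating-body construction.

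Your route is genuinely different. Rather than exhibiting an explicit family $\{p_\delta\}$ of affine invariant points and a body on which their values are affinely spanning, you aim to show directly that every body $K_0$ with trivial affine symmetry group satisfies $\AIP(K_0)=\R^d$, by manufacturing, for each target $v$, an equivariant map equal to $v$ at $K_0$. This is in spirit the special case ``$\FD(K_0)=\R^d\Rightarrow\AIP(K_0)=\R^d$'' of Gr\"unbaum's question~(iii), which the paper attributes not to \citet{Meyer_etal2015} but to \citet{Kucment1972} and \citet{Mordhorst2017}. The floating-body approach buys concreteness and avoids the equivariant-section machinery; your approach buys a stronger conclusion (it works for \emph{every} rigid body at once, hence yields density immediately).

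The sketch is plausible, but the ``technical crux'' you flag is exactly where the content lives. Two points deserve care. First, the John-ellipsoid normalisation is only well defined up to $O(d)$, so ``$\widehat K$'' is not a body but an $O(d)$-orbit; your local section must be built $O(d)$-equivariantly, and the continuity of the ``best-fitting rotation'' relies on the normalised $\widehat K_0$ having trivial $O(d)$-stabiliser together with a compactness argument for the minimiser. The paper explicitly says that Kuchment's 1972 argument was complete up to precisely such a compactness step; you should expect that step to require real work here too. Second, your affine-invariant cutoff $\varphi$ needs an affine-invariant continuous ``distance to the orbit $\mathcal O(K_0)$''; an infimum of $\dH$ over the non-compact affine group is not obviously continuous, so you must pass through the John normalisation (reducing to an infimum over the compact group $O(d)$) before this is safe. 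None of this is wrong, but as written it is a programme rather than a proof.
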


\begin{theorem}
The space $\AIP$ is infinite-dimensional.
\end{theorem}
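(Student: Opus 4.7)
The plan is to build an uncountable family of affine invariant points from centroids of halfspace-depth central regions, and then to establish infinite-dimensionality of $\AIP$ by arguing affine independence of arbitrary finite subfamilies.

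For each $t \in (0, (d/(d+1))^d)$ and each $K \in \CB$, let $P$ denote the uniform distribution on $K$. Part~\ref{maximum:2} of Theorem~\ref{theorem:maximum} guarantees that the central region $\CR[t]$ is a nonempty convex body, so
\[
  p_t(K) := \mbox{centroid of } \CR[t]
\]
is well defined. The affine equivariance of $\HD$ makes $\CR[t]$ affinely equivariant in $K$, and the centroid commutes with nonsingular affine transformations, so $p_t(T(K)) = T(p_t(K))$. Continuity on $(\CB, \dH)$ follows from the standard Hausdorff-continuity of central regions and of the centroid map, together with weak continuity of $K \mapsto P$ in $\dH$. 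Hence $p_t \in \AIP$ for every such $t$, and the ordinary centroid $p_0$ also lies in $\AIP$.

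To prove $\AIP$ is infinite-dimensional, it suffices to show that for every $N \in \N$ and every distinct $0 < t_1 < \cdots < t_N < (d/(d+1))^d$, the maps $p_0, p_{t_1}, \dots, p_{t_N}$ are affinely independent in $C(\CB, \R^d)$. Assume for contradiction a nontrivial relation $\sum_{i=0}^N \alpha_i p_{t_i}(K) = 0$ for every $K$, with $\sum_i \alpha_i = 0$ (setting $t_0 = 0$). I would test this on an explicit family $\{K_c\}_{c > 0} \subset \CB$ of bodies of revolution, for instance right circular cones of unit height over a $(d-1)$-ball of radius $c$. Rotational symmetry forces each $p_{t_i}(K_c)$ onto the common axis, so writing $p_{t_i}(K_c) = \lambda(t_i, c)\, e$ for the axis unit vector $e$ the relation collapses to
\[
  \sum_{i=0}^N \alpha_i \lambda(t_i, c) = 0 \qquad \mbox{for every } c > 0.
\]

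The main obstacle is ruling this one-dimensional identity out. My approach is an explicit computation: on the axis of a cone the minimal halfspace through any axis point is itself rotationally symmetric, so its normal direction and offset are algebraic functions of a single axis coordinate, and the resulting depth, volume, and first moment of the corresponding central region reduce to elementary one-dimensional integrals. This shows that $\lambda(t, c)$ is real-analytic in $t$ for each fixed $c$ and admits a joint expansion $\lambda(t, c) = \sum_{k \geq 0} a_k(c)\, t^k$; contrasting the degenerate regimes $c \to 0$ (cone collapsing to a segment) and $c \to \infty$ (cone flattening to a disk) is expected to establish that the coefficients $\{a_k(c)\}_{k \geq 0}$ span an infinite-dimensional subspace of $C((0, \infty), \R)$. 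Given that, a Vandermonde argument in the distinct $t_i$'s forces $\alpha_0 = \cdots = \alpha_N = 0$, yielding the required contradiction.
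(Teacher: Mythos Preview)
Your construction of the maps $p_t$ (centroids of floating bodies / depth central regions) is precisely the class of affine invariant points the paper singles out, following \citet{Meyer_etal2015}. So the first half of your proposal is aligned with the intended approach and essentially correct, modulo the continuity of $K \mapsto p_t(K)$, which you assert rather than prove; this step is genuinely delicate (it requires Hausdorff continuity of $K \mapsto K_\delta$ at the chosen level, not just continuity of $\delta \mapsto K_\delta$ for fixed $K$) and in \citet{Meyer_etal2015} it takes real work.

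The substantive gap is in your independence argument. You reduce the question to a one-parameter identity $\sum_i \alpha_i \lambda(t_i,c)=0$ for all $c$, expand $\lambda(t,c)=\sum_k a_k(c)\,t^k$, and then say that comparing the regimes $c\to 0$ and $c\to\infty$ ``is expected to establish'' that the coefficient functions $\{a_k(c)\}_k$ are linearly independent, after which Vandermonde would finish. Two problems. First, real-analyticity of $t\mapsto\lambda(t,c)$ is asserted, not shown: the floating body of a cone is only piecewise described (different regimes depending on whether the cutting hyperplane meets the apex region or the base), and the centroid of $K_t$ is obtained by integrating over a region whose combinatorial type changes with $t$; you would need to verify analyticity across those transitions, or restrict to a subinterval. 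Second, and more seriously, the linear independence of the $a_k(c)$ is the entire content of the argument, and you give no mechanism for it beyond the word ``expected''. Degenerate limits $c\to 0,\infty$ give at most two linear constraints; that cannot by itself produce an infinite independent family. Without an explicit computation of the $a_k$ (or some structural reason they must be independent, e.g.\ distinct growth rates in $c$), the Vandermonde step has nothing to act on.

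In short: the construction matches the paper, but the separation step is a sketch with its hardest claim left as a hope. The argument in \citet{Meyer_etal2015} does not proceed via analyticity and Vandermonde on a single cone family; it builds bodies for which the $p_\delta$'s can be separated directly. If you want to salvage your route, you must actually compute $\lambda(t,c)$ for the cone (this is doable in closed form for $d=2$) and exhibit, for each $N$, values $c_1,\dots,c_N$ making $\det(\lambda(t_i,c_j))\ne 0$.
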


In the proofs of these theorems, new classes of affine invariant points were introduced using convex floating bodies (see Section~\ref{FloatBodConv} below). We define $p_{\delta} \colon \CB \to \R^d$ to be the mapping that sends $K$ to the centroid of $\CR$ from \eqref{central region} for $P$ uniform on $K$.

Moreover, in \citet[Theorem~2]{Meyer_etal2015} it was shown that for convex bodies $K$ with $\operatorname{dim}(\AIP(K))=d-1$ a positive answer to Gr\"unbaum's question \ref{Grunbaum3} above holds, i.e. $\FD(K)=\AIP(K)$. It was settled in all dimensions by \citet{Mordhorst2017}, based on work by \citet{Kucment1972} (see also \citep{Kucment2016}) where question \ref{Grunbaum3} of Gr\"unbaum was almost proved already in 1972, with only a compactness argument missing.

\begin{theorem}
For any $K \in \CB$ we have that $\FD(K)=\AIP(K)$.
\end{theorem}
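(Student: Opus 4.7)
\emph{Easy direction.} The inclusion $\AIP(K)\subseteq \FD(K)$ is immediate from the definition of an affine invariant point. If $p\in\AIP$ and $T$ is an affine symmetry of $K$, i.e.\ $T(K)=K$, then $T(p(K))=p(T(K))=p(K)$, so $p(K)$ is fixed by every symmetry of $K$, i.e.\ $p(K)\in\FD(K)$.

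\emph{Hard direction, construction on the orbit.} Fix $x_{0}\in\FD(K)$; the task is to produce $p\in\AIP$ with $p(K)=x_{0}$. Write $G$ for the group of non-singular affine maps of $\R^{d}$, and let $\mathcal O:=\{T(K):T\in G\}\subseteq\CB$ be the $G$-orbit of $K$. On $\mathcal O$ I would first set $p(T(K)):=T(x_{0})$. This is well defined: if $T_{1}(K)=T_{2}(K)$, then $S:=T_{2}^{-1}T_{1}$ is a symmetry of $K$, so $S(x_{0})=x_{0}$ by the hypothesis $x_{0}\in\FD(K)$, whence $T_{1}(x_{0})=T_{2}(x_{0})$. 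Because $\mathcal O$ is homeomorphic to the homogeneous space $G/\mathrm{Stab}(K)$ and $T\mapsto T(x_{0})$ is continuous, the resulting $p$ is continuous and $G$-equivariant on $\mathcal O$.

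\emph{Extension to all of $\CB$ and the main obstacle.} The remaining step is to extend $p$ to a continuous, $G$-equivariant map on the whole of $\CB$; note that once this is done, equivariance automatically forces $p(K')\in\FD(K')$ for every $K'\in\CB$, because $p(K')$ is fixed by each affine symmetry of $K'$. I would use the standard tube/slice construction: choose a local slice $\mathcal S$ through $K$ transverse to $\mathcal O$ in the Hausdorff metric \eqref{Hausdorff distance}, set $p(K'):=x_{0}$ on $\mathcal S$ (legitimate because by the slice property $\mathrm{Stab}(K')\subseteq\mathrm{Stab}(K)$ for $K'\in\mathcal S$, so $x_{0}\in\FD(K)$ is in particular fixed by $\mathrm{Stab}(K')$), transport by $G$-equivariance to a tubular neighborhood of $\mathcal O$, and then combine tubular extensions around different orbits using a $G$-invariant partition of unity, in the spirit of the equivariant Tietze extension theorem.

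\emph{Main obstacle.} The single delicate point, and precisely the step at which Kucment's 1972 argument was incomplete and which Mordhorst closed, is a compactness argument for the stabilizer $\mathrm{Stab}(K)\leq G$. Concretely one needs that if $T(x)=Ax+b$ preserves $K$, then $A(K)=K-b$, so the boundedness of $K$ combined with its non-empty interior (hence $K-K$ contains a neighborhood of $0$) forces a uniform bound on $\|A\|$, and fixing any $x\in K$ then yields a uniform bound on $\|b\|$ via $Ax+b\in K$. Since $\mathrm{Stab}(K)$ is also closed in $G$, it is compact. Equipped with this compact stabilizer, the slice theorem and the equivariant Tietze-type extension go through, producing the desired $p\in\AIP$ with $p(K)=x_{0}$ and establishing $\FD(K)\subseteq\AIP(K)$.
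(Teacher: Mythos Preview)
The paper does not supply its own proof of this theorem; it only records that the result was ``settled in all dimensions by Mordhorst (2017), based on work by Kucment (1972) \dots where question (iii) of Gr\"unbaum was almost proved already in 1972, with only a compactness argument missing.'' Your outline---define $p$ on the $G$-orbit of $K$ by equivariance (well-posedness being exactly the condition $x_0\in\FD(K)$), then extend equivariantly off the orbit---is consistent with that attribution, and the easy inclusion $\AIP(K)\subseteq\FD(K)$ is correct as written.

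Two points deserve sharpening. First, the extension step is too vague to stand as a proof. Invoking ``the equivariant Tietze extension theorem'' and ``tubular extensions around different orbits'' hides real work: $G=\mathrm{GL}(d)\ltimes\R^d$ is non-compact and $\CB$ is an infinite-dimensional metric space, so one must first establish that the $G$-action on $\CB$ is proper (this follows by an argument parallel to your stabilizer bound, using Blaschke selection), then appeal to Palais' slice theorem for proper actions on completely regular spaces, and finally glue. A clean way to glue is not to patch tubes around \emph{all} orbits but to take any known $q\in\AIP$ (say the centroid), a $G$-invariant continuous $\phi:\CB\to[0,1]$ with $\phi\equiv1$ near the orbit of $K$ and $\phi\equiv0$ off the tube, and set $p:=\phi\,p_{\mathrm{tube}}+(1-\phi)\,q$; affine combinations with $G$-invariant coefficients preserve equivariance, and $p(K)=x_0$ is immediate.

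Second, your identification of the missing ``compactness argument'' with compactness of $\mathrm{Stab}(K)$ is historically doubtful. That $\mathrm{Stab}(K)$ is compact is classical and easy (your $K-K$ argument works; alternatively, any affine symmetry fixes the John ellipsoid and hence lies in a conjugate of $O(d)$), and would not plausibly have been the gap in 1972. The delicate compactness issue is more likely the properness of the full $G$-action on $\CB$ and the attendant slice/extension machinery in this non-manifold setting; you should not present the stabilizer bound as the crux.
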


\section{Description at the boundary: Convex floating bodies}	\label{section:convex floating bodies}

Data depth is intimately related to the concept of \emph{floating body} which we now introduce. We start with a brief discussion of differentiability properties of the boundary of convex bodies, since this will  be essential in what follows.

\subsection{Curvature of convex bodies}

We take as a measure on the boundary $\partial K$ of a convex body $K\in \CB$  the restriction of the $(d-1)$-dimensional Hausdorff measure to $\partial K$. We call this measure the boundary measure, or the Lebesgue measure on $\partial K$, and denote it by $\mu_{\partial K}$. Let $\mathcal U$ be an open subset of $\R^{d}$ and $f \colon \mathcal U \to \R$ be a twice continuously differentiable function. Then the classical Gau{\ss}-Kronecker curvature at $x_{0}\in\mathcal U$ is
	\[	\kappa(x_{0})=\frac{ \det \left(\nabla^{2} f(x_{0})\right)}{\left(1 + \|\nabla f(x_{0}) \|^2 \right)^{\frac{d+2}{2}}},	\]
where $\nabla f$ is the gradient of $f$ and $\nabla^{2}f$ the Hessian of $f$. The Gau{\ss}-Kronecker curvature of the boundary of a convex body is
the curvature of a function parametrizing the boundary.

By a theorem of Rademacher (see, e.g., \citep[Theorem~2.5.1]{Borwein_Vanderwerff2010}), a convex function on $\mathbb R^{d}$, and in particular the boundary of a convex body, is almost everywhere differentiable. There are, however, examples of convex functions and convex bodies that are not differentiable on a dense set of $\R^{d}$ and of the boundary of the convex body, respectively. Those examples do not have a second derivative at any point and thus the classical Gau{\ss}-Kronecker curvature $\kappa$ does not exist at any point.

Therefore we use the generalized Gau{\ss}-Kronecker curvature as introduced by \citet{Busemann_Feller1936} in dimension $d=3$ and \citet{Aleksandrov1939} in general. We present here only a short explanation of  the generalized Gau{\ss}-Kronecker curvature and  we refer to e.g., 
\citep[Section~1.6]{Schutt_Werner2003} and \citep{Schutt_Werner1990} for a detailed account.  

A cap of $K\in\CB$ at $x \in \partial K$ is the intersection of a halfspace $H^-$ with $K$ such that there is a supporting hyperplane to $K$ at $x$ that is parallel to $H$. There may, of course, be points on the boundary of $K$ having more than one supporting hyperplane. But, those points are of measure $0$ and shall be of less importance in our discussion.

If $K$ has a unique supporting hyperplane at $x\in \partial K$, we denote by $\Delta(x,\delta)$ the height of a cap with volume $\delta$. The height of a cap is the distance of the supporting hyperplane at $x$ to the parallel hyperplane cutting off a set of volume $\delta$.

\begin{definition}
Let $K\in\CB$ and $x\in\partial K$. Let $c_{d}=2^{d+1} \left(\vol[d-1]{\B[d-1]}/(d+1)\right)^{2}$. Assume that $K$ has at $x$ a unique supporting hyperplane. We say that $K$ has a generalized Gau{\ss}-Kronecker curvature if the limit
	\[	\lim_{\delta \to 0}c_{d} \  \frac{\Delta (x,\delta)^{d+1}}{\delta^{2}}	\]
exists.
In this case we define 
\begin{equation}\label{DefGenCurv2}
\kappa(x)
=\lim_{\delta \to 0}c_{d} \  \frac{\Delta (x,\delta)^{d+1}}{\delta^{2}}
\end{equation}
to be the generalized Gau{\ss}-Kronecker curvature at $x$.
\end{definition}

If the Gau{\ss}-Kronecker curvature exists, then it is equal to the generalized Gau{\ss}-Kronecker curvature. By a theorem of Busemann, Feller and Aleksandrov \citep{Busemann_Feller1936, Aleksandrov1939} the generalized Gau{\ss}-Kronecker curvature of a convex body exists almost everywhere. Geometrically, the existence of the generalized Gau{\ss}-Kronecker curvature at $x$ means that $\partial K$ can be ``well" approximated by an ellipsoid, or ellipsoidal cylinder at $x$ (see, e.g., \citep[Section~1.6]{Schutt_Werner2003}).

The following example clarifies the difference between Gau{\ss}-Kronecker curvature and generalized Gau{\ss}-Kronecker curvature.

\begin{example}	\label{example:function}
Let $f:[-1,1]\to\mathbb R$ be defined by
	\[	f(x)=	\begin{cases}
						x^{2} & \text{if } \left\vert x \right\vert = 1/n \mbox{ and }n\in\N, \\
						\frac{2n+1}{n(n+1)}\left\vert x \right\vert-\frac{1}{n(n+1)} & \text{if }\frac{1}{n+1} < \left\vert x \right\vert < 1/n \mbox{ and }n\in\N. 
						\end{cases}	\]
The function $f$ is not differentiable at the points $x=\pm 1/n$ and therefore $f$ is not twice differentiable at $0$. Thus, the Gau{\ss}-Kronecker curvature of $f$ does not exist at $0$. On the other hand, it is not difficult to compute that $f$ has a generalized Gau{\ss}-Kronecker curvature at $0$ and this curvature is $2$, see Figure~\ref{figure:function}.
\end{example}

\begin{figure}[htpb]
\includegraphics[width=.65\textwidth]{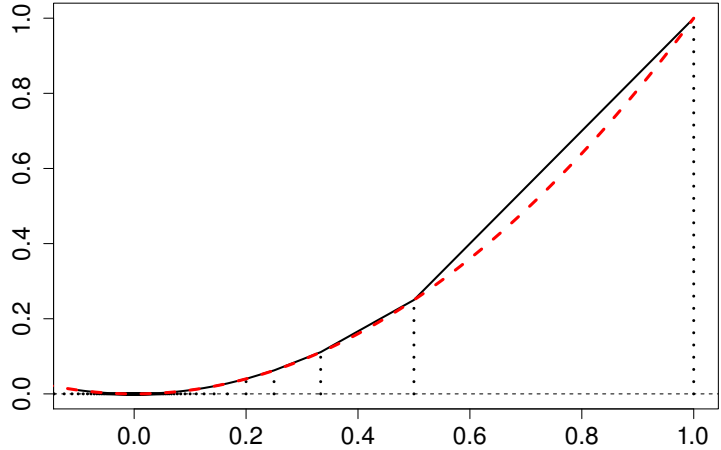} 
\caption{The function $f$ from Example~\ref{example:function} (black solid line) and the function $x \mapsto x^2$ (red dashed line) that approximates $f$ around $x=0$. Since $f$ is not twice differentiable at $x = 0$, its Gau{\ss}-Kronecker curvature does not exist at $0$. Its generalized Gau{\ss}-Kronecker curvature at $0$ exists and is equal to $2$, the Gau{\ss}-Kronecker curvature of $x \mapsto x^2$.}
\label{figure:function}
\end{figure}

\subsection{Floating body and convex floating body}\label{FloatBodConv}

Earliest records on floating bodies can be traced back to the early 19th century work of \citet{Dupin1822} and are motivated by mechanics. By the Archimedean principle, a solid convex body $K\in\CB[3]$ of constant (volumetric mass) density that floats in water has always a set of the same volume above the water surface, regardless of its position.

This leads to the definition of floating bodies for convex bodies in $K\in\CB$ according to Dupin: A nonempty convex subset $K_{[\delta]}$ of $K$ is a floating body of $K$ if each supporting hyperplane to $K_{[\delta]}$ cuts off a set of volume $\delta>0$ of $K$. Dupin observed that a support hyperplane $H$ to $K_{[\delta]}$ touches the boundary of $K_{[\delta]}$ in exactly one point, the barycenter of $K \cap H$. It implies that if $K_{[\delta]}$ exists, its boundary is given by the surface of all barycenters of $H \cap K$ for hyperplanes $H$ that cut off volume $\delta$ from $K$.

\begin{figure}[htpb]
\includegraphics[width=.45\textwidth]{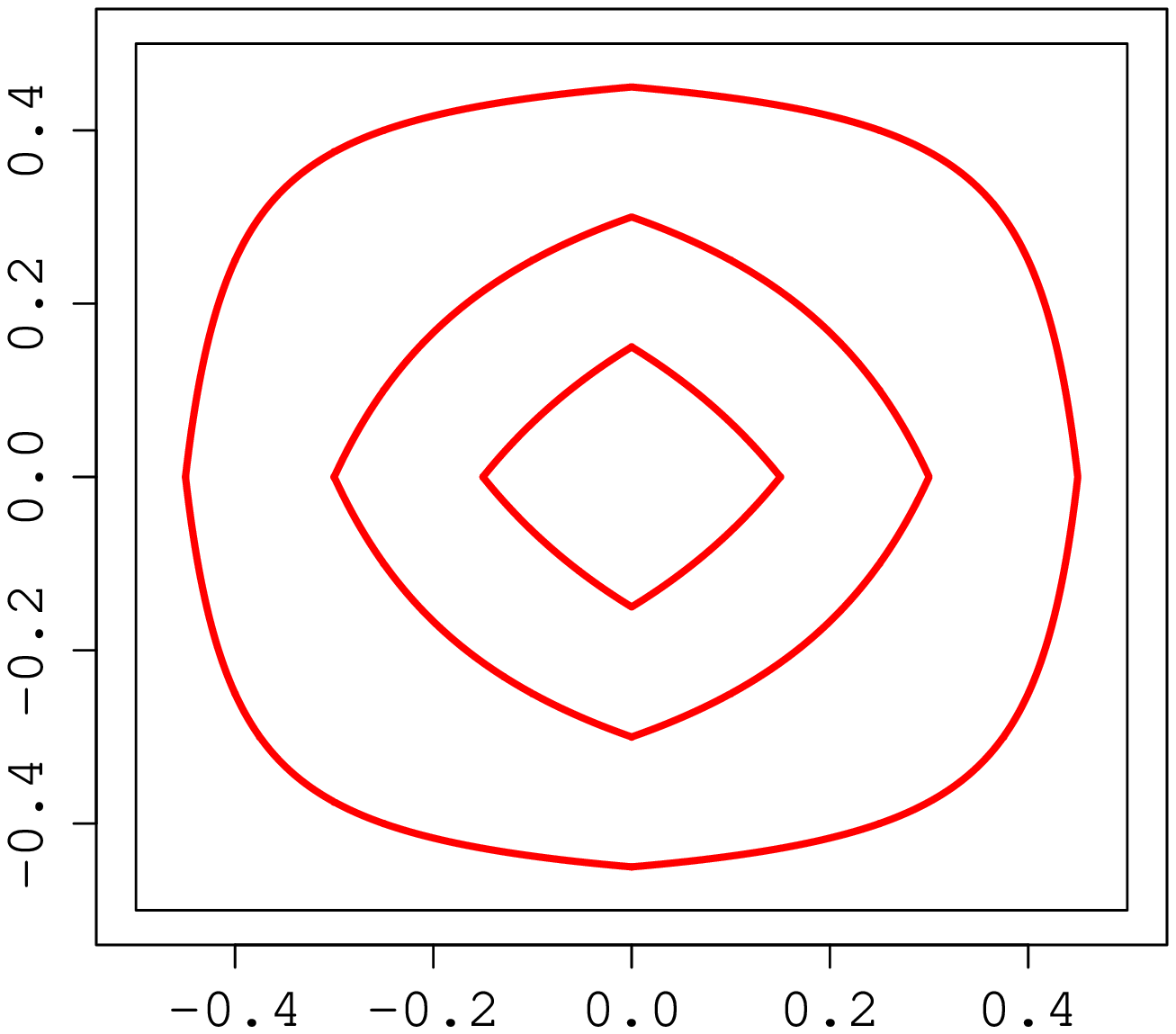} \quad \includegraphics[width=.45\textwidth]{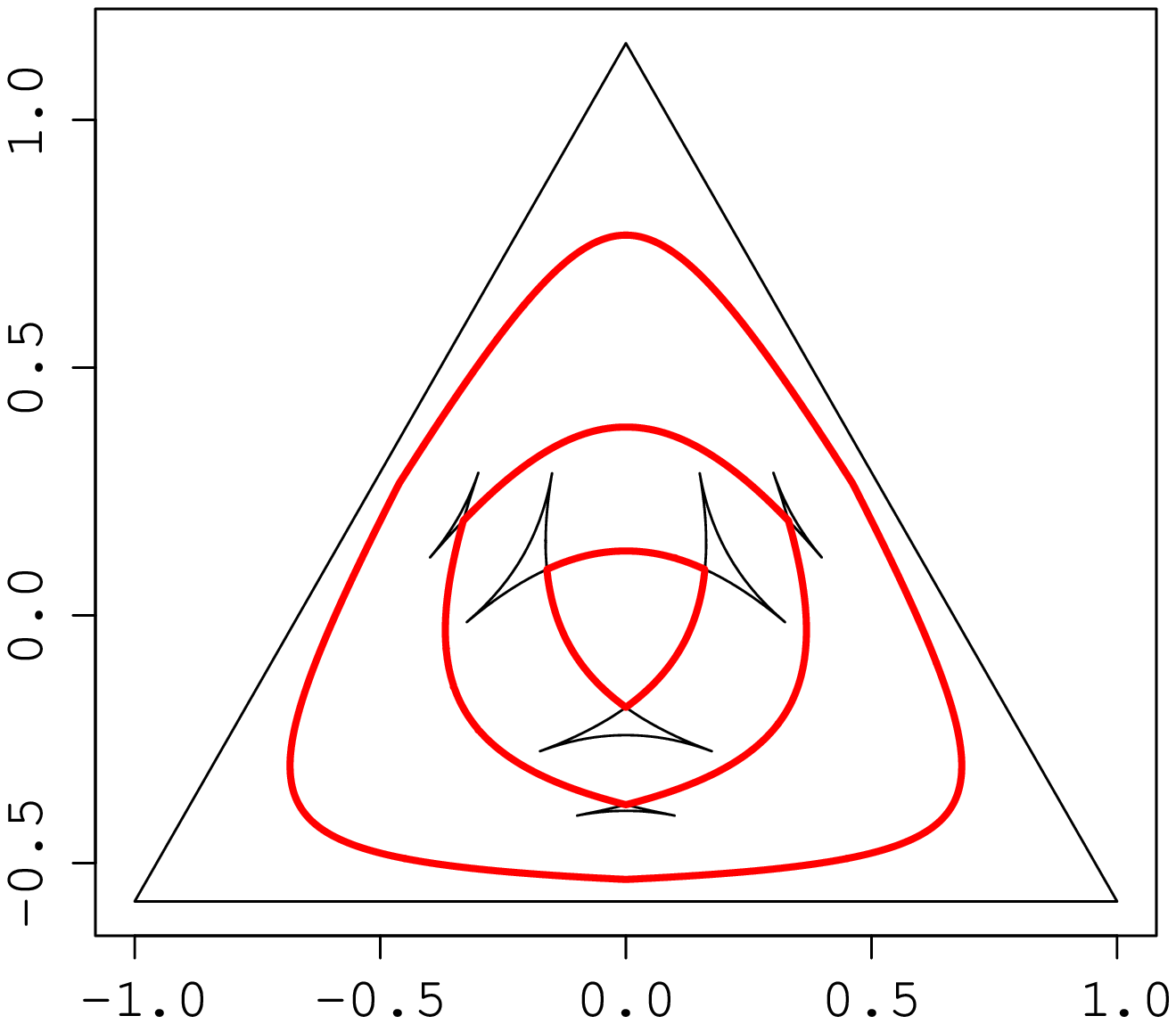}
\caption{Curves of barycenters of hyperplanes that cut off volume $\delta \in \{0.05,0.20,0.35\}$ from convex bodies (thin black lines), and the boundaries of convex floating bodies for the same values of $\delta$ (thick red lines), for the uniform distribution on a square (left panel) and a triangle (right panel). For the square, all (Dupin's) floating bodies exist, and coincide with the convex floating bodies. For the triangle, the boundaries of all convex floating bodies are proper sub-curves of the corresponding black curves (the difference is not visible in the plot for $\delta = 0.05$). The (Dupin's) floating bodies do not exist.}
\label{figure:Dupin floating bodies1}
\end{figure}


The floating body cannot exist for $\delta > \vol{K}/2$. Suppose it does exist. Then any two different parallel supporting hyperplanes of $K_{[\delta]}$ cut off disjoint sets of volume $\delta$ from $K$, and therefore $K_{[\delta]}$ is the empty set. As shown in the next example, the floating body $K_{[\delta]}$ may not exist even for small $\delta > 0$. 

\begin{example}	\label{example:floating bodies}
Let $K \in \CB[2]$ be the equilateral triangle from Example~\ref{example:depth level sets of convex bodies}. For all $\delta>0$, the curve of barycenters of lines that cut off volume $\delta$ from $K$ is not the boundary of a convex set. Some of these curves for various values of $\delta$ are displayed on the left panel of Figure~\ref{figure:Dupin floating bodies1}. Therefore, in agreement with the observation of \citet[pp. 433--434]{Leichtweiss1986}, no floating body of a triangle exists. Compare this also to Example~\ref{example:depth level sets of convex bodies}.

If  $K \in \CB[2]$  is the unit square of Example~\ref{example:depth level sets of convex bodies}, all floating bodies $K_{[\delta]}$ exist for $\delta \in (0,\vol{K}/2]$, and they coincide with the halfspace depth central regions \eqref{central region}.
\end{example}

If $K\in\CB$ has a sufficiently smooth boundary, then $K_{[\delta]}$ exists by \citet[Satz~2]{Leichtweiss1986}, at least for small $\delta>0$.  
However, in many applications (e.g., in Section~\ref{asa} below), existence of  floating bodies  
for all convex bodies is needed. Therefore a modified definition has been proposed, independently by \citet{Barany_Larman1988} and \citet{Schutt_Werner1990}, called the convex floating body.

\begin{definition}
Let $K$ be a convex body in $\mathbb R^d$ and $\delta\geq0$. The convex floating body is the intersection of all halfspaces whose defining hyperplanes cut off a set of volume
$\delta$ of $K$, 
	\begin{equation*}	
	K_{\delta} = \bigcap_{\vol{K\cap H^{-}}=\delta} H^{+},	
	\end{equation*}
where $H\in\mathcal H$ and $H^{+}$ and $H^{-}$ are its associated halfspaces.
\end{definition} 
The convex floating body exists for all convex bodies since it is an intersection of halfspaces. For instance, the convex floating body of the triangle has a boundary described by the red curve in Figure~\ref{figure:Dupin floating bodies1}. Note also that $K_0=K$. It is easy to see that whenever $K_{[\delta]}$ exists, then $K_{[\delta]} = K_\delta$ \citep{Schutt_Werner1990}. Unlike the floating body, the convex floating body is allowed to be an empty set. This way, all convex floating bodies $K_\delta$ of $K$ are well defined convex sets, but certainly $K_\delta = \emptyset$ if $\delta > \vol{K}/2$.

Properties of the convex floating body are stated in the next proposition.

\begin{proposition}\label{PropFloatBod1}
Let $K\in\CB$ and $\delta>0$. 
	\begin{enumerate}[label=(\roman*)]
	\item \label{existence of support} Through every point of $\partial K_{\delta}$ there is at least one supporting hyperplane of $K_{\delta}$ that cuts off a set of volume $\delta$ from $K$.
	\item \label{Dupin}	A supporting hyperplane $H$ of $K_{\delta}$ that cuts off a set of volume $\delta$ touches $K_{\delta}$ in exactly one point, the barycenter of $K\cap H$.
	\item \label{strict convexity} $K_{\delta}$ is strictly convex.
	\item \label{median} Let 
		\begin{equation}\label{delta0}	
		\delta_{0}=\sup\{\delta \colon \vol{K_{\delta}}>0	\}. 
		\end{equation}	
	Then $K_{\delta_{0}}$ consists of one point only and  for $\delta<\delta_{0}$ we have that $K_{\delta}$ is a convex body.
	\end{enumerate}
\end{proposition}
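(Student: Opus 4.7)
The plan is to treat the four parts in order, as each builds on its predecessors, and to use throughout the fundamental monotonicity $K_{\delta_2}\subseteq K_{\delta_1}$ for $\delta_1<\delta_2$. This monotonicity follows because any hyperplane $H$ cutting off volume $\delta_1$ from $K$ can be translated parallel to itself into $K$ until it cuts off $\delta_2$; the resulting halfspace $\tilde H^+$ is contained in $H^+$, so $K_{\delta_2}\subseteq \tilde H^+\subseteq H^+$, and intersecting over all such $H$ yields the containment.

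For part~\ref{existence of support}, given $x\in\partial K_\delta$, I will first exhibit a sequence of hyperplanes $H_n$ with $\vol{K\cap H_n^-}=\delta$ and $\operatorname{dist}(x,H_n^-)\to 0$; otherwise $x$ would lie in the interior of every closed halfspace defining $K_\delta$, contradicting $x\in\partial K_\delta$. Parametrizing each $H_n$ by a unit normal $u_n\in\Sph$ and the unique offset $h(u_n)$ fixed by the volume condition, extract a convergent subsequence $u_{n_k}\to u$. Continuity of $(u,\alpha)\mapsto\vol{K\cap H_{u,\alpha}^-}$ gives $h(u_{n_k})\to h(u)$, and the limit hyperplane $H_{u,h(u)}$ passes through $x$, cuts off exactly volume $\delta$, and satisfies $K_\delta\subseteq H_{u,h(u)}^+$.

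Part~\ref{Dupin} is Dupin's classical envelope theorem. With the parametrization above, implicit differentiation of the identity $\vol{K\cap H_{u,h(u)}^-}=\delta$ combined with the layer-cake formula gives $\nabla_u h(u)=b(u)$, where $b(u)$ is the centroid of $K\cap H_{u,h(u)}$, and this centroid is the unique point at which the supporting hyperplane touches $\partial K_\delta$. The main obstacle is to make the implicit differentiation rigorous without any a priori smoothness of $h$: I will first work on the dense open set of normals $u$ for which $H_{u,h(u)}\cap\operatorname{Int}(K)$ is $(d-1)$-dimensional, where $h$ is $C^1$ by the implicit function theorem, and then extend to all admissible $u$ by continuity of the centroid map.

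For part~\ref{strict convexity}, suppose $[x,y]\subseteq\partial K_\delta$ with $x\neq y$ and let $z$ be an interior point of the segment; by part~\ref{existence of support} some supporting hyperplane $H$ of $K_\delta$ at $z$ cuts off volume $\delta$, and since $K_\delta\subseteq H^+$ with $z$ a convex combination of $x,y\in K_\delta$, both $x$ and $y$ must lie in $H$, whence $H\cap K_\delta\supseteq[x,y]$ has more than one point, contradicting part~\ref{Dupin}. Finally for part~\ref{median}, monotonicity together with the definition of $\delta_0$ implies that for every $\delta<\delta_0$ there exists $\delta'\in(\delta,\delta_0]$ with $\vol{K_{\delta'}}>0$, whence $\vol{K_\delta}>0$ and $K_\delta$ is a closed convex body. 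The identity $K_{\delta_0}=\bigcap_{\delta<\delta_0}K_\delta$, justified because every hyperplane cutting off $\delta_0$ is a limit of parallel translates cutting off $\delta_0-\tfrac{1}{n}$, exhibits $K_{\delta_0}$ as a non-empty compact convex set. To conclude that it is a single point I will invoke the uniqueness of the halfspace median for measures satisfying \eqref{Delta} with contiguous support, recalled in Section~\ref{section:symmetry}: the uniform distribution $P$ on $K\in\CB$ satisfies both conditions, and $\HD(\cdot;P)$ attains its maximum $\delta_0/\vol{K}$ precisely on $K_{\delta_0}$, forcing the latter to be a singleton.
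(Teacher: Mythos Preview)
The paper does not prove this proposition in the text; it refers to \citep[Lemma~2]{Schutt_Werner1994} and to Dupin for part~\ref{Dupin}. Your outline for parts~\ref{existence of support} and~\ref{strict convexity} is sound and standard. Two points deserve tightening.

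In part~\ref{Dupin} the envelope approach works in principle, but your sketch does not explain why the contact point of $H_{u,h(u)}$ with $K_\delta$ equals $\nabla h(u)$: this would require knowing that $h$ agrees with the support function $h_{K_\delta}$ in a neighbourhood of $u$ and that $h_{K_\delta}$ is differentiable there, neither of which is automatic (for the triangle, for instance, $h_{K_\delta}(u)<h(u)$ on an open set of directions). The classical and more elementary route is a direct tilting argument: if $H$ supports $K_\delta$ at $p$ with $\vol{K\cap H^-}=\delta$ and $p$ is not the centroid $b$ of $K\cap H$, rotate $H$ about the $(d-2)$-plane in $H$ through $p$ orthogonal to $p-b$; the first-order change in cut-off volume is $\pm|p-b|\,\vol[d-1]{K\cap H}\neq0$, so for one sign of the rotation the new hyperplane through $p$ cuts off strictly less than $\delta$, and translating it inward to restore volume $\delta$ produces a defining halfspace of $K_\delta$ that excludes $p$, a contradiction. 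This avoids all differentiability bookkeeping.

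In part~\ref{median} your appeal to Mizera--Volauf is legitimate but has an unjustified step: you assert that $K_{\delta_0}$ is the halfspace median set, i.e.\ that $\delta_0=\MD(P)\vol{K}$, yet $\delta_0$ is defined as the supremum of $\delta$ with $\vol{K_\delta}>0$, not with $K_\delta\neq\emptyset$, and equating the two needs an argument (continuity of $\HD(\cdot;P)$ does give it, but you should say so). A cleaner, entirely self-contained route uses part~\ref{strict convexity} directly: first show $\vol{K_{\delta_0}}=0$ (were $K_{\delta_0}$ to contain a ball $\B(x_0,r)$, uniform continuity over $u\in\Sph$ of the offset $t_\delta(u)$ in $\delta$ would force $\B(x_0,r/2)\subseteq K_\delta$ for some $\delta>\delta_0$, contradicting the definition of $\delta_0$); then strict convexity implies that any two distinct points of $K_{\delta_0}$ would have their open segment in the interior, giving positive volume, so $K_{\delta_0}$ is at most a point; your nested-compact argument already gives nonemptiness.
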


Most of Proposition~\ref{PropFloatBod1} was proved in \citep[Lemma~2]{Schutt_Werner1994}. Part~\ref{Dupin}, in dimension $d=3$, is due to \citet{Dupin1822}, see also \citep[p.~435]{Leichtweiss1986}. In general, it is not true that all supporting hyperplanes to the convex floating body $K_{\delta}$ cut off a set of exactly volume $\delta$ from $K$. An example is the simplex, as can be seen also from Example~\ref{example:floating bodies}. Not every point on the boundary of $K_\delta$ has a unique supporting hyperplane. An example is the cube, see Example~\ref{example:floating bodies}.

\citet{Meyer_Reisner1991} show that for centrally symmetric convex bodies $K_{[\delta]}$ exists for any $\delta\in(0,\vol{K}/2]$. Moreover, in that case each $K_{[\delta]}$ is also (centrally) symmetric around the same center of symmetry as $K$. In an unpublished work, K.~Ball gave a different proof of the existence result, see \citep[Section~4]{Meyer_Reisner1991B}. 

\begin{proposition}\label{FloatSymm}
Let $K\in\CB$ be a convex body that is (centrally) symmetric with respect to the origin $0$, i.e. $x\in K$ implies $-x\in K$. Then we have for all $\delta \in (0,\vol{K}/2)$
	\begin{enumerate}[label=(\roman*)]
	\item The floating body of $K$ exists.
	\item \label{C2 differentiability} For all convex bodies $K$ with $C^1$ boundary and all $\delta$ the floating body $K_{\delta}$ has a $C^{2}$ boundary.
	\end{enumerate}
\end{proposition}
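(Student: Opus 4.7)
The plan is to establish parts (i) and (ii) separately, with part (i) being the main content and part (ii) following from smoothness via the implicit function theorem.

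For part (i), by Proposition~\ref{PropFloatBod1}(ii) it suffices to show that every hyperplane $H$ cutting off volume $\delta$ from $K$ supports $K_\delta$: granted this, Dupin's floating body exists and coincides with the convex floating body $K_\delta$. For each $u\in\Sph$ let $\alpha(u)$ denote the unique real number with $\vol{K\cap\{x\colon \langle x,u\rangle\geq \alpha(u)\}}=\delta$; central symmetry of $K$ forces $\alpha(-u)=-\alpha(u)$, and $\delta<\vol{K}/2$ gives $\alpha(u)>0$. Writing $b(u)$ for the barycenter of the section $K\cap H_{u,\alpha(u)}$, the crux is the inequality
\begin{equation*}
\langle b(u),v\rangle\leq \alpha(v)\qquad\text{for every } u,v\in\Sph,
\end{equation*}
which places $b(u)\in K_\delta$ and forces $H_{u,\alpha(u)}$ to support $K_\delta$ at $b(u)$. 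To prove this inequality, I would analyse the two-parameter family of cap volumes $(s,t)\mapsto \vol{K\cap H_{u,s}^{-}\cap H_{v,t}^{-}}$: Brunn's theorem applied to slices by hyperplanes $H_{u,s}$ yields $(d$-$1)$-concavity of cross-sectional volumes, while the central symmetry $K=-K$ provides an evenness property that constrains the first moment of $\langle\cdot,v\rangle$ over the section $K\cap H_{u,\alpha(u)}$.

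For part (ii), once part (i) is available the support function of $K_\delta$ on $\Sph$ equals $\alpha$, defined implicitly by $V(u,\alpha(u))=\delta$ with $V(u,t)=\vol{K\cap\{x\colon \langle x,u\rangle\geq t\}}$. Because $\partial_t V(u,\alpha(u))=-\vol[d-1]{K\cap H_{u,\alpha(u)}}$ is strictly negative, the implicit function theorem reduces the claim to $C^2$ regularity of $V$ in a neighbourhood of the graph of $\alpha$. The hypothesis $\partial K\in C^1$ guarantees that both the cross-sectional area $(u,t)\mapsto \vol[d-1]{K\cap H_{u,t}}$ and the first-moment vector of the section (which arises when differentiating $V$ in $u$) are continuously differentiable, so $V\in C^2$; hence $\alpha\in C^2(\Sph)$. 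Combined with the strict convexity of $K_\delta$ from Proposition~\ref{PropFloatBod1}(iii), this gives a $C^2$ boundary for $K_\delta$.

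The principal obstacle is the inequality $\langle b(u),v\rangle\leq \alpha(v)$ in part (i). Invoking Brunn's theorem in a single direction is not enough; one has to compare two transverse families of caps of equal volume, and the symmetry $K=-K$ must be used in a genuinely nontrivial way to prevent the averaged coordinate $\langle b(u),v\rangle$ from overshooting $\alpha(v)$. A secondary subtlety sits in part (ii): explaining the gain of one derivative from $\partial K\in C^1$ to $\partial K_\delta\in C^2$. This smoothing is intrinsic to the floating body, since each boundary point of $K_\delta$ is obtained by averaging over a $(d-1)$-dimensional section of $K$, a process that improves the regularity of the boundary data by one order.
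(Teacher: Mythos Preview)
The paper does not prove Proposition~\ref{FloatSymm}; it is stated with attribution to \citet{Meyer_Reisner1991} (with Ball's alternative argument mentioned via \citep{Meyer_Reisner1991B}). So there is no in-paper proof to compare against. Let me assess your outline on its own merits and against the published proofs.

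\textbf{Part (i).} Your reduction is correct: showing $\langle b(u),v\rangle\le\alpha(v)$ for all $u,v\in\Sph$ is exactly equivalent to $h_{K_\delta}=\alpha$, which forces every volume-$\delta$ hyperplane to support $K_\delta$ and hence $K_{[\delta]}=K_\delta$. This inequality is precisely the content of Meyer--Reisner's key lemma. However, your sketch of how to obtain it is not a proof. ``Two-parameter family of cap volumes'' plus ``Brunn and evenness constrain the first moment'' names the ingredients without showing how they combine. The actual argument in \citep{Meyer_Reisner1991} fixes the hyperplane $H_{u,\alpha(u)}$ and rotates a second hyperplane through $b(u)$, pairing the cap it cuts with the symmetric cap $-\,(\text{cap})$ and using a monotonicity coming from Brunn--Minkowski on parallel slabs; Ball's proof instead shows directly that $u\mapsto\alpha(u)$ is sublinear via log-concavity of the parallel section function. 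Either route requires a genuine computation you have not supplied. As written, the crucial step is still open.

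\textbf{Part (ii).} The implicit-function-theorem strategy is sound and close to Meyer--Reisner's approach, but two points are glossed over. First, $C^2$ regularity of the support function together with strict convexity does \emph{not} by itself yield a $C^2$ boundary: one also needs the spherical Hessian $\nabla^2_{\Sph}\alpha+\alpha\,\mathrm{Id}$ to be positive definite (equivalently, positive radii of curvature), so that the inverse Gauss map $u\mapsto b(u)=\nabla_{\Sph}\alpha(u)+\alpha(u)u$ is a $C^1$ diffeomorphism onto $\partial K_\delta$. This non-degeneracy must be checked; it comes out of the computation of $D^2V$ and the fact that $K\cap H_{u,\alpha(u)}$ has nonempty relative interior. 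Second, the claim ``$\partial K\in C^1\Rightarrow V\in C^2$'' needs more care: second derivatives of $V$ in the $u$-variables involve how $\partial(K\cap H_{u,t})$ moves with $u$, and the relevant boundary integrals are only controlled once you use the $C^1$ hypothesis to parametrize $\partial K$ locally as a $C^1$ graph and differentiate under the integral. Your ``averaging gains a derivative'' heuristic is the right intuition, but it does not replace that calculation.

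In summary: the architecture of your plan matches the published proofs, but both the central inequality in (i) and the regularity bookkeeping in (ii) are left as assertions rather than arguments.
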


The next two results can be found in \citet[Theorem~5.3 and Proposition~5.1]{Schutt_Werner1992}, and describe the behavior of the volume of $K\setminus K_{\delta}$.

\begin{proposition}\label{DiffFloat1}
Let $K\in\CB$, and let $\delta_{0}$ be as in \eqref{delta0}. Then $\vol{K\setminus K_{\delta}}$ is a differentiable function of $\delta$ on $(0,\delta_{0})$ and 
	\[	\frac{\dd}{\dd \delta}\vol{K\setminus K_{\delta}}=\int_{\partial K_{\delta}} \frac{1}{\vol[d-1]{K\cap H(x,N_{\partial K_{\delta}}(x))}}	\dd \mu_{\partial K_{\delta}}(x),	\]
where $H(x,N_{\partial K_{\delta}}(x))$ is the hyperplane passing through $x$ orthogonal to the normal of $K_\delta$ at $x$. 
\end{proposition}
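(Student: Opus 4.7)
The plan is to represent the volume of $K_\delta$ through its support function and to differentiate the defining implicit relation. For each $u\in\Sph$, let $h_\delta(u)=h_{K_\delta}(u)$; by the definition of the convex floating body as an intersection of halfspaces, $h_\delta(u)$ is the unique value of $h$ for which the halfspace $\{y\in\R^d:\langle y,u\rangle\geq h\}$ cuts off volume exactly $\delta$ from $K$ (uniqueness holds for $\delta\in(0,\delta_{0})$ because the function $h\mapsto\vol{K\cap\{y:\langle y,u\rangle\geq h\}}$ is then strictly decreasing near $h_\delta(u)$).

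The first key step is implicit differentiation. By Fubini's theorem applied to the slicing of $K$ by hyperplanes orthogonal to $u$, the map
\[
G(u,h)=\vol{K\cap\{y\in\R^d:\langle y,u\rangle\geq h\}}
\]
is continuously differentiable in $h$ on the interior of its effective domain, with $\partial_h G(u,h)=-\vol[d-1]{K\cap H(h,u)}$, where $H(h,u)=\{y:\langle y,u\rangle=h\}$. Since $\delta<\delta_{0}$ forces the cap at height $h_\delta(u)$ to have positive $(d-1)$-sectional area (otherwise $K_\delta$ would collapse to a point), the implicit function theorem yields the pointwise formula
\[
\frac{\partial h_\delta(u)}{\partial\delta}=-\frac{1}{\vol[d-1]{K\cap H(h_\delta(u),u)}},
\]
and this partial derivative depends continuously on $(u,\delta)$ on the relevant range.

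The second step is to convert this pointwise expression into the boundary integral. Since $K_\delta$ is strictly convex by Proposition~\ref{PropFloatBod1}\ref{strict convexity}, its Gauss map sends $\mu_{\partial K_\delta}$-almost every $x\in\partial K_\delta$ to a single outer normal $N_{\partial K_\delta}(x)\in\Sph$, and the pushforward of $\mu_{\partial K_\delta}$ under this Gauss map is the surface area measure $S_{K_\delta}$ of $K_\delta$. Applying the Minkowski first variation formula $\frac{\dd}{\dd t}\vol{K_t}=\int_{\Sph}(\partial_t h_{K_t})\,\dd S_{K_t}$ to the family $K_t=K_{\delta+t}$ and then changing variables back to $\partial K_\delta$ gives
\[
\frac{\dd}{\dd\delta}\vol{K\setminus K_\delta}=-\frac{\dd}{\dd\delta}\vol{K_\delta}=\int_{\partial K_\delta}\frac{1}{\vol[d-1]{K\cap H(x,N_{\partial K_\delta}(x))}}\,\dd\mu_{\partial K_\delta}(x).
\]

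The main obstacle is rigorously justifying the first variation formula in this setting, where the family $\delta\mapsto K_\delta$ is only guaranteed to be continuous in the Hausdorff distance and not a priori $C^1$ in $\delta$. I would bypass this by a direct shell argument: for $\varepsilon>0$ small, decompose $K_\delta\setminus K_{\delta+\varepsilon}$ into radial prisms over $\partial K_\delta$ of thickness $h_\delta(N_{\partial K_\delta}(x))-h_{\delta+\varepsilon}(N_{\partial K_\delta}(x))$, evaluate the total volume via Fubini as the boundary integral of these thicknesses plus an error of order $o(\varepsilon)$ (using uniform continuity of the integrand on the compact set $\partial K_\delta$ together with the continuous dependence of $h_\cdot(u)$ on $\delta$ established above), divide by $\varepsilon$, and take $\varepsilon\to 0^{+}$. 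Performing the same estimate with $\delta$ and $\delta+\varepsilon$ interchanged shows that the left and right difference quotients at $\delta$ agree with the claimed boundary integral, thereby establishing differentiability and the stated formula in one stroke.
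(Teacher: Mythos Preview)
The paper does not prove this proposition; it only cites \citet[Theorem~5.3]{Schutt_Werner1992}. So your proposal has to be assessed on its own.

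Your opening step asserts that for every $u\in\Sph$ the support value $h_\delta(u)=h_{K_\delta}(u)$ is the unique $h$ with $\vol{K\cap\{y:\langle y,u\rangle\geq h\}}=\delta$. This is false in general: as the paper remarks immediately after Proposition~\ref{PropFloatBod1}, not every supporting hyperplane of $K_\delta$ cuts off a set of volume exactly $\delta$, the simplex being the explicit counterexample. Proposition~\ref{PropFloatBod1}\ref{existence of support} guarantees only that through each boundary \emph{point} of $K_\delta$ there passes some $\delta$-cutting supporting hyperplane; for directions $u$ lying in the normal cone at a non-smooth point of $\partial K_\delta$ one has $h_\delta(u)$ strictly below your implicitly defined $g(u)$, and these inactive directions occupy a set of positive Lebesgue measure on $\Sph$ (three arcs, for the triangle). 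Your implicit-differentiation formula for $\partial_\delta h_\delta(u)$ is therefore unjustified on that set; and even at an active direction you have not argued that it remains active as $\delta$ varies, so $h_{\delta+\varepsilon}(u)=g_{\delta+\varepsilon}(u)$ may fail at normals $u=N_{\partial K_\delta}(x)$ of smooth points $x\in\partial K_\delta$, which undercuts your shell-thickness computation as well. It is true that the inactive directions carry zero $S_{K_\delta}$-mass (being the spherical image of a $\mu_{\partial K_\delta}$-null set of corners), so the final boundary integral may well survive, but closing the argument requires either controlling $\partial_\delta h_\delta$ on the inactive set separately, or abandoning the spherical parametrisation and working directly on $\partial K_\delta$ via a radial map from an interior point, which is the route taken in \citet{Schutt_Werner1992}. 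As written, your argument is complete only under the extra hypothesis that Dupin's floating body $K_{[\delta]}$ exists throughout the range (so that $h_\delta\equiv g$), e.g.\ for centrally symmetric $K$ by Proposition~\ref{FloatSymm}.
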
 

\begin{proposition}\label{DiffFloat3}
Let $K\in\CB$ be a (centrally) symmetric convex body in $\R^{d}$. Then we have for all $\delta\in(0,\vol{K}/2)$
	\[	\frac{\dd}{\dd \delta}\vol{K\setminus K_{\delta}} \leq \frac{d}{\delta}\vol{K\setminus K_{\delta}}.	\]
\end{proposition}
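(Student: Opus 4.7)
The strategy is to combine the formula of Proposition~\ref{DiffFloat1} with a pointwise cap-volume estimate that uses central symmetry of $K$ only through Brunn's theorem, and then to recognize the resulting boundary integrals as mixed volumes. Fix $\delta \in (0,\vol{K}/2)$. By Proposition~\ref{FloatSymm}(i) the floating body exists, so at every $x \in \partial K_\delta$ the supporting hyperplane $H_x$ with outward unit normal $u=u(x)$ cuts off a cap $C_x = K\cap H_x^{+}$ of volume exactly $\delta$. Since $K_\delta$ is symmetric about the origin, $0$ lies in its interior, so $\langle x,u\rangle = h_{K_\delta}(u)>0$.

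The key geometric step is the cap estimate. By Brunn's theorem the function $t\mapsto \vol[d-1]{K\cap H_{u,t}}^{1/(d-1)}$ is concave on its support, and by the central symmetry of $K$ it is even in $t$; hence it is maximized at $t=0$ and non-increasing on $[0,\infty)$. In particular, for every $t\in[h_{K_\delta}(u),h_K(u)]$ we have $\vol[d-1]{K\cap H_{u,t}} \le \vol[d-1]{K\cap H_x}$. Slicing the cap perpendicular to $u$ yields
\[
\delta = \vol{C_x} = \int_{h_{K_\delta}(u)}^{h_K(u)} \vol[d-1]{K\cap H_{u,t}}\dd t \;\le\; \bigl(h_K(u)-h_{K_\delta}(u)\bigr)\,\vol[d-1]{K\cap H_x},
\]
so $1/\vol[d-1]{K\cap H_x} \le \delta^{-1}\bigl(h_K(u)-h_{K_\delta}(u)\bigr)$.

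Substituting this pointwise bound into the integral formula of Proposition~\ref{DiffFloat1} gives
\[
\delta\,\frac{\dd}{\dd\delta}\vol{K\setminus K_\delta} \;\le\; \int_{\partial K_\delta} h_K(u(x))\dd\mu_{\partial K_\delta}(x) - \int_{\partial K_\delta} h_{K_\delta}(u(x))\dd\mu_{\partial K_\delta}(x).
\]
The second integral equals $d\vol{K_\delta}$ by the identity $\vol{L} = \frac{1}{d}\int_{\partial L} h_L(n(y))\dd\mu_{\partial L}(y)$. The first integral equals $d\cdot V(K,K_\delta,\ldots,K_\delta)$, a mixed volume of $K$ with $(d-1)$ copies of $K_\delta$; since $K_\delta\subset K$, monotonicity of mixed volumes in each argument yields $V(K,K_\delta,\ldots,K_\delta) \le V(K,K,\ldots,K) = \vol{K}$. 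Therefore
\[
\delta\,\frac{\dd}{\dd\delta}\vol{K\setminus K_\delta} \;\le\; d\vol{K} - d\vol{K_\delta} \;=\; d\vol{K\setminus K_\delta},
\]
which is the claim.

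The main obstacle is the cap-volume estimate, and central symmetry enters precisely there: without it, parallel sections of $K$ on the side of $H_x$ away from the origin are not necessarily dominated by $\vol[d-1]{K\cap H_x}$, and the linear-in-height upper bound for $\vol{C_x}$ collapses, so the factor $d$ in the inequality would not be accessible by this route.
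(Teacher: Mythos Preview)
Your proof is correct. The paper itself does not prove this proposition but cites \citet[Proposition~5.1]{Schutt_Werner1992}; your argument---bounding the cap volume via Brunn's theorem and central symmetry to get $\delta \leq (h_K(u)-h_{K_\delta}(u))\vol[d-1]{K\cap H_x}$, substituting into the derivative formula of Proposition~\ref{DiffFloat1}, and controlling the resulting boundary integral by the mixed-volume inequality $V(K,K_\delta,\dots,K_\delta)\le \vol{K}$---is precisely the approach of the original reference.
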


\subsection{Affine surface area} \label{asa}


An important affine invariant from affine convex  geometry is the affine surface area.  Applications of the affine surface area are numerous. We only name some in convex geometry \citep{LutwakZhang1997, GardZhang1998, Boe2010, Ludwig2010, Ludwig_Reitzner2010, Haberl_Parapatits2014}, in 
differential geometry \citep{Andrews1996, Andrews1999, Stancu2003, Mohammad_Stancu2013}, approximation of convex bodies by polytopes (see Section \ref{ApproxCoBodPol}), information theory \citep{LYZ2004, Werner2012, AKSW2012, Werner2013}, and partial differential equations \citep{Lutwak_Oliker1995, Trudinger_Wang2005}.

Let $K$ be a convex body in $\R^d$ with a $C^2$ boundary. Then  for all $x \in \partial K$, the Gau{\ss}-Kronecker curvature $\kappa(x)$ exists and  the 
(classical) affine surface area, introduced by \citet{Blaschke1923} in 1923 in dimensions two and three, is defined as
	\begin{equation*}	
	\asa{K} = \int_{\partial K} \kappa(x)^{\frac{1}{d+1}} \dd \mu_{\partial K}(x).	
	\end{equation*}
For a Euclidean ball with radius $1$, the affine surface area equals its surface area. It is $0$ for all polytopes. 
 \citet{Blaschke1923} observed that for convex bodies in $\R^3$ with analytic boundary the following identity holds
\begin{equation}\label{asa-Blaschke}
	\lim_{\delta \to 0} \frac{\text{vol}_3(K)-\text{vol}_3(K_{[\delta]})}{\delta^{\frac{1}{2}}}=\frac{1}{\sqrt{\pi}}
\int_{\partial K}\kappa(x)^{\frac{1}{4}} \dd \mu_{\partial K}(x).
	\end{equation}
An important tool in the proof of this identity is the rolling theorem of \citet{Blaschke1923}: The floating body exists if a sufficiently small Euclidean ball rolls freely inside $K$, i.e., there is $r>0$
such that for all $x \in \partial K$ there is $y \in K$ such that $\|x-y\|=r$ and $\B(y, r) \subset K$.

It is natural to ask if formula \eqref{asa-Blaschke} can be extended to all dimensions and all convex bodies using the convex floating body instead of the floating body. This is indeed the case and was achieved in \citet{Schutt_Werner1990}, where now the function $\kappa$ under the integral is the generalized Gau{\ss}-Kronecker curvature \eqref{DefGenCurv2}.

\begin{theorem} \label{FloatASA1}
Let $K\in\CB$. Then
	\begin{equation}\label{FloatASA1-1}
	\lim_{\delta \to 0} \frac{\vol{K}-\vol{K_{\delta}}}{\delta^{\frac{2}{d+1}}}=\frac{1}{2}\left(\frac{d+1}{\vol[d-1]{\B[d-1]}}\right)^{\frac{2}{d+1}}
\int_{\partial K}\kappa(x)^{\frac{1}{d+1}} \dd \mu_{\partial K}(x).
	\end{equation}
\end{theorem}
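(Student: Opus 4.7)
The strategy is to reduce the limit to a pointwise-plus-dominated-convergence statement on $\partial K$: express $\vol{K}-\vol{K_\delta}$ as an integral over $\partial K$ whose integrand, after rescaling by $\delta^{-2/(d+1)}$, converges $\mu_{\partial K}$-almost everywhere to a constant multiple of $\kappa(x)^{1/(d+1)}$. Solving the defining relation $c_d\Delta(x,\delta)^{d+1}/\delta^2\to\kappa(x)$ for $\Delta$ yields $\Delta(x,\delta)\sim c_d^{-1/(d+1)}\kappa(x)^{1/(d+1)}\delta^{2/(d+1)}$, and a direct computation fixes the constant as $c_d^{-1/(d+1)}=\tfrac{1}{2}\bigl((d+1)/\vol[d-1]{\B[d-1]}\bigr)^{2/(d+1)}$, matching the claim.

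For the local picture, fix $x\in\partial K$ where the generalised Gauss--Kronecker curvature $\kappa(x)$ exists and the supporting hyperplane is unique; by the Busemann--Feller--Aleksandrov theorem this is $\mu_{\partial K}$-almost every point. Near such $x$ the body is well approximated by a paraboloid with Hessian determinant $\kappa(x)$, and the cap at $x$ of height $h$ has cross-section at height $t\in[0,h]$ equal (to leading order) to an ellipsoid of $(d-1)$-volume $\vol[d-1]{\B[d-1]}(2t)^{(d-1)/2}/\sqrt{\kappa(x)}$; the existence of $\kappa(x)$ gives exactly $\Delta(x,\delta)^{d+1}\sim\kappa(x)\delta^2/c_d$. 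By Dupin's theorem (Proposition~\ref{PropFloatBod1}\ref{Dupin}), the supporting hyperplane of $K_\delta$ parallel to the tangent at $x$ touches $K_\delta$ at the barycenter of the cap's top cross-section, which in the paraboloid model sits at height $\Delta(x,\delta)$ above $x$ along the inward normal $N(x)$.

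For the global assembly I would parametrise the shell $K\setminus K_\delta$ by $(x,t)\mapsto x-tN(x)$ with $x\in\partial K$ and $t\in[0,s(x,\delta)]$, where $s(x,\delta)$ is the distance from $x$ to $\partial K_\delta$ along $N(x)$. The local picture gives $s(x,\delta)=\Delta(x,\delta)(1+o(1))$ at almost every $x$, and the Jacobian of the normal parametrisation is $\prod_i(1-t\kappa_i(x))=1+o(1)$ uniformly on $t\in[0,s(x,\delta)]$ as $\delta\to 0$. Thus
\[
\vol{K\setminus K_\delta}=(1+o(1))\int_{\partial K}\Delta(x,\delta)\,d\mu_{\partial K}(x),
\]
and dividing by $\delta^{2/(d+1)}$ and invoking dominated convergence with the pointwise limit from the previous paragraph produces the theorem.

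The main obstacle is supplying an integrable majorant for $\delta^{-2/(d+1)}\Delta(x,\delta)$ valid at \emph{every} boundary point, including points where $\kappa(x)$ fails to exist, vanishes, or is infinite, and where the normal need not be unique. One route is to bound caps from below by an inscribed ball at $x$ (which yields a universal estimate $\Delta(x,\delta)\leq C_K\delta^{2/(d+1)}$ on all of $\partial K$) and to control the lateral extension of caps near flat regions of $\partial K$; a cleaner but more technical route, followed by \citet{Schutt_Werner1990}, avoids the Jacobian estimates entirely by working on $\partial K_\delta$ via Proposition~\ref{DiffFloat1} and passing to the limit through monotonicity of the cross-section volumes appearing in its integrand.
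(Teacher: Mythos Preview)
Your overall strategy—represent $\vol{K\setminus K_\delta}$ as an integral of cap heights over $\partial K$, establish the pointwise limit $\delta^{-2/(d+1)}\Delta(x,\delta)\to c_d^{-1/(d+1)}\kappa(x)^{1/(d+1)}$ from the definition of generalised curvature, and close with dominated convergence—is exactly the architecture of the Sch\"utt--Werner proof the paper summarises. The gap is in the majorant.

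The claimed universal estimate $\Delta(x,\delta)\leq C_K\,\delta^{2/(d+1)}$ is false for general $K$. At boundary points of infinite generalised curvature the cap of volume $\delta$ is much taller than $\delta^{2/(d+1)}$: for instance, if $\partial K$ locally looks like $y=|x|^p$ with $1<p<2$ in $\R^2$, then $\Delta(0,\delta)\asymp\delta^{p/(p+1)}$ and $\delta^{-2/3}\Delta(0,\delta)\to\infty$. An inscribed ball at $x$ only gives $\Delta(x,\delta)\leq C\,r(x)^{-(d-1)/(d+1)}\delta^{2/(d+1)}$, where $r(x)$ is the rolling function, and $r(x)$ is not bounded away from zero. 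The missing ingredient is precisely what the paper isolates as the ``important tool'' in the proof: Proposition~\ref{SW1}, which shows that $r^{-\alpha}$ is $\mu_{\partial K}$-integrable for every $\alpha<1$. Since $(d-1)/(d+1)<1$, this supplies the integrable dominating function you need. Without this strengthened rolling theorem the dominated-convergence step does not go through.

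A secondary issue: the identification $s(x,\delta)=\Delta(x,\delta)(1+o(1))$ is not automatic. One always has $s(x,\delta)\leq\Delta(x,\delta)$, but the reverse inequality requires ruling out that an \emph{oblique} hyperplane cuts off volume $\delta$ while excluding the point $x-\Delta(x,\delta)N(x)$. This is again controlled via the rolling function (a ball of radius $r(x)$ inside $K$ forces nearby $\delta$-cutting hyperplanes to be nearly parallel to the tangent at $x$), so once you have Proposition~\ref{SW1} in hand this can be made rigorous, but it is a genuine step rather than a parenthetical $(1+o(1))$. Your alternative suggestion of routing through Proposition~\ref{DiffFloat1} is anachronistic: that formula is from \citet{Schutt_Werner1992}, two years after the theorem, and is not how the original argument proceeds.
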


The expressions in the above theorem can thus be used to define the affine surface area for all convex bodies. Around the same time, different  extensions of the affine surface area to arbitrary convex bodies were given by \citet{Leichtweiss1986} and \citet{Lutwak1991} and afterwards several more have been found, e.g., \citep{Huang_etal2018, Meyer_Werner2000, Werner1996}. It has been shown that all those extensions coincide.

Expression \eqref{FloatASA1-1} is called the affine surface area because of its similarity to  Min\-kow\-ski's definition of  surface area 
	\[	\vol[d-1]{\partial K} = \lim_{\delta\to 0}\frac{1}{\delta}\left(\vol{K+\delta \B}-\vol{K}\right),	\]
and because for all affine maps $T \colon \R^{d}\to\R^{d}$, 
	\(	\asa{T(K)} = \left\vert \det(T) \right\vert^{\frac{d-1}{d+1}}\asa{K}.	\)
The latter equation follows easily from \eqref{FloatASA1-1}. Indeed,
	\begin{equation*}
	(T(K))_{\delta}=T\left(K_{\frac{\delta}{\left\vert \det(T) \right\vert}}\right).
	\end{equation*}


An important tool in the proof of  Theorem \ref{FloatASA1} is a strengthening of Blaschke's rolling theorem.  
To achieve this, \citet{Schutt_Werner1990} introduce the rolling function. For $x \in \partial K$, the rolling function $r(x)$ is the supremum of all radii of Euclidean balls that contain $x$ and that are contained in $K$, i.e. $r \colon \partial K\to\R$ is defined by
	\begin{equation*}
	r(x)=\sup\left\{\left\Vert x-z \right\Vert \colon z\in K, \B(z,\|x-z\|)\subseteq K\right\}.
	\end{equation*}
If $K$ does not have a unique normal at $x$ then $r(x)=0$. The following was shown by \citet[Lemmas~4 and~5]{Schutt_Werner1990}.

\begin{proposition}\label{SW1}
Let $K\in\CB$ be such that $\B \subset K$. Then we have for all $t$ with $0 \leq t \leq 1$ that $\{x \in \partial K \colon r(x) \geq t\}$ is a closed set and
	\[	(1-t)^{d-1}\vol[d-1]{\partial K} \leq \vol[d-1]{\{x \in \partial K \colon r(x) \geq t\}}.	\]
The inequality is optimal.
In particular, the function $r^{-\alpha}\colon \partial K\to\R$ is Lebesgue integrable for all $\alpha$ with $0\leq\alpha<1$.
\end{proposition}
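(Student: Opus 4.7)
The plan is to compare $\partial K$ with the boundary of the inner parallel body $K^{-t}=\{y\in\R^d\colon \B(y,t)\subseteq K\}$, which is a $d$-dimensional convex body whenever $\B\subseteq K$ and $t\in[0,1)$. Closedness of the set $\{x\in\partial K\colon r(x)\geq t\}$ follows from a standard compactness argument: given $x_n\to x$ in this set with witness balls $\B(z_n,s_n)\subseteq K$, $s_n\geq t$, a subsequence satisfies $z_n\to z$, hence $s_n\to s=\|x-z\|\geq t$ and closedness of $K$ yields $\B(z,s)\subseteq K$, so $r(x)\geq t$.

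The main inequality splits in two steps. First, convexity of $K$ together with $\B\subseteq K$ gives $(1-t)x+tz\in K$ for every $x\in K$ and $z\in\B$, so $(1-t)K\subseteq K^{-t}$; monotonicity of surface area for nested convex bodies and the scaling rule then yield
\[
(1-t)^{d-1}\vol[d-1]{\partial K}=\vol[d-1]{\partial((1-t)K)}\leq\vol[d-1]{\partial K^{-t}}.
\]
Second, I would define $\phi\colon\partial K^{-t}\to\partial K$ by $\phi(y)=y+t\nu(y)$, with $\nu(y)$ an outer unit normal of $K^{-t}$ at $y$ (measurably selected, unique a.e.). Standard inner-parallel-body theory identifies $\phi(y)$ as the contact point of the inscribed ball $\B(y,t)\subseteq K$ with $\partial K$, so $r(\phi(y))\geq t$ and $\phi$ sends $\partial K^{-t}$ into $A:=\{x\in\partial K\colon r(x)\geq t\}$. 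The monotonicity of the normal map, $\langle y_1-y_2,\nu(y_1)-\nu(y_2)\rangle\geq 0$ by convexity, combined with expanding $\|\phi(y_1)-\phi(y_2)\|^2$, shows $\phi$ is injective with $1$-Lipschitz inverse on its image. The area formula for Lipschitz maps then gives $\vol[d-1]{A}\geq\vol[d-1]{\phi(\partial K^{-t})}\geq\vol[d-1]{\partial K^{-t}}$, completing the bound.

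Optimality is witnessed by $K=[-1,1]^d$: there $K^{-t}=(1-t)K$ and $A$ is a concentric $(d-1)$-cube of side $2(1-t)$ on each of the $2d$ facets, giving exact equality $\vol[d-1]{A}=(1-t)^{d-1}\vol[d-1]{\partial K}$. For Lebesgue integrability of $r^{-\alpha}$ with $\alpha\in(0,1)$, Bernoulli's inequality $(1-u)^{d-1}\geq 1-(d-1)u$ and the main bound yield $\vol[d-1]{\{r<u\}}\leq(d-1)u\,\vol[d-1]{\partial K}$ for $u\in[0,1]$, while trivially $\vol[d-1]{\{r<u\}}\leq\vol[d-1]{\partial K}$ for $u\geq 1$. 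The layer-cake identity
\[
\int_{\partial K}r^{-\alpha}\dd\mu_{\partial K}=\alpha\int_0^\infty u^{-\alpha-1}\vol[d-1]{\{r<u\}}\dd u
\]
then converges, the linear bound taming the singularity at $0$ and the trivial bound taming the tail at $\infty$; the case $\alpha=0$ is clear since $\partial K$ has finite $(d-1)$-measure.

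The main technical obstacle will be making the expanding-map step rigorous on the Lipschitz hypersurface $\partial K^{-t}$: a Borel selection of the possibly multi-valued outer normal, the a.e.\ identification of $\phi(y)$ with the unique contact point of $\B(y,t)$ with $\partial K$, and the change of variables formula for Lipschitz maps between rectifiable sets are all required. Each ingredient is classical for boundaries of convex bodies, but careful invocation is needed to glue everything together.
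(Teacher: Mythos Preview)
Your argument is correct. The paper itself does not supply a proof of this proposition; it merely cites \citet[Lemmas~4 and~5]{Schutt_Werner1990}, so there is no in-paper proof to compare against. Your route via the inner parallel body $K^{-t}$, the containment $(1-t)K\subseteq K^{-t}$, and the expanding map $\phi(y)=y+t\nu(y)$ is precisely the strategy of the cited source, and your treatment of optimality (the cube) and of the integrability of $r^{-\alpha}$ via the layer-cake formula is standard and sound.

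One point worth making explicit when you write this up: the identity ``$\phi(y)\in\partial K$'' can fail at points $y\in\partial K^{-t}$ where the outer normal is not unique (e.g.\ at a corner of $K^{-t}$ for $K=[-1,1]^d$, choosing $\nu$ in the interior of the normal cone sends $y+t\nu$ into $\operatorname{Int}(K)$). It does hold at every $y$ where $\nu(y)$ is unique, since any contact point $w\in\partial K\cap\partial\B(y,t)$ satisfies $\|y'-w\|\geq t$ for all $y'\in K^{-t}$, making $(w-y)/t$ an outer normal of $K^{-t}$ at $y$ and hence equal to $\nu(y)$. You already flag this (``a.e.\ identification''), and since the exceptional set has $\mathcal H^{d-1}$-measure zero the Hausdorff-measure comparison is unaffected; just be sure the written proof restricts $\phi$ to the full-measure smooth part of $\partial K^{-t}$ before invoking the expanding-map estimate.
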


Note that by taking $t=0$ in Proposition~\ref{SW1} it follows that the boundary of a  convex body is almost everywhere differentiable.


Affine invariance is a useful property as it lets us  consider convex  bodies independent of their position in space. Another extremely important property of the affine surface area is the affine isoperimetric inequality which says that for all convex bodies $K\in\CB$,
	\begin{equation}	\label{AffIsoIneq1}
	\frac{\asa{K}}{\asa{\B}} \leq \left(\frac{\vol{K}}{\vol{\B}}\right)^{\frac{d-1}{d+1}},
	\end{equation}
with equality if and only if $K$ is an ellipsoid (see, e.g., \citep[Section~10.5]{Schneider2014}). The affine isoperimetric inequality is stronger than the classical isoperimetric inequality and provides solutions to many problems where ellipsoids are extrema \citep{Lutwak1993, Schutt_Werner1994, Stancu2006, Werner_Ye2011}.

The affine isoperimetric inequality \eqref{AffIsoIneq1} is equivalent to another classical inequality from convex geometry, the Blaschke-Santal\'o inequality \citep{Blaschke1923, Santalo1949}. For an interior point $x_{0}$ of a convex body $K$ recall the definition of the polar body $K^{x_{0}}$ of $K$ w.r.t. $x_{0}$ from \eqref{polar}. The Blaschke-Santal\'o inequality states that for all convex bodies $K$ in $\R^{d}$, 
	\begin{equation}	\label{Blaschke-Santalo}
	\vol{K}\vol{K^{s(K)}} \leq \vol{\B}^{2},	
	\end{equation}
where $s(K)$ is the Santal\'o point of $K$, i.e. the unique point for which the minimum is attained on the left hand side. This inequality and its counterpart, the reverse Blaschke-Santal\'o inequality (proved by \citet{Bourgain_Milman1987} and closely connected to the still-unsolved Mahler's conjecture, see e.g. \citet{Giannopoulos_etal2014}), are helpful to estimate  the volume of convex bodies in situations, when it is easier to compute the volume of the polar $K^{x_{0}}$ of a convex body. These inequalities have important applications in convex geometry, functional analysis, Banach space theory, quantum information theory, operator theory and geometric number theory. For background including references,
see e.g.,  the books \citep{AGM2015, Gardner2006, Giannopoulos_etal2014, Koldobsky2005, Schneider2014}.

To conclude this section, note that for a polytope $S \in \CB$ we have a different behavior of the volume difference $\vol{S \setminus S_{\delta}}$ than that from Theorem~\ref{FloatASA1}. To describe it, we need the notion of flag. A flag of a polytope $S$ is a $d$-tuple $(f_{0},\dots,f_{d-1})$ where $f_i$ is an $i$-dimensional face of $S$ with $f_{i}\subset f_{i+1}$. $\operatorname{fl}_{d}(S)$ denotes the number of flags of the polytope $S$.

\begin{theorem}\label{PolyFloatBod1}
Let $S$ be a convex polytope with nonempty interior in $\R^d$. Then
	\[	\lim_{\delta\to0}\frac{\vol{S}-\vol{S_{\delta}}}{\delta\left(\log\frac{1}{\delta}\right)^{d-1}}=\frac{\operatorname{fl}_{d}(S)}{d!\,d^{d-1}}.	\]
\end{theorem}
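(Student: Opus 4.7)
The plan is to localize the computation near the vertices of $S$ and then express the leading asymptotic as a sum over flags by decomposing each vertex cone into simplicial cones via barycentric subdivision. The first step is a localization lemma: the wet part $S\setminus S_\delta$ concentrates near the vertices of $S$, so if $U=\bigcup_v U_v$ is the union of suitable small neighbourhoods of the vertices, then $\vol{(S\setminus S_\delta)\setminus U}=O(\delta)$ as $\delta\to 0$. The reason is that at any non-vertex boundary point $x$, $x$ lies in the relative interior of a face $f$ of dimension $k\geq 1$, so $S$ is locally flat in the $k$ tangent directions of $f$ and pointy in only $d-k$ complementary directions. The minimum-volume cap through $x$ therefore has height of order $\delta^{1/(d-k)}$, and integration over a tubular neighbourhood of $f$ produces a contribution of order $\delta$ with no logarithmic enhancement; only vertices, where the local geometry is pointy in all $d$ directions simultaneously, can produce the $(\log(1/\delta))^{d-1}$ factor.

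The second step treats a \emph{simplicial} vertex cone $C_v$ with linearly independent edge vectors $v_1,\dots,v_d$ and the matrix $V=(v_1,\dots,v_d)$. Writing $x=v+\sum_i t_i v_i$, any hyperplane $H\ni x$ with inward normal crosses the ray $\R_+v_i$ at $s_i v_i$ with $\sum_i t_i/s_i=1$, and the corresponding cap of $C_v$ has volume $(d!)^{-1}|\det V|\prod_i s_i$. A Lagrange-multiplier computation gives $\min\prod_i s_i=d^d\prod_i t_i$, so $x\notin (C_v)_\delta$ iff $\prod_i t_i<c_\delta:=\delta\cdot d!/(d^d|\det V|)$. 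In the parallelepiped $N=\{\sum_i t_i v_i\colon 0\leq t_i\leq 1\}$, the substitution $t_i=e^{-u_i}$ transforms the constraint into $\sum_i u_i>\log(1/c_\delta)$; evaluating the integral as a Gamma tail probability for a sum of $d$ independent unit exponentials yields
\begin{equation*}
\vol{N\cap(C_v\setminus(C_v)_\delta)}=\frac{\delta(\log(1/\delta))^{d-1}}{d^{d-1}}(1+o(1)),
\end{equation*}
the factor $|\det V|$ cancelling conveniently so that the constant does not depend on the shape of $C_v$.

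For a general vertex $v$ the tangent cone $C_v$ need not be simplicial, so I would apply the barycentric subdivision to $C_v$: the maximal $d$-dimensional simplicial sub-cones of this subdivision correspond bijectively to the maximal chains $v=f_0\subset f_1\subset\cdots\subset f_{d-1}$, i.e.\ to the flags of $S$ whose bottom face is $v$. Over each simplicial sub-cone an edge-type parametrisation and cap-volume minimisation analogous to the second step apply, and one shows that the wet-part contribution per sub-cone is the universal constant $\delta(\log(1/\delta))^{d-1}/(d!\,d^{d-1})$. This is consistent with the simplicial case: the barycentric subdivision of $\R^d_+$ itself produces $d!$ congruent pieces which split the total wet-part volume $\delta(\log(1/\delta))^{d-1}/d^{d-1}$ evenly by symmetry. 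Summing over every vertex $v$ and every flag through $v$ yields
\begin{equation*}
\vol{S\setminus S_\delta}=\frac{\operatorname{fl}_d(S)}{d!\,d^{d-1}}\,\delta(\log(1/\delta))^{d-1}(1+o(1)).
\end{equation*}

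The main obstacle is verifying the universal per-sub-cone contribution: the minimum cap volume at a point of a sub-cone is defined relative to the \emph{full} tangent cone $C_v$, not the sub-cone alone, and for non-simplicial $C_v$ these two quantities generally differ. One has to show that for points deep in a given sub-cone the minimising hyperplane effectively uses the $d$ edge directions of that sub-cone, so that the edge-coordinate computation of the second step applies locally with those edge vectors and the $|\det|$ factors again cancel. The remaining error terms --- coming from boundary layers between adjacent sub-cones, from points where the optimal hyperplane is attained at several facets simultaneously, and from approximating $S$ by the tangent cone $C_v$ inside each $U_v$ --- must be shown to be of lower order than $\delta(\log(1/\delta))^{d-1}$ and absorbed into the $o$-factor; obtaining this uniform control as $\delta\to 0$ is the main technical ingredient of the proof.
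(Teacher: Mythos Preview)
The paper does not prove this theorem; it cites \citet[Theorem~1.2]{Schutt1991}. Your localization step and the simplicial-cone computation are correct and are indeed the core of Sch\"utt's argument: the substitution $t_i=e^{-u_i}$ and the Gamma-tail asymptotic cleanly produce the constant $1/d^{d-1}$, with the cancellation of $|\det V|$ explaining the affine invariance.

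The genuine gap is the barycentric-subdivision step for non-simplicial vertices, and the obstacle is sharper than you indicate. The edge rays of a barycentric sub-cone $\sigma$ attached to a flag $v=f_0\subset\cdots\subset f_{d-1}$ point toward the barycenters of $f_1,\ldots,f_{d-1}$ in the link of $v$ and toward the barycenter of the link itself; apart from the ray along the edge $f_1$, none of these are extreme rays of $C_v$. Hence for $x\in\sigma$ the minimum-volume cap of $C_v$ through $x$ is \emph{not} a simplex with vertices on the edges of $\sigma$ --- its shape is dictated by which extreme rays and facets of the full cone $C_v$ the cutting hyperplane meets --- so the product description $\prod t_i<c_\delta$ from your second step does not describe the wet part inside $\sigma$, and the phrase ``the minimising hyperplane effectively uses the $d$ edge directions of that sub-cone'' cannot hold as written. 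The $S_d$-symmetry that equidistributes the wet part among the $d!$ Weyl chambers of $\R^d_+$ has no counterpart for a general $C_v$, so nothing forces each sub-cone to carry the universal mass $1/(d!\,d^{d-1})$. Sch\"utt's treatment of non-simplicial vertices does not reduce to applying the simplicial formula on sub-cones; the flag number emerges instead from a direct and rather technical analysis of the cap geometry at a general vertex, with the iterated integration over the wet region organised along the chain of faces. A decomposition closer in spirit to yours would triangulate $C_v$ using only its extreme rays (so that caps really are simplices on each piece), but then the number of pieces is \emph{not} the number of flags through $v$, and one still needs both a combinatorial identity and control of the cross-piece interaction (the minimum is taken in $C_v$, not in the piece) to recover $\operatorname{fl}_d(S)$.
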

Theorem~\ref{PolyFloatBod1} was proved by \citet[Theorem~1.2]{Schutt1991}. Recent extensions of this theorem can be found in~\citet{Besau_etal2018}.

\subsection{\texorpdfstring{$L_p$}{Lp}-affine surface area} 

The concept of affine surface area for convex bodies has been generalized to $L_p$-affine surface areas. Those are by now the cornerstones of the 
rapidly developing {\em $L_p$-Brunn-Minkowski theory}, initiated in the groundbreaking paper of \citet{Lutwak1996}. See also \citep[Section~9.1]{Schneider2014} and, e.g., \citep{Paouris_Werner2012, HaberlSchuster2009, LuYaZh2000,Meyer_Werner2000}. The next definition was given by \citet{Lutwak1996} for $p>1$, and \citet{Schutt_Werner2004} for all other $p$. See also \citet{Hug1996}.

\begin{definition}
Let $K$ be a convex body in $\R^d$ such that $0$ is in the interior of $K$. Let  $- \infty \leq p \leq \infty$, $p \neq -d$. The $L_p$-affine surface area of $K$ is
	\begin{equation}\label{asp-K}
	\asa[p]{K} = \int_{\partial K} \frac{ \kappa(x)^\frac{p}{d+p}}{\left\langle x, N_K(x) \right\rangle^\frac{d(p-1)}{d+p}} \dd\mu_{\partial K}(x).
	\end{equation}
Here, $N_K(x)$ is the outer unit normal at $x \in \partial K$, $\mu_{\partial K}$ is the usual surface area measure on $\partial K$ and $\kappa$ is the generalized Gau{\ss}-Kronecker curvature at $x$. 
\end{definition}

For $p=0$, $\asa[0]{K} = d \vol{K}$. For $p = \pm \infty$, the $L_p$-affine surface area is defined by the corresponding limit in \eqref{asp-K} 
\begin{equation*}
	\asa[\pm \infty]{K} = \int_{\partial K} \frac{ \kappa(x) }{\left\langle x, N_K(x) \right\rangle^d} \dd\mu_{\partial K}(x), 
\end{equation*}
which, for $K$ sufficiently smooth, gives $\asa[\pm \infty]{K} = d \vol{K^\circ}$, where $K^\circ$ is the polar body \eqref{polar} of $K$ w.r.t. $0$. For $p=1$ we get the above mentioned affine surface area of $K$,
\begin{equation*}
\asa[1]{K} = \asa{K} = \int _{\partial K } \kappa(x) ^\frac{1}{d+1}  \dd \mu_{\partial K}(x).
\end{equation*}
Note that in general the $L_p$-affine surface area  is not an affine invariant anymore, only a linear invariant. There exist geometric identities, analogous to \eqref{FloatASA1-1}, also for $L_p$-affine surface area. These use 
{\em weighted floating bodies} \citep{Werner2002}, {\em Santal\'o bodies} \citep{Meyer_Werner2000} and 
{\em surface bodies} \citep{Schutt_Werner2004}. We refer to those references for the details. Moreover, the corresponding $L_p$-affine isoperimetric inequalities hold true as well. 

\begin{theorem} \label{pasainequalities}
Let $K\in\CB$ with the origin in its interior.
	\begin{enumerate}[label=(\roman*)]
	\item If $p\geq 0$, then
		\begin{eqnarray*}
		\frac{\asa[p]{K}}{\asa[p]{\B}} \leq \left(\frac{\vol{K}}{\vol{\B}}\right)^{\frac{d-p}{d+p}}.
		\end{eqnarray*}
	\item If $-d<p<0$, then
		\begin{eqnarray*}
		\frac{\asa[p]{K}}{\asa[p]{\B}} \geq \left(\frac{\vol{K}}{\vol{\B}}\right)^{\frac{d-p}{d+p}}.
		\end{eqnarray*}		
	Equality holds in (i) and (ii) if and only if $K$ is an ellipsoid.
	\item If  $K$  in addition has  $C^2$ boundary with strictly positive Gau{\ss}-Kronecker curvature everywhere and if $p < -d$, then
		\begin{equation*}
		c^{\frac{d p}{d+p}} \left(\frac{\vol{K}}{\vol{\B}}\right)^{\frac{d-p}{d+p}} \leq \frac{\asa[p]{K}}{\asa[p]{\B}}.
		\end{equation*}
	The constant $c$ in (iii) is the constant from the reverse Blaschke-Santal\'o inequality due to \citet[Theorem~1]{Bourgain_Milman1987}.
	\end{enumerate}
\end{theorem}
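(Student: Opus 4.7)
My strategy is to reduce all three parts of the theorem to the combined application of H\"older's inequality, the affine isoperimetric inequality \eqref{AffIsoIneq1}, and the (ordinary or reverse) Blaschke--Santal\'o inequality \eqref{Blaschke-Santalo}. The starting observation is that the integrand in \eqref{asp-K} splits multiplicatively. For any real $A,B,C$ satisfying
\[
\frac{A}{d+1}+B=\frac{p}{d+p}, \qquad -dB+C=-\frac{d(p-1)}{d+p},
\]
one has, pointwise on $\partial K$,
\[
\frac{\kappa(x)^{p/(d+p)}}{\langle x,N_K(x)\rangle^{d(p-1)/(d+p)}}= \bigl(\kappa(x)^{1/(d+1)}\bigr)^{A}\bigl(\kappa(x)\langle x,N_K(x)\rangle^{-d}\bigr)^{B}\bigl(\langle x,N_K(x)\rangle\bigr)^{C}.
\]
Adding the two linear constraints yields $A+B+C=1$ automatically, leaving one free parameter. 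The three factors on the right are precisely the integrands of $\asa{K}$, of $d\vol{K^\circ}$ (via the Gauss map change of variables, valid for smooth $K$), and of $d\vol{K}$ (via the divergence theorem).

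For part (i), $p\geq 0$, I would choose the free parameter so that $A,B,C>0$; an explicit interval check shows that this range is nonempty for every $p>0$. H\"older's inequality with conjugate indices $1/A,1/B,1/C$ then gives
\[
\asa[p]{K}\leq \asa{K}^{A}\bigl(d\vol{K^\circ}\bigr)^{B}\bigl(d\vol{K}\bigr)^{C}.
\]
Substituting the affine isoperimetric bound \eqref{AffIsoIneq1} for $\asa{K}$ and the Blaschke--Santal\'o bound \eqref{Blaschke-Santalo} to control $\vol{K^\circ}$, and using the identity $A+B+C=1$ together with the two linear constraints, the combined exponent of $\vol{K}/\vol{\B}$ collapses to precisely $(d-p)/(d+p)$, while the numerical constants reassemble (as verified directly on $\B$) into $\asa[p]{\B}$. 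The equality analysis requires simultaneous equality in H\"older (the three factor functions must be proportional $\mu_{\partial K}$-a.e.) and in \eqref{AffIsoIneq1} and \eqref{Blaschke-Santalo}; the latter two already force $K$ to be an ellipsoid, after which equality in H\"older is immediate.

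For part (ii), the regime $-d<p<0$ makes one of $A,B,C$ negative, and the corresponding reverse H\"older inequality (with a single negative conjugate index) flips the direction of the main bound; used in conjunction with \eqref{AffIsoIneq1} and \eqref{Blaschke-Santalo} it yields the reverse inequality in the stated form. For part (iii), $p<-d$, the Blaschke--Santal\'o inequality is now needed in the wrong direction and must be replaced by the reverse Blaschke--Santal\'o inequality of \citet{Bourgain_Milman1987}, $\vol{K}\vol{K^\circ}\geq c^{d}\vol{\B}^{2}$, which contributes exactly the factor $c^{dp/(d+p)}$ in the final bound. The $C^{2}_{+}$ smoothness assumption in (iii) guarantees the polar-volume identity $\int_{\partial K}\kappa\langle x,N_K\rangle^{-d}\dd\mu_{\partial K}=d\vol{K^\circ}$, and ensures integrability of the relevant powers of $\kappa$ appearing in \eqref{asp-K}.

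The main obstacle I anticipate is the centering issue. The polar-volume identity is computed with respect to the chosen origin, while \eqref{Blaschke-Santalo} is sharp only at the Santal\'o point $s(K)$, and for a generic interior origin only $\vol{K^{s(K)}}\leq\vol{K^\circ}$ is available. Since $\asa[p]{K}$ is not translation invariant for $p\neq 1$, one cannot freely translate $s(K)$ to the origin. To circumvent this, I would first prove the inequality assuming $0=s(K)$ and then transfer it back, exploiting the fact that the extremal ellipsoids in \eqref{AffIsoIneq1} and \eqref{Blaschke-Santalo} are centered, so the equality analysis automatically identifies the origin with the Santal\'o (and symmetry) point of the extremal body. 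A secondary technical point is the routine but separate verification that the free parameter $B$ can always be chosen in the admissible range for the variant of H\"older (standard or reverse) relevant to each of the three regimes for $p$.
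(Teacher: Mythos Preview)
The paper does not give its own proof of this theorem: immediately after the statement it simply records that the result ``was proved by \citet{Lutwak1996} for $p>1$ and by \citet[Theorem~4.2]{Werner_Ye2008} for all other $p$.'' There is therefore nothing in the paper to compare your argument against beyond these citations.

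That said, your strategy---factor the integrand as $(\kappa^{1/(d+1)})^{A}(\kappa\langle x,N_K\rangle^{-d})^{B}\langle x,N_K\rangle^{C}$ with $A+B+C=1$, apply (reverse) H\"older, and feed in the affine isoperimetric inequality \eqref{AffIsoIneq1} together with the (reverse) Blaschke--Santal\'o inequality---is precisely the mechanism used in those references, so your plan is the standard one and is sound in outline. Your algebraic check that the two linear constraints force $A+B+C=1$ and leave one free parameter is correct, and the claimed nonemptiness of the admissible interval for $B$ in each regime of $p$ is a routine verification.

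The one place where your sketch is genuinely incomplete is the centering issue you yourself flag. Your proposed fix (``prove it for $0=s(K)$ and then transfer it back'') does not work as stated, because $\asa[p]{K}$ is not translation invariant for $p\neq 1$, so there is nothing to transfer. In the cited proofs this is handled differently: one either imposes from the outset that the centroid of $K$ is at the origin (in which case a centroid version of Blaschke--Santal\'o applies directly and the statement is understood in that normalization), or one bounds $\vol{K^{\circ}}$ by $\vol{K^{s(K)}}$ only in the direction where the inequality goes the right way. You should replace your ``transfer back'' step by one of these devices; once that is done, the rest of your argument matches the literature.
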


Theorem~\ref{pasainequalities}
was proved by \citet{Lutwak1996} for $p >1$ and by \citet[Theorem~4.2]{Werner_Ye2008} for all other $p$.

\subsection {Floating measures} \label{FLME}
Much effort has been devoted to extend the theory of convex bodies  to a functional setting (e.g., \citep{Ball1986, Artstein_etal2004, Fradelizi_Meyer2007}).  Natural analogs of convex bodies in the realm of functions are log-concave functions, i.e. densities of log-concave measures. For such measures we present a notion of {\em floating measure}. Another approach will be shown in Section~\ref{section:floating bodies for measures}. 


Let $\psi\colon \R^{d} \rightarrow \R$ be a convex function such that 
	\begin{equation}	\label{psi integrability condition}
	0 < \int _{\R^{d}} \Euler^{-\psi(x)} \dd x < \infty.	
	\end{equation}
In the general case, when $\psi$ is neither smooth nor strictly convex, the gradient of $\psi$, denoted  by $\nabla \psi$, exists almost everywhere by Rademacher's theorem \citep[Theorem~2.5.1]{Borwein_Vanderwerff2010}. A theorem of \citet{Busemann_Feller1936} and \citet{Aleksandrov1939} guarantees the existence of the (generalized) Hessian, denoted by $\nabla^2 \psi$, almost everywhere in $\R^{d}$ (for details see, e.g., \citep[Section~1.6]{Schutt_Werner2003}). 
The Hessian is a quadratic form on $\R^{d}$, and if $\psi$ is a convex function, for almost every $x \in \R^{d}$ one has, when $y \rightarrow 0$, that 
	\[	\psi( x + y) = \psi (x) + \langle \nabla \psi(x),  y  \rangle + \frac{1}{2}  \langle \nabla^2 \psi(x) (y), y \rangle + o( \|y\|^2).	\]
Let $\mu$ be a log-concave measure on $\R^{d}$, i.e. a measure with density $\Euler^{-\psi}$, where $\psi \colon \R^{d} \to \R$ is a convex function. 
Note that we do not necessarily require that $\mu$ is a probability measure. Let
	\[	\operatorname{epi}(\psi) = \{ (x,y) \in \R^{d} \times \R \colon y \geq \psi(x)\}	\]
be the  epigraph of $\psi$. Then $\operatorname{epi}(\psi)$  is a  closed convex set in $\R^{d+1}$ and for sufficiently small $\delta$ we can define its floating set $\operatorname{epi}(\psi)_\delta$ as
	\begin{equation*} 
	\operatorname{epi}(\psi)_\delta=\bigcap_{\{H \in \mathcal H \colon \vol{H^-\cap \, \operatorname{epi}(\psi)} \leq \delta \}} {H^+}. 
	\end{equation*}
This was done in \citep{Li_etal2018}, where also the definition of a floating set was introduced for convex, not necessarily bounded subsets of $\R^d$.

It is easy to see that there exists a unique convex function $\psi_\delta \colon \R^d \rightarrow \R$ such that $(\operatorname{epi}(\psi))_\delta =\operatorname{epi}(\psi_\delta)$. Consequently, \citet{Li_etal2018} define the floating function of a convex function $\psi$ and the floating measure of the (not necessarily probability) measure $\mu$ as follows.

\begin{definition} 
Let $\psi \colon \R^d \rightarrow \R$ be a  convex function. Let $\delta >0$.
	\begin{enumerate}[label=(\roman*)]
	\item The floating function of $\psi$ is defined to be the function $\psi_\delta$ such that
		\begin{equation*}
		(\operatorname{epi}(\psi))_\delta = \operatorname{epi}\left(\psi_\delta\right).
		\end{equation*} 
	\item Let $\mu$ be a measure with density $f(x)= \Euler^{-\psi (x)}$. The floating measure of $\mu$ is the measure with density $f_\delta$ where
		\begin{equation*}
		f_\delta (x) = \Euler^{-\psi_\delta (x)} \quad\mbox{ for }x\in\R^d.
		\end{equation*}
	\end{enumerate}
\end{definition}

Note that when $\psi$ is affine, $\psi_\delta=\psi$ and, for $f=\Euler^{-\psi}$, $f_\delta=f$.

\subsection{Affine surface areas for log-concave measures}	\label{section:Asa for measures}

As far as we know, at present there are two approaches for a definition of affine surface area for log-concave measures. The first one is similar to the one discussed in Section~\ref{asa} and uses the floating measure of Section~\ref{FLME} instead of the floating bodies $K_\delta$. It was proposed in \citep{Li_etal2018} and is inspired by the formula of Theorem~\ref{FloatASA1}. As in Section~\ref{FLME}, we do not require that the log-concave measure $\mu$ with density $\Euler^{-\psi}$ is a probability measure.

\begin{theorem}\label{theo:f-deltafloat}
Let  $\psi: \R^{d} \rightarrow \R $ be a convex function such that \eqref{psi integrability condition} holds true. Then 
	\begin{equation*}
	\lim _{\delta \rightarrow 0} \frac{  \int_{\R^d} \Euler^{-\psi (x)}  \dd x - \int_{\R^d} \Euler^{-\psi_{\delta}(x)}  \dd x} {\delta^{2/(d+2)}} =   \frac{1}{2} \left(\frac{d+2}{\vol{\B}}\right)^\frac{2}{d+2} \int_{\R^{d}} \left(\det\left( \nabla^2 \psi (x) \right)\right)^\frac{1}{d+2} \  \Euler^{-\psi(x)} \dd x.
	\end{equation*}
	\end{theorem}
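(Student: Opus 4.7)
The plan is to reduce the claim to a pointwise asymptotic analysis of $\psi_\delta(x)-\psi(x)$ and then integrate. Since $\operatorname{epi}(\psi_\delta)=\operatorname{epi}(\psi)_\delta\subseteq\operatorname{epi}(\psi)$ forces $\psi_\delta\ge\psi$ pointwise, Fubini yields
\begin{equation*}
\int_{\R^d}e^{-\psi(x)}dx-\int_{\R^d}e^{-\psi_\delta(x)}dx=\int_{\R^d}\int_{\psi(x)}^{\psi_\delta(x)}e^{-t}dt\,dx=\int_{\R^d}e^{-\psi(x)}\bigl(1-e^{-(\psi_\delta(x)-\psi(x))}\bigr)dx.
\end{equation*}

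Fix next $x_0$ at which the generalized Hessian $A:=\nabla^{2}\psi(x_0)$ exists with $A\succ 0$; by Busemann-Feller-Aleksandrov this holds at a.e.\ $x_0\in\R^d$. Only non-vertical hyperplanes $y=a_0+\langle v,x\rangle$ can cut finite caps from $\operatorname{epi}(\psi)$, so
\begin{equation*}
\psi_\delta(x_0)=\sup\bigl\{a_0+\langle v,x_0\rangle:\ v\in\R^d,\ \textstyle\int_{\{\tilde\psi_v<a_0\}}(a_0-\tilde\psi_v(x))\,dx\le\delta\bigr\},\qquad \tilde\psi_v(x):=\psi(x)-\langle v,x\rangle.
\end{equation*}
Parametrize $v=\nabla\psi(x_0)+w$. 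Taylor expansion gives $\tilde\psi_v$ a minimum at $x_w=x_0+A^{-1}w$ with value $\tilde\psi_v^{\min}=\psi(x_0)-\langle\nabla\psi(x_0),x_0\rangle-\langle w,x_0\rangle-\tfrac12\langle w,A^{-1}w\rangle+o(|w|^{2})$. The substitution $x-x_w=\sqrt{2\eta}\,A^{-1/2}u$, with $\eta:=a_0-\tilde\psi_v^{\min}$, and the identity $\int_{\|u\|<1}(1-\|u\|^{2})du=\tfrac{2}{d+2}\vol{\B}$ turn the cap-volume constraint into
\begin{equation*}
\frac{2^{(d+2)/2}\vol{\B}}{(d+2)\sqrt{\det A}}\,\eta^{(d+2)/2}(1+o(1))=\delta,
\end{equation*}
so $\eta=\tfrac12\bigl(\tfrac{d+2}{\vol{\B}}\bigr)^{2/(d+2)}(\det A)^{1/(d+2)}\delta^{2/(d+2)}(1+o(1))$, independent of $w$ at leading order. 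Since $a_0+\langle v,x_0\rangle=\psi(x_0)+\eta-\tfrac12\langle w,A^{-1}w\rangle+o(|w|^{2})$ and the penalty $\tfrac12\langle w,A^{-1}w\rangle$ is of order $|w|^{2}$ while $\eta$ is of order $\delta^{2/(d+2)}$, the supremum is attained asymptotically at $w=0$, whence
\begin{equation*}
\lim_{\delta\to 0}\frac{\psi_\delta(x_0)-\psi(x_0)}{\delta^{2/(d+2)}}=\frac12\Bigl(\frac{d+2}{\vol{\B}}\Bigr)^{2/(d+2)}\bigl(\det\nabla^{2}\psi(x_0)\bigr)^{1/(d+2)}.
\end{equation*}

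Because $\psi_\delta(x)-\psi(x)\to 0$ and $(1-e^{-s})/s\to 1$ as $s\to 0^+$, the rescaled integrand $\delta^{-2/(d+2)}e^{-\psi(x)}(1-e^{-(\psi_\delta(x)-\psi(x))})$ converges pointwise a.e.\ to $\tfrac12(\tfrac{d+2}{\vol{\B}})^{2/(d+2)}(\det\nabla^{2}\psi(x))^{1/(d+2)}e^{-\psi(x)}$, and swapping limit and integral by dominated convergence concludes the proof. The main obstacle is precisely this last interchange: one must produce a $\delta$-uniform integrable majorant $G(x)\ge\delta^{-2/(d+2)}(\psi_\delta(x)-\psi(x))e^{-\psi(x)}$ on all of $\R^d$, which is delicate because $\det\nabla^{2}\psi$ may degenerate or blow up and $\psi_\delta-\psi$ admits no easy pointwise bound in terms of local curvature. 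A natural strategy is to split $\R^d$ into $\B(0,R)$ and its complement: on the ball, use compactness of sublevel sets of $\psi$ together with continuity properties of the floating set (analogs of Propositions~\ref{DiffFloat1}--\ref{DiffFloat3} adapted to the functional setting via Section~\ref{FLME}); on the complement, exploit the exponential tail decay of the log-concave density $e^{-\psi}$ to drive the remainder to zero uniformly in $\delta$ as $R\to\infty$.
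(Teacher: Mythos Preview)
The paper does not actually prove this theorem: immediately after the statement it writes ``This theorem was proved in \citep[Theorem~1]{Li_etal2018}'' and moves on. So there is no in-paper argument to compare against; the relevant benchmark is the cited source.

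Your heuristic is the standard one and your pointwise computation is correct: the Fubini rewriting, the identification of $\psi_\delta(x_0)$ as a supremum over non-vertical cutting hyperplanes, the quadratic-model cap-volume formula, and the resulting leading term $\tfrac12\bigl(\tfrac{d+2}{\vol{\B}}\bigr)^{2/(d+2)}(\det\nabla^2\psi(x_0))^{1/(d+2)}$ are all right. Two points, however, are not yet handled. First, you only treat $x_0$ with $\nabla^2\psi(x_0)\succ 0$; you must also show that at points with degenerate generalized Hessian the rescaled increment $\delta^{-2/(d+2)}(\psi_\delta(x_0)-\psi(x_0))$ tends to $0$, so that the limiting integrand is genuinely $(\det\nabla^2\psi)^{1/(d+2)}e^{-\psi}$ a.e. Second, and more seriously, you yourself flag that the interchange of limit and integral is the crux, and you do not carry it out. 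Your proposed $R$-splitting is reasonable, but the inner piece still requires a uniform-in-$\delta$ bound of the type $\psi_\delta(x)-\psi(x)\le C(R)\,\delta^{2/(d+2)}$ on $\B(0,R)$, which in turn needs a quantitative lower bound on cap volumes that is uniform over $x$ in a compact set --- essentially a one-sided rolling-type estimate for $\operatorname{epi}(\psi)$. Absent such an estimate (or an alternative Fatou/monotonicity argument), the proof is a correct outline but not complete.
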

	
This theorem was proved in \citep[Theorem~1]{Li_etal2018}. Its comparison with convex bodies (see Theorem~\ref{FloatASA1}) led \citet{Li_etal2018} to call the right hand side integral of Theorem~\ref{theo:f-deltafloat} the affine surface area of the measure $\mu$.

	\begin{definition}
	For a log-concave measure $\mu$ on $\R^d$ with density $\Euler^{-\psi}$ such that \eqref{psi integrability condition} holds true, the affine surface area of the measure $\mu$ is given by
	\begin{equation}\label{Pasa}
	\asa{\mu} = \int_{\R^{d}} \left(\det\left( \nabla^2 \psi (x)\right)\right)^\frac{1}{d+2} \  \Euler^{-\psi(x)} \dd x.
	\end{equation}
	\end{definition} 

This definition is further justified as the expression shares many properties of the affine surface area for convex bodies. For instance, it is invariant under affine transformations with determinant $1$. For the standard Gau{\ss}ian measure $P$ we have that $\asa{P} = 1$.

Another definition of affine surface area for log-concave measures was put forward in \citet{Calgar_etal2016}. Actually, an even more general approach was proposed, again for convex functions $\psi$ such that \eqref{psi integrability condition} holds true. We put $\Omega_{\psi}$ to be the set of vectors in $\R^d$ at which $\nabla^2\psi$ exists and is invertible. 

	\begin{definition}
	For a log-concave measure $\mu$ on $\R^d$ with density $\Euler^{-\psi}$ such that \eqref{psi integrability condition} holds true and $\lambda \in \R$, the $\lambda$-affine surface areas are 
		\begin{equation}\label{lPasa}
		\Asa[\lambda]{\mu} =  \int_{\Omega_\psi} \Euler^{\lambda \left( 2 \psi(x)- \langle x, \nabla\psi(x)\rangle\right)}\left(\det \, \nabla^2 \psi (x)\right)^\lambda \Euler^{- \psi(x) } \dd x.
		\end{equation}
	We can replace $\Omega_\psi$ by $\R^d$ for $\lambda >0$.
	\end{definition}
	

Differentiating with respect to $\alpha$ at $\alpha=1$, we get in  the case of $2$-homogeneous convex functions $\psi$, that is $\psi(\alpha x) = \alpha^2 \psi(x)$,  for any $\alpha>0$ and $x \in \R^d$, 
that
	\[	\langle x, \nabla \psi(x) \rangle = 2 \psi(x).	\] 
Thus, for $2$-homogeneous functions $\psi$, formula \eqref{lPasa} simplifies to 
	\begin{equation*}
	\Asa[\lambda]{\psi}=\int_{\Omega_\psi}\left(\det \, \nabla^2  \psi (x)\right)^\lambda \Euler^{-\psi(x)} \dd x, 
	\end{equation*}
and definitions \eqref{Pasa} and \eqref{lPasa} agree for $\lambda=\frac{1}{d+2}$.

To understand why it is justified to name the quantities \eqref{lPasa} affine surface areas, we recall the definition of the $L_p$-affine surface areas 
\eqref{asp-K} for convex bodies $K$.
It was noted in \citet{Calgar_etal2016} that the definition of $\lambda$-affine surface area for a log-concave density agrees with the definition of $L_p$-affine surface area for convex bodies if the function is the gauge function $\| \cdot \|_K$ of a convex body $K$ with $0$ in its interior, 
	\[	\|x\|_K = \min\{ \alpha\geq 0 \colon \ x \in \alpha K\}  .	\]
	
The next theorem is from \citet[Theorem~3]{Calgar_etal2016}.
	
\begin{theorem}
Let $K$ be a convex body in $\R^d$ that contains the origin in its interior. For any $p \geq 0$, let $\lambda=\frac{p}{d+p}$. Then 
	\[	\Asa[\lambda]{\frac{\|\cdot\|_K^2}{2}} = \frac{(2\pi)^\frac{d}{2}}{d \vol{\B}} \asa[p]{K}.	\]
Moreover, if the set of points of $\partial K$ where the generalized Gau{\ss}-Kronecker curvature is strictly positive has full measure in $\partial K$, then the same relation holds true for every $p \ne -d$.
\end{theorem}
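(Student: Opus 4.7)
The plan is to exploit the $2$-homogeneity of $\psi(x)=\|x\|_K^2/2$, which, as already noted in the excerpt, reduces \eqref{lPasa} to
\[
\Asa[\lambda]{\psi}=\int_{\Omega_\psi}\left(\det \nabla^2\psi(x)\right)^\lambda \Euler^{-\psi(x)}\dd x,
\]
and then to change variables along rays from the origin: write $x = sz$ with $s>0$ and $z\in\partial K$. The Jacobian of this map relative to $\dd s\otimes\dd\mu_{\partial K}(z)$ is $s^{d-1}\langle z,N_K(z)\rangle$; meanwhile $\psi(sz)=s^2/2$, and the $0$-homogeneity of $\nabla^2\psi$ gives $\det\nabla^2\psi(sz)=\det\nabla^2\psi(z)$. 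Thus the $s$-integration decouples and yields $\int_0^\infty s^{d-1}\Euler^{-s^2/2}\dd s = 2^{d/2-1}\Gamma(d/2)$, which through $\vol{\B}=\pi^{d/2}/\Gamma(d/2+1)$ equals $(2\pi)^{d/2}/(d\vol{\B})$, exactly the constant in the statement.

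The heart of the argument is the pointwise boundary identity
\[
\det \nabla^2\psi(z)=\frac{\kappa(z)}{\langle z,N_K(z)\rangle^{d+1}}\qquad\text{for $z\in\partial K$ at which $\kappa(z)$ exists.}
\]
To prove it, fix an orthonormal frame at $z$ with $e_d=N_K(z)$ and $e_1,\dots,e_{d-1}$ spanning $T_z\partial K$, and parametrize $\partial K$ locally as a graph $u\mapsto z+u+g(u)N_K(z)$ with $g(0)=0$, $\nabla g(0)=0$, and $(-1)^{d-1}\det\nabla^2 g(0)=\kappa(z)$ (using concavity of $g$, since $K$ lies on the negative-normal side). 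Differentiating $\psi\equiv 1/2$ along $\partial K$ twice pins down the tangential $(d-1)\times(d-1)$ block of $\nabla^2\psi(z)$ as $A=-\nabla^2 g(0)/\langle z,N_K(z)\rangle$, with $\det A=\kappa(z)/\langle z,N_K(z)\rangle^{d-1}$. Euler's identity $\nabla^2\psi(z)\cdot z = \nabla\psi(z) = N_K(z)/\langle z,N_K(z)\rangle$ then forces the off-diagonal column $b$ and the $(d,d)$-entry $c$ of $\nabla^2\psi(z)$, and a direct Schur-complement calculation collapses to $c-b\tr A^{-1}b = 1/\langle z,N_K(z)\rangle^2$, giving
\[
\det\nabla^2\psi(z) = \det A\cdot\left(c-b\tr A^{-1}b\right) = \frac{\kappa(z)}{\langle z,N_K(z)\rangle^{d+1}}.
\]

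Substituting back into the transformed integral produces
\[
\Asa[\lambda]{\psi} = \frac{(2\pi)^{d/2}}{d\vol{\B}}\int_{\partial K}\frac{\kappa(z)^\lambda}{\langle z,N_K(z)\rangle^{\lambda(d+1)-1}}\dd\mu_{\partial K}(z),
\]
and an arithmetic check confirms $\lambda(d+1)-1 = d(p-1)/(d+p)$ when $\lambda=p/(d+p)$, matching the integrand of $\asa[p]{K}$ from \eqref{asp-K} on the nose.

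The main obstacle I expect is justifying the pointwise identity \emph{off} the smooth locus of $\partial K$ and aligning this with the domain of integration in \eqref{lPasa}. For $p\geq 0$ we have $\lambda\geq 0$, so the integrand either vanishes harmlessly on the singular set or (for $\lambda=0$) reduces to the elementary identity $\int_{\R^d}\Euler^{-\|x\|_K^2/2}\dd x = (2\pi)^{d/2}\vol{K}/\vol{\B}$, while the generalized Gau{\ss}-Kronecker curvature is available $\mu_{\partial K}$-a.e. on $\partial K$ by Busemann-Feller-Aleksandrov; the identity therefore holds a.e. and integrates correctly. For the ``moreover'' part with $p\neq -d$ one may have $\lambda<0$, in which case $\det\nabla^2\psi$ must be strictly positive a.e.~on $\R^d$; this is ensured precisely by the hypothesis that $\{\kappa>0\}$ has full measure in $\partial K$, whereupon $\Omega_\psi$ has full Lebesgue measure in $\R^d$ and the entire polar-type change of variables goes through verbatim for every $p\neq -d$.
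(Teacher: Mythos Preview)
Your proof is correct. The change of variables $x=sz$, the decoupling of the Gaussian radial integral, and the arithmetic relating $2^{d/2-1}\Gamma(d/2)$ to $(2\pi)^{d/2}/(d\vol{\B})$ all check out. The key boundary identity $\det\nabla^2\psi(z)=\kappa(z)/\langle z,N_K(z)\rangle^{d+1}$ is established cleanly: your two differentiations of $\psi\circ\varphi\equiv 1/2$ correctly pin down the tangential block $A$, the differentiated Euler relation $\nabla^2\psi(z)\,z=\nabla\psi(z)$ determines $b$ and $c$, and the Schur complement computation $c-b\tr A^{-1}b=1/\langle z,N_K(z)\rangle^2$ is right (the $z'^T A z'$ terms cancel exactly as you claim). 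The exponent check $\lambda(d+1)-1=d(p-1)/(d+p)$ is also correct. Your measure-theoretic justification for both the $p\ge 0$ case and the ``moreover'' clause is sound: $0$-homogeneity of $\nabla^2\psi$ transports the a.e.\ statement on $\partial K$ to full Lebesgue measure in $\R^d$ via the nondegenerate Jacobian.

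Note that the present paper does not supply its own proof of this theorem; it is quoted from \citet[Theorem~3]{Calgar_etal2016}. Your argument is the natural one---polar-type coordinates adapted to $\partial K$ combined with the Hessian determinant identity---and is essentially the route taken in that reference.
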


The $L_p$-affine isoperimetric inequalities for convex bodies of Theorem \ref{pasainequalities} have analogs for the $\lambda$-affine surface areas for log-concave measures. 
We only mention the case $\lambda \in [0,1]$ and refer to \citet{Calgar_etal2016} for the other cases.

	\begin{proposition}
	Let $\psi \colon \R^d\to \R\cup\{+\infty\}$ be a convex function such that \eqref{psi integrability condition} holds true and such that $\int _{\R^d} x \Euler^{-\psi(x)} \dd x = 0$. Then we have for all $\lambda \in [0,1]$, 
		\[
		\frac{\Asa[\lambda]{\mu}}{\Asa[\lambda]{\frac{\|\cdot\|^2}{2}}} \leq \left( \frac{\int_{\Omega_\psi} \Euler^{-\psi(x)} \dd x}{\int_{\R^d} \Euler^{-\frac{\|x\|^2}{2}} \dd x}\right)^{1-2\lambda}.
		\]
	In particular, if $\psi$ is in addition $2$-homogeneous, then
		\[	
		\frac{\asa{\mu}}{\asa{\frac{\|\cdot\|^2}{2}}} \leq \left( \frac{\int_{\Omega_\psi} \Euler^{-\psi(x)} \dd x}{\int_{\R^d} \Euler^{-\frac{\|x\|^2}{2}} \dd x}\right)^{\frac{d}{d+2}}.	
		\]
	Equality holds in the inequalities if and only if there are $a\in\R$ and a positive definite matrix $A$ such that for all $x\in\R^{d}$
		\[	\psi(x)=\langle Ax,x\rangle + a.	\]
	\end{proposition}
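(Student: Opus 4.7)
The plan is to reduce the stated inequality to the functional Blaschke-Santal\'o inequality, mirroring the way the $L_p$-affine isoperimetric inequality of Theorem~\ref{pasainequalities} is derived from the classical Blaschke-Santal\'o inequality \eqref{Blaschke-Santalo}. The bridge is the Legendre transform $\psi^{*}(y)=\sup_{x\in\R^{d}}\left(\langle x,y\rangle-\psi(x)\right)$, which on $\Omega_{\psi}$ obeys the identity $\psi^{*}(\nabla\psi(x))=\langle x,\nabla\psi(x)\rangle-\psi(x)$. Substituting this into \eqref{lPasa} one can rewrite
\[
\Asa[\lambda]{\mu}=\int_{\Omega_{\psi}}\Euler^{-(1-\lambda)\psi(x)}\left(\Euler^{-\psi^{*}(\nabla\psi(x))}\det\nabla^{2}\psi(x)\right)^{\lambda}\dd x,
\]
which neatly splits $\Asa[\lambda]{\mu}$ into a $\psi$-part and a $\psi^{*}$-part weighted by the Hessian.

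For $\lambda\in(0,1)$, I would apply H\"older's inequality with conjugate exponents $1/(1-\lambda)$ and $1/\lambda$ to the two factors above to obtain
\[
\Asa[\lambda]{\mu}\leq\left(\int_{\Omega_{\psi}}\Euler^{-\psi(x)}\dd x\right)^{1-\lambda}\left(\int_{\Omega_{\psi}}\Euler^{-\psi^{*}(\nabla\psi(x))}\det\nabla^{2}\psi(x)\dd x\right)^{\lambda}.
\]
The second integral I would evaluate by the change of variable $y=\nabla\psi(x)$, whose Jacobian is precisely $\det\nabla^{2}\psi(x)$; since $\nabla\psi$ maps $\Omega_{\psi}$ into $\operatorname{dom}(\psi^{*})\subseteq\R^{d}$, this bounds it by $\int_{\R^{d}}\Euler^{-\psi^{*}(y)}\dd y$. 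The barycenter hypothesis $\int x\,\Euler^{-\psi(x)}\dd x=0$ then unlocks the functional Blaschke-Santal\'o inequality (Ball; Artstein-Avidan--Klartag--Milman; Fradelizi--Meyer),
\[
\int_{\R^{d}}\Euler^{-\psi(x)}\dd x\cdot\int_{\R^{d}}\Euler^{-\psi^{*}(y)}\dd y\leq(2\pi)^{d},
\]
with equality iff $\psi(x)=\langle Ax,x\rangle+a$ for some positive definite $A$ and $a\in\R$. Chaining these three steps yields $\Asa[\lambda]{\mu}\leq(2\pi)^{d\lambda}\left(\int\Euler^{-\psi}\right)^{1-2\lambda}$, and a direct computation from \eqref{lPasa} applied to $\psi_{0}(x)=\|x\|^{2}/2$ shows $\Asa[\lambda]{\|\cdot\|^{2}/2}=\int_{\R^{d}}\Euler^{-\|x\|^{2}/2}\dd x=(2\pi)^{d/2}$ (the pre-exponential factor collapses to $1$ because $2\psi_{0}(x)-\langle x,\nabla\psi_{0}(x)\rangle=0$ and $\det\nabla^{2}\psi_{0}=1$). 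Dividing produces exactly the claimed bound, with equality inherited from the rigidity of functional Blaschke-Santal\'o.

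The $2$-homogeneous specialisation is immediate: when $\langle x,\nabla\psi(x)\rangle=2\psi(x)$ the pre-exponential factor in \eqref{lPasa} trivialises, so $\Asa[\lambda]{\mu}=\int(\det\nabla^{2}\psi)^{\lambda}\Euler^{-\psi}\dd x$, and picking $\lambda=1/(d+2)$ simultaneously yields $1-2\lambda=d/(d+2)$ and $\Asa[\lambda]{\mu}=\asa{\mu}$ in the sense of \eqref{Pasa}. The endpoints $\lambda\in\{0,1\}$ need brief separate checks: at $\lambda=0$ the inequality reduces to the trivial $\int_{\Omega_{\psi}}\Euler^{-\psi}\leq\int_{\R^{d}}\Euler^{-\psi}$, and at $\lambda=1$ it is functional Blaschke-Santal\'o itself. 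The principal technical obstacle I anticipate is the change-of-variable step when $\psi$ is merely convex and $\nabla^{2}\psi$ denotes the Busemann--Feller--Alexandrov generalised Hessian on $\Omega_{\psi}$: one must invoke the area formula for Alexandrov-differentiable gradient maps and verify that $\nabla\psi(\Omega_{\psi})$ lies, up to a null set, inside $\operatorname{dom}(\psi^{*})$. This is standard but is the only point at which the full generality permitted by \eqref{psi integrability condition} is genuinely used.
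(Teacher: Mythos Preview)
Your approach is correct and is precisely the route the paper indicates: the paper does not spell out a proof but names the functional Blaschke--Santal\'o inequality as the main ingredient and defers to \citet{Calgar_etal2016}; your H\"older-plus-Legendre-transform reduction to functional Blaschke--Santal\'o is exactly the argument used there. One cosmetic slip: at $\lambda=0$ both sides are actually equal (to $\int_{\Omega_\psi}\Euler^{-\psi}/(2\pi)^{d/2}$), so there is nothing to check, rather than the inequality you wrote; and for the equality characterisation you should note that equality in your chain forces not only equality in functional Blaschke--Santal\'o but also $\int_{\Omega_\psi}\Euler^{-\psi}=\int_{\R^d}\Euler^{-\psi}$ and equality in H\"older, all of which are automatically satisfied once $\psi$ is Gaussian.
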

A main ingredient in the proof of this proposition is a functional version of the Blaschke-Santal\'o inequality. We refer to \citep{Ball1986, Artstein_etal2004, Fradelizi_Meyer2007} for the details.

\subsection{Applications of affine surface area: Approximation of convex bodies by polytopes}	\label{ApproxCoBodPol}

Approximation by polytopes is a central topic in convex geometry with numerous applications. There is a huge amount of literature on the subject. A (very incomplete) list is \citep{Boe2000, Gruber1993B,Reitzner2005, Schue1994, GroWer2018,HoehnerSchuettWerner2018,LuSchW2006}. We present only one aspect of the subject, approximation by polytopes with a fixed number of vertices and refer to the literature for others.

\subsubsection{Best and random approximation}

Ideally, in approximation problems, one seeks a best approximating polytope in a given metric. One such result is given in the next theorem, where we consider all polytopes with at most $N$ vertices that are contained in a convex body $K$. By compactness, there is a polytope $P_{N}$ in this class with maximal volume. This means that the symmetric difference metric $\dS(K,P_{N})$ from \eqref{SymDiff} is minimal. Such a polytope is called best approximating with respect to the symmetric difference metric.

	\begin{theorem}\label{Best}
  Let $K$ be a convex body in $\R^d$ with $C^2$-boundary $\partial K$ and everywhere strictly positive Gau{\ss}-Kronecker curvature $\kappa$. For every $N\in\N$ let $P_{N}$ be a best approximating polytope of $K$ with at most $N$ vertices. Then 
	\begin{equation}\label{BestApprox-1}
	\lim_{N \to \infty}\frac{\dS(K,P_N)}{\left(\frac{1}{N}\right)^\frac{2}{d-1}} =\frac{1}{2} \operatorname{del}_{d-1} \left(\int_{\partial K}
\kappa(x)^{\frac{1}{d+1}} \dd \mu_{\partial K}(x)\right)^\frac{d+1}{d-1},
	\end{equation}
where $\operatorname{del}_{d-1}$ is a constant depending only on the dimension $d$.
	\end{theorem}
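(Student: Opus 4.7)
My approach rests on localizing the approximation problem to small caps of $K$, where the boundary is well approximated by its osculating paraboloid, and then aggregating the local contributions using an affine-invariant density on $\partial K$. The affine surface area appearing on the right-hand side is essentially forced by dimensional and affine-invariance considerations: the only integral over $\partial K$ depending continuously and locally on $\kappa$ that has the correct behavior under non-singular affine maps $T$ (in particular, $\dS(T(K),T(P_N))=|\det T|\,\dS(K,P_N)$) and the correct order of decay $N^{-2/(d-1)}$ is a power of $\asa{K}=\int_{\partial K}\kappa(x)^{1/(d+1)}\dd\mu_{\partial K}(x)$. The constant $\operatorname{del}_{d-1}$ will arise from solving the extremal approximation problem for a single ``universal'' paraboloid in dimension $d-1$.

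For the upper bound, the plan is to pick a probability density $\rho$ on $\partial K$ (to be optimized) and to distribute the $N$ vertices on $\partial K$ so that a small piece $dA$ around $x\in\partial K$ receives roughly $N\rho(x)\dd\mu_{\partial K}(x)$ vertices. After applying an affine map that sends the osculating paraboloid of $K$ at $x$ to the unit paraboloid, the local contribution to $\dS(K,P_N)$ is, to leading order, the error of the best approximation of a piece of the standard paraboloid by $k\approx N\rho(x)\dd\mu_{\partial K}(x)$ vertices; this error scales like $\operatorname{del}_{d-1}\,k^{-2/(d-1)}$ times a factor $\kappa(x)^{(d+1)/(2(d-1))}$ accounting for the unfolding of the affine transformation. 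Summing over $\partial K$ and extremizing the resulting Lagrangian
\[
\frac{1}{2}\operatorname{del}_{d-1}\,N^{-2/(d-1)}\int_{\partial K}\rho(x)^{-2/(d-1)}\kappa(x)^{(d+1)/(2(d-1))}\dd\mu_{\partial K}(x)
\]
subject to $\int_{\partial K}\rho\,\dd\mu_{\partial K}=1$ produces the optimal density $\rho(x)\propto \kappa(x)^{1/(d+1)}$ and the right-hand side of \eqref{BestApprox-1}.

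For the lower bound, I would argue that any polytope $P_N\subseteq K$ with at most $N$ vertices is contained in the convex hull of $N$ boundary points of $K$ (up to a negligible correction coming from interior vertices). Each vertex $v_i$ can be charged with a ``Voronoi region'' on $\partial K$, in which the deviation $K\setminus P_N$ is controlled from below by the local cap whose height, via Proposition~\ref{PropFloatBod1} and the floating-body/affine-surface-area machinery culminating in Theorem~\ref{FloatASA1}, is at least of order $\kappa(x)^{-1/2}(\operatorname{area})^{(d+1)/(2(d-1))}$. Applying Jensen's inequality to the resulting sum in the concave exponent $2/(d-1)$ and recognizing $\operatorname{del}_{d-1}$ as the optimal Delone constant for the extremal paraboloid problem in dimension $d-1$ yields exactly the same lower bound as obtained above. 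The two bounds together give \eqref{BestApprox-1}.

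The main obstacle, and the part where I would need the most care, is the matching of constants: identifying the universal constant $\operatorname{del}_{d-1}$ unambiguously, and proving that both the upper construction and the lower estimate produce the \emph{same} universal constant. This rests on the nontrivial fact that in the local model (best approximation of a piece of the standard paraboloid by inscribed polytopes with $k$ vertices) the optimal asymptotic configuration is, up to affine equivalence, governed by a single Delone-type tiling of $\R^{d-1}$; controlling the boundary effects of the localization (overlap of Voronoi caps, vertices assigned to regions of low curvature, regions where $\kappa(x)=0$ or where the $C^2$-hypothesis degenerates) to ensure these do not perturb the constant is the technical heart of the argument.
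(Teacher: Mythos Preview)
The paper does not prove this theorem; it is a survey, and the statement is quoted as a known result with the attribution ``This theorem was proved by \citet{McClure_Vitale1975} in dimension $2$ and by \citet{Gruber1993B} for general dimension.'' There is therefore no ``paper's own proof'' to compare against.

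That said, your outline is broadly the Gruber strategy: localize via osculating paraboloids, optimize a vertex density on $\partial K$, and identify the dimensional constant through an extremal Delone problem for the standard paraboloid. Two remarks on the sketch itself. First, your appeal to Proposition~\ref{PropFloatBod1} and Theorem~\ref{FloatASA1} in the lower bound is misplaced: those results concern the floating body $K_\delta$ and the limit $\delta\to 0$, not lower bounds for $\dS(K,P_N)$; Gruber's lower bound proceeds directly by a local cap-volume estimate and a counting/packing argument, without passing through floating bodies. Second, the step ``any polytope $P_N\subseteq K$ with at most $N$ vertices is contained in the convex hull of $N$ boundary points of $K$ (up to a negligible correction)'' is not an approximation but an exact reduction: moving each vertex radially outward to $\partial K$ only enlarges the inscribed polytope, so the best inscribed $P_N$ already has all its vertices on $\partial K$. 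Beyond these points, your identification of the optimal density $\rho\propto\kappa^{1/(d+1)}$ and the role of $\operatorname{del}_{d-1}$ is correct, and you are right that the delicate part is showing that the upper construction and the lower estimate produce the \emph{same} constant.
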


This theorem was proved by \citet{McClure_Vitale1975} in dimension $2$ and by \citet{Gruber1993B} for general dimension. It was shown by \citet{Man_Schue2001} that $\operatorname{del}_{d-1}$ is of the order of dimension, or more precisely, 
	\begin{equation}\label{ManSch}
	\frac{d-1}{d+1}\vol[d-1]{\B[d-1]}^{-\frac{2}{d-1}} \leq \operatorname{del}_{d-1} \leq \frac{d-1}{d+1}\vol[d-1]{\B[d-1]}^{-\frac{2}{d-1}} \frac{\Gamma\left(d+1+\frac{2}{d-1}\right)}{d!}.
	\end{equation}
Note that $\frac{\Gamma\left(d+1+\frac{2}{d-1}\right)}{d!} \leq 1+ c \frac{\log d}{d}$, where $c$ is an absolute constant.	
	
On the right hand side of equation \eqref{BestApprox-1} we find the affine surface area of $K$ from Section~\ref{asa}. It is natural that such a term should appear in approximation questions: Intuitively, we expect that more vertices of the approximating polytope should be put where the boundary of $K$ is very curved, and fewer points where the boundary is flat, to get a good approximation in the $\dS$-metric. 

However it is only in rare cases that a best approximating polytope can be singled out. Consequently, a common practice is to randomize: 
Choose $N$ points at random in $K$ with respect to a probability measure $P$ on $K$. The convex hull of these randomly chosen points is a random
polytope. The expected volume of a random polytope of $N$ points is
	\[	E(K,N) = \int_{K} \cdots \int_{K} \vol{[x_{1},\dots,x_{N}]} \dd P(x_{1})\dots \dd P(x_{N}),	\]
where $[x_{1},\dots,x_{N}]$ is the convex hull of the points $x_{1},\dots,x_{N}$. Thus the expression $\vol{K} -  E(K,N)$ measures how close a random polytope and the convex body are in the symmetric difference metric.

We now compare best approximation with random approximation. The analog to Theorem~\ref{Best} in the random case is the following theorem. There, the probability measure is the normalized Lebesgue measure on $K$.

	\begin{theorem} \label{inK}
	Let $K$ be a convex body in $\R^d$. Then  
		\[	\lim_{N \to \infty}\frac{\vol{K}- E(K,N)}{\left(\frac{\vol{K}}{N}\right)^\frac{2}{d+1}} = c(d) \int_{\partial K} \kappa(x)^{\frac{1}{d+1}}
	\dd \mu_{\partial K}(x),	\]
	where $c(d)$ is a constant that depends only on $d$.
	\end{theorem}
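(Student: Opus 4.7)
The plan is to combine two ingredients: the standard integral representation for $\vol{K} - E(K,N)$ and the precise asymptotics of the wet part $\vol{K \setminus K_\delta}$ afforded by Theorem~\ref{FloatASA1}. First, by Fubini applied to the indicator of membership in the random polytope,
\[
E(K,N) = \int_K \PP\bigl[x \in \operatorname{conv}(X_1,\dots,X_N)\bigr] \dd x,
\]
so $\vol{K} - E(K,N) = \int_K p_N(x) \dd x$, where $p_N(x) = \PP[x \notin \operatorname{conv}(X_1,\dots,X_N)]$. A point $x \in K$ fails to lie in the random polytope exactly when some closed halfspace $H^-$ with $x \in H^-$ contains none of the $X_i$ in its complementary cap $C = K \setminus H^-$; the probability of such an event for a fixed $H^-$ is $(1 - \vol{C}/\vol{K})^N$. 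Optimizing over halfspaces through $x$ shows that $p_N(x)$ is governed by the minimal-volume cap of $K$ containing $x$, which is precisely the quantity controlling membership in the convex floating body: $x \in K \setminus K_\delta$ iff there is a halfspace $H^- \ni x$ with $\vol{K \cap H^-} \leq \delta$.

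Next, I would localize to a thin layer near $\partial K$. For $\delta = t/N$, choosing $t = c \log N$ gives a lower bound $p_N(x) \leq (1 - c\log N / N)^N \to 0$ uniformly on $K_{c\log N/N}$, while on $K \setminus K_{c/N}$ the same relation yields $p_N(x) \geq (1 - c/N)^N \to \Euler^{-c}$. Thus all of the asymptotic mass of $\int_K p_N \dd x$ concentrates in the wet layer $K \setminus K_{\Theta(1/N)}$, whose thickness at a boundary point $y$ with positive generalized Gau{\ss}--Kronecker curvature $\kappa(y)$ scales like $N^{-2/(d+1)} \kappa(y)^{-1/(d+1)}$ by the very definition~\eqref{DefGenCurv2}. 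I would then parametrize the wet region by boundary coordinates $(y,h)$, with $h$ the distance to the supporting hyperplane at $y$, and at $\mu_{\partial K}$-almost every $y$ replace $K$ locally by its osculating paraboloid (justified by the Busemann--Feller--Aleksandrov theorem). An affine renormalization reducing this paraboloid to the standard one rescales the local contribution; performing the substitution $t = N \cdot c_0 \kappa(y)^{-1/2} h^{(d+1)/2}$ separates the integral as a dimensional constant $c(d)$ (depending only on the normalized paraboloid calculation) times $\int_{\partial K} \kappa(y)^{1/(d+1)} \dd \mu_{\partial K}(y)$.

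The main obstacle is the upper bound $p_N(x) \leq (1 - c\log N/N)^N$ on $K_{c\log N/N}$: $p_N(x)$ is a supremum over a continuum of halfspaces through $x$, not a single probability. The standard remedy is the Bárány--Larman economic cap covering theorem, which produces a covering of $K \setminus K_{1/N}$ by polynomially many Macbeath regions and reduces the sup to a union bound with controlled multiplicity; this must be combined with the rolling-function estimate of Proposition~\ref{SW1} to ensure integrability of the relevant densities. A second delicate point is the passage from the two-sided asymptotic $\vol{K} - E(K,N) \asymp \vol{K \setminus K_{1/N}}$ to the sharp limit: here one must argue that the universal function $t \mapsto \Phi(t) = \lim_N (1 - t/N)^N = \Euler^{-t}$ (together with its higher-order cap-covering corrections) emerges as the correct averaging kernel on the rescaled paraboloid, and then invoke dominated convergence against the boundary integrand $\kappa(y)^{1/(d+1)}$, which is $\mu_{\partial K}$-integrable by the affine isoperimetric inequality~\eqref{AffIsoIneq1}. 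Once these pieces are assembled, Theorem~\ref{FloatASA1} serves as the guarantee that the boundary integral appearing in the limit is indeed the affine surface area, yielding the claimed formula with a dimensional constant $c(d)$.
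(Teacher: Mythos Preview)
The paper is a survey and does not prove Theorem~\ref{inK}; it only attributes the result to R\'enyi--Sulanke in dimension~$2$, Wieacker for the ball, B\'ar\'any for $C^3$ bodies with positive curvature, and finally Sch\"utt~\citep{Schue1994} and B\"or\"oczky--Fodor--Hug~\citep{BoerFeHu2010} in full generality. So there is no ``paper's own proof'' to compare against, and your outline is in fact broadly the strategy of~\citep{Schue1994}: reduce to $\int_K p_N(x)\dd x$, connect $p_N$ to cap volumes and hence to the floating body, localize near $\partial K$ via the paraboloid approximation at points of generalized second-order differentiability, and control the passage to the limit with an integrable majorant coming from the rolling function (Proposition~\ref{SW1}).

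Two remarks on the write-up. First, there is a sign slip in your cap discussion: if $x\notin[X_1,\dots,X_N]$ then some halfspace $H^-\ni x$ contains \emph{no} $X_i$, i.e.\ all $X_i$ lie in $K\setminus H^-$; the probability for fixed $H^-$ is $(1-\vol{K\cap H^-}/\vol{K})^N$, not $(1-\vol{C}/\vol{K})^N$ with $C=K\setminus H^-$ as you wrote. Second, and more substantively, you correctly flag the real difficulty---$p_N(x)$ is a probability of a union over uncountably many halfspaces, not a single one---but what you sketch is still only a two-sided estimate $\vol{K}-E(K,N)\asymp\vol{K\setminus K_{1/N}}$ in the spirit of B\'ar\'any--Larman, plus a hand-wave that the ``correct kernel $\Euler^{-t}$ emerges.'' Getting the \emph{sharp constant} $c(d)$ (rather than just the order of magnitude) is exactly where the hard analysis lies in~\citep{Schue1994}: one needs a careful pointwise asymptotic for $p_N$ along normal lines at $\mu_{\partial K}$-a.e.\ boundary point, uniform enough to pass under the integral. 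The economic cap covering by itself gives only constants depending on $d$, not the precise limit. As written, your proposal is a correct road map but stops short of the step that actually determines $c(d)$.
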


This theorem was proved by \citet{Ren_Sulanke1, Ren_Sulanke2} in  dimension $2$. \citet{Wie1} settled the case of the Euclidean ball in dimension $d$. \citet{Barany1992} proved the result for convex bodies with $C^{3}$-boundary and everywhere positive Gau{\ss}-Kronecker curvature. Finally, the general result for arbitrary convex bodies was proved by \citet{Schue1994} and \citet{BoerFeHu2010}.

Notice that Theorem~\ref{inK} does not give the optimal dependence on $N$ for best approximation. One reason is that not all the points chosen at random from $K$ appear as vertices of the approximating random polytope. Thus we now choose the points randomly from the boundary of $K$ according to a measure with a density with respect to $\mu_{\partial K}$. We denote by ${E}(K,f,N)$ the expected volume of the corresponding random polytope. Which density is optimal? It turns out that it is, up to normalization, the $(d+1)$-root of the generalized Gau{\ss}-Kronecker curvature. The integral of this function is the affine surface area. The next theorem was shown by \citet[Theorem~1.1]{Schutt_Werner2003}, see also \citet{Reitzner2002}.

	\begin{theorem} \label{Random}
	Let $K$ be a convex body in $\R^d$ such that there are $0<r\leq R < \infty$ so that we have for all $x\in\partial K$
		\[	\B(x-r N_{K}(x),r)\subseteq K \subseteq \B(x-R N_{K}(x),R),	\]
	for $N_K(x)$ an outer unit normal of $K$ at $x$, and let $f \colon \partial K\rightarrow (0,\infty)$ be continuous with $\int_{\partial K} f(x) \dd \mu_{\partial K}(x)=1$. 
	Then
		\begin{equation}	\label{equation random}
		\lim_{N \to \infty} \frac{\vol{K} - {E}(K,f,N)}{\left(\frac{1}{N}\right)^\frac{2}{d-1}} = c(d)\int_{\partial K} \frac{\kappa(x)^{\frac{1}{d-1}}}{f(x)^{\frac{2}{d-1}}} \dd \mu_{\partial K}(x)	
		\end{equation}
	where
		\[	c(d) = \frac{(d-1)^{\frac{d+1}{d-1}} \Gamma\left(d+1+\tfrac{2}{d-1}\right)}{2(d+1)!(\vol[d-2]{\partial \B[d-1]})^{\frac{2}{d-1}}}.	\]
	The minimum at the right-hand side of \eqref{equation random} is attained for the normalized affine surface area measure with density
		\[	f_{as}(x) = \frac{\kappa(x)^{\frac{1}{d+1}}}{\int_{\partial K}\kappa(x)^{\frac{1}{d+1}} \dd \mu_{\partial K}(x)} \quad \mbox{for }x\in\partial K.	\]
	\end{theorem}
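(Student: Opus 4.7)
The strategy adapts the classical proof of Schütt--Werner for the volume missed by a random polytope with interior sampling to the present boundary-sampling setting. I would begin with the Efron-type identity
\begin{equation*}
\vol{K} - E(K, f, N) = \int_K \PP\bigl(y \notin [X_1, \ldots, X_N]\bigr) \, \dd y,
\end{equation*}
where $X_1, \ldots, X_N \in \partial K$ are i.i.d.\ with density $f$ with respect to $\mu_{\partial K}$. For $y$ in $K$, the event $\{y \notin [X_1, \ldots, X_N]\}$ occurs exactly when all $X_i$ lie in a common cap $K \cap H^-$ for some halfspace $H^-$ with $y \notin H^-$. The $y$-integration is then reorganized via a cap-height foliation of a thin inner neighborhood of $\partial K$: to each $x \in \partial K$ with a unique outer normal $N_K(x)$ and each small $t > 0$ I associate the cap $C(x,t) = K \cap H_{N_K(x),\langle x, N_K(x)\rangle - t}^-$ cut off by the hyperplane parallel to the tangent at $x$ at distance $t$.

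The two-sided rolling ball hypothesis $\B(x - r N_K(x), r) \subseteq K \subseteq \B(x - R N_K(x), R)$ ensures that every small enough cap arises uniquely in this way, with a Jacobian of the reparametrization bounded uniformly above and below in $x \in \partial K$. At a boundary point $x$ where the generalized Gauss--Kronecker curvature $\kappa(x)$ exists and is strictly positive (a set of full $\mu_{\partial K}$-measure by Busemann--Feller--Aleksandrov), an osculating paraboloid computation yields, as $t \to 0$,
\begin{equation*}
\vol{C(x,t)} \sim a_d \, \kappa(x)^{-1/2} \, t^{(d+1)/2}, \qquad \mu_{\partial K}\bigl(\partial K \cap C(x,t)\bigr) \sim b_d \, \kappa(x)^{-1/2} \, t^{(d-1)/2},
\end{equation*}
for explicit dimensional constants $a_d, b_d$. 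By continuity of $f$ the probability that all $N$ sample points avoid $C(x,t)$ therefore behaves like $\exp\bigl(-N f(x)\, b_d \,\kappa(x)^{-1/2} t^{(d-1)/2}\bigr)$. Substituting these expansions, changing variable $s = N f(x)\, b_d\, \kappa(x)^{-1/2} t^{(d-1)/2}$ in the height integration, and recognizing a Gamma integral produces the asserted rate $(1/N)^{2/(d-1)}$ with integrand $\kappa(x)^{1/(d-1)}/f(x)^{2/(d-1)}$; collecting the resulting beta and gamma factors reproduces the constant $c(d)$.

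The principal obstacle is justifying the interchange of limit and integral uniformly over $\partial K$. This is where the two-sided rolling ball assumption is essential: together with Proposition~\ref{SW1} and the integrability of $r^{-\alpha}$ established there it provides a uniform integrable majorant, so that dominated convergence closes the argument. Boundary points with multiple supporting hyperplanes or degenerate curvature form a $\mu_{\partial K}$-null set and contribute negligibly.

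Finally, optimality of $f_{as}$ in \eqref{equation random} among continuous densities with $\int_{\partial K} f\,\dd \mu_{\partial K} = 1$ follows from Hölder's inequality with conjugate exponents $p = (d+1)/(d-1)$ and $q = (d+1)/2$:
\begin{equation*}
\int_{\partial K} \kappa^{\frac{1}{d+1}} \dd \mu_{\partial K} = \int_{\partial K} \bigl(\kappa^{\frac{1}{d-1}} f^{-\frac{2}{d-1}}\bigr)^{\frac{d-1}{d+1}} f^{\frac{2}{d+1}} \dd \mu_{\partial K} \leq \left(\int_{\partial K} \frac{\kappa^{\frac{1}{d-1}}}{f^{\frac{2}{d-1}}} \dd \mu_{\partial K}\right)^{\!\frac{d-1}{d+1}} \!\!\left(\int_{\partial K} f\, \dd \mu_{\partial K}\right)^{\!\frac{2}{d+1}}.
\end{equation*}
Using $\int_{\partial K} f\,\dd\mu_{\partial K} = 1$ and raising to the power $(d+1)/(d-1)$ yields the sharp lower bound $\int_{\partial K} \kappa^{1/(d-1)} f^{-2/(d-1)} \dd \mu_{\partial K} \geq \bigl(\int_{\partial K} \kappa^{1/(d+1)} \dd \mu_{\partial K}\bigr)^{(d+1)/(d-1)}$, with equality iff $f \propto \kappa^{1/(d+1)}$, i.e.\ $f = f_{as}$.
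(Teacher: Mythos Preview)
The paper does not prove this theorem; it merely cites \citet[Theorem~1.1]{Schutt_Werner2003} and \citet{Reitzner2002}. Your outline is faithful to the Sch\"utt--Werner strategy at the level of a sketch: Efron's identity, the cap foliation near $\partial K$, the paraboloid asymptotics for cap volume and cap boundary measure, the Gamma-integral substitution, and dominated convergence. The H\"older argument for the minimizer $f_{as}$ is correct and complete as written.

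Two small remarks. First, your characterization of $\{y\notin[X_1,\dots,X_N]\}$ as ``all $X_i$ lie in a common cap'' is correct, but the probability of this existential event is not simply $(1-F(C(x,t)))^N$ for a single cap; one needs to control, uniformly in $y$, the discrepancy between the minimal-measure cap through $y$ and the tangential cap $C(x,t)$. In the actual Sch\"utt--Werner proof this is the longest and most delicate part, and it is where the two-sided ball condition does real work. Second, your appeal to Proposition~\ref{SW1} is slightly misplaced: that proposition yields integrability of $r^{-\alpha}$ for \emph{general} convex bodies and is the key device for Theorem~\ref{FloatASA1}. Here the hypothesis $\B(x-rN_K(x),r)\subseteq K\subseteq\B(x-RN_K(x),R)$ already forces the generalized curvature to satisfy $R^{-(d-1)}\le\kappa\le r^{-(d-1)}$ everywhere it exists, so the integrable majorant is available directly and Proposition~\ref{SW1} is not needed.
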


Best approximation of Theorem~\ref{Best} differs from random approximation of Theorem~\ref{Random} only in the dimensional constants $\operatorname{del}_{d-1}$ and $c(d)$. Comparing those, using also \eqref{ManSch}, an amazing fact follows: with the density $f_{as}$ random approximation is almost as good as best approximation, 
	\[	\lim_{N \to \infty}\frac{\dS(K,P_N)}{\left(\frac{1}{N}\right)^\frac{2}{d-1}} \leq \lim_{N \to \infty} \frac{\vol{K}- {E}(K,f_{as},N)}{\left(\frac{1}{N}\right)^\frac{2}{d-1}} \leq \left(1+c\frac{\log d}{d}\right)\lim_{N \to \infty}\frac{\dS(K,P_N)} 
{\left(\frac{1}{N}\right)^\frac{2}{d-1}},	\]
where $c$ is an absolute constant. 

\subsubsection{The floating body algorithm}

\citet[Theorem~1]{Barany_Larman1988} established a relation between floating bodies and random polytopes for the uniform measure on the convex body: With high probability the volume of a random polytope is close to the volume of an appropriate floating body.

	\begin{theorem}
	Let $K$ be a convex body in $\R^d$. Then there is $N_{0}\in\N$ such that for all $N\geq N_{0}$
		\[	c_1 \left(\vol{K} - \vol{K_{\frac{1}{N}\vol{K}}} \right)	\leq\vol{K} - E(K,N) \leq c_{2}\left(\vol{K} - \vol{K_{\frac{1}{N}\vol{K}}}\right),	\]	 	 
	where $c_1$ and $c_2$ are constants that depend on $d$ only.
	\end{theorem}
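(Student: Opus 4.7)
Set $\delta=\vol{K}/N$ and note, by Fubini, that
\[
\vol{K}-E(K,N)=\int_K \PP\bigl(x\notin [X_1,\dots,X_N]\bigr)\,\dd x.
\]

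For the lower bound, I would exploit that a point $x\in K\setminus K_\delta$ is, by the very definition of the convex floating body, cut off from $K_\delta$ by a hyperplane $H\ni x$ with $\vol{K\cap H^-}\leq\delta$. On the event that every $X_i$ lies in $H^+$---an event of probability at least $(1-\delta/\vol{K})^N=(1-1/N)^N\geq c_1(d)>0$ for $N\geq N_0$---the point $x$ is separated from the random polytope. Integrating the pointwise bound $\PP(x\notin[X_1,\dots,X_N])\geq c_1$ over $K\setminus K_\delta$ gives the left-hand inequality.

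The upper bound is substantially more delicate. Bounding the integrand trivially on $K\setminus K_\delta$, it suffices to control $\int_{K_\delta}\PP(x\notin[X_1,\dots,X_N])\,\dd x$ by a constant multiple of $\vol{K\setminus K_\delta}$. Write $A_k=K_{k\delta}\setminus K_{(k+1)\delta}$ and observe that the definition of the convex floating body forces every cap $C$ of $K$ whose interior meets $K_{k\delta}$ to satisfy $\vol{C}\geq k\delta$. I would then invoke B\'ar\'any's economic cap-covering theorem at level $k\delta$: it furnishes a family of caps $C^{(k)}_1,\dots,C^{(k)}_{m_k}$ of $K$ whose volumes are comparable to $k\delta$, which covers $K\setminus K_{k\delta}$ with multiplicity bounded in the dimension, and such that any cap of volume between $k\delta$ and $2k\delta$ is contained in some $C^{(k)}_j$. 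A union bound over the (at most $c(d)$) covering caps containing a given $x\in A_k$, together with a dyadic decomposition handling separating caps of volume $\geq 2k\delta$, yields $\PP(x\notin[X_1,\dots,X_N])\leq c(d)\,e^{-k}$ uniformly on $A_k$.

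To finish, note that the volumes of successive floating-body layers grow only polynomially in $k$: Proposition~\ref{DiffFloat3} for centrally symmetric $K$, and a classical extension for general convex bodies, integrates to $\vol{K\setminus K_{k\delta}}\leq k^d\vol{K\setminus K_\delta}$, whence $\vol{A_k}\leq (k+1)^d\vol{K\setminus K_\delta}$. Summing,
\[
\sum_{k\geq 1}c(d)\,e^{-k}\,\vol{A_k}\leq c(d)\,\vol{K\setminus K_\delta}\sum_{k\geq 1}(k+1)^d e^{-k}=C(d)\,\vol{K\setminus K_\delta},
\]
which yields the right-hand inequality with $c_2=1+C(d)$. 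The principal obstacle is the cap-covering lemma: its bounded-multiplicity and volume-calibrated features are precisely what prevent the union bound from introducing a factor polynomial in $N$, and constructing this covering (together with verifying the polynomial-in-$k$ growth of the layer volumes for non-symmetric $K$) is the combinatorial--geometric heart of the argument.
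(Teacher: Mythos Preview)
The paper does not prove this theorem; it merely states the result and attributes it to \citet{Barany_Larman1988}. There is thus no in-paper argument to compare against, and your outline is in fact the B\'ar\'any--Larman strategy.

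The lower bound is correct as written. For the upper bound your architecture is right---the layer decomposition $A_k=K_{k\delta}\setminus K_{(k+1)\delta}$, exponential decay of $\PP(x\notin[X_1,\dots,X_N])$ on $A_k$, polynomial growth of $\vol{A_k}$, and summation---and you correctly single out the economic cap covering as the heart of the matter. Two technical points are glossed over, however. First, the feature you ascribe to the covering, that \emph{every} cap of volume in $[k\delta,2k\delta]$ lies inside one of the covering caps $C_j^{(k)}$, is not part of the standard statement of the economic cap covering; it is a consequence of the underlying Macbeath-region construction, and making your union bound go through without picking up a factor of $m_k$ requires either this refined form or the (essentially equivalent) observation that if $x\notin[X_1,\dots,X_N]$ then the \emph{minimal} cap at $x$, dilated in height by a dimensional constant, is already empty of sample points. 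Second, the layer-volume estimate $\vol{K\setminus K_{k\delta}}\le c(d)\,k^{d}\,\vol{K\setminus K_\delta}$ for general (non-symmetric) $K$ does not follow from Proposition~\ref{DiffFloat3}, which is stated only for centrally symmetric bodies; in the original paper it is obtained from the cap covering itself via a doubling estimate $\vol{K\setminus K_{2\varepsilon}}\le c_d\,\vol{K\setminus K_\varepsilon}$, iterated. Both points are dealt with in \citet{Barany_Larman1988}, so your sketch correctly locates where the real work lies even if it does not carry it out.
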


Even more can be said about the connection between floating bodies and random polytopes. There is an algorithm, the floating body algorithm, where, for a given convex body $K$ in $\R^{d}$, one uses floating bodies to construct a polytope $P_N$ with as few vertices $N$ as possible such that for a suitable $\delta$, $K_{\delta}\subseteq P_{N}\subseteq K$ and such that $P_N$ approximates the convex body $K$ very well in the symmetric difference metric. It should be noted that we make no assumption on $K$.

We describe this algorithm: We are choosing the vertices $x_{1},\dots,x_{N}\in\partial K$ of the polytope $P_{N}$. $x_{1}$ is chosen arbitrarily.
Having chosen $x_{1},\dots,x_{k-1}$ we choose $x_{k}$ such that
	\[	\{x_{1},\dots,x_{k-1}\}\cap\operatorname{Int}\left(K\cap H^{-}(x_{k}-\Delta_{k}N_K(x_{k}),N_K(x_{k}))\right) = \emptyset \]
where $N_K(x_{k})$ denotes a (not necessarily unique) outer normal to $\partial K$ at $x_{0}$, $\operatorname{Int}(C)$ is the interior of a set $C \subset \R^d$, and $\Delta_{k}$ is determined by 
	\[	\vol{K\cap H^{-}\left( x_{k}-\Delta_{k} N_K(x_{k}), N_K(x_{k})\right)}=\delta.	\]
The next theorem can be found in \citet{Schue1999}. 

	\begin{theorem}\label{PolyApproxFloat1}
	Let $K$ be a convex body in $\R^{d}$. Then, for all $\delta$ with $0< \delta\leq\frac{1}{4 \Euler^{4}}\vol{K}$ there exists $N\in\N$ with 
		\begin{equation*}
		\vol{K\setminus K_{\delta}} \leq N\left(\frac{c}{4 \Euler^{4}}\right)^{d} \vol{\B} 4 \Euler^{3} \delta
		\end{equation*}
	where $c$ is a universal constant, and there exists a polytope $P_{N}$ that has at most $N$ vertices and such that
		\[	K_{\delta}\subseteq P_{N}\subseteq K.	\]
	\end{theorem}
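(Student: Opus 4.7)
The natural strategy is to analyze the greedy floating body algorithm described just above the statement, and let $N$ be the number of steps after which no admissible $x_{N+1}\in\partial K$ remains. Denote the caps selected by the algorithm by
\[
C_k = K\cap H^{-}\!\bigl(x_k-\Delta_k N_K(x_k),\,N_K(x_k)\bigr),\qquad \vol{C_k}=\delta,\ k=1,\dots,N,
\]
and set $P_N=\operatorname{conv}(x_1,\dots,x_N)$. By construction every $x_k\in\partial K\subset K$, so $P_N\subseteq K$ is immediate. Two independent things remain to establish: (a) the containment $K_\delta\subseteq P_N$, and (b) the quantitative bound on $N$.

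\textbf{Step 1 (containment $K_\delta\subseteq P_N$).} I would argue by contrapositive: take $y\in K\setminus P_N$ and produce a supporting halfspace separating $y$ from $\{x_1,\dots,x_N\}$. Using the termination rule of the algorithm --- every $x\in\partial K$ carries its own cap already meeting $\{x_1,\dots,x_N\}$ --- I would translate this separating hyperplane outward to a supporting hyperplane of $K$ at some $x\in\partial K\cap\overline{H^-}$, note that the corresponding cap at $x$ contains some $x_j$ on the $P_N$-side, and deduce that the portion of $K$ lying on $y$'s side of the original separating hyperplane has volume at most~$\delta$. This forces $y\notin K_\delta$ by the very definition of the convex floating body.

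\textbf{Step 2 (counting the vertices).} This is where the exponential constants in the statement come from, and it is the main obstacle. The idea is a Macbeath-type packing argument inside $K$: around each selected $x_k$ one constructs a ``core'' $M_k\subseteq C_k$ --- for instance the image of $C_k$ under the contraction by $\tfrac{1}{4\Euler^{4}}$ toward $x_k$ --- whose volume is at least $(c/(4\Euler^{4}))^{d}\vol{\B}\,\delta$ for a universal constant $c$ (the factor $\vol{\B}$ arises because up to an affine map each cap is comparable to a half--ball of volume $\delta$). The greedy stopping condition $x_j\notin C_k$ for $j<k$, together with the standard fact that a Macbeath region scaled by $\tfrac{1}{4\Euler^{3}}$ and then dilated by $4\Euler^{3}$ still lies inside the original cap, forces the cores $M_k$ to overlap with multiplicity bounded by $4\Euler^{3}$. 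Combining this with the covering property of Step~1 (the family $\{C_k\}$ covers $K\setminus K_\delta$, by the same translation argument as in Step~1 applied to an arbitrary $y\in K\setminus K_\delta$) yields
\[
\vol{K\setminus K_\delta}\;\le\; 4\Euler^{3}\sum_{k=1}^{N}\vol{M_k}^{\text{(ratio)}}\cdot\delta\;\le\; N\Bigl(\tfrac{c}{4\Euler^{4}}\Bigr)^{d}\vol{\B}\,4\Euler^{3}\,\delta,
\]
which is exactly the stated inequality.

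\textbf{Anticipated obstacle.} The delicate point is not the containment in Step~1 --- that is essentially a repackaging of Proposition~\ref{PropFloatBod1}\,\ref{existence of support} and the definition of $K_\delta$ --- but rather the quantitative Macbeath/packing estimates in Step~2. One needs a sharp control, independent of the shape of $K$, of the volume of a Macbeath core of a cap of volume $\delta$, together with a quantitative overlap lemma tying the greedy exclusion $x_j\notin C_k$ to bounded multiplicity of the cores. The hypothesis $\delta\le\tfrac{1}{4\Euler^{4}}\vol{K}$ and the factor $4\Euler^{3}$ strongly suggest that the underlying Macbeath/economic-cap lemma of \citet{Barany_Larman1988} is the right tool here; making it work in full generality (no smoothness of $\partial K$, using only the rolling-function bound of Proposition~\ref{SW1}) is the technical heart of the argument.
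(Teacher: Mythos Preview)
The paper does not give its own proof of this theorem; it is stated with attribution to \citet{Schue1999}, so there is no in-paper argument against which to compare. At the level of strategy your outline is in line with that reference: run the greedy algorithm described just above the statement and control the resulting caps by Macbeath-type estimates. Your Step~1 is essentially correct; termination of the algorithm means every $\delta$-cap at a boundary point already contains some $x_j$, and your separating-hyperplane argument then yields $K_\delta\subseteq P_N$.

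Step~2 has a genuine gap. You assert that ``the family $\{C_k\}$ covers $K\setminus K_\delta$, by the same translation argument as in Step~1'', but this does not follow. Termination tells you that every $\delta$-cap at a boundary point contains some vertex $x_j$; it does \emph{not} say that every $y\in K\setminus K_\delta$ lies in one of the specific caps $C_j$ that the algorithm actually selected. Concretely: such a $y$ lies in some $\delta$-cap $C$, and by termination $C$ contains some $x_j$, but the cap $C_j$ based at $x_j$ can point in a different direction and need not contain $y$. What bridges this is precisely a Macbeath-region lemma: if $x_j$ and $y$ share a common $\delta$-cap, then $y$ lies in a suitable dilate $\tilde C_j$ of $C_j$ whose volume is at most a dimensional constant times $\delta$. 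Thus the covering one actually obtains is by \emph{enlarged} caps $\tilde C_j$, not by the $C_j$ themselves, and the factor $(c/(4\Euler^4))^{d}\vol{\B}\,4\Euler^{3}$ in the statement records exactly this volume inflation. Your write-up instead builds shrunk ``cores'' $M_k$ and argues a packing/bounded-overlap property; that is the dual construction and drives the inequality the wrong way for the stated conclusion, which is why your displayed chain does not parse (the exponent ``$\text{(ratio)}$'' is undefined, and a multiplicity bound of $4\Euler^{3}$ is not what is used). Sorting out which direction---covering by dilated caps to get $\vol{K\setminus K_\delta}\le N\,C_d\,\delta$, rather than packing of shrunk cores---is the missing ingredient.
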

	
How well does this polytope approximate $K$? It follows from Theorem~\ref{PolyApproxFloat1} that
	\[	\limsup_{N\to\infty}\frac{\dS(K,P_{N})}{\left(\frac{1}{N}\right)^{\frac{2}{d-1}}} \leq c \  d^{2}\left(\int_{\partial K}\kappa(x)^{\frac{1}{d+1}} \dd \mu_{\partial K} (x) \right)^{\frac{d+1}{d-1}}.	\]
This should be compared to~\eqref{BestApprox-1}. Since $\operatorname{del}_{d-1}$ is of the order of $d$, both expressions differ only by a factor of the order of dimension $d$.

\section{Floating bodies of measures}	\label{section:floating bodies for measures}


The definition of the (convex) floating body of a convex body $K \in \CB$ discussed in Section~\ref{section:convex floating bodies} extends naturally also to general probability measures, in a manner different than that from Section~\ref{FLME}. It is closely related to the halfspace depth. Analogously to the approach of \citet{Dupin1822}, let $P\in\Prob$ and $\delta > 0$. We say that the nonempty convex set $P_{[\delta]} \in \CB$ is the floating body of $P$ if for each supporting halfspace $H^+$ of $P_{[\delta]}$ we have $P(H^-) = \delta$. 

For $P$ distributed uniformly on a convex body $K \in \CB$ of unit volume, $P_{[\delta]} = K_{[\delta]}$. Therefore, the floating body $P_{[\delta]}$ does not exist for $\delta > 1/2$, and it may happen that it does not exist for any $\delta>0$, see the example of (the uniform distribution on) a triangle from Example~\ref{example:floating bodies}. Unlike in the situation with the floating body of $K \in \CB$, even if the floating body $P_{[\delta]}$ of $P \in \Prob$ exists, it may not be uniquely defined. Take, for instance, a distribution $P$ on $\R$ whose support is not contiguous, such as that displayed in Figure~\ref{figure:difference}. For $\delta = 1/4$, and $q_2$ the $\left(1-\delta\right)$-quantile of $P$, each interval $[q_1, q_2]$ for $q_1 \in [0,1]$ is a floating body of $P$. Note that if $P$ has contiguous support and $P_{[\delta]}$ exists, then it is unique.

To avoid these problems, let us consider, as in the case of convex bodies, the convex floating body of $P$, given by an intersection of halfspaces.

\begin{definition}
Let $P\in\Prob$. For $\delta \geq 0$, the convex floating body of $P$ with index $\delta$ is defined as the intersection of all closed halfspaces whose defining hyperplanes cut off a set of probability content at most $\delta$ from $P$, i.e.
	\begin{equation}	\label{weighted floating body}
	\CRFB = \bigcap_{P(H^-)\leq \delta} H^+, 
	\end{equation}
where $H\in\mathcal H$ and $H^{+}$ and $H^{-}$ are its associated closed halfspaces.
\end{definition} 

Note that with the convention that the intersection of an empty collection of subsets of $\R^d$ is $\R^d$, convex floating bodies of a measure are always well defined, unique, convex subsets of $\R^d$. It can happen that $\CRFB = \emptyset$, especially for larger values of $\delta$. It is easy to see that for $P \in \Prob$ distributed uniformly on $K \in \CB$ with $\vol{K} = 1$, $\CRFB = K_{\delta}$ for any $\delta \geq 0$, and the convex floating bodies of measures generalize the convex floating bodies discussed throughout Section~\ref{section:convex floating bodies}.

(Convex) floating bodies for general measures have already been considered in the literature, mainly due to the association of convex bodies and log-concave measures established by \citet{Ball1988}. The previous definitions were considered by \citet{Werner2002}, \citet{Bobkov2010}, \citet{Fresen2012, Fresen2013}, and \citet{Brunel2018}, among others. In connection with the halfspace depth, the floating bodies \eqref{weighted floating body} were considered in \citet{Nolan1992}, and \citet{Masse_Theodorescu1994}. In the latter paper, those regions are called the $\delta$-trimmed regions of $P$.

The convex floating body of a measure $P$ is very closely related to the depth central region $\CR$, defined in \eqref{central region} as the upper level set of the depth $\HD\left(\cdot;P\right)$. Indeed, recall the characterization of \citet[Proposition~6]{Rousseeuw_Ruts1999}, who showed that for any $P\in\Prob$ and $\delta>0$
	\begin{equation*}
	\CR = \bigcap_{P(H^+) > 1 - \delta} H^+.	
	\end{equation*}
On the other hand, it is not difficult to see that the convex floating body \eqref{weighted floating body} can be written also in the form
	\begin{equation}	\label{floating body reverse}
	\CRFB = \bigcap_{P(H^{+}) \geq 1 - \delta} H^{+}.	
	\end{equation}
Now it is obvious that for all $\delta \geq 0$ we have that
	\[	\CRFB \subseteq \CR,	\]
and under the assumption of the contiguity of the support of $P$,
	\[	\CR = \CRFB.	\]
These results were noted by \citet[Theorem~2]{Kong_Mizera2012} and \citet[Lemma~1]{Brunel2018}. For general measures $P$ it may happen that the convex floating body is a proper subset of the depth central region, see Figure~\ref{figure:difference}. 

It is interesting to investigate which results for convex bodies described in Section~\ref{section:convex floating bodies} carry over to measures. 

\begin{figure}[htpb]
\includegraphics[width=.75\textwidth]{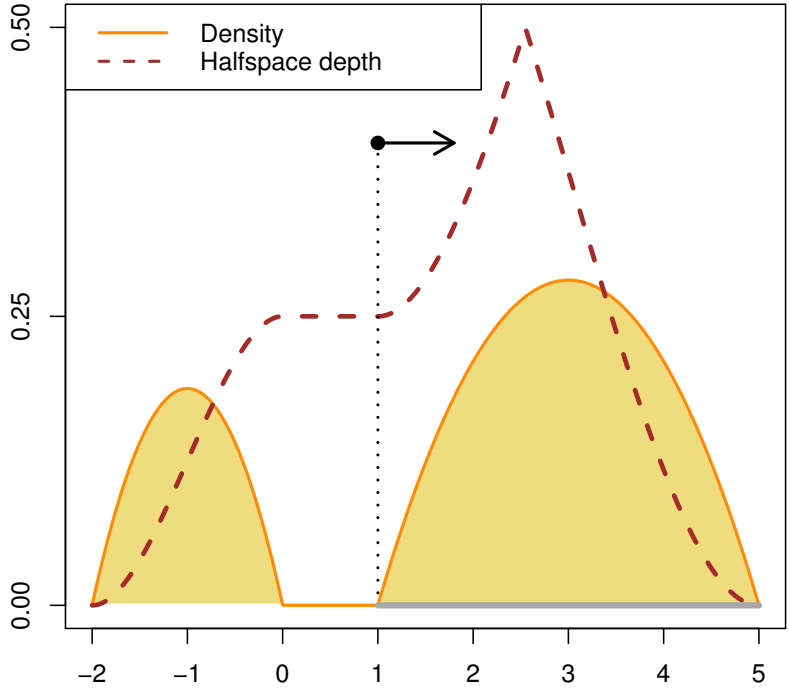} 
\caption{For distributions whose support is not contiguous, the convex floating body \eqref{weighted floating body} and the halfspace depth central region \eqref{central region} may differ. In this example, the density of $P \in \Prob[\R^1]$, supported on disjoint intervals $[-2,0]$ and $[1,5]$, is displayed (orange line), along with its halfspace depth function (dashed brown line). For $\delta = 1/4$, the left endpoint of the interval $\CR$ is $0$. But the complement of the halfline $[1,\infty)$ (black arrow) has probability $1/4$, and the left endpoint of the convex $1/4$-floating body of $P$ is $1$. Points in the interval $(0,1)$ are not boundaries of any convex floating body of $P$.}
\label{figure:difference}
\end{figure}

Let us first relate the floating body $P_{[\delta]}$ with the the convex floating body $\CRFB$ and the central region $\CR$. If a unique floating body $P_{[\delta]}$ of a measure $P\in\Prob$ exists, then the corresponding convex floating body $\CRFB$ must be equal to $P_{[\delta]}$. For the sake of completeness, let us provide an elementary proof of this result.

%
%
%

\begin{proposition}	
Let  $P\in\Prob$ have contiguous support. Let  $\delta > 0$ and  assume that $P_{[\delta]}$ exists. Then $\CRFB = \CR = P_{[\delta]}$.
\end{proposition}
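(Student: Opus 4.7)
The identity $\CRFB = \CR$ under contiguous support has already been recorded in the discussion preceding the proposition, via the representations \eqref{weighted floating body} and \eqref{floating body reverse}. My plan is therefore to concentrate on the identity $\CRFB = P_{[\delta]}$, proving the two inclusions separately; only one of them will actually use the contiguity hypothesis.

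For $\CRFB \subseteq P_{[\delta]}$ the strategy is a direct separation argument. Given $z \notin P_{[\delta]}$, strictly separate $z$ from the convex body $P_{[\delta]}$ by a hyperplane and then translate this hyperplane parallel to itself toward $P_{[\delta]}$ until it first touches $P_{[\delta]}$. This produces a supporting hyperplane $H$ of $P_{[\delta]}$ with $P_{[\delta]} \subseteq H^{+}$ and $z$ lying in the complementary open halfspace $\R^{d}\setminus H^{+}$. By the defining property of the floating body one has $P(H^{-})=\delta$, so $H$ is admissible in the intersection \eqref{weighted floating body}. Hence $\CRFB \subseteq H^{+}$ and therefore $z \notin \CRFB$.

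For the reverse inclusion $P_{[\delta]} \subseteq \CRFB$, fix a hyperplane $H = \{x \colon \langle u, x\rangle = \alpha\}$ with $P(H^{-})\leq \delta$ and suppose toward a contradiction that some $z \in P_{[\delta]}$ satisfies $\langle u, z\rangle < \alpha$. Setting $\alpha' = \min_{x\in P_{[\delta]}} \langle u, x\rangle < \alpha$ produces a parallel supporting hyperplane $H'$ of $P_{[\delta]}$ with $(H')^{-} \subsetneq H^{-}$, and the defining property of $P_{[\delta]}$ forces $P((H')^{-}) = \delta$. Combined with $P(H^{-})\leq \delta$, this pins down $P(H^{-}) = \delta$ and $P(H^{-}\setminus (H')^{-})=0$. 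The disjoint closed halfspaces $(H')^{-}$ and $H^{+}$ then carry total mass
\[
P((H')^{-}) + P(H^{+}) = \delta + \left(1 - \delta + P(H)\right) = 1 + P(H),
\]
which, being at most one by disjointness, forces $P(H)=0$ and reveals a zero-mass slab separating two closed halfspaces whose positive $P$-masses sum to one (note $P(H^{+}) = 1-\delta > 0$, since $\delta \leq 1/2$ whenever $P_{[\delta]}$ exists). This is precisely the configuration ruled out by the contiguous support hypothesis on $P$, giving the contradiction. The only real subtlety in the argument is recognizing that this degenerate slab configuration is exactly what must be excluded; once it is identified, the contiguity assumption closes the proof immediately.
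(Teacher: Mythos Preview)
Your proof is correct and follows essentially the same route as the paper's. The inclusion $\CRFB \subseteq P_{[\delta]}$ is argued identically in both (separate, then slide to a supporting hyperplane of $P_{[\delta]}$); for $P_{[\delta]} \subseteq \CRFB$ the paper passes through an auxiliary hyperplane $H_2$ containing the offending point and invokes contiguity in two stages, whereas you go straight to the supporting hyperplane $H'$ of $P_{[\delta]}$ and exhibit the forbidden slab directly---a slightly cleaner packaging of the same idea.
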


\begin{proof}
Recall that if $P_{[\delta]}$ exists, then it is unique as $P$ has contiguous support. For $P$ with contiguous support, the proof of $\CRFB = \CR$ can be found in \citep[Lemma~1]{Brunel2018}. 

We show now that $P_{[\delta]}= P_{\delta}$. We show first that $P_{\delta} \subseteq P_{[\delta]}$. Let $x \notin P_{[\delta]}$.  By the Hahn-Banach separation theorem there is a support hyperplane $H_0$ to $P_{[\delta]}$ that strictly separates $x$ and $P_{[\delta]}$, i.e., $x \in \operatorname {Int}(H_0^-)$, $P_{[\delta]} \subset H_0^+$ and $P_{[\delta]} \cap H_0 \neq \emptyset$. Since $H_0$ is a supporting hyperplane to $P_{[\delta]}$, $P(H_0^-) = \delta$. Then, as $ P_{\delta}= \bigcap_{P(H^-)\leq \delta} H^+$, $x \notin P_\delta$.

Now we show that $P_{[\delta]} \subseteq P_{\delta}$. Suppose not. Then there exists $x \in P_{[\delta]}$ such that, by \eqref{floating body reverse}, $x \notin H_1^+$ for some $H_1$ with $P(H_1^+) \geq 1 - \delta$. In that case there must exist a hyperplane $H_2$ with $x \in H_2^- \subsetneq H_1^-$. Because $H_2^-$ lies completely in the open halfspace complementary to $H_1^+$, by the contiguity of $P$ we know that $P(H_2^-) < \delta$. This contradicts $x \in P_{[\delta]}$, as for $P$ contiguous the boundary hyperplane of any closed halfspace with probability $\delta$ must support $P_{[\delta]}$.
\end{proof}

Now we explore whether analogues of Propositions~\ref{PropFloatBod1}--\ref{DiffFloat3} stated for convex bodies in Section~\ref{section:convex floating bodies} hold true also for measures.

\subsubsection{Proposition~\ref{PropFloatBod1} for measures}

A result analogous to part~\ref{existence of support} of Proposition~\ref{PropFloatBod1} would require that the infimum in the definition of the halfspace depth \eqref{halfspace depth} can be replaced by a minimum, i.e. that a minimal halfspace of $\HD$ exists at each $x\in\R^d$ for any $P\in\Prob$. For measures that that do not satisfy \eqref{Delta} this is not true, as noted already by \citet[Remark~1]{Rousseeuw_Ruts1999}. There, the following example is given. 

\begin{example}
Let $P\in\Prob[\R^2]$ be a mixture of the standard bivariate Gau{\ss}ian distribution and the Dirac measure at the point $(1,1)$, with equal mixing proportions. Then, at $x = (0,1)$, we have $\HD\left(x;P\right) = \Phi(-1)/2$, where $\Phi$ is the distribution function of the standard univariate Gau{\ss}ian distribution, see also Example~\ref{example:alpha symmetric distributions}. Yet, no minimal halfspace at $x$ exists.
\end{example}

For a different example of the same phenomenon, see \citet[Section~2]{Masse2004}. For distributions that satisfy \eqref{Delta}, a minimal halfspace always exists for all $x\in\R^d$. That was shown, e.g., by \citet[Proposition~4.5 (i)]{Masse2004}.

An extension of Dupin's theorem (part~\ref{Dupin} of Proposition~\ref{PropFloatBod1}) to probability distributions was stated in \citet[Theorem~3.1]{Hassairi_Regaieg2008}. Here we provide a version of that result with a slightly modified set of assumptions. The proof of the proposition follows very closely the original proof of \citep[Theorem~3.1]{Hassairi_Regaieg2008}, and is omitted.

\begin{proposition}	\label{proposition:Hassairi}
Let $X \sim P\in\Prob$ be absolutely continuous with contiguous support $\Supp(P)$ and let $x\in\R^d$ be such that $\HD(x;P) > 0$. Denote by $f_u$ the density of the random variable given by $\left\langle X - x, u \right\rangle$ with $u \in \Sph$. Suppose that $f_u(y)$ is continuous as a function of $u \in \Sph$ and $y$ in a neighborhood of $0 \in \R$. Let $H^-\in \mathcal H^-$ be a minimal halfspace at $x$, i.e. $x\in H$ and $P(H^-) = \HD(x;P)$. Then 
	\[	x = \frac{\int_H y f(y) \dd y}{\int_H f(y) \dd y},	\]
i.e. $x$ is the conditional expectation of $P$ given $H$. The integrals in the formula above are taken with respect to the $(d-1)$-dimensional Lebesgue measure on $H$.
\end{proposition}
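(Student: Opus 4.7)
The plan is to read off the centroid identity from the first-order optimality of an auxiliary function on the unit sphere. By \citet[Proposition~3]{Rousseeuw_Ruts1999} the infimum defining $\HD(x;P)$ may be restricted to hyperplanes through $x$, so parametrising each such halfspace by its inner unit normal $u\in\Sph$ and putting
\[ g(u)=P\bigl(\{y\in\R^d\colon\langle y-x,u\rangle\leq 0\}\bigr)=F_u(0), \]
the inner unit normal $u_0\in\Sph$ of the given minimal halfspace $H^-$ is a global minimiser of $g$ on $\Sph$. Since $P$ is absolutely continuous, $g$ is continuous on the compact $\Sph$, so the minimiser exists and the first-order condition applies to every tangent direction.

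The key computation is the directional derivative $\frac{d}{dt}g(u(t))|_{t=0}$ along $u(t)=(u_0+tv)/\|u_0+tv\|$, for $v\in u_0^{\perp}$. Choose orthonormal coordinates so that $u_0=e_d$ and $v=(v',0)$; since $v\perp u_0$, we have $u'(0)=v$. The condition $\langle y-x,u(t)\rangle\leq 0$ becomes, after multiplying by $\|u_0+tv\|>0$, the inequality $y_d\leq x_d-t\langle y'-x',v'\rangle$, so Fubini yields
\[ g(u(t))=\int_{\R^{d-1}}\int_{-\infty}^{x_d-t\langle y'-x',v'\rangle} f(y',y_d)\dd y_d\dd y', \]
where $f$ is the density of $P$ on $\R^d$. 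Differentiating at $t=0$ under both integrals produces
\[ g'(0)=-\int_{\R^{d-1}}\langle y'-x',v'\rangle\,f(y',x_d)\dd y'=-\int_{H}\langle y-x,v\rangle f(y)\dd y, \]
the right-hand integral being taken against the $(d-1)$-dimensional Lebesgue measure on $H$.

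Because $u_0$ minimises $g$, $g'(0)=0$ for every $v\in u_0^{\perp}$; as $\langle y-x,u_0\rangle=0$ for $y\in H$, the identity trivially extends to $v=u_0$, whence $\int_H(y-x)f(y)\dd y=0$ as a vector in $\R^d$. Dividing by $\int_H f(y)\dd y=f_{u_0}(0)$ yields the claimed identity. Positivity of this denominator follows from the joint continuity of $f_u$ near $0$ together with $\HD(x;P)>0$ and the contiguity of $\Supp(P)$: were $f_{u_0}(0)=0$, continuity would make $f_{u_0}$ vanish on a neighbourhood of $0$, and one could translate $H$ without changing the halfspace mass, contradicting either the minimality of $P(H^-)$ at $x$ or the contiguity hypothesis.

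The main obstacle is the rigorous interchange of derivative and integral in the calculation of $g'(0)$, and this is exactly what the joint continuity of $(u,s)\mapsto f_u(s)$ near $(u_0,0)$ is designed to provide. Via the co-area representation $f_u(s)=\int_{\{y\colon\langle y-x,u\rangle=s\}}f\dd\mathcal{H}^{d-1}$, the assumption furnishes the uniform bound on the difference quotients of $g(u(t))$ needed to invoke dominated convergence, without any smoothness hypothesis on $f$ itself. No other step is delicate: the geometric content lies entirely in translating ``$u_0$ is a critical point of $g$'' into the vanishing of the centroid moment on the minimal hyperplane $H$.
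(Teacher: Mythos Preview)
The paper does not actually give a proof of this proposition: it states that ``the proof of the proposition follows very closely the original proof of \citep[Theorem~3.1]{Hassairi_Regaieg2008}, and is omitted.'' Your argument --- parametrising halfspaces through $x$ by their unit normals, recognising the minimal direction $u_0$ as a minimiser of $g(u)=\phi_x(u)$ on $\Sph$, and reading off the centroid identity from the vanishing of the directional derivative $g'(0)=-\int_H\langle y-x,v\rangle f(y)\dd y$ for every tangent $v\in u_0^{\perp}$ --- is precisely the variational approach of Hassairi and Regaieg (cf.\ the paper's remark in Section~\ref{section:characterization} that their condition is ``formulated \ldots\ in terms of derivatives of functions related to $\phi_x$''). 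So your proposal matches the method the paper defers to.
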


One has to be careful with the statement of Proposition~\ref{proposition:Hassairi}. Without the required continuity properties of the marginal densities $f_u$, the conditional expectation of $P$ given a hyperplane $H$, may not even be well defined. To illustrate our point, we give an example that was brought to our attention by M.~Tancer~\citep{Tancer}.

\begin{example}	\label{example:Tancer}
Let $P\in\Prob[\R^2]$ be distributed uniformly on the union of two squares with vertices $(1,0)$, $(1,2)$, $(-1,2)$, $(-1,0)$, and $(2,0)$, $(2,-4)$, $(-2,-4)$, $(-2,0)$, respectively, see the left panel of Figure~\ref{figure:Tancer}. Consider $x = (\varepsilon,0)$ for $-1/2 \leq \varepsilon \leq 1/2$. A simple computation shows that $\HD(x;P) = 1/5$, and the unique minimal halfspace at all such points $x$ is the halfspace $H^+$ that cuts off the smaller square from $P$. A direct analogue of Dupin's theorem would now assert that the conditional expectation of $H = \partial H^+$ is not unique --- any $x$ on the line segment $L$ that joins $(-1/2,0)$ and $(1/2,0)$ would be a candidate for the barycenter of $P$ given $H$. The problem here, of course, is due to the discontinuity of the marginal density of $P$ at $H$. For this particular $H$, the conditional expectation of $P$ given $H$ is not properly defined.
\end{example}

\begin{problem}
Does a version of Dupin's theorem (i.e. a variant of Proposition~\ref{proposition:Hassairi}) hold true also under weaker conditions on measures $P\in\Prob$? 
\end{problem}

\begin{figure}[htpb]
\includegraphics[width=.45\textwidth]{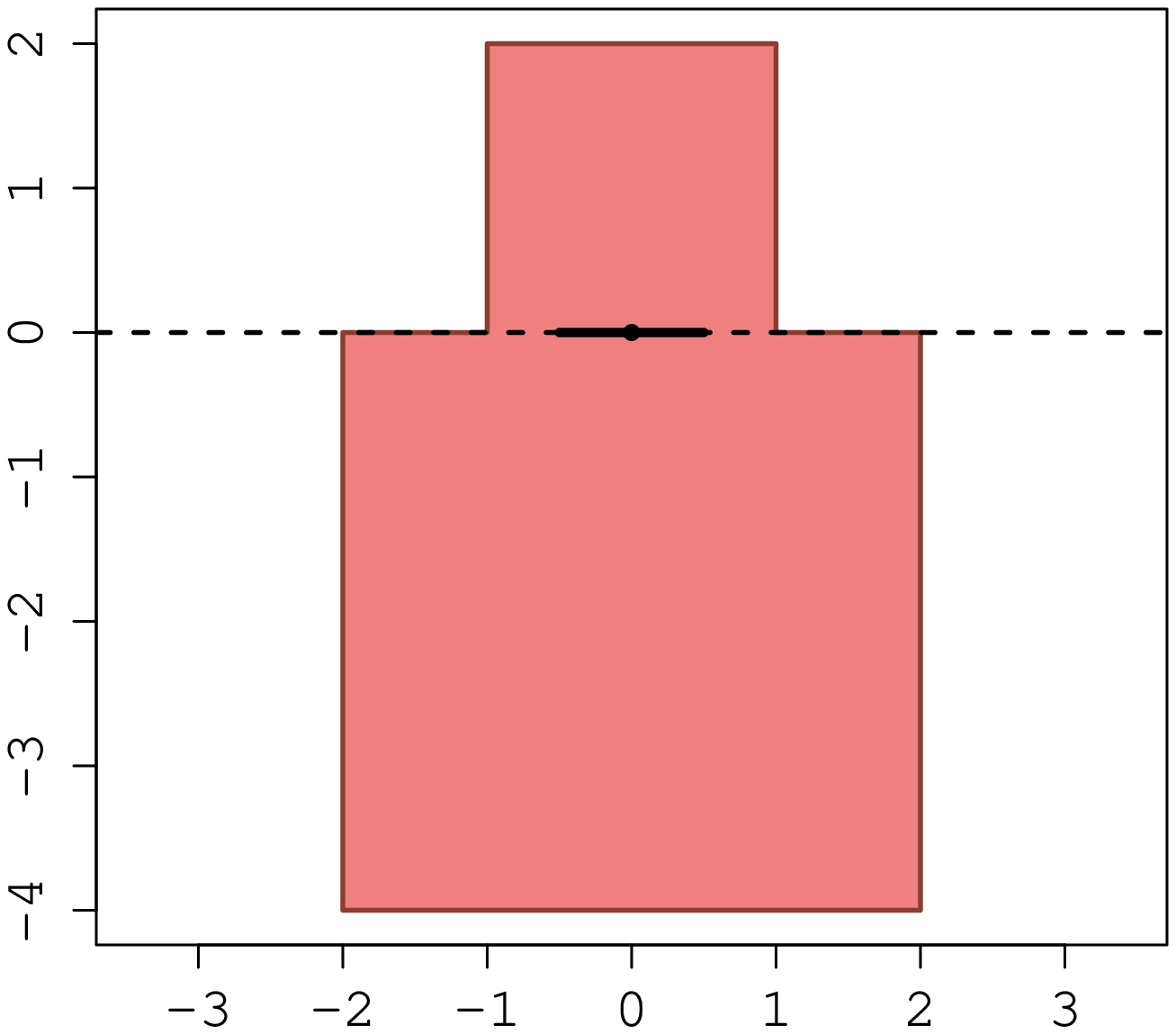} \quad \includegraphics[width=.45\textwidth]{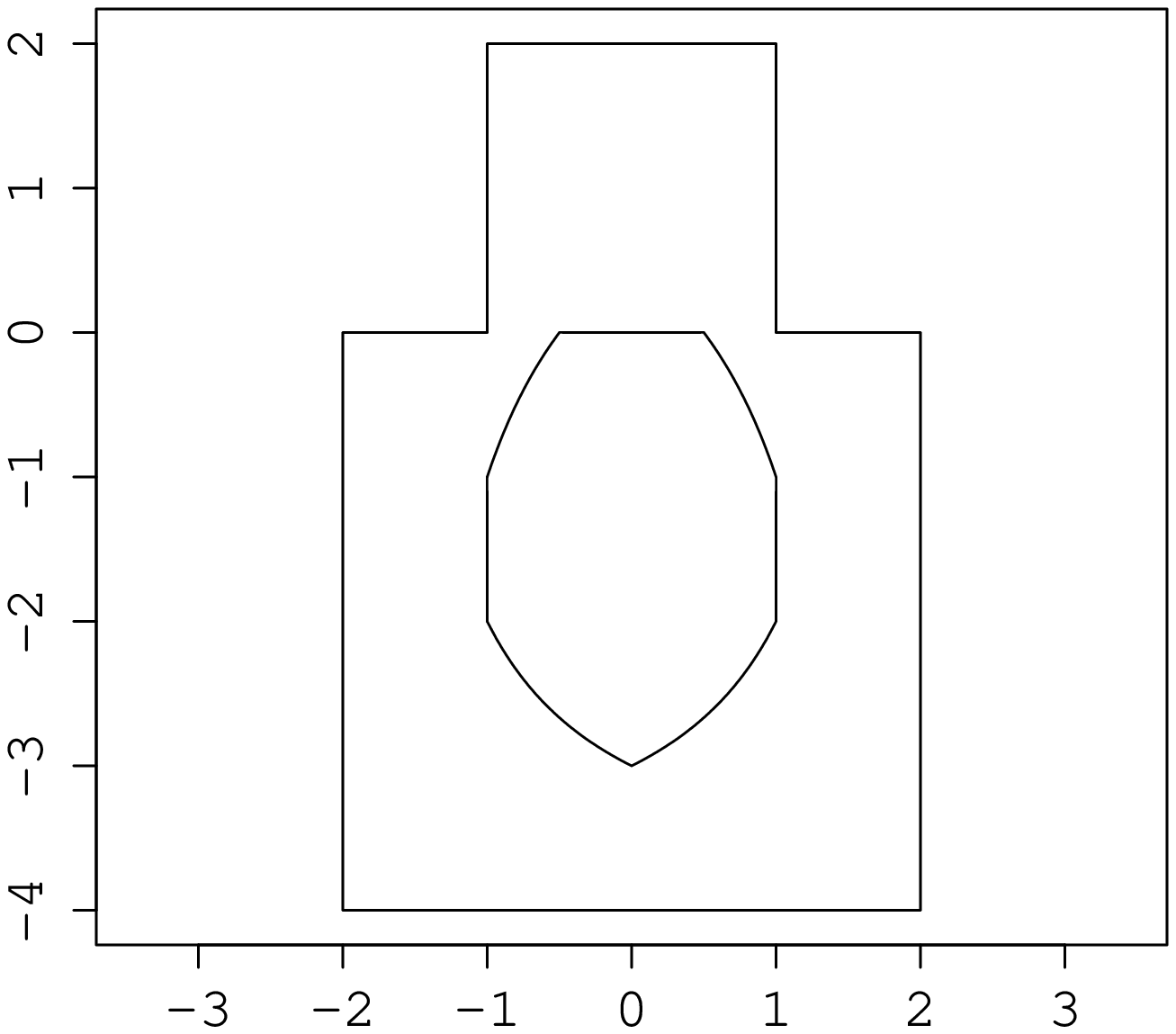}
\caption{Left panel: polygon where the measure $P$ is supported in Example~\ref{example:Tancer}, line segment $L$ (thick solid line), and the unique minimal hyperplane at $x = (\varepsilon,0)$ for $\left\vert \varepsilon \right\vert \leq 1/2$ (dashed line). Right panel: central region $\CR$ of $P$ for $\delta = 0.2$. This region is not strictly convex, as it contains the line segment $L$ between the points $(-1/2,0)$ and $(1/2,0)$.}
\label{figure:Tancer}
\end{figure}

To see that the strict convexity of the central regions (part~\ref{strict convexity} of Proposition~\ref{PropFloatBod1}) does not hold true for all measures, it is enough to return to Example~\ref{example:Tancer}. Indeed, due to the considerations made there, the line segment $L$ lies on the boundary of the central region $\CR = \CRFB$ for $\delta = 1/5$, and $P_{1/5}$ is not strictly convex, see also the right panel of Figure~\ref{figure:Tancer}. For another example where the strict convexity of $\CR$ is violated, recall the collection of $\alpha$-symmetric distributions from Example~\ref{example:alpha symmetric distributions} for $\alpha \leq 1$, and the right panel of Figure~\ref{figure:depth for distributions with density}. In Example~\ref{example:Tancer}, the problem appears to stem from discontinuity of the density of $P$ at the boundary of a minimal halfspace. For $\alpha$-symmetric distributions the problem is that the expectation of $P$ is not defined.

\begin{problem}
Under which conditions are the central regions $\CR$ and the convex floating bodies $\CRFB$ strictly convex? 
\end{problem}

An extension of part~\ref{median} of Proposition~\ref{PropFloatBod1} was given by~\citet[Proposition~7]{Mizera_Volauf2002}, who stated that if \eqref{Delta} is true for $P$ with contiguous support, then the halfspace median of $P$ is a unique point. 

\subsubsection{Proposition~\ref{FloatSymm} for measures}

The (Dupin's) floating body $P_{[\delta]}$ of a general measure $P$ may not exist. Sufficient conditions for the existence of floating bodies of probability measures appear to be a challenging problem of great importance in mathematical statistics, and the theory of data depth (see, e.g., \citep[Open question~1]{Brunel2018}, or \citep{Masse2004, Masse2009}). Many theoretical results on the behavior of the depth and its central regions hold true only under the assumption of existence of floating bodies of $P$, see also the discussion in Section~\ref{section:characterization} below. \citet[Open question~2]{Brunel2018} asks a question that can be rephrased as follows:

	\begin{quotation}
	\emph{Is it true that for any log-concave measure $P\in\Prob$ all floating bodies $P_{[\delta]}$ for $\delta > 0$ small enough exist?}
	\end{quotation}

From the example of the uniform distribution on a triangle (Example~\ref{example:floating bodies}), we see that the answer to the above question is negative. Though, under the additional assumption of central symmetry of $P$, similar properties have been investigated by \citet{Meyer_Reisner1991} for convex bodies (see Proposition~\ref{FloatSymm} above), and extended to certain probability measures by \citet[Section~6]{Bobkov2010}. In the latter paper, it is shown that all $P_{[\delta]}$ exist for centrally symmetric $s$-concave measures with $s\geq -1$. As far as we are aware, the following theorem from \citep[Theorem~6.1]{Bobkov2010} is, up to date, the most general result on the existence of floating bodies of measures $P\in\Prob$.

\begin{theorem}	\label{theorem:existence}
Let $P\in\Prob$ be a centrally symmetric $s$-concave measure with $s \geq -1$ such that $\Supp(P)$ is a $d$-dimensional subset of $\R^d$. Then $P_{[\delta]}$ exists for all $\delta \in (0,1/2]$.
\end{theorem}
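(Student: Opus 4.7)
The plan is to identify the Dupin floating body $P_{[\delta]}$ with the convex floating body $\CRFB$ from \eqref{weighted floating body}, by verifying that every supporting hyperplane of $\CRFB$ cuts off exactly $P$-mass $\delta$. After translating so the center of symmetry lies at $0$, one first uses that $s\geq -1$ together with the $d$-dimensional support of $P$ to force $s\leq 1/d$, and hence (by the density characterization of $s$-concave measures recalled in Section~\ref{section:Winternitz}) that $P$ is absolutely continuous with respect to Lebesgue measure. In particular, no hyperplane carries positive $P$-mass, and for each $u\in\Sph$ the marginal distribution function $F_u(\alpha):=P(\langle X,u\rangle\leq\alpha)$ is continuous and strictly increasing on the interior of its support; central symmetry gives $F_u(\alpha)+F_u(-\alpha)=1$.

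Next, define the quantile $\beta(u):=F_u^{-1}(1-\delta)$ for $u\in\Sph$ and extend it to $\R^d\setminus\{0\}$ by positive $1$-homogeneity. Central symmetry forces $\beta(-u)=\beta(u)\geq 0$, with equality only for $\delta=1/2$. Unpacking \eqref{weighted floating body}, the halfspaces $H_{u,\alpha}^-$ of $P$-mass at most $\delta$ are precisely those with $\alpha\leq -\beta(u)$, and the halfspaces $H_{u,\alpha}^+$ of $P$-mass at most $\delta$ are those with $\alpha\geq \beta(u)$. Intersecting the corresponding complementary halfspaces over all $(u,\alpha)$ yields, using $\beta(u)=\beta(-u)$,
\[
\CRFB \;=\; \bigcap_{u\in\Sph}\bigl\{x\in\R^d : |\langle x,u\rangle|\leq \beta(u)\bigr\} \;=\; \bigcap_{u\in\Sph}\bigl\{x\in\R^d : \langle x,u\rangle\leq \beta(u)\bigr\},
\]
a closed, convex, centrally symmetric set containing $0$, hence non-empty.

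The crux of the argument is to show that the $1$-homogeneous function $\beta$ is \emph{sublinear}, i.e., is the support function of the convex set $\CRFB$. Once this is established, the supporting hyperplane of $\CRFB$ with outer normal $u\in\Sph$ is exactly $H_{u,\beta(u)}$, which by the very definition of $\beta(u)$ carries $P$-mass $\delta$ on its far side. Consequently $\CRFB$ satisfies the Dupin property and $P_{[\delta]}:=\CRFB$ is the required floating body.

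The main obstacle is the sublinearity step. The proposal is to apply Borell's $s$-concavity inequality to the pairs of centrally symmetric tail halfspaces $\pm A_i:=\pm\{x:\langle x,u_i\rangle\geq\beta(u_i)\}$ (each of $P$-mass $\delta$), and, combined with central symmetry, derive a Brunn--Minkowski-type estimate forcing $\beta(\lambda u_1+(1-\lambda)u_2)\leq \lambda\beta(u_1)+(1-\lambda)\beta(u_2)$ for all $\lambda\in[0,1]$ and $u_1,u_2\in\R^d\setminus\{0\}$. The threshold $s\geq -1$ is precisely what secures both the existence of a first moment of $P$ (so that central symmetry genuinely pins down the center of mass) and a sharp enough form of Borell's inequality to propagate across the interpolation; central symmetry is indispensable, as otherwise $\beta$ need not be sublinear and $P_{[\delta]}$ may fail to exist for every $\delta>0$, as witnessed by the triangle in Example~\ref{example:floating bodies}.
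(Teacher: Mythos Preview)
The paper does not supply its own proof of this theorem: it is quoted as \citet[Theorem~6.1]{Bobkov2010}, building on \citet{Meyer_Reisner1991} and an unpublished argument of K.~Ball. Your overall plan---identify $P_{[\delta]}$ with $\CRFB$ and reduce everything to showing that the quantile function $\beta(u)=F_u^{-1}(1-\delta)$ is sublinear (hence equal to $h_{\CRFB}$)---is exactly the framework used in those references, and the preliminary steps (absolute continuity, strict monotonicity of $F_u$, the representation of $\CRFB$ as $\{x:\langle x,u\rangle\le\beta(u)\ \forall u\}$) are correct.

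The gap is in the ``crux'' step. Applying the $s$-concavity inequality directly to the tail halfspaces $A_i=\{x:\langle x,u_i\rangle\ge\beta(u_i)\}$ gives no information: whenever $u_1$ and $u_2$ are not parallel, each $A_i$ is invariant under translation by the full hyperplane $u_i^{\perp}$, so $\lambda A_1+(1-\lambda)A_2$ is translation-invariant by $u_1^{\perp}+u_2^{\perp}=\R^d$, i.e.\ $\lambda A_1+(1-\lambda)A_2=\R^d$. The Borell inequality then reads $1=P(\R^d)\ge\delta$, which is vacuous. Passing to the symmetric pairs $\pm A_i$ does not help either, since $A_i\cup(-A_i)$ is not convex and cannot be fed into the $s$-concavity inequality. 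Likewise, the Minkowski combination of the complementary slabs $\{|\langle x,u_i\rangle|\le\beta(u_i)\}$ is not a slab in the direction $\lambda u_1+(1-\lambda)u_2$, so that route also stalls at the same point.

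The actual arguments are more delicate. One first projects onto the two-dimensional span of $u_1,u_2$ (the push-forward remains centrally symmetric and $s$-concave with $s\ge -1$ preserved, by Borell's theorem on marginals), reducing the sublinearity inequality to a planar statement. In the plane one then exploits concavity properties of the one-dimensional marginals that are \emph{specific} to the range $s\ge -1$: for such $s$ a suitable power (or logarithm) of the tail function $t\mapsto P(\langle X,u\rangle>t)$ is convex in $t$, and combined with the central symmetry this yields the required comparison of quantiles. This is where $s\ge -1$ genuinely enters, not merely through integrability of $X$; your proposal does not isolate this mechanism. If you want to complete the proof, follow Bobkov's Section~6 (or Meyer--Reisner's barycentric argument via Brunn's concavity principle for parallel sections) rather than applying $s$-concavity to raw halfspaces.
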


As remarked by \citet{Bobkov2010}, it is not known whether the restriction $s \geq -1$ can be dropped. 

\begin{problem}
Do the floating bodies of all centrally symmetric $s$-concave measures with full-dimensional support exist?
\end{problem}

\begin{problem}
Let $P\in\Prob$ be centrally symmetric with a sufficiently smooth density $f$ that is positive on $\R^d$. Suppose that all the upper level sets of the density $\{x\in\R^d \colon f(x)\geq t\}$ are (strictly) convex. Does this imply that the floating bodies of $P$ exist for $\delta \in (0,1/2]$?
\end{problem}

As shown in the following theorem, there exists a close connection between the question of existence of floating bodies, and the problem of smoothness of the boundaries of the central regions $\CR$.

\begin{proposition}	\label{theorem:smoothness and floating bodies}
Let $P\in\Prob$ satisfy \eqref{Delta}, $\delta \in (0,1/2)$, and let $\CR$ be a convex body whose boundary is $C^1$. Then $P_\delta$ is a floating body.
\end{proposition}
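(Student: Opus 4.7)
The plan is to show that every supporting hyperplane $H$ of $\CR$ satisfies $P(H^{-}) = \delta$; combined with a separation argument this identifies $\CR = P_{\delta}$ and exhibits it as a Dupin floating body.

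Under \eqref{Delta}, $\HD(\cdot;P)$ is continuous on $\R^{d}$ (Section~\ref{section:continuity}), so $\CR = \{\HD \geq \delta\}$ is closed with $\partial\CR \subseteq \{\HD = \delta\}$; in addition, a minimal halfspace exists at every point of $\R^{d}$, and $H \mapsto P(H^{-})$ is continuous in the natural topology on closed halfspaces. Fix $x \in \partial\CR$ with outward unit normal $\nu$, which is unique by the $C^{1}$ hypothesis, and for $t > 0$ set $x_{t} = x + t\nu$. Since $x_{t} \notin \CR$, one has $\HD(x_{t};P) < \delta$, and one may pick a minimal halfspace $H_{t}^{-}$ at $x_{t}$ with $x_{t} \in H_{t}$ and $P(H_{t}^{-}) = \HD(x_{t};P)$. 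As $t \downarrow 0$, $P(H_{t}^{-}) \to \delta$ by continuity of $\HD$, and the unit normals of the $H_{t}$ converge along a subsequence by compactness of $\Sph$, producing a limiting hyperplane $H_{0} \ni x$ with $P(H_{0}^{-}) = \delta$. This $H_{0}$ supports $\CR$: any $z \in \operatorname{int}(H_{0}^{-})$ would lie in $\operatorname{int}(H_{t}^{-})$ for all small $t$, giving $\HD(z;P) \leq P(H_{t}^{-}) < \delta$ and hence $z \notin \CR$. By the $C^{1}$ hypothesis the supporting hyperplane of $\CR$ at $x$ is unique, so $H_{0}$ coincides with the tangent to $\CR$ at $x$, and the outer halfspace of this tangent has $P$-measure exactly $\delta$.

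Thus every supporting hyperplane of $\CR$ cuts off $\delta$. The inclusion $P_{\delta} \subseteq \CR$ is immediate from the definitions; for $\CR \subseteq P_{\delta}$, any halfspace $H^{-}$ with $P(H^{-}) \leq \delta$ that cut strictly into $\CR$ would, via the sandwich $\delta \leq \HD(y;P) \leq P(H_{y}^{-}) \leq \delta$ at a witness $y \in \CR \cap \operatorname{int}(H^{-})$ (with $H_{y}$ the parallel translate of $H$ through $y$), produce a minimal halfspace at $y$ whose underlying hyperplane, after pushing $y$ to a point of $\partial\CR$ by a compactness argument within $\CR \cap \operatorname{int}(H^{-})$, must equal the tangent there by the $C^{1}$-uniqueness established in the previous paragraph, yielding a contradiction with the assumed non-tangential form of $H$. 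Consequently $P_{\delta} = \CR$ is a floating body of $P$.

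The main obstacle is the uniqueness step identifying the limit $H_{0}$ with the tangent of $\CR$ at $x$: this is precisely where the $C^{1}$ hypothesis is essential, for without it several supporting hyperplanes at a boundary point could coexist with distinct values of $P(\cdot^{-})$, invalidating the floating-body property. The reverse inclusion $\CR \subseteq P_{\delta}$, while technically more delicate, follows the same template of extracting limits of minimal halfspaces under \eqref{Delta}.
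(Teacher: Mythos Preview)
Your first paragraph is correct and already suffices, but the detour through exterior points $x_{t}$ and a limit of minimal halfspaces is unnecessary. You observe yourself that under \eqref{Delta} a minimal halfspace exists at every point; the paper simply uses this at $x\in\partial\CR$ directly. Since $\HD(x;P)=\delta$ by continuity, any minimal halfspace $H^{-}$ at $x$ has $P(H^{-})=\delta$, and by Proposition~\ref{minimal halfspace supports} the complementary halfspace $H^{+}$ supports $\CR$. The $C^{1}$ hypothesis then forces $H$ to be \emph{the} supporting hyperplane at $x$, so every supporting hyperplane of $\CR$ cuts off probability exactly $\delta$ and the proof is finished. Your limiting construction effectively rebuilds the minimal halfspace at $x$ from scratch instead of invoking its existence.

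Your second paragraph rests on a notational confusion: in this paper $\CR$ and $P_{\delta}$ denote the same object, the central region $\{y\in\R^d:\HD(y;P)\geq\delta\}$ (the convex floating body carries the separate symbol $\CRFB$). There is thus no inclusion ``$P_{\delta}\subseteq\CR$'' or ``$\CR\subseteq P_{\delta}$'' to establish; once every supporting hyperplane of $\CR$ cuts off probability $\delta$, the Dupin floating-body property holds by definition. The argument you sketch for the reverse inclusion is therefore superfluous, and as written it is also not rigorous: the step ``pushing $y$ to a point of $\partial\CR$ by a compactness argument within $\CR\cap\operatorname{int}(H^{-})$'' is vague, and you have not ruled out that such a $y$ lies in the interior of $\CR$ with $\HD(y;P)=\delta$.
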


\begin{proof}
Let $x \in \partial \CR$. Since, under \eqref{Delta}, the depth $\HD\left(\cdot;P\right)$ is continuous on $\R^d$ (\citep[Proposition~1]{Mizera_Volauf2002}, or Section~\ref{section:continuity} above), $\HD(x;P) = \delta$. Using \citep[Proposition~4.5 (i)]{Masse2004} there exists a minimal halfspace $H^-\in\mathcal H^-$ at $x$, and $H^+$ then must support $\CR$ at $x$ by Proposition~\ref{minimal halfspace supports}. Because of the smoothness of the boundary of $\CR$, there is only a single supporting halfspace of $\CR$ at each $x \in \partial \CR$. Thus, we have shown that for any supporting halfspace $H^+$ of $\CR$, $P(H^-) = \delta$, and $\CR$ is a floating body of $P$.
\end{proof}

Smoothness of boundaries of $\CR$ was recognized to be crucial in establishing theoretical properties of $\HD$ already by \citet{Nolan1992}, and \citet{Masse_Theodorescu1994}. 
Many theoretical results stated for the halfspace depth in statistics rely on that condition. For instance, as shown by \citet[Theorem~2.1]{Masse2004}, the asymptotic distribution of the sample halfspace depth at $x$ is Gau{\ss}ian if the boundary of $\CR$ passing through $x$ has a unique minimal halfspace. For another application of the smoothness of boundaries of floating bodies see Section~\ref{section:characterization} below.

Despite being of critical importance, so far the only examples of distributions with smooth contours of $\HD$ are the (full-dimensional affine images of) $\alpha$-symmetric distributions with $\alpha>1$, see Example~\ref{example:alpha symmetric distributions}. As discussed in \citet{Gijbels_Nagy2016}, apart from those distributions, no other multivariate measure with smooth depth contours is known in statistics. In that paper, it is also shown that simple distributions such as mixtures of multivariate Gau{\ss}ian distributions, and distributions with smooth centrally symmetric, or smooth strictly quasi-concave densities, may have points at which the boundary of $\CR$ is not smooth. It is therefore remarkable that \citet[Theorem~3]{Meyer_Reisner1991} (part~\ref{C2 differentiability} of Proposition~\ref{FloatSymm} above) showed that for certain (centrally) symmetric convex bodies, the boundaries of $K_{\delta}$ exhibit a high degree of smoothness. We are not aware of any result giving sufficient conditions for higher order differentiability of the boundary of the depth central regions, or convex floating bodies of measures, in statistics. 

\begin{problem}
Under which conditions have the central regions $\CR$ and the convex floating bodies $\CRFB$ boundaries of type $C^1$ or $C^2$?
\end{problem}

\subsubsection{Propositions~\ref{DiffFloat1} and~\ref{DiffFloat3} for measures}

\begin{problem}
Are there analogues of Proposition~\ref{DiffFloat1} and Proposition~\ref{DiffFloat3} for measures? 
\end{problem}

\subsection{Application: Multivariate extremes and depth}

The intimate connections of floating bodies with the approximation problems described in Section~\ref{ApproxCoBodPol} have analogues for probability measures. If $X_1, \dots, X_n$ is a random sample from distribution $P \in \Prob$, one can ask how fast does the random polytope given by the convex hull of these random points grow to the convex hull of the support of $P$. In conjunction with the advances for uniform measures on convex bodies outlined in Section~\ref{ApproxCoBodPol}, it is not surprising that the halfspace depth and floating bodies of measures play a prominent role in these problems.

The following theorem, called the multivariate Gnedenko law of large numbers, can be found in \citet[Theorem~2]{Fresen2013}. 

\begin{theorem}	\label{theorem:Fresen}
Let $q > 0$ and $p > 1$, and let $P\in\Prob$ be a probability measure with a density of the form $f(x) = c \Euler^{-g(x)^p}$ where $g \colon \R^d \to [0,\infty)$ is a convex function and $c>0$. Then there exist constants $c_1, c_2 > 0$ such that for a random sample $X_1, \dots, X_n$ of any size $n \in \N$ with $n \geq d+2$ from $P$, it holds true that
	\begin{equation}	\label{Fresen inequality}
	\PP\left( \dH\left( \left[ X_1, \dots, X_n \right], P_{1/n} \right) \leq c_1 \frac{\log \log n}{\left(\log n\right)^{1-1/p}} \right) \geq 1 - c_2 \left( \log n \right)^{-q},
	\end{equation}
where $\left[ X_1, \dots, X_n \right]$ is the closed convex hull of the points $X_1, \dots, X_n$, and $P_{1/n}$ is the depth central region $\CR$ with $\delta = 1/n$. 
\end{theorem}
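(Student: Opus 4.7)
The plan is to translate the Hausdorff-distance statement into a uniform comparison of support functions and attack it via a Gnedenko-type extreme value argument. Recall that the Hausdorff distance of two compact convex sets equals the sup-norm distance of their support functions (cf.~\eqref{support function}). Since the density $c\,\Euler^{-g^p}$ is strictly positive on $\R^d$, the measure $P$ has contiguous support and satisfies \eqref{Delta}, so by \eqref{floating body reverse} the central region $P_{1/n}$ agrees with the convex floating body $\CRFB[1/n]$, and
\[
h_{P_{1/n}}(u) \;=\; q_u(1/n) \qquad\text{for every }u\in\Sph,
\]
where $q_u(\tau)$ denotes the upper $\tau$-quantile of the marginal $\langle u,X\rangle$, $X\sim P$. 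On the other hand, $h_{[X_1,\dots,X_n]}(u) = \max_{i\le n}\langle u,X_i\rangle$ is the sample maximum of $n$ i.i.d.\ copies of that marginal. Thus \eqref{Fresen inequality} reduces to a uniform-in-$u$ extremal bound.

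The first step is to establish Weibull-type tail asymptotics for every one-dimensional projection. Using log-concavity of $f$ and a Laplace expansion of the halfspace probability along the ray $\{tu:t\ge 0\}$, I would show that uniformly in $u\in\Sph$
\[
-\log \PP(\langle u,X\rangle > t) \;=\; (1+o(1))\,a(u)\,t^p \qquad\text{as }t\to\infty,
\]
with $a(u)$ bounded away from $0$ and $\infty$ (the constant $a(u)$ is governed by the growth rate of $g$ in the direction $u$, and the positive lower bound on $a$ comes from $g\ge 0$ together with the integrability of $\Euler^{-g^p}$). Inverting the expansion yields $q_u(1/n) = ((\log n)/a(u))^{1/p}(1+o(1))$, and differentiating the tail identifies the scale on which the quantile can shift, namely $(\log n)^{1/p-1}$.

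The second step is a pointwise deviation estimate. For each fixed $u$, a direct binomial/Chernoff bound applied to the number of indices $i$ with $\langle u,X_i\rangle$ exceeding $q_u(1/n)\pm c_1 \log\log n\,(\log n)^{1/p-1}$ produces
\[
\PP\!\left(\bigl|\max_{i\le n}\langle u,X_i\rangle - q_u(1/n)\bigr|\le c_1\frac{\log\log n}{(\log n)^{1-1/p}}\right)\ge 1 - c'_2\,(\log n)^{-q-M},
\]
for a polylogarithmic safety margin $M$ to be consumed by a union bound. Both $h_{[X_1,\dots,X_n]}$ and $h_{P_{1/n}}$ are Lipschitz on $\Sph$ with constants of order $(\log n)^{1/p}$ (both bodies have that order of diameter), so an $\varepsilon$-net of $\Sph$ with $\varepsilon$ a sufficiently negative power of $\log n$ contains only polylogarithmically many points. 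Applying the pointwise bound at each net point and combining via the union bound upgrades the estimate to the sup norm over $\Sph$, which translates back into \eqref{Fresen inequality}.

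The main obstacle is the uniformity at Step~1. Convexity of $g$ guarantees the existence of a limit $a(u)\in(0,\infty)$ in each direction, but extracting a quantitative $o(1)$ remainder that is uniform over $\Sph$ requires a careful use of Prékopa--Leindler-type projection arguments for log-concave measures, especially in directions along which the sublevel sets of $g$ are elongated and $a(u)$ is close to its infimum. This is precisely where the logarithmic correction $\log\log n$ appears, exactly as in the classical Gnedenko theorem for Weibull-type tails, and it is what prevents a naive Borel--Cantelli argument from producing a sharper rate.
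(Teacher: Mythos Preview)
The paper does not give its own proof of this theorem: it is stated as a quotation of \citet[Theorem~2]{Fresen2013} and no argument is supplied. So there is nothing in the paper to compare your attempt against; any assessment has to be against Fresen's original proof.

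Your outline is in the right spirit --- reduce to support functions, control the one-dimensional marginals, and pass from pointwise to uniform via a net --- and this is broadly how Fresen proceeds. A few points are worth flagging. First, the identity $h_{P_{1/n}}(u)=q_u(1/n)$ is exactly the statement that $P_{1/n}$ is a (Dupin) floating body, which is not automatic from \eqref{Delta} and contiguity alone; you are implicitly using that for $p$-log-concave densities every supporting hyperplane of $P_\delta$ cuts off mass exactly $\delta$, and this needs its own justification (Fresen handles this via log-concavity). Second, your Step~1 claim that $-\log\PP(\langle u,X\rangle>t)=(1+o(1))\,a(u)\,t^p$ uniformly in $u$ is the crux, as you note, but ``Pr\'ekopa--Leindler-type projection arguments'' is not yet a proof: one needs a quantitative two-sided bound on the marginal tails with explicit dependence on $u$, and this is where Fresen invests most of the work. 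Third, to get the lower inclusion (i.e.\ that $P_{1/n}$ is not much larger than the convex hull) your binomial argument must produce, for each net direction, at least one sample point beyond the inner threshold; this is the correct mechanism, but be aware that it only controls the support function of the hull, not the hull itself, and you still need the Lipschitz step to close the gap between net directions --- which you do mention. Overall your sketch is a plausible roadmap, but the uniform tail expansion is a genuine piece of analysis that your write-up does not yet contain.
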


Distributions $P$ from Theorem~\ref{theorem:Fresen} are sometimes called $p$-log-concave measures. For usual log-concave measures, an inequality only slightly weaker than \eqref{Fresen inequality} is given in \citet[Theorem~1]{Fresen2013}. 

Theorem~\ref{theorem:Fresen} asserts that, with large probability, convex hulls of large random samples from $P$ behave as the halfspace depth central regions $\CR$ for very small values of $\delta$. This observation opens a whole new field of applications of the depth in multivariate extreme value theory. Indeed, by now, data depth has been used in statistics predominantly as a robust tool that identifies the central parts of the probability mass of distributions, and little attention was paid to its behavior near the tails. Theorem~\ref{theorem:Fresen} gives a probabilistic interpretation also to the boundaries of those depth regions that correspond to the extreme depth-quantiles. It is also interesting to compare Theorem~\ref{theorem:Fresen} with the recent advances of \citet{Einmahl_etal2015} and \citet{He_etal2017}. There, the authors employ extreme value theory in order to estimate $\CR$ for low values of $\delta$ reliably from the data. It will be interesting to see what can be obtained by a proper combination of the estimation techniques from the latter papers, and the asymptotic representations of \citet{Fresen2013}.

In a further analogue with the exposition from Section~\ref{section:convex floating bodies}, one may study the limit behavior of the quantity
	\begin{equation}	\label{affine surface area for measures} 
	P\left(P_{0}\right)-P\left(\CR\right) = 1 - P\left(\CR\right)	
	\end{equation}
as $\delta \to 0$ from the right. More specifically, assume that the difference \eqref{affine surface area for measures} is scaled properly, so that the resulting limit is a finite, non-negative number $\Omega(P)$. The characteristic $\Omega(P)$, together with the sequence of its scaling constants, is then an affine invariant on $\Prob$. $\Omega(P)$ is not a generalized notion of the affine surface area such as the functional from Section~\ref{section:Asa for measures}, but it is interesting in its own right. From the viewpoint of statistics, $\Omega(P)$ may serve as an index of heavy-tailedness of the distribution $P$, where not only the size of the tails is evaluated, but also ``the complexity of the boundary" of $\Supp(P)$ is taken into account.

\section{Mahalanobis ellipsoids and the halfspace depth}	\label{section:ellipsoids}

Let $X \sim P\in\Prob$ be distributed uniformly on $K\in\CB$. The body $K$ is said to be isotropic, or in the isotropic position, if $\vol{K} = 1$, $\E X = 0$, and $\Var X = L_K^2 I_d$ where $L_K > 0$ a constant and $I_d$ the $d\times d$ identity matrix. Geometrically, this means that the barycenter of $K$ is at the origin and that the ellipsoid of inertia of $K$, or equivalently, all Mahalanobis ellipsoids of $P$ from \eqref{Mahalanobis level set}, are Euclidean balls. The constant
	\begin{equation*}
	L_K= \frac{1}{d} \  \left( \int_K \| x\|^2 \dd x \right)^\frac{1}{2}
	\end{equation*}
is called the {\em isotropic constant} of $K$.

The isotropic constant plays an important role in the analysis of convex bodies. We refer to e.g., \citet{Milman_Pajor1989} and the book of \citet[Chapter~3]{Brazitikos_etal2014}. The conjecture that for all $K \in \CB$ the constant $L_K$ is bounded from above by an absolute constant independent of the dimension $d$ is one of the major open problems in geometric analysis. The best known upper estimate so far, due to \citet{Klartag2006}, is that $L_K \leq c \ d^\frac{1}{4}$ for an absolute constant $c$, improving an earlier estimate by \citet{Bourgain1991} by a logarithmic factor. The conjecture is equivalent to the {\em hyperplane conjecture}, first formulated by Bourgain, which asks if every centered convex body of volume $1$ has a hyperplane section through the origin whose $(d-1)$-dimensional volume is greater than an absolute positive constant, independent of dimension $d$. We refer to e.g., \citep[Section~3.1]{Brazitikos_etal2014} for the details.

For any $K$ there exists an affine transformation $T$ such that $T(K)$ is isotropic, and the isotropic position is uniquely determined up to orthogonal transformations. The isotropic constant of a general body $K \in \CB$ is then defined as the isotropic constant of the corresponding isotropic body $T(K)$. 

Similarly, we can define the isotropic constant for probability measures $P$ with log-concave density $f$. A measure that corresponds to $X \sim P$ is isotropic if it is centered, i.e. $\E X=0$, and if for all $u \in \Sph$, 
	\begin{equation*}
  \int _{\R^d}  \langle x, u \rangle^2  f(x) \dd x  = 1,
	\end{equation*}
or, equivalently, $\Var X = I_d$. Then
	\[	L_P = \left(\sup_{x \in \R^d} f(x)\right)^\frac{1}{d}	\]
is the isotropic constant of $P$. The isotropic constant of a general probability measure $P$ with log-concave density is, again, given as the isotropic constant of an affine image of $P$ that is isotropic \citep[Chapter~2]{Brazitikos_etal2014}. Note that a convex body $K \in \CB$ of volume $1$ is isotropic, if and only if the density of the uniform distribution on the convex body $K/L_K$ is an isotropic log-concave density.

For bodies and log-concave measures in isotropic position, many important geometrical results are known. In this section we state one that relates to the subjects of data depth and floating bodies.

\begin{proposition}	\label{theorem:Legendre}
The following holds true:
	\begin{enumerate}[label=(\roman*)]
	\item \label{Milman Legendre} For any isotropic convex body $K\in\CB$ and any $\delta\in\left(0, \frac{1}{\Euler}\right)$
		\begin{equation*}	
		\left( \frac{1}{\Euler} -\delta \right) L_K \, \B  \subseteq K_\delta \subseteq 17 \log \left(\frac{1}{\delta}\right) L_K \, \B.	
		\end{equation*}
	\item \label{Fresen Legendre} For any isotropic measure $P\in\Prob$ with a log-concave density 
		\begin{equation*}	
		\left( \frac{1}{\Euler} -\delta \right) L_P \, \B  \subseteq P_\delta  \subseteq 17 \log \left(\frac{1}{\delta}\right) L_P \, \B.	
		\end{equation*}
	\end{enumerate}
\end{proposition}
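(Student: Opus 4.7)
The plan is to derive both inclusions from two classical tools: Grünbaum's inequality for log-concave measures (Theorem~\ref{theorem:maximum} part~(iii) with $s=0$) handles the inner containment, while Borell's exponential tail inequality for log-concave measures handles the outer one. I will describe the argument for part~(ii); part~(i) follows from essentially the same argument after noting that the uniform probability distribution on the isotropic convex body $K$ has one-dimensional marginals of variance $L_K^2$ rather than $1$.

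For the inner containment, fix $x \in \R^d$ with $\|x\| \leq r$ (to be chosen) and let $H$ be any hyperplane through $x$ with unit normal $u$. Writing $g_u$ and $G_u$ for the density and distribution function of $\langle u, X \rangle$, the marginal $g_u$ is a one-dimensional log-concave density with mean $0$ and variance $1$ (resp.\ variance $L_K^2$ in part~(i)). By the remark following Theorem~\ref{theorem:maximum}, which says that the lower bound on $\MD$ is attained at the mean, applied to the one-dimensional $g_u$, one has $G_u(0) \geq 1/\Euler$, and symmetrically $1 - G_u(0) \geq 1/\Euler$. Flipping $u$ if necessary, we may assume $\alpha := \langle u, x\rangle \leq 0$, so that $P(H^-) = G_u(\alpha)$. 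Lipschitz continuity of $G_u$ then yields
\[
P(H^-) \geq G_u(0) - \|x\|\,\|g_u\|_\infty \geq \tfrac{1}{\Euler} - \|x\|\,\|g_u\|_\infty.
\]
The sharp inequality $\sigma \|g\|_\infty \leq 1$ for a centered one-dimensional log-concave density $g$ of standard deviation $\sigma$ (with equality for the one-sided exponential) gives $\|g_u\|_\infty \leq 1/L_K$ in part~(i) and $\|g_u\|_\infty \leq 1$ in part~(ii); in the latter case $L_P$ is bounded below by a positive absolute constant, so the factor $1$ may be rewritten as a multiple of $1/L_P$. Choosing $r = (1/\Euler - \delta) L_K$ (resp.\ $(1/\Euler - \delta) L_P$) makes the right-hand side at least $\delta$ uniformly in $u$, giving $\HD(x;P) \geq \delta$.

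For the outer containment, fix $x$ with $\|x\| > R$ and take $u = x/\|x\|$ together with the halfspace $H^- = \{y \in \R^d : \langle u, y\rangle \geq \|x\|\} \ni x$, so that $P(H^-) = \PP(\langle u, X\rangle \geq \|x\|)$. The marginal $\langle u, X\rangle$ is one-dimensional log-concave with mean $0$ and standard deviation $\sigma$ ($\sigma = L_K$ in part~(i); $\sigma = 1 \simeq L_P$ in part~(ii)). Borell's inequality, a consequence of the Brunn--Minkowski inequality applied to the one-dimensional marginal, gives an exponential tail bound of the form $\PP(\langle u, X\rangle \geq t) \leq C\,\Euler^{-ct/\sigma}$ for $t \geq t_0\sigma$, with absolute constants $c,C,t_0 > 0$. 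Requiring this quantity to be at most $\delta$ and, after a careful choice of constants, simplifying, yields $\|x\| \leq 17 L_K \log(1/\delta)$ (resp.\ $17 L_P \log(1/\delta)$) as a sufficient condition for $\HD(x;P) < \delta$, establishing the outer inclusion.

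The main obstacle is the quantitative tracking of the constants, in particular the prefactor $17$ in the outer bound: this demands a quantitative form of Borell's inequality with explicit constants rather than merely absolute ones, and attention to the extremal case of the one-sided exponential density. A secondary subtlety in part~(ii) is reconciling the natural scale $1$ (coming from the variance-$1$ normalization) with the stated scale $L_P$; this is handled via the classical fact that $L_P$ is bounded between positive absolute constants for isotropic log-concave measures.
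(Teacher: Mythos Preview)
The paper does not supply its own proof of this proposition; it simply cites \citet[Proposition in the Appendix]{Milman_Pajor1989} for part~(i) and \citet{Fresen2012} for the extension in part~(ii). Your sketch is essentially the Milman--Pajor argument: reduce to one-dimensional log-concave marginals, use Gr\"unbaum's bound $G_u(0)\geq 1/\Euler$ together with the Lipschitz estimate $|G_u(\alpha)-G_u(0)|\leq |\alpha|\,\|g_u\|_\infty$ and the sharp inequality $\sigma\|g\|_\infty\leq 1$ for the inner inclusion, and a Borell-type exponential tail bound for the outer one. So in spirit and in substance you are reproducing the cited proof.

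Two points deserve attention. First, your treatment of part~(ii) does not actually deliver the constant $L_P$ as stated: from $\|g_u\|_\infty\leq 1$ you obtain the inner ball of radius $(1/\Euler-\delta)$, and replacing $1$ by $1/L_P$ would require $L_P\leq 1$, which is not known in general (indeed, bounding $L_P$ above is the isotropic constant problem). What one can show is that $\|g_u\|_\infty$ is comparable to $L_P$ up to absolute constants, which yields the inclusion up to a universal multiplicative factor; if you want the statement exactly as written you will have to revisit Fresen's formulation and check whether the constant there is literal or only asserted up to absolute constants. Second, your claim $\sigma\|g\|_\infty\leq 1$ with equality for the one-sided exponential is correct, but it is worth citing explicitly (it follows, e.g., from Hensley's lemma or from the elementary argument in Milman--Pajor), since this is the step that pins down the exact factor $1/\Euler-\delta$ in part~(i). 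The constant $17$ in the outer bound is, as you note, purely a matter of bookkeeping in the one-dimensional exponential tail estimate.
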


This proposition was proved by \citet[Proposition in the Appendix]{Milman_Pajor1989}, and re-stated by \citet{Fresen2012} who also gave the formulation in part \ref{Fresen Legendre} for isotropic log-concave measures. Its further extension to centrally symmetric $s$-concave measures with $s > -\infty$ can be found in \citet[Theorem~5.1]{Bobkov2010}. Part~\ref{Milman Legendre} of this proposition is a special case of more general relations between floating bodies and {\em $p$-centroid bodies} which can be found in \citep[Theorem~2.2]{Paouris_Werner2012}.

Proposition~\ref{theorem:Legendre} has important implications for the theory of halfspace depth. By affine equivariance of the halfspace depth central regions $\CR$, for any log-concave measure $P\in\Prob$ with expectation $\mu\in\R^d$ and a positive definite variance matrix $\Sigma \in \R^{d \times d}$,
	\[	\left\{ x \in \R^d \colon \dist(x,\mu) \leq \left( \frac{1}{\Euler} -\delta \right) L_P \right\} \subseteq P_\delta \subseteq  \left\{ x \in \R^d \colon \dist(x,\mu) \leq 17 \log \left(\frac{1}{\delta}\right) L_P \right\}	\]
where $\dist$ is the Mahalanobis distance from~\eqref{Mahalanobis distance}. 
%
%
Therefore, all central regions of the halfspace depth for $\delta < 1/\Euler$ of log-concave measures are, up to a constant that depends only on $\delta$ and $L_P$, isomorphic to the Mahalanobis ellipsoids given by the covariance structure of $P$. This corroborates the findings from statistics, where it has been long observed that the depth central regions $\CR$ tend to take more ``ellipsoidal" shapes than the level sets of the densities, see also Figures~\ref{figure:floating bodies} and~\ref{figure:depth for distributions with density} above. Results in this section provide quantitative statements that support those claims. 

\begin{problem}
Is it possible to state an analogue of Proposition~\ref{theorem:Legendre} also for more general probability measures?
\end{problem}

\section{Characterization of distributions}	\label{section:characterization}

One of the most important open questions connected with the halfspace depth is the halfspace depth characterization problem. It has been conjectured (e.g. \citep[p.~2306]{Cuesta_Nieto2008c} and \citep[p.~1598]{Kong_Mizera2012}) that for each distribution $P\in\Prob$ there exists a unique depth surface $\left\{ \HD(x;P) \colon x \in \R^d \right\}$, i.e., that all probability distributions are determined by their halfspace depth. Such a result would be invaluable in statistics, as it would assert that just as the distribution function or the characteristic function of a random vector, also the halfspace depth could be used as a complete representative of any probability distribution.

Recently, the depth characterization conjecture was disproved in \citep{Nagy2018s}, where an example of two different probability distributions with the same depth at all points in $\R^d$, $d \geq 2$, was given. The example employs collections of different $\alpha$-symmetric distributions with $\alpha\leq 1$ whose projections coincide in some directions.

Even though the general characterization conjecture turned out to be false, important partial positive results to the characterization problem can be found in the literature. Thanks to the results of \citet{Struyf_Rousseeuw1999}, \citet{Koshevoy2002}, and \citet{Hassairi_Regaieg2007} we know that if $P, Q\in\Prob$ are distributions whose supports are finite subsets of $\R^d$, then $\HD(x;P) = \HD(x;Q)$ for all $x \in\R^d$ implies $P=Q$. For non-atomic distributions, two results can be found in the literature in the papers of \citet{Hassairi_Regaieg2008}, and \citet{Kong_Zuo2010}. In this section we show that the last two theorems are special cases of the following theorem.

\begin{theorem}	\label{theorem:retrieval}
Let $P\in\Prob$ have contiguous support, and let $x_P \in \R^d$ be the halfspace median of $P$. Then the following are equivalent:
	\begin{enumerate}[start=1, label=$\mathbf{(FB_{\arabic*})}$,ref=$\mathbf{(FB_{\arabic*})}$]
		\item	\label{FB1} For each $\delta \in (0,1/2)$ the floating body $P_{[\delta]}$ of $P$ exists. 
		\item \label{FB2} \eqref{Delta} holds true, and
			\begin{equation}	\label{retrieval}
					P(H^-) = 	\begin{cases}
									\sup_{x\in H} \HD(x;P)	& \mbox{for any $H\in\mathcal H$ with $x_P \notin H^-$}, \\
									1 - \sup_{x\in H} \HD(x;P)	& \mbox{for any $H\in\mathcal H$ with $x_P \in H^-$}.
									\end{cases}	
			\end{equation}
	\end{enumerate}
Consequently, if \ref{FB1} is true, then $P$ is characterized by its halfspace depth, i.e. there is no other probability distribution with the same depth at all points in $\R^d$. 
\end{theorem}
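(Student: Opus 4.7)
The plan is to prove that \ref{FB1} and \ref{FB2} are equivalent, and then to deduce the characterization as a corollary.

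For the forward implication \ref{FB1}$\Rightarrow$\ref{FB2}, the first task is to extract \eqref{Delta}. Suppose $P(H_0)=\varepsilon>0$ for some hyperplane $H_0$ with unit normal $n$. Parametrising the parallel translates by signed displacement $t\in\R$, the map $t\mapsto P(H_t^-)$ is monotone with a jump of size $\varepsilon$ at $t=0$, so its range has an open gap of length $\varepsilon$; a short case analysis on the two directions $\pm n$ shows that at least one of the two resulting gaps intersects $(0,1/2)$. For $\delta$ in such a gap no closed halfspace with normal $\pm n$ has measure exactly $\delta$, contradicting that $P_{[\delta]}$, being a convex body, admits supporting hyperplanes in every direction, each cutting off measure exactly $\delta$. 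Hence \eqref{Delta} holds; and since $P_{[\delta]}\neq\emptyset$ for every $\delta<1/2$ forces $\MD(P)\geq 1/2$, while \eqref{Delta} gives $\MD(P)\leq 1/2$, we obtain $\MD(P)=1/2$ with $x_P$ the unique halfspace median. To prove \eqref{retrieval}, fix $H$ with $x_P\notin H^-$; the median property forces $P(H^-)<1/2$ (the case $P(H^-)=0$ is trivial). For $\delta:=P(H^-)\in(0,1/2)$ I would match $H$ against the supporting hyperplane of $P_{[\delta]}$ with the same outer normal: under \eqref{Delta} and contiguity, for each direction the map $t\mapsto P(\{\langle z,n\rangle\leq t\})$ is continuous and strictly monotone where it takes values in $(0,1)$, so this supporting hyperplane coincides with $H$. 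A touching point $x^\ast\in P_{[\delta]}\cap H$ then lies in $P_\delta=\CR$, so $\HD(x^\ast;P)\geq\delta$, and also in $H$, so $\HD(x^\ast;P)\leq P(H^-)=\delta$; hence $\sup_{x\in H}\HD(x;P)=\delta$. The case $x_P\in H^-$ with $P(H^-)>1/2$ follows by applying the previous argument to $H^+$ and complementing.

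For the reverse implication \ref{FB2}$\Rightarrow$\ref{FB1}, I first deduce $\MD(P)=1/2$: for any hyperplane $H\ni x_P$ both closed halfspaces contain $x_P$, so applying \eqref{retrieval} to $H^-$ and to $H^+$ forces $P(H^-)=P(H^+)=1/2$ under \eqref{Delta}, and the right-hand side then pins $\sup_{x\in H}\HD(x;P)=1/2$, whence $\MD(P)=1/2$. Fix $\delta\in(0,1/2)$; under contiguity $\CR=P_\delta$ is a convex body (nonempty, containing $x_P$, with nonempty interior by continuity of $\HD$). For any supporting halfspace $H^+\supseteq\CR$ with touching point $x^\ast\in H$, continuity of $\HD$ under \eqref{Delta} gives $\HD(x^\ast;P)=\delta$, while $\HD(x_P;P)=1/2>\delta$ together with continuity places $x_P$ in the interior of $\CR\subseteq H^+$. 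Thus $x_P\notin H^-$, and the first branch of \eqref{retrieval} yields $P(H^-)=\sup_{x\in H}\HD(x;P)$. This supremum equals exactly $\delta$, for any $x\in H$ with $\HD(x;P)>\delta$ would, by openness of $\{\HD>\delta\}$, carry an open neighborhood into $\CR\subseteq H^+$, which is impossible for $x\in H$. Hence every supporting hyperplane of $\CR$ cuts off mass exactly $\delta$, so $\CR=P_{[\delta]}$ is a floating body.

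The characterization is now immediate: \ref{FB2} expresses $P(H^-)$ for every closed halfspace purely in terms of $\HD(\cdot;P)$ and the point $x_P$, and $x_P$ is itself recovered as the unique argmax of $\HD(\cdot;P)$. Thus the distribution of every one-dimensional projection $\langle u,X\rangle$, which is encoded by the family $\{P(H^-):H\perp u\}$, is determined by $\HD$, and the Cram\'er-Wold device concludes that $P$ is determined by $\HD$. The most delicate step I expect is the bijection, in each normal direction, between supporting hyperplanes of $P_{[\delta]}$ and closed halfspaces of probability exactly $\delta$: this is precisely where \eqref{Delta} and contiguity are essential, and where the boundary cases $P(H^-)\in\{0,1\}$ and the possibility $x_P\in H$ must be handled carefully.
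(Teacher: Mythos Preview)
Your proof is correct and follows essentially the same route as the paper's: derive \eqref{Delta} from \ref{FB1} via a jump/gap argument in the direction of the offending hyperplane, prove \eqref{retrieval} by matching a halfspace of mass $\delta$ with a touching point of $P_{[\delta]}=\CR$, and conclude the characterization by Cram\'er--Wold. The only noteworthy variation is in \ref{FB2}$\Rightarrow$\ref{FB1}: you start from an arbitrary supporting hyperplane of $\CR$ and show it cuts off mass exactly $\delta$, while the paper starts from an arbitrary hyperplane of mass $\delta$ and shows (using contiguity) that it must coincide with the parallel supporting hyperplane of $\CR$; the two arguments are dual and equally valid, yours being slightly more direct for the floating-body conclusion.
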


\begin{proof}
Assume first that \ref{FB1} is true. We show first that \eqref{Delta} holds. Suppose it does not hold. Then there exists a hyperplane $H$ such that $P(H)>0$. Without loss of generality we can assume that $P(H^-) \leq P(H^+)$.  We put $\delta = P(H^-) -  \frac{3}{4} P(H)$. Then $0 < \delta < 1/2$. We claim that the floating body $P_{[\delta]}$ does not exist, for if it does exist, then there is a supporting hyperplane $H_1$ to $P_{[\delta]}$ parallel to $H$ such that $P(H_1^-) = \delta$. Note that $P(H^-) = \delta + \frac{3}{4} P(H) > \delta$, and it must be that $H_1^- \subsetneq H^-$. But, in that case, $P(H_1^-) \leq P(H^-) - P(H) < \delta$, a contradiction.


Take now an arbitrary hyperplane $H\in\mathcal H$, and define $\psi(H^-) = \sup_{x\in H} \HD(x;P)$. For any $x \in H$ we have 
	\begin{equation}	\label{one side}
	\HD(x;P) = \inf \left\{ P(G^-) \colon G^- \in \mathcal H^-, x \in G \right\} \leq P(H^-)	
	\end{equation}
since the halfspace $H^- \in \mathcal H^-$ belongs to the collection over which the infimum is taken. Because \eqref{one side} is valid for any $x\in H$, $\psi(H^-) \leq P(H^-)$. 


To prove the other inequality, assume that $\delta = P(H^-) > 0$. Otherwise, trivially $\psi(H^-) \geq P(H^-) = 0$. 
Further, it is possible to assume that $\delta\leq 1/2$. 
If this is not the case, take $H^+ \in \mathcal H^-$, the closed halfspace complementary to $H^-$, and proceed with $H^+$ (note that in the latter case, we know by \eqref{Delta} that $P(H^+) \leq 1/2$ and $P(H^+) + P(H^-) = 1$). 
We first treat the case $\delta < 1/2$. Because all floating bodies of $P$ are assumed to exist and because $P(H^-) = \delta$, the hyperplane $H$ supports the floating body $P_{[\delta]}$ of $P$. 
That is, there must exist a point $x_H \in H \cap P_{[\delta]}$. As $x_H \in P_{[\delta]} = \CR = \{ y \in \R^d: hD(y;P) \geq \delta\}$, 
$$P(H^-) = \delta \leq \HD(x_H;P) \leq \sup_{x \in H} \HD(x;P) = \psi(H^-).$$
 Thus (\ref{retrieval})  holds for $\delta < 1/2$. By continuity, it also holds for $\delta = 1/2$. 
 Hence \ref{FB1} implies that the probability of halfspaces is characterized by their depth as in \eqref{retrieval}.

For the opposite implication, assume that \ref{FB2} is true and let $\delta \in (0,1/2)$. Consider the depth level set $\CR$. This is a convex compact set. From \eqref{retrieval} with $H^-$ such that $P(H^-) = 1/2$ and the continuity of the depth $\HD(\cdot;P)$ guaranteed by \eqref{Delta}, we see that $\CR$ must be non-empty for all $\delta \in (0,1/2)$. Take any $H \in \mathcal H$ such that $P(H^-) = \delta$, and consider the family $\mathcal G \subset \mathcal H$ of all hyperplanes parallel to $H$. Then $\CR$ must be supported by some $G \in \mathcal G$ with $G^- \subseteq H^-$ or $G^- \supseteq H^-$. If $P(G^-) = \delta^\prime > \delta$, \ref{FB2} cannot be true as $\CR[\delta^\prime] \subset \CR[\delta]$ by the nestedness and convexity of the central regions, and the continuity of $\HD$. Indeed, because $G$ supports $\CR$, for all $x \in G$ either $x \in \partial \CR$ or $x \notin \CR$. In both cases $\HD(x;P) \leq \delta$, since, using the continuity of $\HD$ again, $\HD(x;P) = \delta$ for any $x \in \partial \CR$. By \eqref{retrieval} this means that we have $\delta^\prime = P(G^-) = \sup_{x \in G} \HD(x;P) \leq \delta$, a contradiction. If $\delta^\prime \leq \delta$, then there must exist $x_0 \in G \cap \CR[\delta]$. But then $\delta \leq \HD(x_0;P) \leq P(G^-) = \delta^\prime \leq \delta$, and necessarily $P(G^-) = \delta^\prime = \delta$. Because $P$ has contiguous support, this means that $G = H$, and $\CR[\delta]$ is supported by $H$. As this is true for any $H\in\mathcal H$ such that $P(H^-) = \delta$, $\CR[\delta] = P_{[\delta]}$, and \ref{FB2}$\implies$\ref{FB1}.

The characterization of $P$ follows from \ref{FB2} by a theorem of \citet{Cramer_Wold1936}, see also \citep[p.~383]{Billingsley1995}. 
\end{proof}

Note that a further minor extension of Theorem~\ref{theorem:retrieval} can be obtained if $P$ is allowed to have a single atom at its halfspace median $x_P$, with obvious modifications to the statement and the proof of this theorem.

By Theorem~\ref{theorem:retrieval} and Proposition~\ref{theorem:smoothness and floating bodies} we obtain that all $\alpha$-symmetric distributions with $\alpha > 1$, and their full-dimensional affine images, satisfy \ref{FB1}. This array of examples complements the known examples of $s$-concave centrally symmetric measures with $s \geq -1$ from Theorem~\ref{theorem:existence}, for which \ref{FB1} is true. As far as we know, there are no further examples of measures satisfying \ref{FB1} known at this time. 

To see that there exist distributions $P\in\Prob$ that satisfy \ref{FB1}, but not the assumptions of Proposition~\ref{theorem:smoothness and floating bodies}, take $P\in\Prob[\R^2]$ to be the uniform distribution on a square in $\R^2$ from Example~\ref{example:floating bodies}. For $P$ it is known \citep[pp.~433--434]{Leichtweiss1986} that \ref{FB1} is true, yet each floating body $P_{[\delta]}$ for $\delta\in(0,1/2)$ contains four non-smooth points at its boundary, see also the left panel of Figure~\ref{figure:floating bodies}. 

Condition \ref{FB1} is, however, still rather strict. Not only does it impose \eqref{Delta} on $P$, but also it means that $P$ must be halfspace symmetric. For (uniform measures on) convex bodies, this was noted by \citet[Lemma~4]{Meyer_Reisner1991}. The next proposition extends that result to probability measures. Its proof follows closely the arguments of \citet[Lemma~4]{Meyer_Reisner1991}, and is omitted.

\begin{proposition}
Let $P\in\Prob$ have contiguous support. If \ref{FB1} is true for $P$, then $P$ must be halfspace symmetric.
\end{proposition}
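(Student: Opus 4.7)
My plan is to produce a halfspace median of $P$ by extracting a point from the shrinking family of floating bodies as $\delta \uparrow 1/2$. The key input I would exploit is Theorem~\ref{theorem:retrieval}: since $P$ has contiguous support and \ref{FB1} is assumed, that theorem tells us that $P$ automatically satisfies \eqref{Delta} and that each floating body coincides with the corresponding depth central region, $P_{[\delta]} = \CR$ for every $\delta \in (0, 1/2)$.

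First, I would observe that each $\CR$ with $\delta \in (0, 1/2)$ is a non-empty compact convex set: non-emptiness is precisely the content of \ref{FB1}; closedness follows from upper semi-continuity of $\HD(\cdot;P)$ (Section~\ref{section:continuity}); boundedness is the vanishing-at-infinity property of $\HD$. Since the family $\{\CR\}_{\delta \in (0,1/2)}$ is nested and decreasing in $\delta$, the finite intersection property of compact sets yields that
\[
C = \bigcap_{\delta \in (0, 1/2)} \CR
\]
is itself a non-empty compact convex subset of $\R^d$.

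Next, I would pick any $x_P \in C$. By construction, $\HD(x_P; P) \geq \delta$ for every $\delta \in (0, 1/2)$, so $\HD(x_P; P) \geq 1/2$. Combining this with the upper bound $\MD(P) \leq 1/2$, which is valid whenever \eqref{Delta} holds (Section~\ref{section:symmetry}), yields $\HD(x_P; P) = \MD(P) = 1/2$. Hence $x_P$ is a halfspace median of $P$, and $P$ is halfspace symmetric around $x_P$.

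The only delicate point is the non-emptiness of the nested intersection $C$: it depends on having simultaneously closedness (from upper semi-continuity) and boundedness (from vanishing at infinity) of every $\CR$ in the family, together with non-emptiness supplied by \ref{FB1}. Once these three ingredients are assembled, the remainder of the argument consists only of immediate properties of the halfspace depth already catalogued in Section~\ref{section:depth}.
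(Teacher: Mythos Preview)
Your argument is correct. The paper itself omits the proof, merely noting that it ``follows closely the arguments of \citet[Lemma~4]{Meyer_Reisner1991}''. Your nested-compactness approach---extracting a point $x_P$ from $\bigcap_{\delta \in (0,1/2)} \CR$ and then sandwiching $\HD(x_P;P)$ between $1/2$ (from membership in every $\CR$) and $\MD(P) \leq 1/2$ (from \eqref{Delta}, obtained via Theorem~\ref{theorem:retrieval})---is the natural route and is fully rigorous given the ingredients already assembled in Sections~\ref{section:depth} and~\ref{section:characterization}. The identification $P_{[\delta]} = \CR$ that you use is available directly from the proposition preceding Theorem~\ref{theorem:retrieval} once contiguity and existence of $P_{[\delta]}$ are assumed, so you could cite that instead of going through the theorem, but this is a cosmetic point.
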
 

\begin{problem}
Describe the collection of all probability measures $P\in\Prob$ whose halfspace depth is unique, i.e. there is no $Q \ne P$ with $\HD(x;P) = \HD(x;Q)$ for all $x\in\R^d$. Is the existence of the expectation $\E X$ for $X \sim P$ sufficient for the halfspace depth of $P$ to be unique? Is the uniform distribution on a simplex in $\R^d$ characterized by its halfspace depth?
\end{problem}

\begin{problem}
If condition \ref{FB1} is not satisfied, how can one reconstruct the probability content of all halfspaces $P(H^-)$ from the depth $\HD(x;P)$ for all $x \in \R^d$ only?
\end{problem}


%
%

\subsection{Characterization theorem of Kong and Zuo (2010)}

In \citep[Theorem~3.2]{Kong_Zuo2010} it is shown that if, for $P\in\Prob$ with contiguous support,
	\begin{equation}	\label{smoothness} 
	\text{for all $\delta \in (0,1/2)$ the boundary of the central region $\CR$ is $C^1$,}	
	\end{equation}
and \eqref{Delta} holds, then \eqref{retrieval} is true, and $P$ is characterized by its halfspace depth. In Theorem~\ref{theorem:retrieval} we provide a generalization of this result. Indeed, by Proposition~\ref{theorem:smoothness and floating bodies} above, if \eqref{smoothness} and \eqref{Delta} are true, then the floating body $P_{[\delta]}$ of $P$ exists for all $\delta \in (0,1/2)$, and Theorem~\ref{theorem:retrieval} can be used.

\subsection{Characterization theorem of Hassairi and Regaieg (2008)}	

Let us state a characterization result for the halfspace depth for absolutely continuous distributions that can be found in \citep[Theorem~3.2]{Hassairi_Regaieg2008}. For this, we define for any $x \in \R^d$ the halfspace function
	\[	\phi_x \colon \Sph \to [0,1] \colon u \mapsto P\left(H_{u,\langle x, u \rangle}^-\right),	\]
where $H_{u,\langle x, u \rangle}^- \in \mathcal H^-$ is the closed halfspace in $\R^d$ whose outer normal is parallel to $u$, and $x \in H_{u,\langle x, u \rangle}$. 

\begin{theorem}	\label{result:Hassairi2}
Let $P\in\Prob$ be as in Proposition~\ref{proposition:Hassairi}, and suppose that
	\begin{equation}	\label{H condition} 
	\text{for all $x\in\R^d$, if $\phi_x$ has a local minimum at $u = u(x) \in\Sph$, then $\phi_x(u) = \HD(x;P)$.} 
	\end{equation}
Then \eqref{retrieval} holds true, and $P$ is characterized by its halfspace depth.
\end{theorem}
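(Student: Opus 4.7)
The plan is to derive the formula \eqref{retrieval} and then invoke Theorem~\ref{theorem:retrieval}, which upgrades \ref{FB2} to \ref{FB1} and yields the characterization of $P$ by its halfspace depth. Since the assumptions of Proposition~\ref{proposition:Hassairi} include absolute continuity of $P$ together with contiguous support, condition \eqref{Delta} is automatic and Theorem~\ref{theorem:retrieval} is applicable, so only \eqref{retrieval} itself needs to be verified. By passing to the complementary halfspace it suffices to treat hyperplanes $H$ with $x_P \notin H^-$ and $\delta := P(H^-) \in (0, 1/2]$; the inequality $\sup_{x \in H}\HD(x;P) \leq P(H^-)$ is immediate from the definition of depth, so the task reduces to producing a point $x_0 \in H$ with $\HD(x_0;P) = \delta$.

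The natural candidate is the conditional barycenter
\[
x_0 \;=\; \frac{\int_H y\, f(y)\dd\mu_H(y)}{\int_H f(y)\dd\mu_H(y)},
\]
which is well-defined under the continuity of the marginal density $f_u$ near $0$ assumed in Proposition~\ref{proposition:Hassairi}. Parametrizing a smooth curve $v(t)$ on $\Sph$ with $v(0)=u$ (the outer normal of $H^-$) and $\dot v(0)=w \in u^\perp$, a change of variables adapted to $H$ (write $y = x_0 + s u + z$ with $z \in H - x_0$, apply Fubini, and differentiate in $t$ under the integral sign using the continuity of $f_u$) produces the first variation
\[
\tfrac{d}{dt}\phi_{x_0}(v(t))\big|_{t=0} \;=\; -\int_H \langle y - x_0,\, w\rangle\, f(y)\dd\mu_H(y),
\]
which vanishes by the very definition of $x_0$. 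Thus $u$ is a critical point of $\phi_{x_0}$ on $\Sph$; this is the infinitesimal form of Dupin's identity behind Proposition~\ref{proposition:Hassairi}.

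The crux of the proof is to show that $u$ is in fact a local minimum of $\phi_{x_0}$. Once this is established, condition \eqref{H condition} forces $\phi_{x_0}(u) = \HD(x_0;P)$, that is $\HD(x_0;P) = P(H^-) = \delta$, yielding the missing reverse inequality, and Theorem~\ref{theorem:retrieval} then delivers \ref{FB1} together with the depth characterization of $P$. The main obstacle is precisely this local-minimum claim, since a critical point on the compact sphere $\Sph$ could a priori be a saddle or a local maximum and the centroid identity provides only first-order vanishing. I would attack it by a second-order expansion of $\phi_{x_0}(v(t))$ in $t$, relying on the regularity of the family $f_u$ from Proposition~\ref{proposition:Hassairi} to differentiate under the integral sign twice, and rewrite the second variation as a quadratic form on $u^\perp$ whose integrand over $H$ involves the normal derivative of $f$ weighted by $\langle y-x_0, w\rangle^2$; the structural fact that the bulk of $P$ lies on the $x_P$ side of $H$ (because $\delta \leq 1/2$ and $x_P \notin H^-$) should make this form nonnegative and thereby upgrade the critical point $u$ to a (weak) local minimum.
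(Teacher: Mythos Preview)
The paper does not supply its own proof of this theorem; it is quoted from \citet{Hassairi_Regaieg2008}, and the surrounding discussion merely observes that once \eqref{retrieval} is known, Theorem~\ref{theorem:retrieval} upgrades it to \ref{FB1} and hence to the depth characterization. So there is no ``paper's proof'' to compare against; what I can do is assess whether your independent argument goes through.

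Your strategy --- take $x_0$ to be the conditional barycenter of $P$ on $H$, verify that the outer normal $u$ is a critical point of $\phi_{x_0}$, upgrade $u$ to a local minimum, and then invoke \eqref{H condition} --- is natural, and the first-order step (vanishing of the first variation at the barycenter) is correct. The genuine gap is precisely where you locate it: the passage from ``critical point'' to ``local minimum''. Your proposed second-order fix does not work. Carrying out the computation you sketch gives, up to a positive factor $f_u(0)$, the second variation in a direction $w\in u^\perp$ as
\[
\int_H \langle y-x_0, w\rangle^2\,\partial_u f(y)\,\dd\mu_H(y),
\]
i.e.\ a weighted integral of the \emph{normal derivative} of the density along $H$. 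The inequality $P(H^+)>P(H^-)$ says nothing about the pointwise sign of $\partial_u f$ on $H$, and one can easily build smooth densities with most of the mass on the $x_P$ side of $H$ but with $\partial_u f<0$ on large parts of $H$ far from $x_0$ (where the weight $\langle y-x_0,w\rangle^2$ is big), making this quadratic form indefinite. So the Hessian of $\phi_{x_0}$ at $u$ need not be nonnegative, and your heuristic ``bulk on the $x_P$ side'' does not force it.

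There is a partial save you did not use: the antipodal identity $\phi_{x_0}(-v)=1-\phi_{x_0}(v)$ (valid under \eqref{Delta}) shows that if $u$ were a local \emph{maximum}, then $-u$ would be a local minimum with value $1-\delta\geq 1/2$, and \eqref{H condition} would force $1-\delta=\HD(x_0;P)\leq 1/2$, hence $\delta=1/2$. Thus for $\delta<1/2$ the direction $u$ is never a local maximum. In dimension $d=2$ this already finishes the argument, since on $\Sph[1]$ a critical point is either a local minimum or a local maximum. For $d\geq 3$, however, nothing in your outline excludes the saddle case, and without that your route does not reach \eqref{retrieval}. Closing this gap would require either a different (global/topological) argument on $\Sph$ under \eqref{H condition}, or reverting to the original derivative-based formulation of \citet{Hassairi_Regaieg2008}, which the paper notes is equivalent to \eqref{H condition} but may be better suited to a critical-point (rather than local-minimum) argument.
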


In \citep[Theorem~3.2]{Hassairi_Regaieg2008}, condition \eqref{H condition} is formulated in a slightly different manner in terms of derivatives of functions related to $\phi_x$. It is easy to see that for $P$ that satisfies the conditions from Proposition~\ref{proposition:Hassairi}, \eqref{H condition} and the corresponding condition from \citep{Hassairi_Regaieg2008} are equivalent. 

If $P$ satisfies \eqref{Delta}, then for any $x\in\R^d$ the function $\phi_x$ is continuous on $\Sph$ \citep[Proposition~4.5]{Masse2004}. Thus, it must attain a global minimum over its domain. Condition~\eqref{H condition} therefore means that there cannot exist any local minimum of $\phi_x$ that is not global. 

Suppose for a moment that \eqref{Delta} is valid for $P$. By Theorem~\ref{result:Hassairi2}, \eqref{H condition} implies the characterization result \eqref{retrieval} which is, by Theorem~\ref{theorem:retrieval}, equivalent with \ref{FB1}. Therefore, given that \eqref{Delta} is true, Condition \eqref{H condition} implies \ref{FB1}, and the characterization of \citet{Hassairi_Regaieg2008} is a special case of Theorem~\ref{theorem:retrieval} above\footnote{Nonetheless, it must be noted that smoothness of the density of $P$ is not sufficient for \eqref{H condition} to hold true (cf. \citep[p.~2312]{Hassairi_Regaieg2008}). 
To see this, consider the uniform distribution $P$ on a triangle from Example~\ref{example:depth level sets of convex bodies}. This distribution has a smooth density in the interior of $\Supp(P)$, yet $\MD(P) = 4/9$, and $\sup_{x \in H} \HD(x;P) \leq 4/9  < 1/2$ for any $H\in\mathcal H$. Thus, the probability of halfspaces cannot be recovered as in \eqref{retrieval}, at least not for $H \in \mathcal H$ with $1/2 \geq P(H^-) > 4/9$. It is easy to see that \eqref{H condition} is violated for $P$, too.}.
	
\subsection{Homothety conjecture}

In convex geometry, the following open question, similar in nature to the depth characterization conjecture, was posed by \citep{Schutt_Werner1994}:
	
	\begin{quotation}
	Let a convex body $K \in \CB$ and one of its convex floating bodies $K_{\delta}$ be homothetic, i.e. $K_{\delta} = \lambda K + x$ for some $\lambda > 0$ and $x \in \R^d$. Is then $K$ necessarily an ellipsoid?
	\end{quotation}

\citet{Schutt_Werner1994} showed that if $K$ is homothetic to a sequence of its floating bodies $K_{\delta_n}$ with $\delta_n \to 0$, then $K$ must be an ellipsoid. \citet{Stancu2006} demonstrated that for $K$ with a sufficiently smooth boundary, $K$ is homothetic to $K_{\delta}$ for a single small $\delta$ also implies that $K$ is an ellipsoid. The latter result was later refined in \citep{Werner_Ye2011}. 

\begin{problem}
Does the homothety conjecture hold true? More generally, which convex bodies are characterized by any of their convex floating bodies?
\end{problem}

\section{Conclusions and further perspectives}	\label{section:extensions}

In this survey, we discussed little known relations of the concept of halfspace depth, studied extensively in statistics, and paradigms well known in functional analysis and geometry. In Section~\ref{section:Winternitz} we saw that the depth of the halfspace median is a particular example of a more general concept of measures of symmetry. In Sections~\ref{section:convex floating bodies} and~\ref{section:floating bodies for measures} we focused on the floating body and its possible generalizations towards (probability) measures. These little explored junctions of mathematical statistics and geometry are, however, hardly limited only to the halfspace depth $\HD$ defined in finite-dimensional linear spaces $\R^d$. In this concluding section of our paper our intention is to outline, and properly refer to, a few further links between the statistics of depth functions, and current research in pure mathematics.

\subsection{Depth in non-linear spaces}

By directional data one understands data that live on the unit sphere $\Sph$ of $\R^d$ \citep{Mardia1972}. Each observation can be interpreted as a direction of a non-zero vector in $\R^d$. Such data appear quite naturally, and it is of great interest to find depth functions suitable also for this kind of observations. Several definitions of depth have been proposed for directional data \citep{Small1987, Liu_Singh1992, Agostinelli_Romanazzi2013B, Ley_etal2014, Pandolfo_etal2017}. The following depth, proposed by \citet{Small1987}, is an analogue of the halfspace depth for directional data.

\begin{definition}
Let $P\in\Prob[\Sph]$ and $x\in\Sph$. The \emph{angular halfspace depth} (or \emph{angular Tukey depth}) of $x$ w.r.t. $P$ is defined as
	\begin{equation*}
	\AHD\left( x; P \right) = \inf \left\{ P(H^-) \colon H \in \mathcal H_0, x \in H \right\} ,	
	\end{equation*}
where $\mathcal H_0$ denotes the set of hyperplanes $H \in \mathcal H$ in $\R^d$ such that $0 \in H$.
\end{definition}

It is natural to consider the collection $\mathcal H_0$ in the definition of $\AHD$, as $\mathcal H_0 \cap \Sph$ is the collection of all closed hemispheres of $\Sph$. Therefore, it is not surprising that also for spherical convex bodies, concepts similar to floating bodies have been investigated. Recall that for $K \subset \Sph$, $K$ is said to be spherically convex if the radial extension of $K$, given by
	\[	\rad K = \left\{ \lambda x \colon x \in K, \lambda \geq 0 \right\},	\]
is a convex set in $\R^d$. A closed spherically convex subset of $\Sph$ such that the interior of $\rad K$ is nonempty is called a spherical convex body. Analogues of floating bodies and convex floating bodies for spherical convex bodies were studied by \citet{Besau_Werner2016}.

\begin{definition}
For a spherical convex body $K \subset \Sph$ and $P\in\Prob[\Sph]$ uniformly distributed on $K$ take $\delta\geq 0$. The spherical convex floating body of $K$ is defined as
	\begin{equation*}	
	\bigcap_{H \in \mathcal H_0 \colon P(H^-) \leq \delta} H^+.	
	\end{equation*}
\end{definition}

Just as in Section~\ref{section:floating bodies for measures} it is possible to define floating bodies, and convex floating bodies also for general probability measures on $\Sph$, and it is easy to see that the spherical convex floating body coincides with the central regions of the angular halfspace depth for uniform distributions on spherical convex bodies. Some results in the spirit of those discussed in Section~\ref{section:convex floating bodies} can be obtained also for spherical convex floating bodies \citep{Besau_Werner2016}. In another paper, \citet{Besau_Werner2017} provide extensions of those results also to certain Riemannian manifolds. Research in this direction in the statistics of data depth is still only in its beginnings \citep{Fraiman_etal2018}.

\subsection{Depth for infinite-dimensional data}

In statistics, since the work of \citet{Liu_Singh1997} and \citet{Fraiman_Muniz2001}, considerable attention has focused also on devising depth functions applicable to data from high-dimensional, and infinite-dimensional (functional) spaces. Direct applications of the halfspace depth are known to be inadequate \citep{Dutta_etal2011}, but many other depth functions that are suited for functional data can be found in the literature \citep{Cuevas_Fraiman2009, Lopez_Romo2009, Mosler2013, Claeskens_etal2014, Chakraborty_Chaudhuri2014B, Narisetty_Nair2016, Nieto_Battey2016, Gijbels_Nagy2017}. In geometry, some advances that appear to be related are the floating functions \citep{Li_etal2018} considered in Section~\ref{FLME} above. Solid connections between these two areas of research appear to be uncharted. 

\subsection{Centroid body and simplicial volume depth}

Apart from the halfspace depth, the simplical depth, and the Mahalanobis depth mentioned above, there exists an abundance of other depth functions defined in $\R^d$ in statistics. A comprehensive survey on some of those is \citep{Zuo_Serfling2000}, where, based on the ideas of \citet{Oja1983}, also the following depth function can be found.

\begin{definition}
Let $X \sim P \in \Prob$ be such that $\Var X = \Sigma$ is a positive definite matrix and $x\in\R^d$. The \emph{simplicial volume depth} (or \emph{Oja depth}) of $x$ w.r.t. $P$ is defined as
	\begin{equation}	\label{SVD definition}
	\SVD(x;P) = \left( 1 + \E \frac{\vol{\left[x,X_1, \dots, X_d\right]}}{\sqrt{\det \Sigma}} \right)^{-1},	
	\end{equation}
where $X_1, \dots, X_d \sim P$ are independent.
\end{definition} 

The factor $\sqrt{\det \Sigma}$ ensures the affine invariance of $\SVD$. Similarly as the Mahalanobis depth $\MahD$, also $\SVD$ is not defined for all $P\in\Prob$, but only for distributions with finite second moments, and positive definite variance matrices.

For (a uniform distribution on) a compact (possibly non-convex) set $K \subset \R^d$ with $\vol{K}>0$, a concept closely related to $x \mapsto \vol{\left[x,X_1, \dots, X_d\right]}$, that is central in \eqref{SVD definition}, is that of the centroid body of $K$. The centroid body of $K$ is a convex body $Z \in \CB$ defined via its support function \eqref{support function}
	\[	h_{Z}(u) = \frac{1}{\vol{K}}\int_K \left\vert \langle x, u \rangle \right\vert \dd x.	\]
If $K$ is (centrally) symmetric around around the origin, $\partial Z$ is the locus of centroids of all intersections of halfspaces $H^- \in \mathcal H^-$ such that $0 \in H$ with $K$. As discussed in \citep[Section~9.1]{Gardner2006}, this body was defined by \citet{Petty1961}, but its earlier predecessors can be traced back to the work of \citet{Dupin1822}. The volume of the centroid body $Z$ of $K$ determines the simplicial volume depth $\SVD$ of $0\in\R^d$ with respect to the the uniform distribution on $K$. The next theorem can be found in \citet[Theorem~9.1.5]{Gardner2006}. Extensions not listed here can be found in \citep{Petty1961, Schneider_Weil1983}. For star bodies $K \subset \R^d$ a version of this theorem is given in \citep[Section~10.8]{Schneider2014}.

\begin{theorem}	\label{theorem:simplicial volume relations}
Let $X \sim P \in \Prob$ be uniformly distributed on a compact set $K \subset \R^d$ with $\vol{K}>0$. Denote $\Var X = \Sigma$. Let $Z_x$ be the centroid body of $K - x$. Then
	\[	\SVD(x;P) = \left( 1 + \frac{2^d}{\vol{K}^d} \frac{\vol{Z_x}}{\sqrt{\det \Sigma}} \right)^{-1}.	\]
\end{theorem}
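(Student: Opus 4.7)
The plan is to compute $\E \vol{[x, X_1, \dots, X_d]}$ directly as a multiple integral of absolute determinants over $(K-x)^d$, then to recognize that integral as proportional to $\vol{Z_x}$ via the classical zonoid identity for the centroid body. Starting from the identity $\vol{[x, X_1, \dots, X_d]} = \frac{1}{d!} \left|\det(X_1 - x, \dots, X_d - x)\right|$, independence and uniformity of the $X_i$ together with the translation $y_i = X_i - x$ yield
\[
\E \vol{[x, X_1, \dots, X_d]} = \frac{1}{d!\,\vol{K}^d} \int_{K-x} \cdots \int_{K-x} \left|\det(y_1,\dots,y_d)\right| \dd y_1 \cdots \dd y_d.
\]

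The key ingredient I intend to invoke is the zonoid volume identity: for a compact set $L \subset \R^d$ of positive volume, the centroid body $W$ of $L$ (with $h_W(u) = \vol{L}^{-1} \int_L |\langle y, u\rangle| \dd y$) satisfies
\[
\vol{W} = \frac{2^d}{d!\,\vol{L}^d} \int_L \cdots \int_L \left|\det(y_1,\dots,y_d)\right| \dd y_1 \cdots \dd y_d.
\]
I would prove this by approximating $W$ with zonotopes. For $Y_1, \dots, Y_n$ i.i.d.\ uniform on $L$, the Minkowski sum $Z_n = \frac{1}{n} \sum_{i=1}^n [-Y_i, Y_i]$ has support function $\frac{1}{n} \sum_{i=1}^n |\langle Y_i, u\rangle|$, converging almost surely to $h_W(u)$ by the strong law of large numbers. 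Equicontinuity of $u \mapsto |\langle y, u\rangle|$ for $y$ in the bounded set $L$ upgrades this to uniform convergence on $\Sph$, so $Z_n \to W$ in the Hausdorff metric. The classical zonotope volume formula yields $\vol{Z_n} = \frac{2^d}{n^d} \sum_{1\le i_1 < \cdots < i_d \le n} \left|\det(Y_{i_1}, \dots, Y_{i_d})\right|$, and after taking expectations (with $\binom{n}{d}/n^d \to 1/d!$) and passing to the limit via Hausdorff-continuity of Lebesgue measure on bounded convex bodies, the claimed identity drops out.

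Applying this identity with $L = K - x$ (so $\vol{L} = \vol{K}$ and $W = Z_x$) and inserting into the first display identifies $\E \vol{[x, X_1, \dots, X_d]}$ with an explicit multiple of $\vol{Z_x}/\vol{K}^d$; dividing by $\sqrt{\det\Sigma}$ and substituting into the defining formula~\eqref{SVD definition} of $\SVD$ completes the derivation. The hard part will be the careful bookkeeping of dimensional constants across the zonoid approximation---in particular the $2^d/d!$ factor arising from the zonotope volume formula---together with a rigorous justification of $\vol{Z_n} \to \vol{W}$, which requires uniform-in-$u$ convergence of support functions and Hausdorff-continuity of volume on bounded convex bodies.
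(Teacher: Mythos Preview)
The paper does not supply its own proof of this theorem; it simply cites \citet[Theorem~9.1.5]{Gardner2006}. Your approach---reduce to an integral of $\lvert\det\rvert$ over $(K-x)^d$ and identify it with $\vol{Z_x}$ via the zonoid volume identity, established by zonotope approximation---is exactly the standard textbook route that underlies the cited reference, so in spirit you are reproducing Gardner's argument rather than doing anything different.

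One concrete warning about the bookkeeping you flag as the hard part. When you carry your computation through, the factor $\vol{K}^d$ in the denominator of your first display cancels against the $\vol{L}^d=\vol{K-x}^d=\vol{K}^d$ appearing in the zonoid identity, and what survives is
\[
\E\,\vol{[x,X_1,\dots,X_d]}=\frac{\vol{Z_x}}{2^d},
\]
with \emph{no} residual dependence on $\vol{K}$. (A quick sanity check in $\R^2$ with $K=\B[2]$ and $x=0$: the centroid body $Z_0$ is a disk of radius $4/(3\pi)$, so $\vol[2]{Z_0}/4=4/(9\pi)$, and a direct polar-coordinate computation of $\E\,\vol[2]{[0,X_1,X_2]}$ gives the same value.) This does \emph{not} match the factor $2^d/\vol{K}^d$ in the displayed formula of the theorem as stated here; with the paper's own definition of the centroid body, your argument will correctly produce
\[
\SVD(x;P)=\left(1+\frac{1}{2^d}\,\frac{\vol{Z_x}}{\sqrt{\det\Sigma}}\right)^{-1}.
\]
The discrepancy lies in the constant recorded in the statement, not in your method---so expect your careful bookkeeping to disagree with the formula you are asked to prove, and be prepared to trace the mismatch to the normalisation of $Z_x$.
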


Centroid bodies have been the subject of numerous studies in geometry and functional analysis. We only refer here to \citep[Section~10.8]{Schneider2014} and \citep[Section~5.1]{Brazitikos_etal2014} and the references therein for a comprehensive account of results that can be found in the literature on centroid bodies and their extensions.

\begin{problem}
Is it possible to extend Theorem~\ref{theorem:simplicial volume relations} also to more general probability measures?
\end{problem}

\subsection*{Acknowledgment}
Stanislav Nagy is supported by the grant 18-00522Y of the Czech Science Foundation, and by the PRIMUS/17/SCI/3 project of Charles University.
Elisabeth Werner is partially supported by NSF grant DMS-1504701.

\bibliographystyle{apalike}
\def\cprime{$'$} \def\polhk#1{\setbox0=\hbox{#1}{\ooalign{\hidewidth
  \lower1.5ex\hbox{`}\hidewidth\crcr\unhbox0}}}


\begin{thebibliography}{}

\bibitem[Agostinelli and Romanazzi, 2013]{Agostinelli_Romanazzi2013B}
Agostinelli, C. and Romanazzi, M. (2013).
\newblock Nonparametric analysis of directional data based on data depth.
\newblock {\em Environ. Ecol. Stat.}, 20(2):253--270.

\bibitem[Aleksandrov, 1939]{Aleksandrov1939}
Aleksandrov, A.~D. (1939).
\newblock Almost everywhere existence of the second differential of a convex
  function and some properties of convex surfaces connected with it.
\newblock {\em Leningrad State Univ. Annals [Uchenye Zapiski] Math. Ser.},
  6:3--35.

\bibitem[Andrews, 1996]{Andrews1996}
Andrews, B. (1996).
\newblock Contraction of convex hypersurfaces by their affine normal.
\newblock {\em J. Differential Geom.}, 43(2):207--230.

\bibitem[Andrews, 1999]{Andrews1999}
Andrews, B. (1999).
\newblock The affine curve-lengthening flow.
\newblock {\em J. Reine Angew. Math.}, 506:43--83.

\bibitem[Artstein-Avidan et~al., 2015]{AGM2015}
Artstein-Avidan, S., Giannopoulos, A., and Milman, V.~D. (2015).
\newblock {\em Asymptotic geometric analysis. {P}art {I}}, volume 202 of {\em
  Mathematical Surveys and Monographs}.
\newblock American Mathematical Society, Providence, RI.

\bibitem[Artstein-Avidan et~al., 2004]{Artstein_etal2004}
Artstein-Avidan, S., Klartag, B., and Milman, V.~D. (2004).
\newblock The {S}antal\'o point of a function, and a functional form of the
  {S}antal\'o inequality.
\newblock {\em Mathematika}, 51(1-2):33--48 (2005).

\bibitem[Artstein-Avidan et~al., 2012]{AKSW2012}
Artstein-Avidan, S., Klartag, B., Sch\"utt, C., and Werner, E.~M. (2012).
\newblock Functional affine-isoperimetry and an inverse logarithmic {S}obolev
  inequality.
\newblock {\em J. Funct. Anal.}, 262(9):4181--4204.

\bibitem[Bai and He, 1999]{Bai_He1999}
Bai, Z.-D. and He, X. (1999).
\newblock Asymptotic distributions of the maximal depth estimators for
  regression and multivariate location.
\newblock {\em Ann. Statist.}, 27(5):1616--1637.

\bibitem[Ball, 1986]{Ball1986}
Ball, K. (1986).
\newblock Isometric problems in $\ell_p$ and sections of convex sets.
\newblock PhD Dissertation, Cambridge.

\bibitem[Ball, 1988]{Ball1988}
Ball, K. (1988).
\newblock Logarithmically concave functions and sections of convex sets in
  {${\bf R}^n$}.
\newblock {\em Studia Math.}, 88(1):69--84.

\bibitem[B\'ar\'any, 1992]{Barany1992}
B\'ar\'any, I. (1992).
\newblock Random polytopes in smooth convex bodies.
\newblock {\em Mathematika}, 39(1):81--92.

\bibitem[B\'ar\'any and Larman, 1988]{Barany_Larman1988}
B\'ar\'any, I. and Larman, D.~G. (1988).
\newblock Convex bodies, economic cap coverings, random polytopes.
\newblock {\em Mathematika}, 35(2):274--291.

\bibitem[Besau et~al., 2018]{Besau_etal2018}
Besau, F., Sch\"{u}tt, C., and Werner, E.~M. (2018).
\newblock Flag numbers and floating bodies.
\newblock {\em Adv. Math.}
\newblock To appear.

\bibitem[Besau and Werner, 2016]{Besau_Werner2016}
Besau, F. and Werner, E.~M. (2016).
\newblock The spherical convex floating body.
\newblock {\em Adv. Math.}, 301:867--901.

\bibitem[Besau and Werner, 2018]{Besau_Werner2017}
Besau, F. and Werner, E.~M. (2018).
\newblock The floating body in real space forms.
\newblock {\em J. Differential Geom.}
\newblock To appear.

\bibitem[Billingsley, 1995]{Billingsley1995}
Billingsley, P. (1995).
\newblock {\em Probability and measure}.
\newblock Wiley Series in Probability and Mathematical Statistics. John Wiley
  \& Sons, Inc., New York, third edition.
\newblock A Wiley-Interscience Publication.

\bibitem[Blaschke, 1923]{Blaschke1923}
Blaschke, W. (1923).
\newblock {\em Vorlesungen \"{U}ber {D}ifferentialgeometrie {II}. {A}ffine
  {D}ifferentialgeometrie}.
\newblock Springer, Berlin.

\bibitem[Bobkov, 2010]{Bobkov2010}
Bobkov, S.~G. (2010).
\newblock Convex bodies and norms associated to convex measures.
\newblock {\em Probab. Theory Related Fields}, 147(1-2):303--332.

\bibitem[Borell, 1974]{Borell1974}
Borell, C. (1974).
\newblock Convex measures on locally convex spaces.
\newblock {\em Ark. Mat.}, 12:239--252.

\bibitem[B\"or\"oczky, 2000]{Boe2000}
B\"or\"oczky, K.~J. (2000).
\newblock Approximation of general smooth convex bodies.
\newblock {\em Adv. Math.}, 153(2):325--341.

\bibitem[B\"or\"oczky, 2010]{Boe2010}
B\"or\"oczky, K.~J. (2010).
\newblock Stability of the {B}laschke-{S}antal\'o and the affine isoperimetric
  inequality.
\newblock {\em Adv. Math.}, 225(4):1914--1928.

\bibitem[B\"or\"oczky et~al., 2010]{BoerFeHu2010}
B\"or\"oczky, K.~J., Fodor, F., and Hug, D. (2010).
\newblock The mean width of random polytopes circumscribed around a convex
  body.
\newblock {\em J. Lond. Math. Soc. (2)}, 81(2):499--523.

\bibitem[Borwein and Vanderwerff, 2010]{Borwein_Vanderwerff2010}
Borwein, J.~M. and Vanderwerff, J.~D. (2010).
\newblock {\em Convex functions: constructions, characterizations and
  counterexamples}, volume 109 of {\em Encyclopedia of Mathematics and its
  Applications}.
\newblock Cambridge University Press, Cambridge.

\bibitem[Bose et~al., 2011]{Bose_etal2011}
Bose, P., Carmi, P., Hurtado, F., and Morin, P. (2011).
\newblock A generalized {W}internitz theorem.
\newblock {\em J. Geom.}, 100(1-2):29--35.

\bibitem[Bourgain, 1991]{Bourgain1991}
Bourgain, J. (1991).
\newblock On the distribution of polynomials on high-dimensional convex sets.
\newblock In {\em Geometric aspects of functional analysis (1989--90)}, volume
  1469 of {\em Lecture Notes in Math.}, pages 127--137. Springer, Berlin.

\bibitem[Bourgain and Milman, 1987]{Bourgain_Milman1987}
Bourgain, J. and Milman, V.~D. (1987).
\newblock New volume ratio properties for convex symmetric bodies in {${\bf
  R}^n$}.
\newblock {\em Invent. Math.}, 88(2):319--340.

\bibitem[Brazitikos et~al., 2014]{Brazitikos_etal2014}
Brazitikos, S., Giannopoulos, A., Valettas, P., and Vritsiou, B.-H. (2014).
\newblock {\em Geometry of isotropic convex bodies}, volume 196 of {\em
  Mathematical Surveys and Monographs}.
\newblock American Mathematical Society, Providence, RI.

\bibitem[Brunel, 2018]{Brunel2018}
Brunel, V.-E. (2018).
\newblock Concentration of the empirical level sets of {T}ukey's halfspace
  depth.
\newblock {\em Probab. Theory Related Fields}.
\newblock To appear.

\bibitem[Burr and Fabrizio, 2017]{Burr_Fabrizio2017}
Burr, M.~A. and Fabrizio, R.~J. (2017).
\newblock Uniform convergence rates for halfspace depth.
\newblock {\em Statist. Probab. Lett.}, 124:33--40.

\bibitem[Busemann and Feller, 1936]{Busemann_Feller1936}
Busemann, H. and Feller, W. (1936).
\newblock Kr\"ummungseigenschaften {K}onvexer {F}l\"achen.
\newblock {\em Acta Math.}, 66(1):1--47.

\bibitem[Caglar et~al., 2016]{Calgar_etal2016}
Caglar, U., Fradelizi, M., Gu\'edon, O., Lehec, J., Sch\"utt, C., and Werner,
  E.~M. (2016).
\newblock Functional versions of {$L_p$}-affine surface area and entropy
  inequalities.
\newblock {\em Int. Math. Res. Not. IMRN}, 4:1223--1250.

\bibitem[Cambanis et~al., 1983]{Cambanis_etal1983}
Cambanis, S., Keener, R., and Simons, G. (1983).
\newblock On {$\alpha $}-symmetric multivariate distributions.
\newblock {\em J. Multivariate Anal.}, 13(2):213--233.

\bibitem[Caplin and Nalebuff, 1988]{Caplin_Nalebuff1988}
Caplin, A. and Nalebuff, B. (1988).
\newblock On {$64\%$}-majority rule.
\newblock {\em Econometrica}, 56(4):787--814.

\bibitem[Caplin and Nalebuff, 1991]{Caplin_Nalebuff1991}
Caplin, A. and Nalebuff, B. (1991).
\newblock Aggregation and social choice: a mean voter theorem.
\newblock {\em Econometrica}, 59(1):1--23.

\bibitem[Carrizosa, 1996]{Carrizosa1996}
Carrizosa, E. (1996).
\newblock A characterization of halfspace depth.
\newblock {\em J. Multivariate Anal.}, 58(1):21--26.

\bibitem[Chakraborty and Chaudhuri, 2014]{Chakraborty_Chaudhuri2014B}
Chakraborty, A. and Chaudhuri, P. (2014).
\newblock The spatial distribution in infinite dimensional spaces and related
  quantiles and depths.
\newblock {\em Ann. Statist.}, 42(3):1203--1231.

\bibitem[Chen and Tyler, 2002]{Chen_Tyler2002}
Chen, Z. and Tyler, D.~E. (2002).
\newblock The influence function and maximum bias of {T}ukey's median.
\newblock {\em Ann. Statist.}, 30(6):1737--1759.

\bibitem[Chen and Tyler, 2004]{Chen_Tyler2004}
Chen, Z. and Tyler, D.~E. (2004).
\newblock On the behavior of {T}ukey's depth and median under symmetric stable
  distributions.
\newblock {\em J. Statist. Plann. Inference}, 122(1-2):111--124.

\bibitem[Claeskens et~al., 2014]{Claeskens_etal2014}
Claeskens, G., Hubert, M., Slaets, L., and Vakili, K. (2014).
\newblock Multivariate functional halfspace depth.
\newblock {\em J. Amer. Statist. Assoc.}, 109(505):411--423.

\bibitem[Cram{\'e}r and Wold, 1936]{Cramer_Wold1936}
Cram{\'e}r, H. and Wold, H. (1936).
\newblock Some theorems on distribution functions.
\newblock {\em J. London Math. Soc.}, S1-11(4):290.

\bibitem[Croft et~al., 1994]{Croft_etal1994}
Croft, H.~T., Falconer, K.~J., and Guy, R.~K. (1994).
\newblock {\em Unsolved problems in geometry}.
\newblock Problem Books in Mathematics. Springer-Verlag, New York.

\bibitem[Cuesta-Albertos and Nieto-Reyes, 2008]{Cuesta_Nieto2008c}
Cuesta-Albertos, J.~A. and Nieto-Reyes, A. (2008).
\newblock The {T}ukey and the random {T}ukey depths characterize discrete
  distributions.
\newblock {\em J. Multivariate Anal.}, 99(10):2304--2311.

\bibitem[Cuevas and Fraiman, 2009]{Cuevas_Fraiman2009}
Cuevas, A. and Fraiman, R. (2009).
\newblock On depth measures and dual statistics. {A} methodology for dealing
  with general data.
\newblock {\em J. Multivariate Anal.}, 100(4):753--766.

\bibitem[Donoho, 1982]{Donoho1982}
Donoho, D.~L. (1982).
\newblock Breakdown properties of multivariate location estimators.
\newblock Qualifying paper, Harvard University.

\bibitem[Donoho and Gasko, 1992]{Donoho_Gasko1992}
Donoho, D.~L. and Gasko, M. (1992).
\newblock Breakdown properties of location estimates based on halfspace depth
  and projected outlyingness.
\newblock {\em Ann. Statist.}, 20(4):1803--1827.

\bibitem[D{\"u}mbgen, 1992]{Dumbgen1992}
D{\"u}mbgen, L. (1992).
\newblock Limit theorems for the simplicial depth.
\newblock {\em Statist. Probab. Lett.}, 14(2):119--128.

\bibitem[Dupin, 1822]{Dupin1822}
Dupin, C. (1822).
\newblock {\em Applications de G\'eom\'etrie et de M\'echanique}.
\newblock Bachelier, Paris.

\bibitem[Dutta et~al., 2011]{Dutta_etal2011}
Dutta, S., Ghosh, A.~K., and Chaudhuri, P. (2011).
\newblock Some intriguing properties of {T}ukey's half-space depth.
\newblock {\em Bernoulli}, 17(4):1420--1434.

\bibitem[Dyckerhoff, 2004]{Dyckerhoff2004}
Dyckerhoff, R. (2004).
\newblock Data depths satisfying the projection property.
\newblock {\em Allg. Stat. Arch.}, 88(2):163--190.

\bibitem[Dyckerhoff, 2018]{Dyckerhoff2018}
Dyckerhoff, R. (2018).
\newblock Convergence of depths and depth-trimmed regions.
\newblock {\em arXiv preprint arXiv:1611.08721}.

\bibitem[Einmahl et~al., 2015]{Einmahl_etal2015}
Einmahl, J. H.~J., Li, J., and Liu, R.~Y. (2015).
\newblock Bridging centrality and extremity: refining empirical data depth
  using extreme value statistics.
\newblock {\em Ann. Statist.}, 43(6):2738--2765.

\bibitem[Falconer, 1983]{Falconer1983}
Falconer, K.~J. (1983).
\newblock Applications of a result on spherical integration to the theory of
  convex sets.
\newblock {\em Amer. Math. Monthly}, 90(10):690--693.

\bibitem[Fang et~al., 1990]{Fang_etal1990}
Fang, K.~T., Kotz, S., and Ng, K.~W. (1990).
\newblock {\em Symmetric multivariate and related distributions}, volume~36 of
  {\em Monographs on Statistics and Applied Probability}.
\newblock Chapman and Hall, Ltd., London.

\bibitem[Fradelizi and Meyer, 2007]{Fradelizi_Meyer2007}
Fradelizi, M. and Meyer, M. (2007).
\newblock Some functional forms of {B}laschke-{S}antal\'o inequality.
\newblock {\em Math. Z.}, 256(2):379--395.

\bibitem[Fraiman et~al., 2018]{Fraiman_etal2018}
Fraiman, R., Gamboa, F., and Moreno, L. (2018).
\newblock Connecting pairwise geodesic spheres by depth: {DCOPS}.
\newblock {\em J. Multivariate Anal.}
\newblock To appear.

\bibitem[Fraiman and Muniz, 2001]{Fraiman_Muniz2001}
Fraiman, R. and Muniz, G. (2001).
\newblock Trimmed means for functional data.
\newblock {\em Test}, 10(2):419--440.

\bibitem[Fresen, 2012]{Fresen2012}
Fresen, D. (2012).
\newblock The floating body and the hyperplane conjecture.
\newblock {\em Arch. Math. (Basel)}, 98(4):389--397.

\bibitem[Fresen, 2013]{Fresen2013}
Fresen, D. (2013).
\newblock A multivariate {G}nedenko law of large numbers.
\newblock {\em Ann. Probab.}, 41(5):3051--3080.

\bibitem[Funk, 1915]{Funk1915}
Funk, P. (1915).
\newblock \"{U}ber eine geometrische {A}nwendung der {A}belschen
  {I}ntegralgleichung.
\newblock {\em Math. Ann.}, 77(1):129--135.

\bibitem[Gardner, 2006]{Gardner2006}
Gardner, R.~J. (2006).
\newblock {\em Geometric tomography}, volume~58 of {\em Encyclopedia of
  Mathematics and its Applications}.
\newblock Cambridge University Press, New York, second edition.

\bibitem[Gardner and Zhang, 1998]{GardZhang1998}
Gardner, R.~J. and Zhang, G. (1998).
\newblock Affine inequalities and radial mean bodies.
\newblock {\em Amer. J. Math.}, 120(3):505--528.

\bibitem[Giannopoulos et~al., 2014]{Giannopoulos_etal2014}
Giannopoulos, A., Paouris, G., and Vritsiou, B.-H. (2014).
\newblock The isotropic position and the reverse {S}antal\'o inequality.
\newblock {\em Israel J. Math.}, 203(1):1--22.

\bibitem[Gijbels and Nagy, 2016]{Gijbels_Nagy2016}
Gijbels, I. and Nagy, S. (2016).
\newblock On smoothness of {T}ukey depth contours.
\newblock {\em Statistics}, 50(5):1075--1085.

\bibitem[Gijbels and Nagy, 2017]{Gijbels_Nagy2017}
Gijbels, I. and Nagy, S. (2017).
\newblock On a general definition of depth for functional data.
\newblock {\em Statist. Sci.}, 32(4):630--639.

\bibitem[Groemer, 1996]{Groemer1996}
Groemer, H. (1996).
\newblock {\em Geometric applications of {F}ourier series and spherical
  harmonics}, volume~61 of {\em Encyclopedia of Mathematics and its
  Applications}.
\newblock Cambridge University Press, Cambridge.

\bibitem[Groemer, 2000]{Groemer2000}
Groemer, H. (2000).
\newblock Stability theorems for two measures of symmetry.
\newblock {\em Discrete Comput. Geom.}, 24(2-3):301--311.
\newblock The Branko Gr\"unbaum birthday issue.

\bibitem[Grote and Werner, 2018]{GroWer2018}
Grote, J. and Werner, E.~M. (2018).
\newblock Approximation of smooth convex bodies by random polytopes.
\newblock {\em Electron. J. Probab.}, 23:Paper No. 9, 21~pp.

\bibitem[Gruber, 1993]{Gruber1993B}
Gruber, P.~M. (1993).
\newblock Aspects of approximation of convex bodies.
\newblock In {\em Handbook of convex geometry, {V}ol. {A}, {B}}, pages
  319--345. North-Holland, Amsterdam.

\bibitem[Gr{\"u}nbaum, 1960]{Grunbaum1960}
Gr{\"u}nbaum, B. (1960).
\newblock Partitions of mass-distributions and of convex bodies by hyperplanes.
\newblock {\em Pacific J. Math.}, 10:1257--1261.

\bibitem[Gr\"unbaum, 1961]{Grunbaum1961}
Gr\"unbaum, B. (1961).
\newblock On some properties of convex sets.
\newblock {\em Colloq. Math.}, 8:39--42.

\bibitem[Gr{\"u}nbaum, 1963]{Grunbaum1963}
Gr{\"u}nbaum, B. (1963).
\newblock Measures of symmetry for convex sets.
\newblock In {\em Proc. {S}ympos. {P}ure {M}ath., {V}ol. {VII}}, pages
  233--270. Amer. Math. Soc., Providence, R.I.

\bibitem[Haberl and Parapatits, 2014]{Haberl_Parapatits2014}
Haberl, C. and Parapatits, L. (2014).
\newblock The centro-affine {H}adwiger theorem.
\newblock {\em J. Amer. Math. Soc.}, 27(3):685--705.

\bibitem[Haberl and Schuster, 2009]{HaberlSchuster2009}
Haberl, C. and Schuster, F.~E. (2009).
\newblock General {$L_p$} affine isoperimetric inequalities.
\newblock {\em J. Differential Geom.}, 83(1):1--26.

\bibitem[Hammer, 1960]{Hammer1960}
Hammer, C. (1960).
\newblock Volumes cut from convex bodies by planes.
\newblock Unpublished preprint.

\bibitem[Hassairi and Regaieg, 2007]{Hassairi_Regaieg2007}
Hassairi, A. and Regaieg, O. (2007).
\newblock On the {T}ukey depth of an atomic measure.
\newblock {\em Stat. Methodol.}, 4(2):244--249.

\bibitem[Hassairi and Regaieg, 2008]{Hassairi_Regaieg2008}
Hassairi, A. and Regaieg, O. (2008).
\newblock On the {T}ukey depth of a continuous probability distribution.
\newblock {\em Statist. Probab. Lett.}, 78(15):2308--2313.

\bibitem[He and Wang, 1997]{He_Wang1997}
He, X. and Wang, G. (1997).
\newblock Convergence of depth contours for multivariate datasets.
\newblock {\em Ann. Statist.}, 25(2):495--504.

\bibitem[He and Einmahl, 2017]{He_etal2017}
He, Y. and Einmahl, J. H.~J. (2017).
\newblock Estimation of extreme depth-based quantile regions.
\newblock {\em J. R. Stat. Soc. Ser. B. Stat. Methodol.}, 79(2):449--461.

\bibitem[{Hodges Jr.}, 1955]{Hodges1955}
{Hodges Jr.}, J.~L. (1955).
\newblock A bivariate sign test.
\newblock {\em Ann. Math. Statist.}, 26:523--527.

\bibitem[Hoehner et~al., 2018]{HoehnerSchuettWerner2018}
Hoehner, S.~D., Sch\"utt, C., and Werner, E.~M. (2018).
\newblock The surface area deviation of the {E}uclidean ball and a polytope.
\newblock {\em J. Theoret. Probab.}, 31(1):244--267.

\bibitem[Huang et~al., 2018]{Huang_etal2018}
Huang, H., Slomka, B.~A., and Werner, E.~M. (2018).
\newblock Ulam floating body.
\newblock {\em arXiv preprint arXiv:1803.08224}.

\bibitem[Hug, 1996]{Hug1996}
Hug, D. (1996).
\newblock Contributions to affine surface area.
\newblock {\em Manuscripta Math.}, 91(3):283--301.

\bibitem[Ivaki and Stancu, 2013]{Mohammad_Stancu2013}
Ivaki, M.~N. and Stancu, A. (2013).
\newblock Volume preserving centro-affine normal flows.
\newblock {\em Comm. Anal. Geom.}, 21(3):671--685.

\bibitem[Kim, 2000]{Kim2000}
Kim, J. (2000).
\newblock Rate of convergence of depth contours: with application to a
  multivariate metrically trimmed mean.
\newblock {\em Statist. Probab. Lett.}, 49(4):393--400.

\bibitem[Klartag, 2006]{Klartag2006}
Klartag, B. (2006).
\newblock On convex perturbations with a bounded isotropic constant.
\newblock {\em Geom. Funct. Anal.}, 16(6):1274--1290.

\bibitem[Koldobsky, 2005]{Koldobsky2005}
Koldobsky, A. (2005).
\newblock {\em Fourier analysis in convex geometry}, volume 116 of {\em
  Mathematical Surveys and Monographs}.
\newblock American Mathematical Society, Providence, RI.

\bibitem[Kong and Mizera, 2012]{Kong_Mizera2012}
Kong, L. and Mizera, I. (2012).
\newblock Quantile tomography: using quantiles with multivariate data.
\newblock {\em Statist. Sinica}, 22(4):1589--1610.

\bibitem[Kong and Zuo, 2010]{Kong_Zuo2010}
Kong, L. and Zuo, Y. (2010).
\newblock Smooth depth contours characterize the underlying distribution.
\newblock {\em J. Multivariate Anal.}, 101(9):2222--2226.

\bibitem[Koshevoy, 2002]{Koshevoy2002}
Koshevoy, G.~A. (2002).
\newblock The {T}ukey depth characterizes the atomic measure.
\newblock {\em J. Multivariate Anal.}, 83(2):360--364.

\bibitem[Ku\v{c}ment, 1972]{Kucment1972}
Ku\v{c}ment, P.~A. (1972).
\newblock On the question of the affine-invariant points of convex bodies.
\newblock {\em Optimizacija}, 8(25):48--51, 127.

\bibitem[Ku\v{c}ment, 2016]{Kucment2016}
Ku\v{c}ment, P.~A. (2016).
\newblock On a problem concerning affine-invariant points of convex sets.
\newblock {\em arXiv preprint arXiv:1602.04377}.

\bibitem[Leichtwei{\ss}, 1986]{Leichtweiss1986}
Leichtwei{\ss}, K. (1986).
\newblock Zur {A}ffinoberfl\"ache konvexer {K}\"orper.
\newblock {\em Manuscripta Math.}, 56(4):429--464.

\bibitem[Ley et~al., 2014]{Ley_etal2014}
Ley, C., Sabbah, C., and Verdebout, T. (2014).
\newblock A new concept of quantiles for directional data and the angular
  {M}ahalanobis depth.
\newblock {\em Electron. J. Stat.}, 8(1):795--816.

\bibitem[Li et~al., 2018]{Li_etal2018}
Li, B., Sch\"{u}tt, C., and Werner, E.~M. (2018).
\newblock Floating functions.
\newblock {\em Israel J. Math.}
\newblock To appear.

\bibitem[Liu, 1988]{Liu1988}
Liu, R.~Y. (1988).
\newblock On a notion of simplicial depth.
\newblock {\em Proc. Natl. Acad. Sci. U.S.A.}, 85(6):1732--1734.

\bibitem[Liu, 1990]{Liu1990}
Liu, R.~Y. (1990).
\newblock On a notion of data depth based on random simplices.
\newblock {\em Ann. Statist.}, 18(1):405--414.

\bibitem[Liu, 1992]{Liu1992}
Liu, R.~Y. (1992).
\newblock Data depth and multivariate rank tests.
\newblock In {\em {$L_1$}-statistical analysis and related methods
  ({N}euch\^atel, 1992)}, pages 279--294. North-Holland, Amsterdam.

\bibitem[Liu et~al., 2006]{Liu_etal2006}
Liu, R.~Y., Serfling, R., and Souvaine, D.~L. (2006).
\newblock {\em Data depth: Robust multivariate analysis, computational geometry
  and applications. Papers from the workshop held at Rutgers University, New
  Brunswick, NJ, May 14-16, 2003.}
\newblock American Mathematical Society, Providence.

\bibitem[Liu and Singh, 1992]{Liu_Singh1992}
Liu, R.~Y. and Singh, K. (1992).
\newblock Ordering directional data: concepts of data depth on circles and
  spheres.
\newblock {\em Ann. Statist.}, 20(3):1468--1484.

\bibitem[Liu and Singh, 1997]{Liu_Singh1997}
Liu, R.~Y. and Singh, K. (1997).
\newblock Notions of limiting {$P$} values based on data depth and bootstrap.
\newblock {\em J. Amer. Statist. Assoc.}, 92(437):266--277.

\bibitem[L{\'o}pez-Pintado and Romo, 2009]{Lopez_Romo2009}
L{\'o}pez-Pintado, S. and Romo, J. (2009).
\newblock On the concept of depth for functional data.
\newblock {\em J. Amer. Statist. Assoc.}, 104(486):718--734.

\bibitem[Ludwig, 2010]{Ludwig2010}
Ludwig, M. (2010).
\newblock General affine surface areas.
\newblock {\em Adv. Math.}, 224(6):2346--2360.

\bibitem[Ludwig and Reitzner, 2010]{Ludwig_Reitzner2010}
Ludwig, M. and Reitzner, M. (2010).
\newblock A classification of {${\rm SL}(n)$} invariant valuations.
\newblock {\em Ann. of Math. (2)}, 172(2):1219--1267.

\bibitem[Ludwig et~al., 2006]{LuSchW2006}
Ludwig, M., Sch\"utt, C., and Werner, E.~M. (2006).
\newblock Approximation of the {E}uclidean ball by polytopes.
\newblock {\em Studia Math.}, 173(1):1--18.

\bibitem[Lutwak, 1991]{Lutwak1991}
Lutwak, E. (1991).
\newblock Extended affine surface area.
\newblock {\em Adv. Math.}, 85(1):39--68.

\bibitem[Lutwak, 1993]{Lutwak1993}
Lutwak, E. (1993).
\newblock Selected affine isoperimetric inequalities.
\newblock In {\em Handbook of convex geometry, {V}ol. {A}, {B}}, pages
  151--176. North-Holland, Amsterdam.

\bibitem[Lutwak, 1996]{Lutwak1996}
Lutwak, E. (1996).
\newblock The {B}runn-{M}inkowski-{F}irey theory. {II}. {A}ffine and geominimal
  surface areas.
\newblock {\em Adv. Math.}, 118(2):244--294.

\bibitem[Lutwak and Oliker, 1995]{Lutwak_Oliker1995}
Lutwak, E. and Oliker, V. (1995).
\newblock On the regularity of solutions to a generalization of the {M}inkowski
  problem.
\newblock {\em J. Differential Geom.}, 41(1):227--246.

\bibitem[Lutwak et~al., 2000]{LuYaZh2000}
Lutwak, E., Yang, D., and Zhang, G. (2000).
\newblock {$L_p$} affine isoperimetric inequalities.
\newblock {\em J. Differential Geom.}, 56(1):111--132.

\bibitem[Lutwak et~al., 2004]{LYZ2004}
Lutwak, E., Yang, D., and Zhang, G. (2004).
\newblock Moment-entropy inequalities.
\newblock {\em Ann. Probab.}, 32(1B):757--774.

\bibitem[Lutwak and Zhang, 1997]{LutwakZhang1997}
Lutwak, E. and Zhang, G. (1997).
\newblock Blaschke-{S}antal\'o inequalities.
\newblock {\em J. Differential Geom.}, 47(1):1--16.

\bibitem[Mahalanobis, 1936]{Mahalanobis1936}
Mahalanobis, P.~C. (1936).
\newblock On the generalized distance in statistics.
\newblock {\em Proceedings of the National Institute of Sciences of India},
  2:49--55.

\bibitem[Mankiewicz and Sch\"utt, 2001]{Man_Schue2001}
Mankiewicz, P. and Sch\"utt, C. (2001).
\newblock On the {D}elone triangulation numbers.
\newblock {\em J. Approx. Theory}, 111(1):139--142.

\bibitem[Mardia, 1972]{Mardia1972}
Mardia, K.~V. (1972).
\newblock {\em Statistics of directional data}.
\newblock Academic Press, London-New York.
\newblock Probability and Mathematical Statistics, No. 13.

\bibitem[Mass{\'e}, 2002]{Masse2002}
Mass{\'e}, J.-C. (2002).
\newblock Asymptotics for the {T}ukey median.
\newblock {\em J. Multivariate Anal.}, 81(2):286--300.

\bibitem[Mass{\'e}, 2004]{Masse2004}
Mass{\'e}, J.-C. (2004).
\newblock Asymptotics for the {T}ukey depth process, with an application to a
  multivariate trimmed mean.
\newblock {\em Bernoulli}, 10(3):397--419.

\bibitem[Mass{\'e}, 2009]{Masse2009}
Mass{\'e}, J.-C. (2009).
\newblock Multivariate trimmed means based on the {T}ukey depth.
\newblock {\em J. Stat. Plan. Inference}, 139(2):366--384.

\bibitem[Mass{\'e} and Theodorescu, 1994]{Masse_Theodorescu1994}
Mass{\'e}, J.-C. and Theodorescu, R. (1994).
\newblock Halfplane trimming for bivariate distributions.
\newblock {\em J. Multivariate Anal.}, 48(2):188--202.

\bibitem[Matou{\v{s}}ek, 2002]{Matousek2002}
Matou{\v{s}}ek, J. (2002).
\newblock {\em Lectures on discrete geometry}, volume 212 of {\em Graduate
  Texts in Mathematics}.
\newblock Springer-Verlag, New York.

\bibitem[McClure and Vitale, 1975]{McClure_Vitale1975}
McClure, D.~E. and Vitale, R.~A. (1975).
\newblock Polygonal approximation of plane convex bodies.
\newblock {\em J. Math. Anal. Appl.}, 51(2):326--358.

\bibitem[Meyer and Reisner, 1991a]{Meyer_Reisner1991B}
Meyer, M. and Reisner, S. (1991a).
\newblock Characterizations of affinely-rotation-invariant log-concave measures
  by section-centroid location.
\newblock In {\em Geometric aspects of functional analysis (1989--90)}, volume
  1469 of {\em Lecture Notes in Math.}, pages 145--152. Springer, Berlin.

\bibitem[Meyer and Reisner, 1991b]{Meyer_Reisner1991}
Meyer, M. and Reisner, S. (1991b).
\newblock A geometric property of the boundary of symmetric convex bodies and
  convexity of flotation surfaces.
\newblock {\em Geom. Dedicata}, 37(3):327--337.

\bibitem[Meyer et~al., 2015]{Meyer_etal2015}
Meyer, M., Sch\"utt, C., and Werner, E.~M. (2015).
\newblock Affine invariant points.
\newblock {\em Israel J. Math.}, 208(1):163--192.

\bibitem[Meyer and Werner, 2000]{Meyer_Werner2000}
Meyer, M. and Werner, E.~M. (2000).
\newblock On the {$p$}-affine surface area.
\newblock {\em Adv. Math.}, 152(2):288--313.

\bibitem[Milman and Pajor, 1989]{Milman_Pajor1989}
Milman, V.~D. and Pajor, A. (1989).
\newblock Isotropic position and inertia ellipsoids and zonoids of the unit
  ball of a normed {$n$}-dimensional space.
\newblock In {\em Geometric aspects of functional analysis (1987--88)}, volume
  1376 of {\em Lecture Notes in Math.}, pages 64--104. Springer, Berlin.

\bibitem[Mizera and Volauf, 2002]{Mizera_Volauf2002}
Mizera, I. and Volauf, M. (2002).
\newblock Continuity of halfspace depth contours and maximum depth estimators:
  diagnostics of depth-related methods.
\newblock {\em J. Multivariate Anal.}, 83(2):365--388.

\bibitem[Mordhorst, 2017]{Mordhorst2017}
Mordhorst, O. (2017).
\newblock New results on affine invariant points.
\newblock {\em Israel J. Math.}, 219(2):529--548.

\bibitem[Mosler, 2013]{Mosler2013}
Mosler, K. (2013).
\newblock Depth statistics.
\newblock In Becker, C., Fried, R., and Kuhnt, S., editors, {\em Robustness and
  complex data structures}, pages 17--34. Springer, Heidelberg.

\bibitem[Nagy, 2018]{Nagy2018s}
Nagy, S. (2018).
\newblock Halfspace depth does not characterize probability distributions.
\newblock Under review.

\bibitem[Nagy et~al., 2016]{Nagy_etal2016}
Nagy, S., Gijbels, I., Omelka, M., and Hlubinka, D. (2016).
\newblock Integrated depth for functional data: statistical properties and
  consistency.
\newblock {\em ESAIM Probab. Stat.}, 20:95--130.

\bibitem[Narisetty and Nair, 2016]{Narisetty_Nair2016}
Narisetty, N.~N. and Nair, V.~N. (2016).
\newblock Extremal depth for functional data and applications.
\newblock {\em J. Amer. Statist. Assoc.}, 111(516):1705--1714.

\bibitem[Neumann, 1945]{Neumann1945}
Neumann, B.~H. (1945).
\newblock On an invariant of plane regions and mass distributions.
\newblock {\em J. London Math. Soc.}, 20:226--237.

\bibitem[Nieto-Reyes and Battey, 2016]{Nieto_Battey2016}
Nieto-Reyes, A. and Battey, H. (2016).
\newblock A topologically valid definition of depth for functional data.
\newblock {\em Statist. Sci.}, 31(1):61--79.

\bibitem[Nolan, 1992]{Nolan1992}
Nolan, D. (1992).
\newblock Asymptotics for multivariate trimming.
\newblock {\em Stochastic Process. Appl.}, 42(1):157--169.

\bibitem[Oja, 1983]{Oja1983}
Oja, H. (1983).
\newblock Descriptive statistics for multivariate distributions.
\newblock {\em Statist. Probab. Lett.}, 1(6):327--332.

\bibitem[Pandolfo et~al., 2017]{Pandolfo_etal2017}
Pandolfo, G., Paindaveine, D., and Porzio, G. (2017).
\newblock Distance-based depths for directional data.
\newblock {\em arXiv preprint arXiv:1710.00080}.

\bibitem[Paouris and Werner, 2012]{Paouris_Werner2012}
Paouris, G. and Werner, E.~M. (2012).
\newblock Relative entropy of cone measures and {$L_p$} centroid bodies.
\newblock {\em Proc. Lond. Math. Soc. (3)}, 104(2):253--286.

\bibitem[Pat\'akov\'a et~al., 2018]{Tancer2}
Pat\'akov\'a, Z., Tancer, M., and Wagner, U. (2018).
\newblock Private communication.

\bibitem[Petty, 1961]{Petty1961}
Petty, C.~M. (1961).
\newblock Centroid surfaces.
\newblock {\em Pacific J. Math.}, 11:1535--1547.

\bibitem[Pokotylo et~al., 2017]{ddalpha}
Pokotylo, O., Mozharovskyi, P., Dyckerhoff, R., and Nagy, S. (2017).
\newblock {\em ddalpha: Depth-based classification and calculation of data
  depth}.
\newblock R package version 1.3.1.1.

\bibitem[Rado, 1946]{Rado1946}
Rado, R. (1946).
\newblock A theorem on general measure.
\newblock {\em J. London Math. Soc.}, 21:291--300 (1947).

\bibitem[Reitzner, 2002]{Reitzner2002}
Reitzner, M. (2002).
\newblock Random points on the boundary of smooth convex bodies.
\newblock {\em Trans. Amer. Math. Soc.}, 354(6):2243--2278.

\bibitem[Reitzner, 2005]{Reitzner2005}
Reitzner, M. (2005).
\newblock The combinatorial structure of random polytopes.
\newblock {\em Adv. Math.}, 191(1):178--208.

\bibitem[R\'enyi and Sulanke, 1963]{Ren_Sulanke1}
R\'enyi, A. and Sulanke, R. (1963).
\newblock \"{U}ber die konvexe {H}\"ulle von {$n$} zuf\"allig gew\"ahlten
  {P}unkten.
\newblock {\em Z. Wahrscheinlichkeitstheorie und Verw. Gebiete}, 2:75--84
  (1963).

\bibitem[R\'enyi and Sulanke, 1964]{Ren_Sulanke2}
R\'enyi, A. and Sulanke, R. (1964).
\newblock \"{U}ber die konvexe {H}\"ulle von {$n$} zuf\"allig gew\"ahlten
  {P}unkten. {II}.
\newblock {\em Z. Wahrscheinlichkeitstheorie und Verw. Gebiete}, 3:138--147
  (1964).

\bibitem[Romanazzi, 2001]{Romanazzi2001}
Romanazzi, M. (2001).
\newblock Influence function of halfspace depth.
\newblock {\em J. Multivariate Anal.}, 77(1):138--161.

\bibitem[Romanazzi, 2002]{Romanazzi2002}
Romanazzi, M. (2002).
\newblock Perturbation properties of depth regions.
\newblock In {\em Statistical data analysis based on the {$L_1$}-norm and
  related methods ({N}euch\^atel, 2002)}, Stat. Ind. Technol., pages 301--312.
  Birkh\"auser, Basel.

\bibitem[Rousseeuw and Hubert, 1999]{Rousseeuw_Hubert1999}
Rousseeuw, P.~J. and Hubert, M. (1999).
\newblock Regression depth.
\newblock {\em J. Amer. Statist. Assoc.}, 94(446):388--433.
\newblock With discussion and a reply by the authors and Stefan Van Aelst.

\bibitem[Rousseeuw and Hubert, 2018]{Rousseeuw_Hubert2018}
Rousseeuw, P.~J. and Hubert, M. (2018).
\newblock Computation of robust statistics: depth, median, and related
  measures.
\newblock In Goodman, J.~E., O'Rourke, J., and T\'oth, C.~D., editors, {\em
  Handbook of discrete and computational geometry}, Discrete Mathematics and
  its Applications (Boca Raton), pages 1539--1552. Chapman \& Hall/CRC, Boca
  Raton, FL, third edition.

\bibitem[Rousseeuw and Leroy, 1987]{Rousseeuw_Leroy1987}
Rousseeuw, P.~J. and Leroy, A.~M. (1987).
\newblock {\em Robust regression and outlier detection}.
\newblock Wiley Series in Probability and Mathematical Statistics: Applied
  Probability and Statistics. John Wiley \& Sons, Inc., New York.

\bibitem[Rousseeuw and Ruts, 1999]{Rousseeuw_Ruts1999}
Rousseeuw, P.~J. and Ruts, I. (1999).
\newblock The depth function of a population distribution.
\newblock {\em Metrika}, 49(3):213--244.

\bibitem[Rousseeuw and Struyf, 2004]{Rousseeuw_Struyf2004}
Rousseeuw, P.~J. and Struyf, A. (2004).
\newblock Characterizing angular symmetry and regression symmetry.
\newblock {\em J. Stat. Plan. Inference}, 122(1-2):161--173.

\bibitem[Santal\'o, 1949]{Santalo1949}
Santal\'o, L.~A. (1949).
\newblock An affine invariant for convex bodies of {$n$}-dimensional space.
\newblock {\em Portugaliae Math.}, 8:155--161.

\bibitem[Schneider, 1970a]{Schneider1970}
Schneider, R. (1970a).
\newblock Functional equations connected with rotations and their geometric
  applications.
\newblock {\em Enseignement Math. (2)}, 16:297--305 (1971).

\bibitem[Schneider, 1970b]{Schneider1970B}
Schneider, R. (1970b).
\newblock \"{U}ber eine {I}ntegralgleichung in der {T}heorie der konvexen
  {K}\"orper.
\newblock {\em Math. Nachr.}, 44:55--75.

\bibitem[Schneider, 2014]{Schneider2014}
Schneider, R. (2014).
\newblock {\em Convex bodies: the {B}runn-{M}inkowski theory}, volume 151 of
  {\em Encyclopedia of Mathematics and its Applications}.
\newblock Cambridge University Press, Cambridge, expanded edition.

\bibitem[Schneider and Weil, 1983]{Schneider_Weil1983}
Schneider, R. and Weil, W. (1983).
\newblock Zonoids and related topics.
\newblock In {\em Convexity and its applications}, pages 296--317.
  Birkh\"auser, Basel.

\bibitem[Sch{\"u}tt, 1991]{Schutt1991}
Sch{\"u}tt, C. (1991).
\newblock The convex floating body and polyhedral approximation.
\newblock {\em Israel J. Math.}, 73(1):65--77.

\bibitem[Sch\"utt, 1994]{Schue1994}
Sch\"utt, C. (1994).
\newblock Random polytopes and affine surface area.
\newblock {\em Math. Nachr.}, 170:227--249.

\bibitem[Sch\"utt, 1999]{Schue1999}
Sch\"utt, C. (1999).
\newblock Floating body, illumination body, and polytopal approximation.
\newblock In {\em Convex geometric analysis ({B}erkeley, {CA}, 1996)},
  volume~34 of {\em Math. Sci. Res. Inst. Publ.}, pages 203--229. Cambridge
  Univ. Press, Cambridge.

\bibitem[Sch{\"u}tt and Werner, 1990]{Schutt_Werner1990}
Sch{\"u}tt, C. and Werner, E.~M. (1990).
\newblock The convex floating body.
\newblock {\em Math. Scand.}, 66(2):275--290.

\bibitem[Sch\"utt and Werner, 1992]{Schutt_Werner1992}
Sch\"utt, C. and Werner, E.~M. (1992).
\newblock The convex floating body of almost polygonal bodies.
\newblock {\em Geom. Dedicata}, 44(2):169--188.

\bibitem[Sch{\"u}tt and Werner, 1994]{Schutt_Werner1994}
Sch{\"u}tt, C. and Werner, E.~M. (1994).
\newblock Homothetic floating bodies.
\newblock {\em Geom. Dedicata}, 49(3):335--348.

\bibitem[Sch\"utt and Werner, 2003]{Schutt_Werner2003}
Sch\"utt, C. and Werner, E.~M. (2003).
\newblock Polytopes with vertices chosen randomly from the boundary of a convex
  body.
\newblock In {\em Geometric aspects of functional analysis}, volume 1807 of
  {\em Lecture Notes in Math.}, pages 241--422. Springer, Berlin.

\bibitem[Sch\"utt and Werner, 2004]{Schutt_Werner2004}
Sch\"utt, C. and Werner, E.~M. (2004).
\newblock Surface bodies and {$p$}-affine surface area.
\newblock {\em Adv. Math.}, 187(1):98--145.

\bibitem[Serfling, 2006a]{Serfling2006}
Serfling, R. (2006a).
\newblock Depth functions in nonparametric multivariate inference.
\newblock In {\em Data depth: robust multivariate analysis, computational
  geometry and applications}, volume~72 of {\em DIMACS Ser. Discrete Math.
  Theoret. Comput. Sci.}, pages 1--16. Amer. Math. Soc., Providence, RI.

\bibitem[Serfling, 2006b]{Serfling_symmetry}
Serfling, R. (2006b).
\newblock Multivariate symmetry and asymmetry.
\newblock {\em Encyclopedia of Statistical Sciences, Second Edition},
  8:5338--5345.

\bibitem[Small, 1987]{Small1987}
Small, C.~G. (1987).
\newblock Measures of centrality for multivariate and directional
  distributions.
\newblock {\em Canad. J. Statist.}, 15(1):31--39.

\bibitem[Small, 1997]{Small1997}
Small, C.~G. (1997).
\newblock Multidimensional medians arising from geodesics on graphs.
\newblock {\em Ann. Statist.}, 25(2):478--494.

\bibitem[Stancu, 2003]{Stancu2003}
Stancu, A. (2003).
\newblock On the number of solutions to the discrete two-dimensional
  {$L_0$}-{M}inkowski problem.
\newblock {\em Adv. Math.}, 180(1):290--323.

\bibitem[Stancu, 2006]{Stancu2006}
Stancu, A. (2006).
\newblock The floating body problem.
\newblock {\em Bull. London Math. Soc.}, 38(5):839--846.

\bibitem[Steinhaus, 1955]{Steinhaus1955}
Steinhaus, H. (1955).
\newblock Quelques applications des principes topologiques \`a la g\'eom\'etrie
  des corps convexes.
\newblock {\em Fund. Math.}, 41:284--290.

\bibitem[Struyf and Rousseeuw, 1999]{Struyf_Rousseeuw1999}
Struyf, A. and Rousseeuw, P.~J. (1999).
\newblock Halfspace depth and regression depth characterize the empirical
  distribution.
\newblock {\em J. Multivariate Anal.}, 69(1):135--153.

\bibitem[Tancer, 2018]{Tancer}
Tancer, M. (2018).
\newblock Private communication.

\bibitem[Toth, 2015]{Toth2015}
Toth, G. (2015).
\newblock {\em Measures of symmetry for convex sets and stability}.
\newblock Universitext. Springer, Cham.

\bibitem[Trudinger and Wang, 2005]{Trudinger_Wang2005}
Trudinger, N.~S. and Wang, X.-J. (2005).
\newblock The affine {P}lateau problem.
\newblock {\em J. Amer. Math. Soc.}, 18(2):253--289.

\bibitem[Tukey, 1975]{Tukey1975}
Tukey, J.~W. (1975).
\newblock Mathematics and the picturing of data.
\newblock In {\em Proceedings of the {I}nternational {C}ongress of
  {M}athematicians ({V}ancouver, {B}. {C}., 1974), {V}ol. 2}, pages 523--531.
  Canad. Math. Congress, Montreal, Que.

\bibitem[Werner, 1996]{Werner1996}
Werner, E.~M. (1996).
\newblock The illumination bodies of a simplex.
\newblock {\em Discrete Comput. Geom.}, 15(3):297--306.

\bibitem[Werner, 2002]{Werner2002}
Werner, E.~M. (2002).
\newblock The {$p$}-affine surface area and geometric interpretations.
\newblock {\em Rend. Circ. Mat. Palermo (2) Suppl.}, 70, part II:367--382.
\newblock IV International Conference in ``Stochastic Geometry, Convex Bodies,
  Empirical Measures $\&$ Applications to Engineering Science'', Vol. II
  (Tropea, 2001).

\bibitem[Werner, 2012]{Werner2012}
Werner, E.~M. (2012).
\newblock R\'enyi divergence and {$L_p$}-affine surface area for convex bodies.
\newblock {\em Adv. Math.}, 230(3):1040--1059.

\bibitem[Werner, 2013]{Werner2013}
Werner, E.~M. (2013).
\newblock {$f$}-divergence for convex bodies.
\newblock In {\em Asymptotic geometric analysis}, volume~68 of {\em Fields
  Inst. Commun.}, pages 381--395. Springer, New York.

\bibitem[Werner and Ye, 2008]{Werner_Ye2008}
Werner, E.~M. and Ye, D. (2008).
\newblock New {$L_p$} affine isoperimetric inequalities.
\newblock {\em Adv. Math.}, 218(3):762--780.

\bibitem[Werner and Ye, 2011]{Werner_Ye2011}
Werner, E.~M. and Ye, D. (2011).
\newblock On the homothety conjecture.
\newblock {\em Indiana Univ. Math. J.}, 60(1):1--20.

\bibitem[Wieacker, 1963]{Wie1}
Wieacker, J. (1963).
\newblock {\em Einige Probleme der polyedrischen Approximation}.
\newblock Diplomarbeit, Freiburg im Breisgau.

\bibitem[Zuo et~al., 2004]{Zuo_etal2004}
Zuo, Y., Cui, H., and He, X. (2004).
\newblock On the {S}tahel-{D}onoho estimator and depth-weighted means of
  multivariate data.
\newblock {\em Ann. Statist.}, 32(1):167--188.

\bibitem[Zuo and He, 2006]{Zuo_He2006}
Zuo, Y. and He, X. (2006).
\newblock On the limiting distributions of multivariate depth-based rank sum
  statistics and related tests.
\newblock {\em Ann. Statist.}, 34(6):2879--2896.

\bibitem[Zuo and Serfling, 2000a]{Zuo_Serfling2000}
Zuo, Y. and Serfling, R. (2000a).
\newblock General notions of statistical depth function.
\newblock {\em Ann. Statist.}, 28(2):461--482.

\bibitem[Zuo and Serfling, 2000b]{Zuo_Serfling2000B}
Zuo, Y. and Serfling, R. (2000b).
\newblock On the performance of some robust nonparametric location measures
  relative to a general notion of multivariate symmetry.
\newblock {\em J. Stat. Plan. Inference}, 84(1-2):55--79.

\bibitem[Zuo and Serfling, 2000c]{Zuo_Serfling2000C}
Zuo, Y. and Serfling, R. (2000c).
\newblock Structural properties and convergence results for contours of sample
  statistical depth functions.
\newblock {\em Ann. Statist.}, 28(2):483--499.

\end{thebibliography}
\end{document}